\documentclass[12pt,a4paper,twoside]{article}
\usepackage[colorlinks=true,linkcolor=blue,citecolor=blue]{hyperref}
\usepackage{amsmath,mathrsfs,multicol,amsfonts,mathtools,amsthm, bm,fancyhdr,lastpage,xcolor,titlesec,cleveref}
\usepackage[english]{babel}
\usepackage{nicematrix}
\usepackage{amssymb}
\usepackage[bottom,multiple]{footmisc}
\usepackage[top=2.5cm,bottom=2.5cm,left=2.5cm,right=2.5cm]{geometry}
\definecolor{myeditcolor}{named}{blue}
\titleformat{\section}[block]{\bfseries\large}{\thesection. }{2pt}{}
\theoremstyle{definition}
\newtheorem{theorem}{Theorem}[section]
\newtheorem{definition}[theorem]{Definition}

\newtheorem{remark}[theorem]{Remark}

\begin{document}
\thispagestyle{empty}
\begin{center}
\noindent\textbf{{\Large Canonical Connections and Algebraic Ricci Solitons on Lorentzian 4-Dimensional Nilpotent Lie Groups}}
\end{center}
\begin{center} \noindent Youssef Ayad\footnote{corresponding author: youssef.ayad@edu.umi.ac.ma}\\
\small{\textit{$^1$Faculty of sciences, Moulay Ismail University of Mekn\`{e}s}}\\
\small{\textit{B.P. 11201, Zitoune, Mekn\`{e}s}, Morocco}
\end{center}
\begin{abstract}
In this paper, we compute the canonical and Kobayashi-Nomizu connections, together with their curvature, on Lorentzian four-dimensional nilpotent Lie groups endowed with a product structure. We also classify the algebraic Ricci solitons associated with these connections.
\end{abstract}
\begin{center}
\textbf{Keywords} Lorentzian metric, Canonical connection, Kobayashi-Nomizu connection, Algebraic Ricci soliton.
\end{center}
\begin{center}
\textbf{Mathematics Subject Classification} 53C40, 53C42.	
\end{center}
\section{Introduction}
\hspace*{3mm}    An algebraic Ricci soliton is a Lie group endowed with a left-invariant Riemannian metric whose Ricci operator can be expressed as the sum of a homothety and a derivation of its Lie algebra. This notion was introduced by Lauret in \cite{lauret2001ricci}, where he established a fundamental connection between solvsolitons and Ricci solitons on homogeneous Riemannian manifolds. More precisely, he proved that any Riemannian solvsoliton metric is a Ricci soliton. In a later work \cite{lauret2011ricci}, Lauret extended his study of Ricci solitons on solvable Lie groups by investigating the structure of solsolitons. He showed that every solsoliton arises, up to isometry, from a nilsoliton together with an abelian Lie algebra of symmetric derivations of the associated metric Lie algebra. He also proved the uniqueness of solsolitons on a fixed solvable Lie group up to isometry and scaling, and classified all solsolitons of dimension at most four. In \cite{jablonski2011concerning,jablonski2014homogeneous,jablonski2015homogeneous}, Jablonski significantly extended Lauret's work by proving that homogeneous Ricci solitons are algebraic, characterizing solvable Lie groups admitting Ricci soliton metrics, and establishing several structural and rigidity results for homogeneous Ricci solitons.

The notion of an algebraic Ricci soliton was extended to the pseudo-Riemannian setting in \cite{batat2017algebraic}, where Batat and Onda investigated algebraic Ricci solitons on three-dimensional Lorentzian Lie groups. They obtained a complete classification of such solitons and showed that, in contrast to the Riemannian case, Lorentzian Ricci solitons are not necessarily algebraic. Earlier, Onda \cite{onda2014examples} initiated the study of algebraic Ricci solitons in the pseudo-Riemannian setting by providing several examples. The classification of Lorentzian algebraic Ricci solitons on four-dimensional nilpotent Lie groups was carried out in \cite{aitbenhaddou2026lorentzian}, building on the classification of left-invariant Lorentzian metrics on four-dimensional nilpotent Lie groups established in \cite{bokan2015lorentz}. More generally, a complete study of four-dimensional Lorentzian algebraic Ricci solitons was given in \cite{garcia}. As a consequence, the authors proved that, in sharp contrast to the Riemannian setting, every connected and simply connected four-dimensional Lie group admits a left-invariant Lorentzian metric that is a Ricci soliton.

In \cite{etayo2016distinguished}, Etayo and Santamar{\'{\i}}a studied several affine connections on manifolds endowed with a product or a complex structure. In particular, they investigated the canonical connection and the Kobayashi-Nomizu connection associated with a product structure. Later, Wang \cite{wang2022canonical} classified algebraic Ricci solitons associated with the canonical and Kobayashi-Nomizu connections on three-dimensional Lorentzian Lie groups equipped with a product structure.

In this paper, we introduce a product structure on Lorentzian four-dimensional nilpotent Lie groups and compute the corresponding canonical and Kobayashi-Nomizu connections, together with their curvature. We then define algebraic Ricci solitons associated with these connections and provide a complete classification of such solitons on Lorentzian four-dimensional nilpotent Lie groups endowed with this product structure.
\section{Preliminaries}
Let $(G, g)$ be a Lorentzian Lie group and let $\left(\mathfrak{g}, \langle ., .\rangle\right)$ be its associated Lorentzian Lie algebra of dimension $n$. Consider $\nabla$ to be the Levi-Civita connection associated to $\left(\mathfrak{g}, \langle \cdot, \cdot\rangle\right)$; It is well known that $\nabla$ is characterized by the following Koszul formula
$$2\langle \nabla_u v, w\rangle = \langle [u, v], w\rangle + \langle [w, u], v\rangle + \langle [w, v], u\rangle, \quad \forall u, v, w \in \mathfrak{g}.$$
The Riemann curvature tensor $R$ of $\left(\mathfrak{g}, \langle \cdot, \cdot\rangle\right)$ assigns to each pair $u, v \in \mathfrak{g}$ the linear transformation
$$R_{uv} = \nabla_{[u, v]} - [\nabla_u, \nabla_v].$$
More precisely, we have
$$R_{uv}w = \nabla_{[u, v]}w - \nabla_u\nabla_v w + \nabla_v\nabla_u w, \quad \forall u, v ,w \in \mathfrak{g}.$$
Let $\mathscr{B} = \left\lbrace v_1, \ldots, v_n\right\rbrace$ be an orthonormal basis of $\left(\mathfrak{g}, \langle \cdot, \cdot\rangle\right)$, that is, it satisfies 
$$\langle v_i, v_j \rangle = 0 \quad \forall i \neq j, \quad \langle v_i, v_i \rangle = 1 \quad \forall i = 1, ..., n - 1, \quad \langle v_n, v_n \rangle = -1.$$
The structure constants $\xi_{ijk}$ of $\left(\mathfrak{g}, \langle ., .\rangle\right)$ are defined by $\xi_{ijk} = \langle [v_i, v_j], v_k\rangle$. By the Koszul formula, the Levi-Civita connection of $\left(\mathfrak{g}, \langle \cdot, \cdot\rangle\right)$ is described by
$$\nabla_{v_i}v_j = \displaystyle{\sum_{k = 1}^{n} \langle v_k, v_k\rangle \frac{1}{2}\left(\xi_{ijk} - \xi_{jki} + \xi_{kij}\right)v_k}.$$
The Ricci tensor $\rho$ of $\left(\mathfrak{g}, \langle \cdot, \cdot\rangle\right)$ is defined by
$$\rho(u, v) = \displaystyle{\sum_{k = 1}^{n} \langle v_k, v_k\rangle}\langle R_{v_ku}v_k, v\rangle.$$
The Ricci operator $\operatorname{Ric}$ of $\left(\mathfrak{g}, \langle \cdot, \cdot\rangle\right)$ is defined by
$$\rho(u, v) = \langle \operatorname{Ric}(u), v\rangle,$$
or equivalently
$$\operatorname{Ric}(u) = \displaystyle{\sum_{k = 1}^{n} \langle v_k, v_k\rangle R_{v_ku}v_k}.$$
\subsection{Nilpotent Lie groups of dimension four}
It is well known that when a Lie group $G$ with Lie algebra $\mathfrak{g}$ is nilpotent, then the exponential map $\operatorname{exp} : \mathfrak{g} \longmapsto G$ is a diffeomorphism. In this case $G$ is diffeomorphic to $\mathbb{R}^n$ and hence simply connected. It is well known also that when $G$ is simply connected then $\operatorname{Aut}(G) \backsimeq \operatorname{Aut}(\mathfrak{g})$ \cite{warner1983foundations}. This is a useful fact in the classification of metrics on simply connected Lie groups. According to \cite{magnin1986algebres,goze1996nilpotent}, there are only two non-Abelian nilpotent Lie algebras of dimension $4$: $\mathfrak{g}_4$ and $\mathfrak{h}_3 \oplus \mathbb{R}$ with corresponding Lie groups $G_4$ and $H_3 \times \mathbb{R}$.\\
The Lie algebra $\mathfrak{g}_4$ has a basis $\mathcal{B} = \left\lbrace e_1, e_2, e_3, e_4\right\rbrace$ with non-zero Lie brackets
$$[e_1, e_2] = e_3, \qquad [e_1, e_3] = e_4.$$
It is $3$-step nilpotent, with a one-dimensional center $\mathcal{Z}(\mathfrak{g}_4) = \operatorname{Span}(e_4)$ and a two-dimensional commutator subalgebra $\mathscr{D}(\mathfrak{g}_4) = \operatorname{Span}(e_3, e_4)$.\\
In contrast, $\mathfrak{h}_3 \oplus \mathbb{R}$ has a basis $\mathcal{B} = \left\lbrace e_1, e_2, e_3, e_4\right\rbrace$ with the only non-zero Lie bracket: $[e_1, e_2] = e_3$.
This Lie algebra is $2$-step nilpotent, with a two-dimensional center $\mathcal{Z}\left(\mathfrak{h}_3 \oplus \mathbb{R}\right) = \operatorname{Span}(e_3, e_4)$ and a one-dimensional commutator subalgebra, spanned by $e_3$.

Let $\mathscr{B} = \left\lbrace v_1, v_2, v_3, v_4\right\rbrace$ be an othonormal basis of $\left( \mathfrak{g}_4, \langle \cdot, \cdot\rangle\right)$ or $\left( \mathfrak{h}_3 \oplus \mathbb{R}, \langle \cdot, \cdot\rangle\right)$. In this case, the Levi-Civita connection is described by
\begin{equation} \label{Levi}
\nabla_{v_i}v_j = \displaystyle{\sum_{k = 1}^{4} \langle v_k, v_k\rangle \frac{1}{2}\left(\xi_{ijk} - \xi_{jki} + \xi_{kij}\right)v_k}.
\end{equation}
The Ricci tensor $\rho$ is given by
\begin{equation} \label{riccitensor}
\rho(u, v) = \displaystyle{\sum_{k = 1}^{4} \langle v_k, v_k\rangle\langle R_{v_ku}v_k, v\rangle}.
\end{equation}
The Ricci operator $\operatorname{Ric}$ is given by
\begin{equation} \label{riccioperator}
\operatorname{Ric}(u) = \displaystyle{\sum_{k = 1}^{4} \langle v_k, v_k\rangle R_{v_ku}v_k}.
\end{equation}
We define a product structure $J$ on $\left( \mathfrak{g}_4, \langle \cdot, \cdot\rangle\right)$ and $\left( \mathfrak{h}_3 \oplus \mathbb{R}, \langle \cdot, \cdot\rangle\right)$ by
$$Jv_1 = v_1, \qquad Jv_2 = v_2, \qquad Jv_3 = v_3, \qquad Jv_4 = -v_4,$$
then $J^2 = \operatorname{Id}$ and $\langle Jv_k, Jv_k\rangle = \langle v_k, v_k\rangle$. By \cite{etayo2016distinguished}, we define the canonical connection $\nabla^0$ and the Kobayashi-Nomizu connection $\nabla^1$ as follows:
\begin{align}
\nabla^0_u v
&= \nabla_u v - \frac{1}{2}(\nabla_u J)Jv,
\label{canonical}\\
\nabla^1_u v
&= \nabla^0_u v - \frac{1}{4}\left[
(\nabla_v J)Ju - (\nabla_{Jv} J)u
\right],
\label{kobayashinomizu}
\end{align}
where
\begin{equation}\label{leibnizrule}
(\nabla_u J)v = \nabla_u (Jv) - J(\nabla_u v),
\end{equation}
which follows from the Leibniz rule
$$\nabla_X (JY) = (\nabla_X J)Y + J(\nabla_X Y).$$
We will use the following notation $\nabla_u (J)v := (\nabla_u J)v$. Next, we define the curvature associated with $\nabla^0$ and $\nabla^1$ as follows
\begin{equation}\label{canonicalcurvature}
R^0_{uv}w = \nabla^0_{[u, v]}w - \nabla^0_u\nabla^0_v w + \nabla^0_v\nabla^0_u w, \quad \forall u, v ,w \in \mathfrak{g}.
\end{equation}
\begin{equation}\label{kobayashicurvature}
R^1_{uv}w = \nabla^1_{[u, v]}w - \nabla^1_u\nabla^1_v w + \nabla^1_v\nabla^1_u w, \quad \forall u, v ,w \in \mathfrak{g}.
\end{equation}
The Ricci tensors associated to the canonical connection and the Kobayashi-Nomizu connection are defined by
\begin{equation} \label{canonicalriccitensors}
\rho^0(u, v) = \displaystyle{\sum_{k = 1}^{4} \langle v_k, v_k\rangle\langle R^0_{v_ku}v_k, v\rangle}, \qquad \rho^1(u, v) = \displaystyle{\sum_{k = 1}^{4} \langle v_k, v_k\rangle\langle R^1_{v_ku}v_k, v\rangle}.
\end{equation}
The Ricci operators $\operatorname{Ric}^0$ and $\operatorname{Ric}^1$ are given by
\begin{equation} \label{canonicalriccioperators}
\operatorname{Ric}^0(u) = \displaystyle{\sum_{k = 1}^{4} \langle v_k, v_k\rangle R^0_{v_ku}v_k}, \qquad \operatorname{Ric}^1(u) = \displaystyle{\sum_{k = 1}^{4} \langle v_k, v_k\rangle R^1_{v_ku}v_k}.
\end{equation}
Let 
\begin{equation}\label{tilderho}
\widetilde{\rho}^0(u, v) = \frac{\rho^0(u, v) + \rho^0(v, u)}{2}, \qquad \widetilde{\rho}^1(u, v) = \frac{\rho^1(u, v) + \rho^1(v, u)}{2}
\end{equation}
and 
\begin{equation}\label{tildeRic}
\widetilde{\rho}^0(u, v) = \langle\widetilde{\operatorname{Ric}}^0(u), v\rangle, \qquad \widetilde{\rho}^1(u, v) = \langle\widetilde{\operatorname{Ric}}^1(u), v\rangle.
\end{equation}
Let $\operatorname{Ric}^0 = \left(\operatorname{Ric}^0_{ij}\right)_{1 \leq i, j \leq 4}$ and $\widetilde{\operatorname{Ric}}^0 = \left(\widetilde{\operatorname{Ric}}^0_{ij}\right)_{1 \leq i, j \leq 4}$ be the matrices of $\operatorname{Ric}^0$ and $\widetilde{\operatorname{Ric}}^0$ in the orthonormal basis $\left\lbrace v_1, v_2, v_3, v_4\right\rbrace$, respectively. On the one hand, we have
$$\widetilde{\rho}^0(v_4, v_1) = \langle  \widetilde{\operatorname{Ric}}^0(v_4), v_1\rangle = \widetilde{\operatorname{Ric}}^0_{14}$$
On the other hand, we have
$$\widetilde{\rho}^0(v_4, v_1) = \frac{\rho^0(v_4, v_1) + \rho^0(v_1, v_4)}{2} = \frac{\langle  \operatorname{Ric}^0(v_4), v_1\rangle + \langle  \operatorname{Ric}^0(v_1), v_4\rangle}{2} = \frac{\operatorname{Ric}^0_{14} - \operatorname{Ric}^0_{41}}{2}$$
Hence $\widetilde{\operatorname{Ric}}^0_{14} = \frac{\operatorname{Ric}^0_{14} - \operatorname{Ric}^0_{41}}{2}$. In fact, we have
{\small$$\widetilde{\operatorname{Ric}}^0_{i4} = -\widetilde{\operatorname{Ric}}^0_{4i} = \frac{\operatorname{Ric}^0_{i4} - \operatorname{Ric}^0_{4i}}{2},\; \forall i = 1, 2, 3, \quad \widetilde{\operatorname{Ric}}^0_{ij} = \widetilde{\operatorname{Ric}}^0_{ji}  = \frac{\operatorname{Ric}^0_{ij} + \operatorname{Ric}^0_{ji}}{2}, \; \forall i, j \in \left\lbrace 1, 2, 3\right\rbrace.$$}
The same is true for $\operatorname{Ric}^1$.
\begin{definition}
\begin{enumerate}
\item A Lorentzian four-dimensional Lie algebra $\left( \mathfrak{g}, \langle \cdot, \cdot\rangle, J\right)$ is called flat with respect to $\nabla^0$ (resp. $\nabla^1$), if it satifies $R^0 = 0$ (resp. $R^1 = 0$).
\item A Lorentzian four-dimensional Lie algebra $\left( \mathfrak{g}, \langle \cdot, \cdot\rangle, J\right)$ is called Ricci-flat with respect to $\nabla^0$ (resp. $\nabla^1$), if it satifies $\operatorname{Ric}^0 = 0$ (resp. $\operatorname{Ric}^1 = 0$).
\item A Lorentzian four-dimensional Lie algebra $\left( \mathfrak{g}, \langle \cdot, \cdot\rangle, J\right)$ is called the first (resp. second) kind algebraic Ricci soliton associated to $\nabla^0$, if it satifies 
$$\operatorname{Ric}^0 = c\operatorname{Id} + D \quad (\text{resp.}\; \widetilde{\operatorname{Ric}}^0 = c\operatorname{Id} + D),$$
where $c$ is a real number, and $D$ is a derivation of $\mathfrak{g}$, that is
$$[D(u), v] + [u, D(v)] = D[u, v] \quad \forall u, v \in \mathfrak{g}.$$
\item A Lorentzian four-dimensional Lie algebra $\left( \mathfrak{g}, \langle \cdot, \cdot\rangle, J\right)$ is called the first (resp. second) kind algebraic Ricci soliton associated to $\nabla^1$, if it satifies 
$$\operatorname{Ric}^1 = c\operatorname{Id} + D \quad (\text{resp.}\; \widetilde{\operatorname{Ric}}^1 = c\operatorname{Id} + D).$$
\end{enumerate}
\end{definition}
\section{Lorentzian algebraic Ricci solitons on $\mathfrak{g}_4$}
\begin{theorem} \cite{bokan2015lorentz}
The set $\mathcal{S}^L(\mathfrak{g}_4)$ of nonequivalent Lorentz inner product on algebra $\mathfrak{g}_4$ in basis $\left\lbrace e_1, e_2, e_3, e_4\right\rbrace$ is represented by the following matrices
\begin{multicols}{3}
$S_A^{\pm} = \begin{bmatrix}
\pm1 & 0 & 0 & 0\\
0 & \mp1 & 0 & 0\\
0 & 0 & a & b\\
0 & 0 & b & c
\end{bmatrix}$\par
$S_A = \begin{bmatrix}
1 & 0 & 0 & 0\\
0 & 1 & 0 & 0\\
0 & 0 & a & b\\
0 & 0 & b & c
\end{bmatrix}$\par
$S_1^{\lambda} = \begin{bmatrix}
1 & 0 & 0 & 0\\
0 & 0 & 0 & 1\\
0 & 0 & \lambda & 0\\
0 & 1 & 0 & 0
\end{bmatrix}$\par
$S_2^{\lambda} = \begin{bmatrix}
0 & 0 & 0 & 1\\
0 & 1 & 0 & 0\\
0 & 0 & \lambda & 0\\
1 & 0 & 0 & 0
\end{bmatrix}$\par
$S_3^{\lambda} = \begin{bmatrix}
0 & 0 & 1 & 0\\
0 & 1 & 0 & 0\\
1 & 0 & 0 & 0\\
0 & 0 & 0 & \lambda
\end{bmatrix}$\par
$S_4^{\lambda} = \begin{bmatrix}
1 & 0 & 0 & 0\\
0 & 0 & 1 & 0\\
0 & 1 & 0 & 0\\
0 & 0 & 0 & \lambda
\end{bmatrix}$
\end{multicols}
with $\lambda > 0$, $b \geq 0$. Matrix $A = \begin{bmatrix}
a & b\\
b & c
\end{bmatrix}$ satisfies $\operatorname{det}(A) > 0$ in $S_A^{\pm}$ and $\operatorname{det}(A) < 0$ in $S_A$.
\end{theorem}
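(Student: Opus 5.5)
The plan is to classify Lorentzian inner products on $\mathfrak{g}_4$ up to automorphisms, i.e. to find orbit representatives of the action of $\operatorname{Aut}(\mathfrak{g}_4)$ on symmetric bilinear forms of signature $(3,1)$ via $S \mapsto P^{T} S P$. Since $G_4$ is simply connected, $\operatorname{Aut}(G_4)\simeq\operatorname{Aut}(\mathfrak{g}_4)$, so this gives the isometry classes of left-invariant Lorentzian metrics up to automorphism.

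\textbf{Step 1: the automorphism group.} An automorphism must preserve the flag $\mathcal{Z}(\mathfrak{g}_4)=\operatorname{Span}(e_4)\subset \mathscr{D}(\mathfrak{g}_4)=\operatorname{Span}(e_3,e_4)$. It must also preserve the ideal $\mathfrak{c}=\operatorname{Span}(e_2,e_3,e_4)$, the centralizer of $\mathscr{D}(\mathfrak{g}_4)$. Writing $Pe_1=\alpha e_1+\ast$ and $Pe_2=\beta e_2+\gamma e_3+\delta e_4$, the bracket relations force $Pe_3=[Pe_1,Pe_2]$ and $Pe_4=[Pe_1,Pe_3]=\alpha^2\beta e_4$. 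So $P$ is lower block-triangular with diagonal entries $\alpha,\beta,\alpha\beta,\alpha^2\beta$. It has free off-diagonal parameters in the first column, and a free $e_3,e_4$ part in $Pe_2$.

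\textbf{Step 2: case split on degenerate subspaces.} The invariant flag $\operatorname{Span}(e_4)\subset\operatorname{Span}(e_3,e_4)\subset\mathfrak{c}$ gives invariants of $S$: the causal character of $e_4$, the restriction of $S$ to $\mathscr{D}=\operatorname{Span}(e_3,e_4)$, and the restriction to $\mathfrak{c}$.
\begin{itemize}
\item[(i)] If $S|_{\mathscr{D}}$ is nondegenerate, we take the $S$-orthogonal complement of $\mathscr{D}$. Using the first-column freedom of $P$ (adding multiples of $e_2,e_3,e_4$ to $e_1$, and of $e_3,e_4$ to $e_2$), we make $e_1,e_2$ orthogonal to $\mathscr{D}$ and to each other, then rescale with $\alpha,\beta$. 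If $\mathscr{D}$ is Lorentzian, the complement is definite positive, giving $S_A$ with $\det A<0$. If $\mathscr{D}$ is definite, the complement is Lorentzian, giving $S_A^{\pm}$ with $\det A>0$. The sign of $\alpha$ normalizes $b\ge0$.
\item[(ii)] If $S|_{\mathscr{D}}$ is degenerate, we split according to whether $e_4$ or another vector spans the radical of $\mathscr{D}$, and whether $e_4$ is null.
\begin{itemize}
\item $e_4$ null and paired with $e_2$ gives $S_1^\lambda$.
\item $e_4$ null and paired with $e_1$ gives $S_2^\lambda$.
\item $e_4$ non-null with $e_3$ null gives $S_3^\lambda$ or $S_4^\lambda$, depending on whether $e_3$ pairs with $e_1$ or $e_2$.
\end{itemize}
In each subcase the remaining parameters of $P$ are used to kill cross terms, and $\alpha,\beta$ normalize the nonzero entries to $1$. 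The surviving scale is $\lambda$, made positive by signature and sign choices.
\end{itemize}

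\textbf{Step 3: non-equivalence.} The listed forms are separated by the invariants above: the signature of $S|_{\mathscr{D}}$, whether $e_4$ is null, and the type of $S|_{\mathfrak{c}}$. Within a family, one checks that the remaining stabilizer of the normal form preserves $\lambda$, or $(a,b,c)$ up to the stated normalization.

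\textbf{Main obstacle.} I expect Step 2(ii) to be the hardest. The bookkeeping of which pairing survives, and the fact that the constraint $Pe_4=\alpha^2\beta e_4$ restricts the rescaling freedom, both determine exactly which entry becomes the modulus $\lambda$. In writing the argument I would instead cite Bokan et al.\ \cite{bokan2015lorentz}, where this classification is carried out.
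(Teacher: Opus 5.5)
The paper gives no proof here. It only cites \cite{bokan2015lorentz}, and at the end of Section~3 it concedes that the flat inner product $S$ (with $S(e_1,e_1)=S(e_3,e_3)=1$, $S(e_2,e_4)=-1$, all other entries zero) was overlooked in that classification. So the list as printed is not a complete set of representatives, and your proposal cannot be completed into a proof of it; deferring to the citation does not repair this. Your orbit-classification framework is the right one, and your Step~1 description of $\operatorname{Aut}(\mathfrak{g}_4)$ is correct, including $Pe_3=\alpha\beta e_3+\alpha\gamma e_4$. The errors are in Step~2, in normalizations you assert without checking them against the rigidity of this group.

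\textbf{(1)} In Step~2(i), making $e_1\perp e_2$ means adding a multiple of $e_2'$ to $e_1$, where $e_2'$ spans the invariant line $\ell=\mathscr{D}^{\perp}\cap\operatorname{Span}(e_2,e_3,e_4)$. This needs $\ell$ non-null, and you cannot instead add $e_1$ to $e_2$, because $\operatorname{Span}(e_2,e_3,e_4)$ is characteristic. If $\mathscr{D}$ is definite, $\mathscr{D}^{\perp}$ is a Lorentzian plane and $\ell$ can be null. For example, take $S(e_1,e_2)=1$, $S|_{\mathscr{D}}=I_2$, all other entries zero. This form is Lorentzian, $S|_{\mathscr{D}}$ is nondegenerate, and $\ell=\operatorname{Span}(e_2)$ is null. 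Every listed form instead has either $S|_{\mathscr{D}}$ degenerate ($S_i^{\lambda}$) or $\ell$ non-null ($S_A^{\pm}$, $S_A$). So a whole family $\left[\begin{smallmatrix}0&1\\1&0\end{smallmatrix}\right]\oplus A$ is missing.

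\textbf{(2)} In the $S_1^{\lambda}$ subcase, the sign of $\lambda$ cannot be fixed by sign choices. If $e_4$ is null and orthogonal to $e_3$, every automorphism gives $S(Pe_2,Pe_4)=\alpha^2\beta^2S(e_2,e_4)$ and $S(Pe_3,Pe_3)=\alpha^2\beta^2S(e_3,e_3)$. Moreover $S(e_3,e_3)>0$ is forced, since a null vector cannot be orthogonal to a timelike one. Hence the sign of $S(e_2,e_4)$ is an invariant, and both signs give Lorentzian forms. The negative sign is exactly the paper's flat $S$.

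\textbf{(3)} In the $S_4^{\lambda}$ subcase, suppose $e_3$ spans the radical of $S|_{\mathscr{D}}$ and $\mu=S(e_4,e_4)\neq0$. Keeping $Pe_3$ null forces $\gamma=0$. Making $Pe_2\perp e_4$ then fixes $\delta$ and leaves $S(Pe_2,Pe_2)=\beta^2\big(S(e_2,e_2)-S(e_2,e_4)^2/\mu\big)$, which no remaining parameter removes. So the form with $S(e_1,e_1)=1$, $S(e_2,e_2)=\kappa$, $S(e_2,e_3)=1$, $S(e_4,e_4)=\lambda>0$ is Lorentzian for every $\kappa$. For $\kappa\neq0$ it is inequivalent to all $S_4^{\lambda'}$. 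Indeed, the plane $\{x\in\operatorname{Span}(e_2,e_3,e_4):[\mathfrak{g}_4,x]\subset\operatorname{rad}(S|_{\mathscr{D}})\}=\operatorname{Span}(e_2,e_4)$ is canonical, and $S$ degenerates on it if and only if $\kappa=0$.

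A dimension count confirms this. Since $\dim\operatorname{Aut}(\mathfrak{g}_4)=7$, the codimension-one strata in (1) and (3) of the $10$-dimensional space of symmetric forms need at least two moduli each, while the statement offers at most one. A correct argument must track these invariants, and it produces a longer list than the one stated.
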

\begin{remark}
Note that as a vector space, $\mathfrak{g}_4$ is identified with $\mathbb{R}^4$ and then the elements $e_1$, $e_2$, $e_3$ and $e_4$ are the canonical basis elements of $\mathbb{R}^4$.
\end{remark}
\subsection{The Lorentzian Lie algebra $\left(\mathfrak{g}_4, S_1^{\lambda}\right)$}
Put
$$v_1 = e_1, \qquad v_2 = \frac{1}{\sqrt{\lambda}}e_3, \qquad v_3 = \frac{1}{\sqrt{2}}(e_2 + e_4), \qquad v_4 = \frac{1}{\sqrt{2}}(e_2 - e_4).$$
Then $\mathscr{B} = \left\lbrace v_1, v_2, v_3, v_4\right\rbrace$ is an orthonormal basis of the Lorentzian Lie algebra $\left(\mathfrak{g}_4, S_1^{\lambda}\right)$. The bracket in $\mathscr{B}$ is described by
$$[v_1, v_2] = \frac{1}{\sqrt{2\lambda}}(v_3 - v_4), \qquad [v_1, v_3] = \sqrt{\frac{\lambda}{2}}v_2, \qquad [v_1, v_4] = \sqrt{\frac{\lambda}{2}}v_2.$$

By \cite{aitbenhaddou2026lorentzian}, the Levi-Civita connection of $\left(\mathfrak{g}_4, S_1^{\lambda}\right)$ is described by the following formulas
\begin{multicols}{2}
$\nabla_{v_i}v_i=0,\; \forall i=1,\ldots,4$\par
$\nabla_{v_1}v_2=\frac{1-\lambda}{2\sqrt{2\lambda}}v_3-\frac{1-\lambda}{2\sqrt{2\lambda}}v_4$\par
$\nabla_{v_2}v_1=\frac{-(\lambda+1)}{2\sqrt{2\lambda}}v_3+\frac{1+\lambda}{2\sqrt{2\lambda}}v_4$\par
$\nabla_{v_1}v_3=\nabla_{v_1}v_4=\frac{\lambda-1}{2\sqrt{2\lambda}}v_2$\par
$\nabla_{v_3}v_1=\nabla_{v_4}v_1=\frac{-(1+\lambda)}{2\sqrt{2\lambda}}v_2$\par
$\nabla_{v_2}v_3=\nabla_{v_3}v_2=\frac{\lambda+1}{2\sqrt{2\lambda}}v_1$\par
$\nabla_{v_2}v_4=\nabla_{v_4}v_2=\frac{\lambda+1}{2\sqrt{2\lambda}}v_1$\par
$\nabla_{v_3}v_4=\nabla_{v_4}v_3=0$
\end{multicols}
Using the formula (\ref{leibnizrule}), we obtain the following equalities
\begin{multicols}{2}
$\nabla_{v_1} (J)v_1 = \nabla_{v_1} (J)v_3 = 0$\par
$\nabla_{v_1} (J)v_2 = \frac{\lambda - 1}{\sqrt{2\lambda}}v_4$\par
$\nabla_{v_1} (J)v_4 = \frac{1 - \lambda}{\sqrt{2\lambda}}v_2$\par
$\nabla_{v_2} (J)v_1 = \frac{1 + \lambda}{\sqrt{2\lambda}}v_4$\par
$\nabla_{v_2} (J)v_2 = \nabla_{v_2} (J)v_3 = 0$\par
$\nabla_{v_2} (J)v_4 = \frac{-\lambda - 1}{\sqrt{2\lambda}}v_1$\par
$\nabla_{v_3} (J)v_i = 0, \;\forall i= 1,..., 4$\par
$\nabla_{v_4} (J)v_i = 0, \;\forall i= 1,..., 4$
\end{multicols}

By (\ref{canonical}), the canonical connection $\nabla^0$ of $\left(\mathfrak{g}_4, S_1^{\lambda}\right)$ is given by
\begin{multicols}{3}
$\nabla^0_{v_i} v_i = 0, \;\forall i= 1,\ldots, 4$\par
$\nabla^0_{v_1} v_2 = \frac{1 - \lambda}{2\sqrt{2\lambda}}v_3$\par
$\nabla^0_{v_1} v_3 = \frac{\lambda - 1}{2\sqrt{2\lambda}}v_2$\par
$\nabla^0_{v_1} v_4 = 0$\par
$\nabla^0_{v_2} v_1 = -\frac{\lambda + 1}{2\sqrt{2\lambda}}v_3$\par
$\nabla^0_{v_2} v_3 = \frac{\lambda + 1}{2\sqrt{2\lambda}}v_1$\par
$\nabla^0_{v_2} v_4 = 0$\par
$\nabla^0_{v_3} v_1 = -\frac{1 + \lambda}{2\sqrt{2\lambda}}v_2$\par
$\nabla^0_{v_3} v_2 = \frac{\lambda + 1}{2\sqrt{2\lambda}}v_1$\par
$\nabla^0_{v_3} v_4 = 0$\par
$\nabla^0_{v_4} v_1 = -\frac{1 + \lambda}{2\sqrt{2\lambda}}v_2$\par
$\nabla^0_{v_4} v_2 = \frac{\lambda + 1}{2\sqrt{2\lambda}}v_1$\par
$\nabla^0_{v_4} v_3 = 0$
\end{multicols}

By (\ref{canonicalcurvature}), the curvature $R^0$ of the canonical connection $\nabla^0$ of $\left(\mathfrak{g}_4, S_1^{\lambda}\right)$ is described by
\begin{multicols}{3}
$R^0_{v_1v_2}v_1 = \frac{\lambda^2 - 1}{8\lambda}v_2$\par
$R^0_{v_1v_2}v_2 = \frac{1 - \lambda^2}{8\lambda}v_1$\par
$R^0_{v_1v_2}v_3 = 0$\par
$R^0_{v_1v_2}v_4 = 0$\par
$R^0_{v_1v_3}v_1 = \frac{-3\lambda^2 - 2\lambda + 1}{8\lambda}v_3$\par
$R^0_{v_1v_3}v_2 = 0$\par
$R^0_{v_1v_3}v_3 = \frac{3\lambda^2 + 2\lambda - 1}{8\lambda}v_1$\par
$R^0_{v_1v_3}v_4 = 0$\par
$R^0_{v_1v_4}v_1 = \frac{-3\lambda^2 - 2\lambda + 1}{8\lambda}v_3$\par
$R^0_{v_1v_4}v_2 = 0$\par
$R^0_{v_1v_4}v_3 = \frac{3\lambda^2 + 2\lambda - 1}{8\lambda}v_1$\par
$R^0_{v_1v_4}v_4 = 0$\par
$R^0_{v_2v_3}v_1 = 0$\par
$R^0_{v_2v_3}v_2 = \frac{\left(\lambda + 1\right)^2}{8\lambda}v_3$\par
$R^0_{v_2v_3}v_3 = -\frac{\left(\lambda + 1\right)^2}{8\lambda}v_2$\par
$R^0_{v_2v_3}v_4 = 0$\par
$R^0_{v_2v_4}v_1 = 0$\par
$R^0_{v_2v_4}v_2 = \frac{\left(\lambda + 1\right)^2}{8\lambda}v_3$\par
$R^0_{v_2v_4}v_3 = -\frac{\left(\lambda + 1\right)^2}{8\lambda}v_2$\par
$R^0_{v_2v_4}v_4 = 0$\par
$R^0_{v_3v_4}v_i = 0, i = 1, \ldots, 4$\par
\end{multicols}

Applying (\ref{canonicalriccioperators}) and the skew-symmetry of $R^0_{v_iv_j}$, we find that the Ricci operator $\operatorname{Ric}^0$ of $\nabla^0$ is represented in the basis $\mathscr{B}$ by the following matrix
$$\operatorname{Ric}^0 = \begin{bmatrix}
\frac{-\lambda - 1}{4} & 0 & 0 & 0\\
0 & \frac{\lambda + 1}{4} & 0 & 0\\
0 & 0 & \frac{1 - \lambda^2}{4\lambda} & \frac{1 - \lambda^2}{4\lambda}\\
0 & 0 & 0 & 0
\end{bmatrix}.$$
\begin{theorem}
$\left(\mathfrak{g}_4, S_1^{\lambda}, J\right)$ is a first kind algebraic Ricci soliton associated to the connection $\nabla^0$ if and only if $c = 0$ and $\lambda = \frac{1}{3}$. In particular, we have 
$$\operatorname{Ric}^0 = D = \begin{bmatrix}
\frac{-1}{3} & 0 & 0 & 0\\
0 & \frac{1}{3} & 0 & 0\\
0 & 0 & \frac{2}{3} & \frac{2}{3}\\
0 & 0 & 0 & 0
\end{bmatrix}.$$
\end{theorem}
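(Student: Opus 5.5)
The plan is to impose the derivation condition directly in the orthonormal basis $\mathscr{B}$. Here $\operatorname{Ric}^0$ is already given as a matrix, and the brackets of $\mathfrak{g}_4$ in $\mathscr{B}$ are known:
$$[v_1,v_2]=\tfrac{1}{\sqrt{2\lambda}}(v_3-v_4),\qquad [v_1,v_3]=[v_1,v_4]=\sqrt{\tfrac{\lambda}{2}}\,v_2,$$
with all other brackets zero. So the statement reduces to a finite linear system in the unknowns $c$ and $\lambda$. Set $D=\operatorname{Ric}^0-c\operatorname{Id}$ and $\mu=\frac{1-\lambda^2}{4\lambda}$. Reading the columns of the matrix as images of the basis vectors gives
$$Dv_1=\alpha v_1,\quad Dv_2=\beta v_2,\quad Dv_3=(\mu-c)v_3,\quad Dv_4=\mu v_3-c\,v_4,$$
where $\alpha=-\frac{\lambda+1}{4}-c$ and $\beta=\frac{\lambda+1}{4}-c$.

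Next I check the identity $[Du,v]+[u,Dv]=D[u,v]$ on all pairs $(v_i,v_j)$ with $i<j$.

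\emph{Pairs not involving $v_1$.} Every bracket is zero. Also $D$ maps $\operatorname{Span}(v_2,v_3,v_4)$ into itself, and this span is abelian. So these conditions hold automatically.

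\emph{The pair $(v_1,v_3)$.} Both sides are multiples of $v_2$. Comparing coefficients gives $\beta=\alpha+\mu-c$.

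\emph{The pair $(v_1,v_4)$.} Since $Dv_4=\mu v_3-cv_4$, the left side is again $(\alpha+\mu-c)\sqrt{\lambda/2}\,v_2$. This yields the same condition as $(v_1,v_3)$.

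\emph{The pair $(v_1,v_2)$.} The right side is $D(v_3-v_4)/\sqrt{2\lambda}$. Using $D(v_3-v_4)=(\mu-c)v_3-\mu v_3+cv_4=-c(v_3-v_4)$, it equals $-c(v_3-v_4)/\sqrt{2\lambda}$. The left side is $(\alpha+\beta)(v_3-v_4)/\sqrt{2\lambda}$. So the condition is $\alpha+\beta=-c$.

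Finally I solve the system. Since $\alpha+\beta=-2c$, the condition $\alpha+\beta=-c$ forces $c=0$. Then $\beta=\alpha+\mu$ becomes
$$\frac{\lambda+1}{2}=\frac{(1-\lambda)(1+\lambda)}{4\lambda}.$$
Because $\lambda>0$, we may divide by $\lambda+1$, which gives $2\lambda=1-\lambda$, that is $\lambda=\frac13$.

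Conversely, for $c=0$ and $\lambda=\frac13$ all the conditions above hold, so $D=\operatorname{Ric}^0$ is a derivation. Substituting $\lambda=\frac13$ gives $-\frac{\lambda+1}{4}=-\frac13$, $\frac{\lambda+1}{4}=\frac13$ and $\mu=\frac{8/9}{4/3}=\frac23$. This recovers the displayed matrix.

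There is no genuine obstacle here. The only delicate point is bookkeeping: reading $\operatorname{Ric}^0$ column-wise, so that the off-diagonal entry gives $Dv_4=\mu v_3-cv_4$ rather than $Dv_3$ having a $v_4$-component. Getting this wrong would alter the $(v_1,v_2)$ condition, which is precisely the one that forces $c=0$.
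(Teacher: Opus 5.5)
Your proof is correct and follows the paper's argument: the pair $(v_1,v_2)$ forces $c=0$, and the pair $(v_1,v_3)$ (equivalently $(v_1,v_4)$) then forces $-3\lambda^2-2\lambda+1=0$, hence $\lambda=\frac13$. Your explicit justification for the remaining pairs is a small improvement on the paper's ``one can easily check'': $D$ preserves the abelian span of $v_2,v_3,v_4$, so those pairs impose no condition.
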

\begin{proof}
Assume that $\left(\mathfrak{g}_4, S_1^{\lambda}, J\right)$ is a first kind algebraic Ricci soliton with respect to $\nabla^0$, then there exists a real number $c$ and a derivation $D$ of $\mathfrak{g}_4$ such that $\operatorname{Ric}^0 = cI_4 + D$. From this, it follows that 
$$D = \operatorname{Ric}^0 - cI_4 = \begin{bmatrix}
\frac{-\lambda - 1}{4} - c & 0 & 0 & 0\\
0 & \frac{\lambda + 1}{4} - c & 0 & 0\\
0 & 0 & \frac{1 - \lambda^2}{4\lambda} - c & \frac{1 - \lambda^2}{4\lambda}\\
0 & 0 & 0 & -c
\end{bmatrix}.$$
Since $D$ is a derivation of  $\mathfrak{g}_4$, the condition below holds
\begin{eqnarray*}
& [D(v_1), v_3] + [v_1, D(v_3)] = D[v_1, v_3] \\ \Leftrightarrow & \left[ \left(\frac{-\lambda - 1}{4} - c\right)v_1, v_3\right] + \left[ v_1, \left(\frac{1 - \lambda^2}{4\lambda} - c\right)v_3\right] = \sqrt{\frac{\lambda}{2}}D(v_2)\\
\Leftrightarrow& \left( \frac{-2\lambda^2 - \lambda + 1}{4\lambda} - 2c\right)[v_1, v_3] = \sqrt{\frac{\lambda}{2}}D(v_2) \\
\Leftrightarrow& \left( \frac{-2\lambda^2 - \lambda + 1}{4\lambda} - 2c\right)\sqrt{\frac{\lambda}{2}}v_2 = \left(\frac{\lambda + 1}{4} - c\right)\sqrt{\frac{\lambda}{2}}v_2\\
\Leftrightarrow& \frac{-2\lambda^2 - \lambda + 1}{4\lambda} - 2c = \frac{\lambda + 1}{4} - c \Leftrightarrow c = \frac{-3\lambda^2 - 2\lambda + 1}{4\lambda}
\end{eqnarray*}
The condition $[D(v_1), v_4] + [v_1, D(v_4)] = D[v_1, v_4]$ is similar to the previous one. Also, $D$ satisfies the following condition
\begin{eqnarray*}
& [D(v_1), v_2] + [v_1, D(v_2)] = D[v_1, v_2] \\ \Leftrightarrow & \left[ \left(\frac{-\lambda - 1}{4} - c\right)v_1, v_2\right] + \left[ v_1, \left(\frac{\lambda + 1}{4} - c\right)v_2\right] = \frac{1}{\sqrt{2\lambda}}\left(D(v_3) - D(v_4)\right)\\
\Leftrightarrow& -2c[v_1, v_2] = \frac{1}{\sqrt{2\lambda}}\left(D(v_3) - D(v_4)\right) \\
\Leftrightarrow& \frac{-2c}{\sqrt{2\lambda}}\left(v_3 - v_4\right) = \frac{1}{\sqrt{2\lambda}}\left(-cv_3 + cv_4\right)
\end{eqnarray*}
Hence $2c = c$ and then $c = 0$. This implies that $-3\lambda^2 - 2\lambda + 1 = 0$; the two solutions are $\lambda = \frac{1}{3}$ and $\lambda = -1$. Since $\lambda > 0$, then $\lambda = \frac{1}{3}$. Thus
$$\operatorname{Ric}^0 = D = \begin{bmatrix}
\frac{-1}{3} & 0 & 0 & 0\\
0 & \frac{1}{3} & 0 & 0\\
0 & 0 & \frac{2}{3} & \frac{2}{3}\\
0 & 0 & 0 & 0
\end{bmatrix}.$$
One can easily check that $D$ satisfies the condition
$$[D(v_i), v_j] + [v_i, D(v_j)] = D[v_i, v_j]$$
for all vanishing brackets.
\end{proof}
According to (\ref{tilderho}) and (\ref{tildeRic}), we find that $\widetilde{\operatorname{Ric}}^0$ is represented in the basis $\mathscr{B}$ by the following matrix
$$\widetilde{\operatorname{Ric}}^0 = \begin{bmatrix}
\frac{-\lambda - 1}{4} & 0 & 0 & 0\\
0 & \frac{\lambda + 1}{4} & 0 & 0\\
0 & 0 & \frac{1 - \lambda^2}{4\lambda} & \frac{1 - \lambda^2}{8\lambda}\\
0 & 0 & \frac{\lambda^2 - 1}{8\lambda} & 0
\end{bmatrix}.$$
\begin{theorem}
$\left(\mathfrak{g}_4, S_1^{\lambda}, J\right)$ is a second kind algebraic Ricci soliton associated to the connection $\nabla^0$ if and only if $c = \frac{-1}{3}$ and $\lambda = \frac{1}{3}$. In particular, we have 
$$\widetilde{\operatorname{Ric}}^0 = \begin{bmatrix}
\frac{-1}{3} & 0 & 0 & 0\\
0 & \frac{1}{3} & 0 & 0\\
0 & 0 & \frac{2}{3} & \frac{1}{3}\\
0 & 0 & \frac{-1}{3} & 0
\end{bmatrix}, \qquad D = \begin{bmatrix}
0 & 0 & 0 & 0\\
0 & \frac{2}{3} & 0 & 0\\
0 & 0 & 1 & \frac{1}{3}\\
0 & 0 & \frac{-1}{3} & \frac{1}{3}
\end{bmatrix}.$$
\end{theorem}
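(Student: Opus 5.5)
We mirror the proof of the preceding theorem, replacing $\operatorname{Ric}^0$ by $\widetilde{\operatorname{Ric}}^0$. Assume $\widetilde{\operatorname{Ric}}^0 = cI_4 + D$ with $D$ a derivation of $\mathfrak{g}_4$, and set $D = \widetilde{\operatorname{Ric}}^0 - cI_4$. We use the same column convention as before, where the $j$-th column gives $D(v_j)$. To lighten notation put
$$a = \frac{1-\lambda^2}{4\lambda}, \qquad \alpha = \frac{-\lambda-1}{4}-c, \qquad \beta = \frac{\lambda+1}{4}-c.$$
Then
$$D(v_1)=\alpha v_1,\quad D(v_2)=\beta v_2,\quad D(v_3)=(a-c)v_3-\tfrac{a}{2}v_4,\quad D(v_4)=\tfrac{a}{2}v_3-cv_4 .$$
The plan is to impose the derivation identity on the three nonzero brackets $[v_1,v_2]$, $[v_1,v_3]$, $[v_1,v_4]$, extract a polynomial system in $(c,\lambda)$, solve it, and finally check the vanishing brackets.

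\emph{Step 1: the bracket $[v_1,v_2]$.} We have $[D v_1,v_2]+[v_1,Dv_2]=(\alpha+\beta)[v_1,v_2]=-2c\,\frac{1}{\sqrt{2\lambda}}(v_3-v_4)$. On the other side, $D[v_1,v_2]=\frac{1}{\sqrt{2\lambda}}(Dv_3-Dv_4)=\frac{1}{\sqrt{2\lambda}}\bigl((a/2-c)v_3-(a/2-c)v_4\bigr)$. Comparing coefficients gives the single relation $-2c=a/2-c$, that is $c=-a/2=\frac{\lambda^2-1}{8\lambda}$. Unlike the first-kind case, the antisymmetric off-diagonal block of $\widetilde{\operatorname{Ric}}^0$ now contributes to $Dv_3-Dv_4$, so $c$ is no longer forced to vanish.

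\emph{Step 2: the brackets $[v_1,v_3]$ and $[v_1,v_4]$.} Since $[v_1,v_3]=[v_1,v_4]=\sqrt{\lambda/2}\,v_2$, only the $v_3,v_4$ components of $Dv_3$ and $Dv_4$ matter, and the right-hand side in both cases is $\sqrt{\lambda/2}\,\beta v_2$. This yields
$$\alpha+(a-c)-\tfrac{a}{2}=\beta, \qquad \alpha+\tfrac{a}{2}-c=\beta .$$
Both equations reduce to $-\frac{\lambda+1}{2}+\frac a2-c=0$. Substituting $c=-a/2$ from Step 1 gives $a=\frac{\lambda+1}{2}$, that is $\frac{1-\lambda}{4\lambda}=\frac12$, so $\lambda=\frac13$. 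Consequently $c=\frac{1/9-1}{8/3}=-\frac13$. Plugging these values into $\widetilde{\operatorname{Ric}}^0$ and into $D=\widetilde{\operatorname{Ric}}^0+\frac13 I_4$ produces exactly the two matrices in the statement.

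\emph{Step 3: vanishing brackets and the converse.} It remains to verify that this $D$ satisfies $[Dv_i,v_j]+[v_i,Dv_j]=0$ for every vanishing bracket, namely the pairs $(2,3),(2,4),(3,4)$. The only possibly dangerous terms come from components along $v_1$, and $D$ has none outside $Dv_1$, which is $0$ here; so these identities are immediate. For $\lambda=1/3$ and $c=-1/3$, Steps 1 and 2 then hold identically, which proves the converse. The main obstacle is bookkeeping rather than conceptual: getting the signs of the antisymmetric block $\widetilde{\operatorname{Ric}}^0_{34}=-\widetilde{\operatorname{Ric}}^0_{43}$ right inside $D(v_3)$ and $D(v_4)$. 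A sign slip there would make the system inconsistent or give the wrong $c$, so I would double-check Step 1 against the Lorentzian sign convention $\widetilde{\operatorname{Ric}}^0_{i4}=-\widetilde{\operatorname{Ric}}^0_{4i}$ established before the definition.
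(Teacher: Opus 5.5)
Your proposal is correct and follows essentially the same route as the paper: the derivation identity on $[v_1,v_2]$ gives $c=\frac{\lambda^2-1}{8\lambda}$, and the identity on $[v_1,v_3]$ (equivalently $[v_1,v_4]$) gives a second linear relation in $c$. Combining them forces $-3\lambda^2-2\lambda+1=0$, hence $\lambda=\frac13$, $c=-\frac13$, after which you check the vanishing brackets. Your justification of that last check, namely that $\operatorname{Span}(v_2,v_3,v_4)$ is an abelian subalgebra preserved by $D$, and your explicit treatment of the converse are slightly more complete than the paper's ``straightforward to check.''
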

\begin{proof}
Suppose that $\left(\mathfrak{g}_4, S_1^{\lambda}, J\right)$ is a second kind algebraic Ricci soliton with respect to $\nabla^0$, then there exists a real number $c$ and a derivation $D$ of $\mathfrak{g}_4$ such that $\widetilde{\operatorname{Ric}}^0 = cI_4 + D$. Thus, we obtain that that 
$$D = \widetilde{\operatorname{Ric}}^0 - cI_4 = \begin{bmatrix}
\frac{-\lambda - 1}{4} - c & 0 & 0 & 0\\
0 & \frac{\lambda + 1}{4} - c & 0 & 0\\
0 & 0 & \frac{1 - \lambda^2}{4\lambda} - c & \frac{1 - \lambda^2}{8\lambda}\\
0 & 0 & \frac{\lambda^2 - 1}{8\lambda} & - c
\end{bmatrix}.$$
As $D$ is a derivation of  $\mathfrak{g}_4$, it satisfies the condition
\begin{eqnarray*}
& [D(v_1), v_3] + [v_1, D(v_3)] = D[v_1, v_3] \\ \Leftrightarrow & \left[ \left(\frac{-\lambda - 1}{4} - c\right)v_1, v_3\right] + \left[ v_1, \left(\frac{1 - \lambda^2}{4\lambda} - c\right)v_3 + \frac{\lambda^2 - 1}{8\lambda}v_4\right] = \sqrt{\frac{\lambda}{2}}D(v_2)\\
\Leftrightarrow& \left( \frac{-3\lambda^2 - 2\lambda + 1}{8\lambda} - 2c\right)\sqrt{\frac{\lambda}{2}}v_2 = \left(\frac{\lambda^2 + \lambda}{4\lambda} - c\right)\sqrt{\frac{\lambda}{2}}v_2\\
\Leftrightarrow& c = \frac{-5\lambda^2 - 4\lambda + 1}{8\lambda}
\end{eqnarray*}
The condition $[D(v_1), v_4] + [v_1, D(v_4)] = D[v_1, v_4]$ is similar to the previous one. Also, $D$ satisfies the condition
\begin{eqnarray*}
& [D(v_1), v_2] + [v_1, D(v_2)] = D[v_1, v_2] \\ \Leftrightarrow & \left[ \left(\frac{-\lambda - 1}{4} - c\right)v_1, v_2\right] + \left[ v_1, \left(\frac{\lambda + 1}{4} - c\right)v_2\right] = \frac{1}{\sqrt{2\lambda}}\left(D(v_3) - D(v_4)\right)\\
\Leftrightarrow& -2c[v_1, v_2] = \frac{1}{\sqrt{2\lambda}}\left(D(v_3) - D(v_4)\right) \\
\Leftrightarrow& \frac{-2c}{\sqrt{2\lambda}}\left(v_3 - v_4\right) = \frac{1}{\sqrt{2\lambda}}\left(\left(\frac{1 - \lambda^2}{8\lambda} - c\right)v_3 + \left(\frac{\lambda^2 - 1}{8\lambda} + c\right)v_4\right)
\end{eqnarray*}
By identifying the components of $v_3$ and $v_4$ we obtain
$$-2c = \frac{1 - \lambda^2}{8\lambda} - c \Longrightarrow c = \frac{\lambda^2 - 1}{8\lambda}$$
Hence $-5\lambda^2 - 4\lambda + 1 = \lambda^2 - 1$, which implies that $-3\lambda^2 - 2\lambda + 1 = 0$. Thus 
$$\lambda = \frac{1}{3}, \qquad c = \frac{-1}{3}.$$
Under these conditions we have
$$\widetilde{\operatorname{Ric}}^0 = \begin{bmatrix}
\frac{-1}{3} & 0 & 0 & 0\\
0 & \frac{1}{3} & 0 & 0\\
0 & 0 & \frac{2}{3} & \frac{1}{3}\\
0 & 0 & \frac{-1}{3} & 0
\end{bmatrix}, \qquad D = \begin{bmatrix}
0 & 0 & 0 & 0\\
0 & \frac{2}{3} & 0 & 0\\
0 & 0 & 1 & \frac{1}{3}\\
0 & 0 & \frac{-1}{3} & \frac{1}{3}
\end{bmatrix}.$$
It is straightforward to check that $D$ satisfies the condition
$$[D(v_i), v_j] + [v_i, D(v_j)] = D[v_i, v_j]$$
for all vanishing brackets.
\end{proof}
By (\ref{kobayashinomizu}), we obtain that the Kobayashi-Nomizu connection $\nabla^1$ of $(\mathfrak{g}_4, S_1^{\lambda}, J)$ is given by
\begin{multicols}{3}
$\nabla^1_{v_i} v_i = 0 \,\,\forall i= 1,\ldots, 4$\par
$\nabla^1_{v_1} v_2 = \frac{1 - \lambda}{2\sqrt{2\lambda}}v_3$\par
$\nabla^1_{v_1} v_3 = \frac{\lambda - 1}{2\sqrt{2\lambda}}v_2$\par
$\nabla^1_{v_1} v_4 = 0$\par
$\nabla^1_{v_2} v_1 = -\frac{\lambda + 1}{2\sqrt{2\lambda}}v_3$\par
$\nabla^1_{v_2} v_3 = \frac{\lambda + 1}{2\sqrt{2\lambda}}v_1$\par
$\nabla^1_{v_2} v_4 = 0$\par
$\nabla^1_{v_3} v_1 = -\frac{1 + \lambda}{2\sqrt{2\lambda}}v_2$\par
$\nabla^1_{v_3} v_2 = \frac{\lambda + 1}{2\sqrt{2\lambda}}v_1$\par
$\nabla^1_{v_3} v_4 = 0$\par
$\nabla^1_{v_4} v_1 = \frac{-\lambda}{\sqrt{2\lambda}}v_2$\par
$\nabla^1_{v_4} v_2 = 0$\par
$\nabla^1_{v_4} v_3 = 0$
\end{multicols}

Applying (\ref{kobayashicurvature}), we find that the curvature $R^1$ of the Kobayashi-Nomizu connection $\nabla^1$ of $(\mathfrak{g}_4, S_1^{\lambda}, J)$ is described by
\begin{multicols}{3}
$R^1_{v_1v_2}v_1 = \frac{\lambda^2 + 2\lambda - 3}{8\lambda}v_2$\par
$R^1_{v_1v_2}v_2 = \frac{-\lambda^2 + 2\lambda + 3}{8\lambda}v_1$\par
$R^1_{v_1v_2}v_3 = 0$\par
$R^1_{v_1v_2}v_4 = 0$\par
$R^1_{v_1v_3}v_1 = \frac{-3\lambda^2 - 2\lambda + 1}{8\lambda}v_3$\par
$R^1_{v_1v_3}v_2 = 0$\par
$R^1_{v_1v_3}v_3 = \frac{3\lambda^2 + 2\lambda - 1}{8\lambda}v_1$\par
$R^1_{v_1v_3}v_4 = 0$\par
$R^1_{v_1v_4}v_1 = \frac{-\lambda}{2}v_3$\par
$R^1_{v_1v_4}v_2 = 0$\par
$R^1_{v_1v_4}v_3 = \frac{\lambda + 1}{4}v_1$\par
$R^1_{v_1v_4}v_4 = 0$\par
$R^1_{v_2v_3}v_1 = 0$\par
$R^1_{v_2v_3}v_2 = \frac{\left(\lambda + 1\right)^2}{8\lambda}v_3$\par
$R^1_{v_2v_3}v_3 = -\frac{\left(\lambda + 1\right)^2}{8\lambda}v_2$\par
$R^1_{v_2v_3}v_4 = 0$\par
$R^1_{v_2v_4}v_1 = 0$\par
$R^1_{v_2v_4}v_2 = 0$\par
$R^1_{v_2v_4}v_3 = \frac{-\lambda - 1}{4}v_2$\par
$R^1_{v_2v_4}v_4 = 0$\par
$R^1_{v_3v_4}v_1 = \frac{\lambda + 1}{4}v_1$\par
$R^1_{v_3v_4}v_2 = \frac{-\lambda - 1}{4}v_2$\par
$R^1_{v_3v_4}v_3 = R^1_{v_3v_4}v_4 = 0 $\par
\end{multicols}

In view of (\ref{canonicalriccioperators}), the Ricci operator associated to $\nabla^1$ is represented in the basis $\mathscr{B}$ by 
$$\operatorname{Ric}^1 = \begin{bmatrix}
\frac{-(\lambda + 1)^2}{4\lambda} & 0 & 0 & 0\\
\\
0 & \frac{\lambda^2 + 2\lambda - 1}{4\lambda} & 0 & 0\\
\\
0 & 0 & \frac{1 - \lambda^2}{4\lambda} & \frac{-\lambda}{2}\\
\\
0 & 0 & 0 & 0
\end{bmatrix}.$$
\begin{theorem}
$\left(\mathfrak{g}_4, S_1^{\lambda}, J\right)$ does not exhibit the properties of a first kind algebraic Ricci soliton associated to the connection $\nabla^1$.
\end{theorem}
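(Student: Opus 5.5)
We argue by contradiction, following the same scheme as the two preceding theorems. Suppose there exist $c\in\mathbb{R}$ and a derivation $D$ of $\mathfrak{g}_4$ with $\operatorname{Ric}^1 = cI_4 + D$. Then $D = \operatorname{Ric}^1 - cI_4$ is explicitly given in the basis $\mathscr{B}$: it is diagonal on $v_1$ and $v_2$ with entries $d_1 = \frac{-(\lambda+1)^2}{4\lambda}-c$ and $d_2 = \frac{\lambda^2+2\lambda-1}{4\lambda}-c$. Moreover,
$$D(v_3) = \left(\tfrac{1-\lambda^2}{4\lambda}-c\right)v_3, \qquad D(v_4) = -\tfrac{\lambda}{2}v_3 - c\,v_4,$$
since the $(3,4)$ entry is the $v_3$-component of $D(v_4)$. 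We then test the derivation identity on the three nonvanishing brackets $[v_1,v_2]$, $[v_1,v_3]$ and $[v_1,v_4]$, each of which yields scalar equations in $c$ and $\lambda$.

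\emph{First equation, from $(v_1,v_2)$.} Here
$$[D v_1,v_2]+[v_1,D v_2]=(d_1+d_2)\tfrac{1}{\sqrt{2\lambda}}(v_3-v_4),$$
while
$$D[v_1,v_2]=\tfrac{1}{\sqrt{2\lambda}}\left(D v_3 - D v_4\right)=\tfrac{1}{\sqrt{2\lambda}}\left(\left(\tfrac{1-\lambda^2}{4\lambda}-c+\tfrac{\lambda}{2}\right)v_3 + c\,v_4\right).$$
Comparing the $v_4$-components gives $-(d_1+d_2)=c$. Since $d_1+d_2=\frac{-4\lambda-2}{4\lambda}-2c$, this yields $c = -\frac{2\lambda+1}{2\lambda}$. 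Comparing the $v_3$-components gives a second relation, $d_1+d_2 = \frac{1-\lambda^2}{4\lambda}-c+\frac{\lambda}{2}$.

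\emph{Second equation, from $(v_1,v_3)$.} This gives $(d_1+d_3)\sqrt{\lambda/2}\,v_2 = \sqrt{\lambda/2}\,d_2 v_2$, i.e.\ $d_1+d_3=d_2$.

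\emph{Third equation, from $(v_1,v_4)$.} Using $D v_4 = -\frac{\lambda}{2}v_3 - c v_4$, we get $[v_1,D v_4] = -(\frac{\lambda}{2}+c)\sqrt{\lambda/2}\,v_2$. The identity therefore reads $d_1-\frac{\lambda}{2}-c = d_2$.

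The step of combining these equations is where inconsistency should appear. The third equation gives $d_1-d_2 = \frac{\lambda}{2}+c$. The explicit values give $d_1-d_2=\frac{-2\lambda^2}{4\lambda}=-\frac{\lambda}{2}$, hence $c=-\lambda$. Together with $c=-\frac{2\lambda+1}{2\lambda}$ from the first equation, this gives $2\lambda^2 = 2\lambda+1$, so $\lambda=\frac{1+\sqrt3}{2}$. We then check the remaining equations at this value. The second equation requires $d_1+d_3-d_2=0$; substituting the explicit entries gives $-\frac{2\lambda^2-1}{4\lambda}-\frac{\lambda}{2}\ldots$ in terms of $\lambda$, which we will show does not vanish at $\lambda=\frac{1+\sqrt3}{2}$. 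If instead it turns out to vanish there, the $v_3$-component relation from $(v_1,v_2)$ is used as the decisive contradiction.

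The main obstacle is purely bookkeeping. One must read the off-diagonal $(3,4)$ entry correctly as the $v_3$-component of $D(v_4)$, and keep careful track of the signs in $D v_3 - D v_4$. With three or four polynomial constraints on only two unknowns, an overdetermined and incompatible system is expected, and this incompatibility contradicts the existence of $D$.
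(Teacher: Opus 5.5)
Your strategy is the paper's, and your four symbolic relations are all correct: $-(d_1+d_2)=c$ and $d_1+d_2=d_3+\frac{\lambda}{2}$ from $(v_1,v_2)$, $d_1+d_3=d_2$ from $(v_1,v_3)$, and $d_1-\frac{\lambda}{2}-c=d_2$ from $(v_1,v_4)$. The last two are exactly the relations the paper uses. The gap is in the evaluations, and in the fact that the argument never actually concludes.

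First, $d_1+d_2=\frac{-(\lambda+1)^2+\lambda^2+2\lambda-1}{4\lambda}-2c=-\frac{1}{2\lambda}-2c$, because the $\lambda^2$ and $2\lambda$ terms cancel. So the $v_4$-component gives $c=-\frac{1}{2\lambda}$, not $-\frac{2\lambda+1}{2\lambda}$.

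Second, $d_1-d_2=\frac{-2\lambda^2-4\lambda}{4\lambda}=-\frac{\lambda}{2}-1$, not $-\frac{\lambda}{2}$. So your third equation gives $c=-(\lambda+1)$, not $c=-\lambda$.

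Consequently $\lambda=\frac{1+\sqrt3}{2}$ is an artifact of these slips. With the correct values, your first and third equations combine to $2\lambda^2+2\lambda-1=0$, i.e. $\lambda=\frac{\sqrt3-1}{2}$. At that value the two equations are consistent, with $c=-\frac{\sqrt3+1}{2}$, so the inconsistency does not arise where you look for it. On top of this, you never perform the decisive check. You only announce it, with a truncated expression, and add a fallback in case it fails. As written, no contradiction is established.

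The repair is immediate and needs no solving for $\lambda$. Your second equation gives $c=\frac{-3\lambda^2-4\lambda+1}{4\lambda}$. The corrected third equation gives $c=-(\lambda+1)=\frac{-4\lambda^2-4\lambda}{4\lambda}$. Equating them yields $\lambda^2+1=0$, which is precisely the paper's proof.

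Alternatively, the $(v_1,v_2)$ identity alone suffices once computed correctly. Its $v_4$-component gives $c=-\frac{1}{2\lambda}$. Its $v_3$-component, $d_1+d_2=d_3+\frac{\lambda}{2}$, gives $c=-\frac{\lambda^2+3}{4\lambda}$. Together these force $\lambda^2=-1$.
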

\begin{proof}
Assume that $\left(\mathfrak{g}_4, S_1^{\lambda}, J\right)$ is a first kind algebraic Ricci soliton associated to $\nabla^1$, then $\operatorname{Ric}^1 = cI_4 + D$, where $c \in \mathbb{R}$ and $D$ is a derivation of $\mathfrak{g}_4$. Hence
$$D = \operatorname{Ric}^1 - cI_4 = \begin{bmatrix}
\frac{-(\lambda + 1)^2}{4\lambda} - c & 0 & 0 & 0\\
\\
0 & \frac{\lambda^2 + 2\lambda - 1}{4\lambda} - c & 0 & 0\\
\\
0 & 0 & \frac{1 - \lambda^2}{4\lambda} - c & \frac{-\lambda}{2}\\
\\
0 & 0 & 0 & -c
\end{bmatrix}.$$
Given that $D$ is a derivation of  $\mathfrak{g}_4$, the condition below is satisfied
\begin{eqnarray*}
& [D(v_1), v_3] + [v_1, D(v_3)] = D[v_1, v_3] \\ \Leftrightarrow & \left[ \left(\frac{-(\lambda + 1)^2}{4\lambda} - c\right)v_1, v_3\right] + \left[ v_1, \left(\frac{1 - \lambda^2}{4\lambda} - c\right)v_3\right] = \sqrt{\frac{\lambda}{2}}D(v_2)\\
\Leftrightarrow& \left( \frac{-2\lambda^2 - 2\lambda}{4\lambda} - 2c\right)\sqrt{\frac{\lambda}{2}}v_2 = \sqrt{\frac{\lambda}{2}}\left(\frac{\lambda^2 + 2\lambda - 1}{4\lambda} - c\right)v_2\\
\Leftrightarrow& \frac{-2\lambda^2 - 2\lambda}{4\lambda} - 2c = \frac{\lambda^2 + 2\lambda - 1}{4\lambda} - c \Leftrightarrow c = \frac{-3\lambda^2 - 4\lambda + 1}{4\lambda}
\end{eqnarray*}
Similarly, the condition below is satisfied
\begin{eqnarray*}
& [D(v_1), v_4] + [v_1, D(v_4)] = D[v_1, v_4] \\ \Leftrightarrow & \left[ \left(\frac{-(\lambda + 1)^2}{4\lambda} - c\right)v_1, v_4\right] + \left[ v_1, \frac{-\lambda}{2}v_3 - cv_4\right] = \sqrt{\frac{\lambda}{2}}D(v_2)\\
\Leftrightarrow& \left( \frac{-3\lambda^2 - 2\lambda - 1}{4\lambda} - 2c\right)\sqrt{\frac{\lambda}{2}}v_2 = \sqrt{\frac{\lambda}{2}}\left(\frac{\lambda^2 + 2\lambda - 1}{4\lambda} - c\right)v_2\\
\Leftrightarrow& \frac{-3\lambda^2 - 2\lambda - 1}{4\lambda} - 2c = \frac{\lambda^2 + 2\lambda - 1}{4\lambda} - c \Leftrightarrow c = \frac{-4\lambda^2 - 4\lambda}{4\lambda}
\end{eqnarray*}
Hence 
$$c = \frac{-3\lambda^2 - 4\lambda + 1}{4\lambda} = \frac{-4\lambda^2 - 4\lambda}{4\lambda} \Leftrightarrow -3\lambda^2 - 4\lambda + 1 = -4\lambda^2 - 4\lambda \Leftrightarrow \lambda^2 + 1 = 0,$$
this is a contradiction.
\end{proof}

From (\ref{tilderho}) and (\ref{tildeRic}), we have that $\widetilde{\operatorname{Ric}}^1$ is represented in the basis $\mathscr{B}$ by 
$$\widetilde{\operatorname{Ric}}^1 = \begin{bmatrix}
\frac{-(\lambda + 1)^2}{4\lambda} & 0 & 0 & 0\\
\\
0 & \frac{\lambda^2 + 2\lambda - 1}{4\lambda} & 0 & 0\\
\\
0 & 0 & \frac{1 - \lambda^2}{4\lambda} & \frac{-\lambda}{4}\\
\\
0 & 0 & \frac{\lambda}{4} & 0
\end{bmatrix}.$$
\begin{theorem}
$\left(\mathfrak{g}_4, S_1^{\lambda}, J\right)$ does not exhibit the properties of a second kind algebraic Ricci soliton associated to the connection $\nabla^1$.
\end{theorem}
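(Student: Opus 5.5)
We argue by contradiction, as in the proof of the previous theorem for $\operatorname{Ric}^1$. Suppose $\widetilde{\operatorname{Ric}}^1 = cI_4 + D$ for some $c \in \mathbb{R}$ and some derivation $D$ of $\mathfrak{g}_4$. Then $D = \widetilde{\operatorname{Ric}}^1 - cI_4$ is fully determined by $c$ and $\lambda$. Reading off its columns in the basis $\mathscr{B}$, we get
$$D(v_1) = \left(\tfrac{-(\lambda+1)^2}{4\lambda} - c\right)v_1, \qquad D(v_2) = \left(\tfrac{\lambda^2+2\lambda-1}{4\lambda} - c\right)v_2,$$
$$D(v_3) = \left(\tfrac{1-\lambda^2}{4\lambda} - c\right)v_3 + \tfrac{\lambda}{4}v_4, \qquad D(v_4) = -\tfrac{\lambda}{4}v_3 - c\,v_4.$$
The only difference from the first kind case is that the off-diagonal $(3,4)$ block is now antisymmetric. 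This changes the contributions coming from $D(v_3)$ and $D(v_4)$.

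First I test the derivation identity on the pair $(v_1,v_3)$. Since $[v_1,v_3] = [v_1,v_4] = \sqrt{\lambda/2}\,v_2$ and $[v_1,v_1]=0$, the term $\tfrac{\lambda}{4}v_4$ in $D(v_3)$ contributes $\tfrac{\lambda}{4}\sqrt{\lambda/2}\,v_2$. Comparing the $v_2$-coefficients on both sides gives
$$\frac{-\lambda^2-2\lambda}{4\lambda} - 2c = \frac{\lambda^2+2\lambda-1}{4\lambda} - c, \qquad\text{i.e.}\qquad c = \frac{-2\lambda^2-4\lambda+1}{4\lambda}.$$

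Next I test the pair $(v_1,v_4)$. Here the term $-\tfrac{\lambda}{4}v_3$ in $D(v_4)$ contributes $-\tfrac{\lambda}{4}\sqrt{\lambda/2}\,v_2$. Comparing coefficients gives
$$\frac{-(\lambda+1)^2-\lambda^2}{4\lambda} - 2c = \frac{\lambda^2+2\lambda-1}{4\lambda} - c, \qquad\text{i.e.}\qquad c = \frac{-3\lambda^2-4\lambda}{4\lambda}.$$

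Equating the two values of $c$ yields $-2\lambda^2-4\lambda+1 = -3\lambda^2-4\lambda$, that is, $\lambda^2+1=0$. This has no real solution, which is the desired contradiction, so the condition coming from $[v_1,v_2]$ is not even needed.

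The only delicate point is sign bookkeeping. One must use the antisymmetric entries $\pm\lambda/4$ of $\widetilde{\operatorname{Ric}}^1$ (rather than the entry $-\lambda/2$ of $\operatorname{Ric}^1$) in the correct columns $D(v_3)$ and $D(v_4)$. I will double-check this against the explicit matrix before concluding.
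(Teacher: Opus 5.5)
Your proposal is correct and is essentially the paper's own proof: the paper also uses only the derivation conditions on the pairs $(v_1,v_3)$ and $(v_1,v_4)$, obtains $c=\frac{-2\lambda^2-4\lambda+1}{4\lambda}$ and $c=\frac{-3\lambda^2-4\lambda}{4\lambda}$, and concludes from $\lambda^2+1=0$. Your readings $D(v_3)=\left(\frac{1-\lambda^2}{4\lambda}-c\right)v_3+\frac{\lambda}{4}v_4$ and $D(v_4)=-\frac{\lambda}{4}v_3-c\,v_4$ match the columns of $\widetilde{\operatorname{Ric}}^1-cI_4$, so the sign check you planned goes through.
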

\begin{proof}
Suppose that $\left(\mathfrak{g}_4, S_1^{\lambda}, J\right)$ is a second kind algebraic Ricci soliton associated to $\nabla^1$, then $\widetilde{\operatorname{Ric}}^1 = cI_4 + D$, where $c \in \mathbb{R}$ and $D$ is a derivation of $\mathfrak{g}_4$. From this, it follows that 
$$D = \widetilde{\operatorname{Ric}}^1 - cI_4 = \begin{bmatrix}
\frac{-(\lambda + 1)^2}{4\lambda} - c & 0 & 0 & 0\\
\\
0 & \frac{\lambda^2 + 2\lambda - 1}{4\lambda} - c & 0 & 0\\
\\
0 & 0 & \frac{1 - \lambda^2}{4\lambda} - c & \frac{-\lambda}{4}\\
\\
0 & 0 & \frac{\lambda}{4} & -c
\end{bmatrix}.$$
Since $D$ is a derivation of  $\mathfrak{g}_4$, it satisfies the condition
\begin{eqnarray*}
& [D(v_1), v_3] + [v_1, D(v_3)] = D[v_1, v_3] \\ \Leftrightarrow & \left[ \left(\frac{-(\lambda + 1)^2}{4\lambda} - c\right)v_1, v_3\right] + \left[ v_1, \left(\frac{1 - \lambda^2}{4\lambda} - c\right)v_3 + \frac{\lambda}{4}v_4\right] = \sqrt{\frac{\lambda}{2}}D(v_2)\\
\Leftrightarrow& \left( \frac{-\lambda^2 - 2\lambda}{4\lambda} - 2c\right)\sqrt{\frac{\lambda}{2}}v_2 = \sqrt{\frac{\lambda}{2}}\left(\frac{\lambda^2 + 2\lambda - 1}{4\lambda} - c\right)v_2\\
\Leftrightarrow& \frac{-\lambda^2 - 2\lambda}{4\lambda} - 2c = \frac{\lambda^2 + 2\lambda - 1}{4\lambda} - c \Leftrightarrow c = \frac{-2\lambda^2 - 4\lambda + 1}{4\lambda}
\end{eqnarray*}
Similarly, $D$ satisfies the condition
\begin{eqnarray*}
& [D(v_1), v_4] + [v_1, D(v_4)] = D[v_1, v_4] \\ \Leftrightarrow & \left[ \left(\frac{-(\lambda + 1)^2}{4\lambda} - c\right)v_1, v_4\right] + \left[ v_1, \frac{-\lambda}{4}v_3 - cv_4\right] = \sqrt{\frac{\lambda}{2}}D(v_2)\\
\Leftrightarrow& \left( \frac{-2\lambda^2 - 2\lambda - 1}{4\lambda} - 2c\right)\sqrt{\frac{\lambda}{2}}v_2 = \sqrt{\frac{\lambda}{2}}\left(\frac{\lambda^2 + 2\lambda - 1}{4\lambda} - c\right)v_2\\
\Leftrightarrow& \frac{-2\lambda^2 - 2\lambda - 1}{4\lambda} - 2c = \frac{\lambda^2 + 2\lambda - 1}{4\lambda} - c \Leftrightarrow c = \frac{-3\lambda^2 - 4\lambda}{4\lambda}
\end{eqnarray*}
Hence 
$$c = \frac{-2\lambda^2 - 4\lambda + 1}{4\lambda} = \frac{-3\lambda^2 - 4\lambda}{4\lambda} \Leftrightarrow -2\lambda^2 - 4\lambda + 1 = -3\lambda^2 - 4\lambda \Leftrightarrow \lambda^2 + 1 = 0,$$
this is a contradiction.
\end{proof}
\subsection{The Lorentzian Lie algebra $\left(\mathfrak{g}_4, S_2^{\lambda}\right)$}
Consider the following elements
$$v_1 = e_2, \qquad v_2 = \frac{1}{\sqrt{\lambda}}e_3, \qquad v_3 = \frac{1}{\sqrt{2}}(e_1 + e_4), \qquad v_4 = \frac{1}{\sqrt{2}}(e_1 - e_4).$$
Then $\mathscr{B} = \left\lbrace v_1, v_2, v_3, v_4\right\rbrace$ is an orthonormal basis of $\left(\mathfrak{g}_4, S_2^{\lambda}\right)$. The nonzero commutators in the basis $\mathscr{B}$ are
\begin{align*}
[v_1, v_3] &= -\sqrt{\frac{\lambda}{2}}v_2,  &\quad [v_1, v_4] &= -\sqrt{\frac{\lambda}{2}}v_2, \\
[v_2, v_3] &= \frac{-1}{2\sqrt{\lambda}}(v_3 - v_4), &\quad [v_2, v_4] &= \frac{-1}{2\sqrt{\lambda}}(v_3 - v_4).
\end{align*}
By \cite{aitbenhaddou2026lorentzian}, the Levi-Civita connection of $\left(\mathfrak{g}_4, S_2^{\lambda}\right)$ is given by
\begin{multicols}{2}
$\nabla_{v_1} v_1 = \nabla_{v_2} v_2 = 0 $\par
$\nabla_{v_3} v_3 = \nabla_{v_4} v_4 = \frac{-1}{2\sqrt{\lambda}}v_2 $\par
$\nabla_{v_1} v_2 = \nabla_{v_2} v_1 = \frac{1}{2}\sqrt{\frac{\lambda}{2}}v_3 - \frac{1}{2}\sqrt{\frac{\lambda}{2}}v_4$\par
$\nabla_{v_1} v_3 = \nabla_{v_1} v_4 = \frac{-1}{2}\sqrt{\frac{\lambda}{2}}v_2$\par
$\nabla_{v_3} v_1 = \nabla_{v_4} v_1 = \frac{1}{2}\sqrt{\frac{\lambda}{2}}v_2$\par
$\nabla_{v_2} v_3 = \nabla_{v_2} v_4 = \frac{-1}{2}\sqrt{\frac{\lambda}{2}}v_1$\par
$\nabla_{v_3} v_2 = \frac{-1}{2}\sqrt{\frac{\lambda}{2}}v_1 + \frac{1}{2\sqrt{\lambda}}(v_ 3 - v_4)$\par
$\nabla_{v_4} v_2 = \frac{-1}{2}\sqrt{\frac{\lambda}{2}}v_1 + \frac{1}{2\sqrt{\lambda}}(v_ 3 - v_4)$\par
$\nabla_{v_3} v_4 = \nabla_{v_4} v_3 = \frac{-1}{2\sqrt{\lambda}}v_2$
\end{multicols}
Applying (\ref{leibnizrule}), we find the following formulas
\begin{multicols}{3}
$\nabla_{v_1} (J)v_1 = 0$\par
$\nabla_{v_1} (J)v_2 = -\sqrt{\frac{\lambda}{2}}v_4$\par
$\nabla_{v_1} (J)v_3 = 0$\par
$\nabla_{v_1} (J)v_4 = \sqrt{\frac{\lambda}{2}}v_2$\par
$\nabla_{v_2} (J)v_1 = -\sqrt{\frac{\lambda}{2}}v_4$\par
$\nabla_{v_2} (J)v_2 = 0$\par
$\nabla_{v_2} (J)v_3 = 0$\par
$\nabla_{v_2} (J)v_4 = \sqrt{\frac{\lambda}{2}}v_1$\par
$\nabla_{v_3} (J)v_1 = 0$\par
$\nabla_{v_3} (J)v_2 = \frac{-1}{\sqrt{\lambda}}v_4$\par
$\nabla_{v_3} (J)v_3 = 0$\par
$\nabla_{v_3} (J)v_4 = \frac{1}{\sqrt{\lambda}}v_2$\par
$\nabla_{v_4} (J)v_1 = 0$\par
$\nabla_{v_4} (J)v_2 = \frac{-1}{\sqrt{\lambda}}v_4$\par
$\nabla_{v_4} (J)v_3 = 0$\par
$\nabla_{v_4} (J)v_4 = \frac{1}{\sqrt{\lambda}}v_2$\par
\end{multicols}

Using (\ref{canonical}), we obtain that the canonical connection $\nabla^0$ of $\left(\mathfrak{g}_4, S_2^{\lambda}\right)$ is given by
\begin{multicols}{3}
$\nabla^0_{v_1} v_1 = 0$\par
$\nabla^0_{v_1} v_2 = \frac{1}{2}\sqrt{\frac{\lambda}{2}}v_3$\par
$\nabla^0_{v_1} v_3 = \frac{-1}{2}\sqrt{\frac{\lambda}{2}}v_2$\par
$\nabla^0_{v_1} v_4 = 0$\par
$\nabla^0_{v_2} v_1 = \frac{1}{2}\sqrt{\frac{\lambda}{2}}v_3$\par
$\nabla^0_{v_2} v_2 = 0$\par
$\nabla^0_{v_2} v_3 = \frac{-1}{2}\sqrt{\frac{\lambda}{2}}v_1$\par
$\nabla^0_{v_2} v_4 = 0$\par
$\nabla^0_{v_3} v_1 = \frac{1}{2}\sqrt{\frac{\lambda}{2}}v_2$\par
$\nabla^0_{v_3} v_2 = \frac{-1}{2}\sqrt{\frac{\lambda}{2}}v_1 + \frac{1}{2\sqrt{\lambda}}v_3$\par
$\nabla^0_{v_3} v_3 = \frac{-1}{2\sqrt{\lambda}}v_2$\par
$\nabla^0_{v_3} v_4 = 0$\par
$\nabla^0_{v_4} v_1 = \frac{1}{2}\sqrt{\frac{\lambda}{2}}v_2$\par
$\nabla^0_{v_4} v_2 = \frac{-1}{2}\sqrt{\frac{\lambda}{2}}v_1 + \frac{1}{2\sqrt{\lambda}}v_3$\par
$\nabla^0_{v_4} v_3 = \frac{-1}{2\sqrt{\lambda}}v_2$\par
$\nabla^0_{v_4} v_4 = 0$
\end{multicols}

According to (\ref{canonicalcurvature}), the curvature $R^0$ of $\nabla^0$ is described by
\begin{multicols}{3}
$R^0_{v_1v_2}v_1 = \frac{\lambda}{8}v_2$\par
$R^0_{v_1v_2}v_2 = \frac{-\lambda}{8}v_1$\par
$R^0_{v_1v_2}v_3 = 0$\par
$R^0_{v_1v_2}v_4 = 0$\par
$R^0_{v_1v_3}v_1 = \frac{-3\lambda}{8}v_3$\par
$R^0_{v_1v_3}v_2 = 0$\par
$R^0_{v_1v_3}v_3 = \frac{3\lambda}{8}v_1$\par
$R^0_{v_1v_3}v_4 = 0$\par
$R^0_{v_1v_4}v_1 = \frac{-3\lambda}{8}v_3$\par
$R^0_{v_1v_4}v_2 = 0$\par
$R^0_{v_1v_4}v_3 = \frac{3\lambda}{8}v_1$\par
$R^0_{v_1v_4}v_4 = 0$\par
$R^0_{v_2v_3}v_1 = \frac{-1}{4\sqrt{2}}v_2$\par
$R^0_{v_2v_3}v_2 = \frac{\lambda}{8}v_3 + \frac{1}{4\sqrt{2}}v_1$\par
$R^0_{v_2v_3}v_3 = \frac{-\lambda}{8}v_2$\par
$R^0_{v_2v_3}v_4 = 0$\par
$R^0_{v_2v_4}v_1 = \frac{-1}{4\sqrt{2}}v_2$\par
$R^0_{v_2v_4}v_2 = \frac{\lambda}{8}v_3 + \frac{1}{4\sqrt{2}}v_1$\par
$R^0_{v_2v_4}v_3 = \frac{-\lambda}{8}v_2$\par
$R^0_{v_2v_4}v_4 = 0$\par
$R^0_{v_3v_4}v_i = 0, i = 1, \ldots, 4$\par
\end{multicols}
By (\ref{canonicalriccioperators}), we find that the Ricci operator $\operatorname{Ric}^0$ of $\nabla^0$ is represented in the basis $\mathscr{B}$ by the following matrix
$$\operatorname{Ric}^0 = \begin{bmatrix}
\frac{-\lambda}{4} & 0 & \frac{1}{4\sqrt{2}} & \frac{1}{4\sqrt{2}}\\
0 & \frac{\lambda}{4} & 0 & 0\\
0 & 0 & \frac{-\lambda}{4} & \frac{-\lambda}{4}\\
0 & 0 & 0 & 0
\end{bmatrix}.$$
\begin{theorem}
$\left(\mathfrak{g}_4, S_2^{\lambda}, J\right)$ does not exhibit the properties of a first kind algebraic Ricci soliton associated to the connection $\nabla^0$.
\end{theorem}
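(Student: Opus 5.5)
The plan is the same direct argument as in the previous theorems of this section. Assume $\operatorname{Ric}^0 = cI_4 + D$ for some $c \in \mathbb{R}$ and some derivation $D$ of $\mathfrak{g}_4$, so that $D = \operatorname{Ric}^0 - cI_4$. Reading the columns of the matrix of $\operatorname{Ric}^0$ (the entry in row $i$, column $j$ is the $v_i$-component of the image of $v_j$), I will write down
\begin{align*}
D(v_1) &= \left(-\tfrac{\lambda}{4} - c\right)v_1, \qquad D(v_2) = \left(\tfrac{\lambda}{4} - c\right)v_2,\\
D(v_3) &= \tfrac{1}{4\sqrt{2}}v_1 + \left(-\tfrac{\lambda}{4} - c\right)v_3, \qquad D(v_4) = \tfrac{1}{4\sqrt{2}}v_1 - \tfrac{\lambda}{4}v_3 - cv_4 .
\end{align*}
I will then impose the derivation identity on the four nonzero brackets $[v_1,v_3]$, $[v_1,v_4]$, $[v_2,v_3]$, $[v_2,v_4]$ and show that the resulting conditions are incompatible with $\lambda > 0$.

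First I use $[v_1,v_3] = -\sqrt{\lambda/2}\,v_2$. Since $[v_1,v_1]=0$, the identity reduces to
$$2\left(-\tfrac{\lambda}{4}-c\right) = \tfrac{\lambda}{4} - c, \qquad\text{so}\qquad c = -\tfrac{3\lambda}{4}.$$
As a consistency check, $[v_1,v_4]$ is handled the same way, using $[v_1,v_3]=[v_1,v_4]$ to absorb the $-\tfrac{\lambda}{4}v_3$ term of $D(v_4)$. It gives $\tfrac{\lambda}{2} + 2c = -\tfrac{\lambda}{4} + c$, which is again $c = -\tfrac{3\lambda}{4}$.

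The decisive condition comes from $[v_2,v_3] = -\tfrac{1}{2\sqrt{\lambda}}(v_3 - v_4)$. On the left, the $v_1$-part of $D(v_3)$ contributes nothing because $[v_2,v_1]=0$. Hence the left side is
$$\left(\tfrac{\lambda}{4}-c\right)[v_2,v_3] + \left(-\tfrac{\lambda}{4}-c\right)[v_2,v_3] = -2c\,[v_2,v_3] = \tfrac{c}{\sqrt{\lambda}}(v_3 - v_4).$$
On the right, $-\tfrac{1}{2\sqrt{\lambda}}\bigl(D(v_3) - D(v_4)\bigr) = \tfrac{c}{2\sqrt{\lambda}}(v_3 - v_4)$, because the $v_1$-components of $D(v_3)$ and $D(v_4)$ cancel. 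Comparing coefficients forces $c = 0$. Combined with $c = -\tfrac{3\lambda}{4}$, this gives $\lambda = 0$, contradicting $\lambda > 0$. This proves the theorem.

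The main obstacle is purely bookkeeping. The off-diagonal entries $\tfrac{1}{4\sqrt{2}}$ in the first row and $-\tfrac{\lambda}{4}$ in position $(3,4)$ must be placed in the correct images; a transposition error there would change the conditions. I will therefore double-check the column convention against the first-kind computation for $S_1^{\lambda}$ earlier in this section. If needed, I will also confirm that the vanishing brackets $[v_1,v_2]$ and $[v_3,v_4]$ impose no further conditions; for $[v_3,v_4]$ this is because $[v_1,v_3]=[v_1,v_4]$. These checks are not needed for the contradiction itself.
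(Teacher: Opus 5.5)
Your proposal is correct and follows the paper's proof essentially step for step. You use the same two brackets in the same way: $[v_1,v_3]$ forces $c=-\frac{3\lambda}{4}$, and $[v_2,v_3]$ forces $c=0$ via the cancellation of the $v_1$-components of $D(v_3)$ and $D(v_4)$, which contradicts $\lambda>0$.
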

\begin{proof}
Assume that $\left(\mathfrak{g}_4, S_2^{\lambda}, J\right)$ is a first kind algebraic Ricci soliton, then there exists a real number $c$ and a derivation $D$ of $\mathfrak{g}_4$ such that $\operatorname{Ric}^0 = cI_4 + D$. From this, it follows that 
$$D = \operatorname{Ric}^0 - cI_4 = \begin{bmatrix}
\frac{-\lambda}{4} - c & 0 & \frac{1}{4\sqrt{2}} & \frac{1}{4\sqrt{2}}\\
0 & \frac{\lambda}{4} - c & 0 & 0\\
0 & 0 & \frac{-\lambda}{4} - c & \frac{-\lambda}{4}\\
0 & 0 & 0 & - c
\end{bmatrix}.$$
From the fact that $D$ is a derivation of  $\mathfrak{g}_4$, it follows that the following condition holds
\begin{eqnarray*}
& [D(v_1), v_3] + [v_1, D(v_3)] = D[v_1, v_3] \\ \Leftrightarrow & \left[ \left(\frac{-\lambda}{4} - c\right)v_1, v_3\right] + \left[ v_1, \frac{1}{4\sqrt{2}}v_1 + \left(\frac{-\lambda}{4} - c\right)v_3\right] = -\sqrt{\frac{\lambda}{2}}D(v_2)\\
\Leftrightarrow& \left( \frac{-\lambda}{2} - 2c\right)[v_1, v_3] = -\sqrt{\frac{\lambda}{2}}D(v_2) \\
\Leftrightarrow& \left( \frac{-\lambda}{2} - 2c\right)\left(-\sqrt{\frac{\lambda}{2}}\right)v_2 = -\sqrt{\frac{\lambda}{2}}\left(\frac{\lambda}{4} - c\right)v_2 \\
\Leftrightarrow& \frac{-\lambda}{2} - 2c = \frac{\lambda}{4} - c \Leftrightarrow c = \frac{-3\lambda}{4}
\end{eqnarray*}
Also, $D$ satisfies the following condition
\begin{eqnarray*}
& [D(v_2), v_3] + [v_2, D(v_3)] = D[v_2, v_3] \\ \Leftrightarrow & \left[ \left(\frac{\lambda}{4} - c\right)v_2, v_3\right] + \left[ v_2, \frac{1}{4\sqrt{2}}v_1 + \left(\frac{-\lambda}{4} - c\right)v_3\right] = \frac{-1}{2\sqrt{\lambda}}\left(D(v_3) - D(v_4)\right)\\
\Leftrightarrow& -2c[v_2, v_3] = \frac{-1}{2\sqrt{\lambda}}\left(D(v_3) - D(v_4)\right) \\
\Leftrightarrow& \frac{2c}{2\sqrt{\lambda}}\left(v_3 - v_4\right) = \frac{c}{2\sqrt{\lambda}}\left(v_3 - v_4\right)
\end{eqnarray*}
Hence $2c = c$ and then $c = 0$. Thus, $c = \frac{-3\lambda}{4} = 0$, which is a contradiction.
\end{proof}
In view of (\ref{tilderho}) and (\ref{tildeRic}), we find that $\widetilde{\operatorname{Ric}}^0$ is represented in the basis $\mathscr{B}$ by the following matrix
$$\widetilde{\operatorname{Ric}}^0 = \begin{bmatrix}
\frac{-\lambda}{4} & 0 & \frac{1}{8\sqrt{2}} & \frac{1}{8\sqrt{2}}\\
0 & \frac{\lambda}{4} & 0 & 0\\
\frac{1}{8\sqrt{2}} & 0 & \frac{-\lambda}{4} & \frac{-\lambda}{8}\\
\frac{-1}{8\sqrt{2}} & 0 & \frac{\lambda}{8} & 0
\end{bmatrix}.$$
\begin{theorem}
$\left(\mathfrak{g}_4, S_2^{\lambda}, J\right)$ does not exhibit the properties of a second kind algebraic Ricci soliton associated to the connection $\nabla^0$.
\end{theorem}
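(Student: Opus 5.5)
The plan is to argue by contradiction, exactly as in the proof of the preceding theorem for $\operatorname{Ric}^0$. Suppose $\widetilde{\operatorname{Ric}}^0 = cI_4 + D$ with $c\in\mathbb{R}$ and $D$ a derivation of $\mathfrak{g}_4$. Then $D = \widetilde{\operatorname{Ric}}^0 - cI_4$. Reading off the columns of the matrix of $\widetilde{\operatorname{Ric}}^0$ in $\mathscr{B}$, with $s=\sqrt{\lambda/2}$ and $t = \frac{1}{2\sqrt{\lambda}}$, gives
\begin{align*}
D(v_1) &= \left(\tfrac{-\lambda}{4}-c\right)v_1 + \tfrac{1}{8\sqrt{2}}v_3 - \tfrac{1}{8\sqrt{2}}v_4, & D(v_2) &= \left(\tfrac{\lambda}{4}-c\right)v_2,\\
D(v_3) &= \tfrac{1}{8\sqrt{2}}v_1 + \left(\tfrac{-\lambda}{4}-c\right)v_3 + \tfrac{\lambda}{8}v_4, & D(v_4) &= \tfrac{1}{8\sqrt{2}}v_1 - \tfrac{\lambda}{8}v_3 - c\,v_4 .
\end{align*}
I then test the derivation identity on the nonvanishing brackets $[v_1,v_3]=[v_1,v_4]=-s\,v_2$ and $[v_2,v_3]=[v_2,v_4]=-t(v_3-v_4)$. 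The off-diagonal terms simplify because $[v_3,v_4]=0$ and $[v_1,v_2]=0$.

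First I use the pair $(v_1,v_3)$. The $v_3,v_4$ components of $D(v_1)$ contribute nothing, since $[v_3,v_3]=0$ and $[v_4,v_3]=0$. The terms of $[v_1,D(v_3)]$ add $\left(\tfrac{-\lambda}{4}-c\right)(-s)v_2 + \tfrac{\lambda}{8}(-s)v_2$. Comparing with $-s\,D(v_2)$ yields
$$-\tfrac{3\lambda}{8}-2c = \tfrac{\lambda}{4}-c, \qquad\text{so}\qquad c = -\tfrac{5\lambda}{8}.$$
The pair $(v_1,v_4)$ should give the same relation, and I will note this as in the earlier proofs.

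Second I use the pair $(v_2,v_3)$. Here $[D(v_2),v_3]+[v_2,D(v_3)] = \left(-2c+\tfrac{\lambda}{8}\right)[v_2,v_3]$, because the $v_1$ term dies ($[v_2,v_1]=0$) and the $v_4$ term reproduces $[v_2,v_3]$. On the other side, $D[v_2,v_3] = -t\,(D(v_3)-D(v_4)) = -t\left(-\tfrac{\lambda}{8}-c\right)(v_3-v_4)$. Identifying coefficients gives $-2c+\tfrac{\lambda}{8} = -\tfrac{\lambda}{8}-c$, that is, $c=\tfrac{\lambda}{4}$.

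Combining the two values of $c$ gives $\tfrac{\lambda}{4} = -\tfrac{5\lambda}{8}$, hence $\lambda = 0$. This contradicts $\lambda>0$, so no such $c$ and $D$ exist.

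The only delicate point is bookkeeping of the antisymmetric off-diagonal entries of $\widetilde{\operatorname{Ric}}^0$ in the $(v_3,v_4)$ block. In particular the entry $\tfrac{\lambda}{8}$ appears as the $v_4$-component of $D(v_3)$ and $-\tfrac{\lambda}{8}$ as the $v_3$-component of $D(v_4)$. I would double-check these signs, since they alone produce the two incompatible values of $c$.
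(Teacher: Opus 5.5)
Your proposal is correct and is essentially the paper's own proof: you test the derivation identity on the pairs $(v_1,v_3)$ and $(v_2,v_3)$, obtain $c=-\frac{5\lambda}{8}$ and $c=\frac{\lambda}{4}$ respectively, and note that these are incompatible since $\lambda>0$. Your reading of the columns of $\widetilde{\operatorname{Ric}}^0 - cI_4$ agrees with the paper's matrix, including the signs $\pm\frac{\lambda}{8}$ in the $(v_3,v_4)$ block that you flagged.
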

\begin{proof}
Assume that $\left(\mathfrak{g}_4, S_2^{\lambda}, J\right)$ is a second kind algebraic Ricci soliton, this signifies that $\widetilde{\operatorname{Ric}}^0 = cI_4 + D$, where $c \in \mathbb{R}$ and $D$ is a derivation of $\mathfrak{g}_4$. From this, it follows that 
$$D = \widetilde{\operatorname{Ric}}^0 - cI_4 = \begin{bmatrix}
\frac{-\lambda}{4} - c & 0 & \frac{1}{8\sqrt{2}} & \frac{1}{8\sqrt{2}}\\
0 & \frac{\lambda}{4} - c & 0 & 0\\
\frac{1}{8\sqrt{2}} & 0 & \frac{-\lambda}{4} - c & \frac{-\lambda}{8}\\
\frac{-1}{8\sqrt{2}} & 0 & \frac{\lambda}{8} & - c
\end{bmatrix}.$$
Since $D$ is a derivation of  $\mathfrak{g}_4$, the following condition is satisfied
\begin{eqnarray*}
& [D(v_1), v_3] + [v_1, D(v_3)] = D[v_1, v_3] \\ \Leftrightarrow & \left[ \left(\frac{-\lambda}{4} - c\right)v_1 + \frac{1}{8\sqrt{2}}v_3 - \frac{1}{8\sqrt{2}}v_4, v_3\right] + \left[ v_1, \frac{1}{8\sqrt{2}}v_1 + \left(\frac{-\lambda}{4} - c\right)v_3 + \frac{\lambda}{8}v_4\right] = -\sqrt{\frac{\lambda}{2}}D(v_2)\\
\Leftrightarrow& \left( \frac{-\lambda}{2} - 2c\right)[v_1, v_3] + \frac{\lambda}{8}[v_1, v_4] = -\sqrt{\frac{\lambda}{2}}D(v_2) \\
\Leftrightarrow& \left( \frac{-3\lambda}{8} - 2c\right)\left(-\sqrt{\frac{\lambda}{2}}\right)v_2 = -\sqrt{\frac{\lambda}{2}}\left(\frac{\lambda}{4} - c\right)v_2 \\
\Leftrightarrow& \frac{-3\lambda}{8} - 2c = \frac{\lambda}{4} - c \Leftrightarrow c = \frac{-5\lambda}{8}
\end{eqnarray*}
Again, the condition below is satisfied
\begin{eqnarray*}
& [D(v_2), v_3] + [v_2, D(v_3)] = D[v_2, v_3] \\ \Leftrightarrow & \left[ \left(\frac{\lambda}{4} - c\right)v_2, v_3\right] + \left[ v_2, \frac{1}{8\sqrt{2}}v_1 + \left(\frac{-\lambda}{4} - c\right)v_3 + \frac{\lambda}{8}v_4\right] = \frac{-1}{2\sqrt{\lambda}}\left(D(v_3) - D(v_4)\right)\\
\Leftrightarrow& -2c[v_2, v_3] + \frac{\lambda}{8}[v_2, v_4] = \frac{-1}{2\sqrt{\lambda}}\left(D(v_3) - D(v_4)\right) \\
\Leftrightarrow& \left(\frac{\lambda}{8} - 2c\right)\frac{-1}{2\sqrt{\lambda}}\left(v_3 - v_4\right) = \left(\frac{-\lambda}{8} - c\right)\frac{-1}{2\sqrt{\lambda}}\left(v_3 - v_4\right)\\
\Leftrightarrow& \frac{\lambda}{8} - 2c = \frac{-\lambda}{8} - c \Leftrightarrow c = \frac{\lambda}{4}
\end{eqnarray*}
Hence $c = \frac{\lambda}{4} = \frac{-5\lambda}{8}$, which is a contradiction.
\end{proof}
By (\ref{kobayashinomizu}), the Kobayashi-Nomizu connection $\nabla^1$ of $(\mathfrak{g}_4, S_2^{\lambda}, J)$ is given by
\begin{multicols}{3}
$\nabla^1_{v_1} v_1 = 0$\par
$\nabla^1_{v_1} v_2 = \frac{1}{2}\sqrt{\frac{\lambda}{2}}v_3$\par
$\nabla^1_{v_1} v_3 = \frac{-1}{2}\sqrt{\frac{\lambda}{2}}v_2$\par
$\nabla^1_{v_1} v_4 = 0$\par
$\nabla^1_{v_2} v_1 = \frac{1}{2}\sqrt{\frac{\lambda}{2}}v_3$\par
$\nabla^1_{v_2} v_2 = 0$\par
$\nabla^1_{v_2} v_3 = \frac{-1}{2}\sqrt{\frac{\lambda}{2}}v_1$\par
$\nabla^1_{v_2} v_4 = \frac{1}{2\sqrt{\lambda}}v_4$\par
$\nabla^1_{v_3} v_1 = \frac{1}{2}\sqrt{\frac{\lambda}{2}}v_2$\par
$\nabla^1_{v_3} v_2 = \frac{-1}{2}\sqrt{\frac{\lambda}{2}}v_1 + \frac{1}{2\sqrt{\lambda}}v_3$\par
$\nabla^1_{v_3} v_3 = \frac{-1}{2\sqrt{\lambda}}v_2$\par
$\nabla^1_{v_3} v_4 = 0$\par
$\nabla^1_{v_4} v_1 = \sqrt{\frac{\lambda}{2}}v_2$\par
$\nabla^1_{v_4} v_2 = \frac{1}{2\sqrt{\lambda}}v_3$\par
$\nabla^1_{v_4} v_3 = 0$\par
$\nabla^1_{v_4} v_4 = 0$
\end{multicols}

Applying (\ref{kobayashicurvature}), we obtain that the curvature $R^1$ of $\nabla^1$ is described by
\begin{multicols}{3}
$R^1_{v_1v_2}v_1 = \frac{\lambda}{8}v_2$\par
$R^1_{v_1v_2}v_2 = \frac{-\lambda}{8}v_1$\par
$R^1_{v_1v_2}v_3 = 0$\par
$R^1_{v_1v_2}v_4 = 0$\par
$R^1_{v_1v_3}v_1 = \frac{-3\lambda}{8}v_3$\par
$R^1_{v_1v_3}v_2 = 0$\par
$R^1_{v_1v_3}v_3 = \frac{3\lambda}{8}v_1$\par
$R^1_{v_1v_3}v_4 = \frac{-1}{2\sqrt{2}}v_4$\par
$R^1_{v_1v_4}v_1 = \frac{-\lambda}{2}v_3$\par
$R^1_{v_1v_4}v_2 = \frac{1}{4\sqrt{2}}v_2$\par
$R^1_{v_1v_4}v_3 = \frac{\lambda}{4}v_1 - \frac{1}{4\sqrt{2}}v_3$\par
$R^1_{v_1v_4}v_4 = \frac{-1}{2\sqrt{2}}v_4$\par
$R^1_{v_2v_3}v_1 = 0$\par
$R^1_{v_2v_3}v_2 = \frac{\lambda}{8}v_3 + \frac{1}{2\sqrt{2}}v_1$\par
$R^1_{v_2v_3}v_3 = \frac{1}{4\lambda}v_2 - \frac{\lambda}{8}v_2$\par
$R^1_{v_2v_3}v_4 = 0$\par
$R^1_{v_2v_4}v_1 = \frac{1}{4\sqrt{2}}v_2$\par
$R^1_{v_2v_4}v_2 = \frac{1}{2\sqrt{2}}v_1$\par
$R^1_{v_2v_4}v_3 = \frac{1}{4\lambda}v_2 - \frac{\lambda}{4}v_2$\par
$R^1_{v_2v_4}v_4 = 0$\par
$R^1_{v_3v_4}v_1 = \frac{\lambda}{4}v_1 - \frac{1}{4\sqrt{2}}v_3$\par
$R^1_{v_3v_4}v_2 = \frac{1}{4\lambda}v_2 - \frac{\lambda}{4}v_2$\par
$R^1_{v_3v_4}v_3 = \frac{-1}{4\lambda}v_3$\par
$R^1_{v_3v_4}v_4 = 0$\par
\end{multicols}

According to (\ref{canonicalriccioperators}), the Ricci operator $\operatorname{Ric}^1$ associated with $\nabla^1$ is represented in the basis $\mathscr{B}$ by 
$$\operatorname{Ric}^1 = \begin{bmatrix}
\frac{-\lambda}{4} & 0 & \frac{1}{2\sqrt{2}} & \frac{1}{2\sqrt{2}}\\
\\
0 & \frac{\lambda^2 - 1}{4\lambda} & 0 & 0\\
\\
0 & 0 & \frac{-\lambda}{4} & \frac{-2\lambda^2 - 1}{4\lambda}\\
\\
\frac{-1}{2\sqrt{2}} & 0 & 0 & 0
\end{bmatrix}.$$
\begin{theorem}
$\left(\mathfrak{g}_4, S_2^{\lambda}, J\right)$ does not exhibit the properties of a first kind algebraic Ricci soliton associated to the connection $\nabla^1$.
\end{theorem}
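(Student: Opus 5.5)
We argue by contradiction, following the same scheme as in the proofs of the preceding theorems. Suppose $\operatorname{Ric}^1 = cI_4 + D$ with $c \in \mathbb{R}$ and $D$ a derivation of $\mathfrak{g}_4$. Then $D = \operatorname{Ric}^1 - cI_4$. We read the columns of the matrix as the images of the basis vectors, with the same convention as in the earlier proofs. This gives
$$D(v_1) = \left(\tfrac{-\lambda}{4} - c\right)v_1 - \tfrac{1}{2\sqrt{2}}v_4, \qquad D(v_2) = \left(\tfrac{\lambda^2 - 1}{4\lambda} - c\right)v_2,$$
$$D(v_3) = \tfrac{1}{2\sqrt{2}}v_1 + \left(\tfrac{-\lambda}{4} - c\right)v_3, \qquad D(v_4) = \tfrac{1}{2\sqrt{2}}v_1 - \tfrac{2\lambda^2 + 1}{4\lambda}v_3 - c\,v_4 .$$
We then impose the derivation identity on the nonzero brackets $[v_1, v_3] = [v_1, v_4] = -\sqrt{\lambda/2}\,v_2$. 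The only other nonzero brackets of $(\mathfrak{g}_4, S_2^{\lambda})$ are $[v_2, v_3]$ and $[v_2, v_4]$; in particular $[v_4, v_3] = 0$ and $[v_1, v_1] = 0$.

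\textbf{First condition.} For $[D(v_1), v_3] + [v_1, D(v_3)] = D[v_1, v_3]$, the terms $-\frac{1}{2\sqrt{2}}[v_4, v_3]$ and $\frac{1}{2\sqrt{2}}[v_1, v_1]$ vanish. The left side reduces to $\left(\frac{-\lambda}{2} - 2c\right)\left(-\sqrt{\lambda/2}\right)v_2$. The right side is $-\sqrt{\lambda/2}\left(\frac{\lambda^2 - 1}{4\lambda} - c\right)v_2$. Comparing coefficients gives
$$c = \frac{1 - 3\lambda^2}{4\lambda}.$$

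\textbf{Second condition.} For $[D(v_1), v_4] + [v_1, D(v_4)] = D[v_1, v_4]$, the $v_3$-component $-\frac{2\lambda^2+1}{4\lambda}v_3$ of $D(v_4)$ now contributes through $[v_1, v_3]$. Comparing the coefficients of $-\sqrt{\lambda/2}\,v_2$ gives
$$-\frac{\lambda}{4} - 2c - \frac{2\lambda^2 + 1}{4\lambda} = \frac{\lambda^2 - 1}{4\lambda} - c, \qquad \text{hence} \qquad c = -\lambda .$$

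\textbf{Contradiction.} Equating the two expressions for $c$ gives $1 - 3\lambda^2 = -4\lambda^2$, that is, $\lambda^2 + 1 = 0$. This has no real solution, so no such $c$ and $D$ exist, which proves the theorem.

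The main point requiring care is the bookkeeping of the off-diagonal entries of $\operatorname{Ric}^1$. One must use the column convention for $D(v_j)$ consistently and check which of the resulting brackets vanish. If this pair of conditions somehow failed to conflict, I would fall back on the conditions coming from $[v_2, v_3]$ and $[v_2, v_4]$, as in the proofs of the two preceding theorems for $S_2^\lambda$.
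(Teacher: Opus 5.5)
Your proof is correct, but it takes a different route from the paper's. With the column convention, the derivation identities on $[v_1,v_3]$ and $[v_1,v_4]$ force $c=\frac{1-3\lambda^2}{4\lambda}$ and $c=-\lambda$ respectively. Equating them gives $\lambda^2=-1$, which is impossible.

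The paper uses the vanishing bracket $[v_1,v_2]=0$ instead. Since $D(v_2)$ is a multiple of $v_2$ and $[v_1,v_2]=0$, the term $[v_1,D(v_2)]$ drops out, so the identity reduces to $[D(v_1),v_2]=0$. The $c$-independent $(4,1)$ entry $-\frac{1}{2\sqrt{2}}$ of $\operatorname{Ric}^1$ then gives $[D(v_1),v_2]=-\frac{1}{2\sqrt{2}}[v_4,v_2]=-\frac{1}{4\sqrt{2\lambda}}(v_3-v_4)\neq 0$. This is a contradiction obtained without ever solving for $c$.

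The two arguments draw on disjoint parts of the matrix. The paper's needs only that single off-diagonal entry, so it does not depend on the diagonal entries at all. Yours uses the diagonal entries and the $(3,4)$ entry $-\frac{2\lambda^2+1}{4\lambda}$. It never uses the $(4,1)$ entry or the $v_1$-components of $D(v_3)$ and $D(v_4)$, since their contributions $[v_4,v_3]$, $[v_4,v_4]$, $[v_1,v_1]$ all vanish. So your version follows the same template as the $S_1^{\lambda}$ case and would still work if the off-diagonal entries were absent, at the price of a bit more computation.
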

\begin{proof}
Assume that $\left(\mathfrak{g}_4, S_2^{\lambda}, J\right)$ is a first kind algebraic Ricci soliton associated to $\nabla^1$, then there exists a real number $c$ and a derivation $D$ of $\mathfrak{g}_4$ such that $\operatorname{Ric}^1 = cI_4 + D$. We arrive at the result that 
$$D = \operatorname{Ric}^1 - cI_4 = \begin{bmatrix}
\frac{-\lambda}{4} - c & 0 & \frac{1}{2\sqrt{2}} & \frac{1}{2\sqrt{2}}\\
\\
0 & \frac{\lambda^2 - 1}{4\lambda} - c & 0 & 0\\
\\
0 & 0 & \frac{-\lambda}{4} - c & \frac{-2\lambda^2 - 1}{4\lambda}\\
\\
\frac{-1}{2\sqrt{2}} & 0 & 0 & - c
\end{bmatrix}.$$
Given that $D$ is a derivation of  $\mathfrak{g}_4$, it satisfies the condition
\begin{eqnarray*}
& [D(v_1), v_2] + [v_1, D(v_2)] = D[v_1, v_2] \\ \Leftrightarrow & [D(v_1), v_2] + [v_1, D(v_2)] = 0\\
\Leftrightarrow& \left[ \left(\frac{-\lambda}{4} - c\right)v_1 - \frac{1}{2\sqrt{2}}v_4, v_2\right] + \left[ v_1, \left(\frac{\lambda^2 - 1}{4\lambda} - c\right)v_2\right] = 0\\
\Leftrightarrow& \frac{-1}{2\sqrt{2}}[v_4, v_2] = 0 \Leftrightarrow \frac{-1}{2\sqrt{2}} \times \frac{1}{2\sqrt{\lambda}}(v_3 - v_4) = 0 \Leftrightarrow \frac{-1}{4\sqrt{2\lambda}} = 0
\end{eqnarray*}
We see that the last equality as above is a contradiction.
\end{proof}
By (\ref{tilderho}) and (\ref{tildeRic}), we obtain that $\widetilde{\operatorname{Ric}}^1$ is represented in the basis $\mathscr{B}$ by
$$\widetilde{\operatorname{Ric}}^1 = \begin{bmatrix}
\frac{-\lambda}{4} & 0 & \frac{1}{4\sqrt{2}} & \frac{1}{2\sqrt{2}}\\
\\
0 & \frac{\lambda^2 - 1}{4\lambda} & 0 & 0\\
\\
\frac{1}{4\sqrt{2}} & 0 & \frac{-\lambda}{4} & \frac{-2\lambda^2 - 1}{8\lambda}\\
\\
\frac{-1}{2\sqrt{2}} & 0 & \frac{2\lambda^2 + 1}{8\lambda} & 0
\end{bmatrix}.$$
\begin{theorem}
$\left(\mathfrak{g}_4, S_2^{\lambda}, J\right)$ does not exhibit the properties of a second kind algebraic Ricci soliton associated to the connection $\nabla^1$.
\end{theorem}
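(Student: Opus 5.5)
The plan is to argue by contradiction, exactly as in the previous theorems. Suppose $\widetilde{\operatorname{Ric}}^1 = cI_4 + D$ for some $c\in\mathbb{R}$ and some derivation $D$ of $\mathfrak{g}_4$. Then $D = \widetilde{\operatorname{Ric}}^1 - cI_4$, whose columns in $\mathscr{B}$ can be read off directly:
\begin{align*}
D(v_1) &= \left(\frac{-\lambda}{4}-c\right)v_1 + \frac{1}{4\sqrt{2}}v_3 - \frac{1}{2\sqrt{2}}v_4,\\
D(v_2) &= \left(\frac{\lambda^2-1}{4\lambda}-c\right)v_2,
\end{align*}
with analogous expressions for $D(v_3)$ and $D(v_4)$. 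The idea is to test the derivation identity on a vanishing bracket, where the right-hand side is zero, so that $c$ drops out.

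\textbf{Key step.} Mirror the proof of the first-kind case and use the pair $(v_1,v_2)$, for which $[v_1,v_2]=0$. The condition becomes
$$[D(v_1),v_2] + [v_1,D(v_2)] = 0.$$
The second term is a multiple of $[v_1,v_2]=0$. In the first term, $[v_1,v_2]=0$ as well, so only the $v_3$ and $v_4$ components of $D(v_1)$ contribute. Using $[v_3,v_2] = [v_4,v_2] = \frac{1}{2\sqrt{\lambda}}(v_3-v_4)$, we get
$$\left(\frac{1}{4\sqrt{2}} - \frac{1}{2\sqrt{2}}\right)\frac{1}{2\sqrt{\lambda}}(v_3-v_4) = \frac{-1}{8\sqrt{2\lambda}}(v_3-v_4).$$
This is nonzero for every $\lambda>0$, which gives the contradiction independently of $c$.

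\textbf{Main obstacle and fallback.} The only delicate point is sign bookkeeping. The entries of $D(v_1)$ come from the first column of $\widetilde{\operatorname{Ric}}^1$, which is $(-\lambda/4-c,\,0,\,\frac{1}{4\sqrt2},\,-\frac{1}{2\sqrt2})^T$. The argument needs the $v_3$ and $v_4$ coefficients not to cancel after multiplying by the equal brackets $[v_3,v_2]=[v_4,v_2]$. Since $\frac{1}{4\sqrt2}\neq\frac{1}{2\sqrt2}$, they do not cancel.

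If this check were to fail, I would instead use the same strategy as for $\nabla^0$. The nonzero brackets $[v_1,v_3]$ and $[v_2,v_3]$ would each give an affine equation for $c$ in terms of $\lambda$. Equating the two expressions would then yield a polynomial in $\lambda$ with no positive root.
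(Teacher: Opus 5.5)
Your proposal is correct and is the paper's argument: the paper also tests the derivation identity on the vanishing bracket $[v_1,v_2]=0$ and reads $D(v_1)$, $D(v_2)$ off the first two columns of $\widetilde{\operatorname{Ric}}^1 - cI_4$. It likewise obtains the nonzero vector $\frac{-1}{8\sqrt{2\lambda}}(v_3-v_4)$, a contradiction independent of $c$, so your fallback via $[v_1,v_3]$ and $[v_2,v_3]$ is not needed.
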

\begin{proof}
We are looking for a real number $c$ and a derivation $D$ of $\mathfrak{g}_4$ such that $\widetilde{\operatorname{Ric}}^1 = cI_4 + D$. From this, it follows that 
$$D = \widetilde{\operatorname{Ric}}^1 - cI_4 = \begin{bmatrix}
\frac{-\lambda}{4} - c & 0 & \frac{1}{4\sqrt{2}} & \frac{1}{2\sqrt{2}}\\
\\
0 & \frac{\lambda^2 - 1}{4\lambda} - c & 0 & 0\\
\\
\frac{1}{4\sqrt{2}} & 0 & \frac{-\lambda}{4} - c & \frac{-2\lambda^2 - 1}{8\lambda}\\
\\
\frac{-1}{2\sqrt{2}} & 0 & \frac{2\lambda^2 + 1}{8\lambda} & - c
\end{bmatrix}.$$
Since $D$ is a derivation of  $\mathfrak{g}_4$, the rule below is satisfied
\begin{eqnarray*}
& [D(v_1), v_2] + [v_1, D(v_2)] = D[v_1, v_2] \\ \Leftrightarrow & [D(v_1), v_2] + [v_1, D(v_2)] = 0\\
\Leftrightarrow& \left[ \left(\frac{-\lambda}{4} - c\right)v_1 + \frac{1}{4\sqrt{2}}v_3 - \frac{1}{2\sqrt{2}}v_4, v_2\right] + \left[ v_1, \left(\frac{\lambda^2 - 1}{4\lambda} - c\right)v_2\right] = 0\\
\Leftrightarrow& \frac{1}{4\sqrt{2}}[v_3, v_2] - \frac{1}{2\sqrt{2}}[v_4, v_2] = 0\\ 
\Leftrightarrow& \frac{-1}{4\sqrt{2}} \times \frac{1}{2\sqrt{\lambda}}(v_3 - v_4) = 0 \Leftrightarrow \frac{-1}{8\sqrt{2\lambda}} = 0
\end{eqnarray*}
We see that the last equality as above is a contradiction.
\end{proof}
\subsection{The Lorentzian Lie algebra $\left(\mathfrak{g}_4, S_3^{\lambda}\right)$}
Define the following elements 
$$v_1 = e_2, \qquad v_2 = \frac{1}{\sqrt{\lambda}}e_4, \qquad v_3 = \frac{1}{\sqrt{2}}(e_1 + e_3), \qquad v_4 = \frac{1}{\sqrt{2}}(e_1 - e_3).$$
Then $\mathscr{B} = \left\lbrace v_1, v_2, v_3, v_4\right\rbrace$ is an orthonormal basis of $\left(\mathfrak{g}_4, S_3^{\lambda}\right)$. The bracket is given in $\mathscr{B}$ by 
$$[v_1, v_3] = \frac{1}{2}(v_4 - v_3), \qquad [v_1, v_4] = \frac{1}{2}(v_4 - v_3), \qquad [v_3, v_4] = -\sqrt{\lambda}v_2.$$
By \cite{aitbenhaddou2026lorentzian}, the Levi-Civita connection of $\left(\mathfrak{g}_4, S_3^{\lambda}\right)$ is characterized by the following formulas
\begin{multicols}{2}
$\nabla_{v_1} v_1 = \nabla_{v_2} v_2 = 0 $\par
$\nabla_{v_3} v_3 = \nabla_{v_4} v_4 = \frac{-1}{2}v_1 $\par
$\nabla_{v_1} v_2 = \nabla_{v_2} v_1 = 0$\par
$\nabla_{v_1} v_3 = \nabla_{v_1} v_4 = 0$\par
$\nabla_{v_3} v_1 = \nabla_{v_4} v_1 = \frac{1}{2}v_3 - \frac{1}{2}v_4$\par
$\nabla_{v_2} v_3 = \nabla_{v_3} v_2 = \frac{-\sqrt{\lambda}}{2}v_4$\par
$\nabla_{v_2} v_4 = \nabla_{v_4} v_2 = \frac{-\sqrt{\lambda}}{2}v_3$\par
$\nabla_{v_3} v_4 = \frac{-1}{2}v_1 - \frac{\sqrt{\lambda}}{2}v_2$\par
$\nabla_{v_4} v_3 = \frac{-1}{2}v_1 + \frac{\sqrt{\lambda}}{2}v_2$
\end{multicols}
The nonzero terms in $\nabla_{v_i} (J)v_j$ are
\begin{multicols}{3}
	$\nabla_{v_2} (J)v_3 = -\sqrt{\lambda}v_4$\par
	$\nabla_{v_2} (J)v_4 = \sqrt{\lambda}v_3$\par
	$\nabla_{v_3} (J)v_1 = -v_4$\par
	$\nabla_{v_3} (J)v_2 = -\sqrt{\lambda}v_4$\par
	$\nabla_{v_3} (J)v_4 = v_1 + \sqrt{\lambda}v_2$\par
	$\nabla_{v_4} (J)v_1 = -v_4$\par
	$\nabla_{v_4} (J)v_4 = v_1$
\end{multicols}

The nonzero terms in the canonical connection $\nabla^0$ of $\left(\mathfrak{g}_4, S_3^{\lambda}\right)$ are
\begin{multicols}{3}
	$\nabla^0_{v_3} v_1 = \frac{1}{2}v_3$\par
	$\nabla^0_{v_3} v_3 = \frac{-1}{2}v_1$\par
	$\nabla^0_{v_4} v_1 = \frac{1}{2}v_3$\par
	$\nabla^0_{v_4} v_2 = \frac{-\sqrt{\lambda}}{2}v_3$\par
	$\nabla^0_{v_4} v_3 = \frac{-1}{2}v_1 + \frac{\sqrt{\lambda}}{2}v_2$
\end{multicols}

To obtain the Ricci operator associated with $\nabla^0$, we only need to describe the following curvatures
\begin{multicols}{4}
	$R^0_{v_2v_1}v_2 = 0$\par
	$R^0_{v_3v_1}v_3 = \frac{-\sqrt{\lambda}}{4}v_2$\par
	$R^0_{v_4v_1}v_4 = 0$\par
	$R^0_{v_1v_2}v_1 = 0$\par
	$R^0_{v_3v_2}v_3 = 0$\par
	$R^0_{v_4v_2}v_4 = 0$\par
	$R^0_{v_1v_3}v_1 = 0$\par
	$R^0_{v_2v_3}v_2 = 0$\par
	$R^0_{v_4v_3}v_4 = 0$\par
	$R^0_{v_1v_4}v_1 = 0$\par
	$R^0_{v_2v_4}v_2 = 0$\par
	$R^0_{v_3v_4}v_3 = 0$
\end{multicols}
The Ricci operator $\operatorname{Ric}^0$ of $\nabla^0$ is represented in the basis $\mathscr{B}$ by the following matrix
$$\operatorname{Ric}^0 = \begin{bmatrix}
0 & 0 & 0 & 0\\
\frac{-\sqrt{\lambda}}{4} & 0 & 0 & 0\\
0 & 0 & 0 & 0\\
0 & 0 & 0 & 0
\end{bmatrix}.$$
\begin{theorem}
$\left(\mathfrak{g}_4, S_3^{\lambda}, J\right)$ is a first kind algebraic Ricci soliton associated to the connection $\nabla^0$ if and only if $c = 0$.
\end{theorem}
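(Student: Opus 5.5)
The plan is to follow the same scheme as the earlier theorems of this section. Assume there are $c\in\mathbb{R}$ and a derivation $D$ of $\mathfrak{g}_4$ with $\operatorname{Ric}^0 = cI_4 + D$. Then
$$D = \operatorname{Ric}^0 - cI_4 = \begin{bmatrix}
-c & 0 & 0 & 0\\
\frac{-\sqrt{\lambda}}{4} & -c & 0 & 0\\
0 & 0 & -c & 0\\
0 & 0 & 0 & -c
\end{bmatrix}.$$
In the column convention used throughout the paper, this means
$$D(v_1) = -cv_1 - \frac{\sqrt{\lambda}}{4}v_2, \qquad D(v_k) = -cv_k \quad (k = 2,3,4).$$
I will then impose the derivation identity $[D(v_i), v_j] + [v_i, D(v_j)] = D[v_i, v_j]$ on the nonzero brackets of $\mathscr{B}$. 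The key structural fact is that $v_2 = \frac{1}{\sqrt{\lambda}}e_4$ spans the center of $\mathfrak{g}_4$. Consequently the extra term $-\frac{\sqrt{\lambda}}{4}v_2$ in $D(v_1)$ drops out of every bracket.

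For the necessity of $c = 0$, I would use the bracket $[v_1, v_3] = \frac{1}{2}(v_4 - v_3)$. The left-hand side of the identity is $-2c[v_1, v_3] = -c(v_4 - v_3)$. The right-hand side is $\frac{1}{2}\left(D(v_4) - D(v_3)\right) = -\frac{c}{2}(v_4 - v_3)$. Comparing gives $2c = c$, hence $c = 0$. The bracket $[v_3, v_4] = -\sqrt{\lambda}v_2$ gives the same conclusion independently: $2c\sqrt{\lambda}v_2 = c\sqrt{\lambda}v_2$. The bracket $[v_1, v_4]$ is identical to the $[v_1, v_3]$ case.

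For sufficiency, set $c = 0$, so that $D = \operatorname{Ric}^0$. This map satisfies $D(v_1) = -\frac{\sqrt{\lambda}}{4}v_2$ and $D(v_2) = D(v_3) = D(v_4) = 0$. I must check that it is a derivation on all pairs, including the vanishing brackets, as the paper does elsewhere.

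For any $x, y$, the term $[D(x), y]$ vanishes because the image of $D$ lies in the center $\operatorname{Span}(v_2)$. Likewise $[x, D(y)] = 0$. On the other side, $D[x, y] = 0$ because $[\mathfrak{g}_4, \mathfrak{g}_4] = \operatorname{Span}(e_3, e_4) = \operatorname{Span}(v_2, v_3 - v_4)$. This space is contained in $\operatorname{Span}(v_2, v_3, v_4) \subseteq \ker D$. Hence both sides of the identity are zero and $D$ is a derivation. This shows that $\left(\mathfrak{g}_4, S_3^{\lambda}, J\right)$ is a first kind algebraic Ricci soliton with $c = 0$ and $D = \operatorname{Ric}^0$, for every $\lambda > 0$.

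There is no genuine obstacle here. The only point requiring care is reading the matrix convention correctly: the nonzero entry in row $2$, column $1$ means $\operatorname{Ric}^0(v_1)$ has a $v_2$-component. Once that is settled, one must notice that $v_2$ is central, which is what makes both directions immediate.
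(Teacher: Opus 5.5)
Your proof is correct and follows the paper's argument: the derivation condition on $[v_1,v_3]$ forces $2c=c$, so $c=0$, and then one checks that $D=\operatorname{Ric}^0$ is a derivation. Your sufficiency step makes explicit what the paper only calls ``easy to see'': the image of $D$ lies in the center $\operatorname{Span}(v_2)$, and $[\mathfrak{g}_4,\mathfrak{g}_4]\subseteq\ker D$.
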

\begin{proof}
Assume that $\left(\mathfrak{g}_4, S_3^{\lambda}, J\right)$ is a first kind algebraic Ricci soliton, then there exists a real number $c$ and a derivation $D$ of $\mathfrak{g}_4$ such that $\operatorname{Ric}^0 = cI_4 + D$. From this, it follows that 
$$D = \operatorname{Ric}^0 - cI_4 = \begin{bmatrix}
-c & 0 & 0 & 0\\
\frac{-\sqrt{\lambda}}{4} & -c & 0 & 0\\
0 & 0 & -c & 0\\
0 & 0 & 0 & -c
\end{bmatrix}.$$
Given that $D$ is a derivation of  $\mathfrak{g}_4$, the following condition holds
\begin{eqnarray*}
& [D(v_1), v_3] + [v_1, D(v_3)] = D[v_1, v_3] \\ \Leftrightarrow & \left[-cv_1 - \frac{\sqrt{\lambda}}{4}v_2, v_3\right] + \left[ v_1, -cv_3\right] = \frac{1}{2}\left( D(v_4) - D(v_3)\right)\\
\Leftrightarrow& -2c[v_1, v_3] = \frac{1}{2}\left( D(v_4) - D(v_3)\right) \\
\Leftrightarrow& -2c\frac{1}{2}\left( v_4 - v_3\right) = \frac{1}{2}\left( -cv_4 + cv_3\right) \\
\Leftrightarrow& 2c = c \Leftrightarrow c = 0
\end{eqnarray*}
Now, it is easy to see that $D = \operatorname{Ric}^0$ satisfy all the other conditions 
$$[D(v_i), v_j] + [v_i, D(v_j)] = D[v_i, v_j].$$
\end{proof}
We obtain that $\widetilde{\operatorname{Ric}}^0$ is represented in the basis $\mathscr{B}$ by the following matrix
$$\widetilde{\operatorname{Ric}}^0 = \begin{bmatrix}
0 & \frac{-\sqrt{\lambda}}{8} & 0 & 0\\
\frac{-\sqrt{\lambda}}{8} & 0 & 0 & 0\\
0 & 0 & 0 & 0\\
0 & 0 & 0 & 0
\end{bmatrix}.$$
\begin{theorem}
$\left(\mathfrak{g}_4, S_3^{\lambda}, J\right)$ does not exhibit the properties of a second kind algebraic Ricci soliton associated to the connection $\nabla^0$.
\end{theorem}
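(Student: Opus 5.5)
The plan is to argue by contradiction, in the same way as for the preceding theorems. Suppose there exist $c\in\mathbb{R}$ and a derivation $D$ of $\mathfrak{g}_4$ with $\widetilde{\operatorname{Ric}}^0 = cI_4 + D$. Then
$$D = \widetilde{\operatorname{Ric}}^0 - cI_4 = \begin{bmatrix}
-c & \frac{-\sqrt{\lambda}}{8} & 0 & 0\\
\frac{-\sqrt{\lambda}}{8} & -c & 0 & 0\\
0 & 0 & -c & 0\\
0 & 0 & 0 & -c
\end{bmatrix}.$$
Reading off the columns gives
$$D(v_1) = -cv_1 - \tfrac{\sqrt{\lambda}}{8}v_2,\quad D(v_2) = -\tfrac{\sqrt{\lambda}}{8}v_1 - cv_2,\quad D(v_3) = -cv_3,\quad D(v_4) = -cv_4.$$
I will test the derivation identity only on the nonzero brackets $[v_1,v_3]$ and $[v_3,v_4]$, using that $v_2$ is central.

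First I impose $[D(v_1), v_3] + [v_1, D(v_3)] = D[v_1,v_3]$. Since $[v_2,v_3]=0$, the left-hand side is $-2c[v_1,v_3] = -c(v_4 - v_3)$. The right-hand side is $\frac12\left(D(v_4)-D(v_3)\right) = -\frac{c}{2}(v_4-v_3)$. Comparing coefficients gives $c = \frac{c}{2}$, hence $c=0$. This is exactly as in the first kind case, because the extra off-diagonal entries only involve the central element $v_2$.

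Next, with $c=0$, I impose $[D(v_3), v_4] + [v_3, D(v_4)] = D[v_3,v_4]$. The left-hand side is $0$, since $D(v_3)=D(v_4)=0$. The right-hand side is
$$D(-\sqrt{\lambda}\,v_2) = -\sqrt{\lambda}\left(-\tfrac{\sqrt{\lambda}}{8}v_1\right) = \tfrac{\lambda}{8}v_1.$$
This forces $\lambda = 0$, contradicting $\lambda>0$. So no such $c$ and $D$ exist.

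The only delicate point is choosing which bracket exposes the obstruction. It is the symmetrized entry $\widetilde{\operatorname{Ric}}^0_{12}$, which makes $D(v_2)$ pick up a $v_1$-component. That component is incompatible with $v_2 \in [\mathfrak{g}_4,\mathfrak{g}_4]$ being hit by the bracket $[v_3,v_4]$, once $c=0$ has been forced.
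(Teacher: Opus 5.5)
Your proof is correct and is essentially the paper's own argument: the $[v_1,v_3]$ identity forces $c=0$, and then the $[v_3,v_4]$ identity gives $0=-\sqrt{\lambda}\,D(v_2)=\frac{\lambda}{8}v_1$, which contradicts $\lambda>0$. (The first step can actually be dropped: the left-hand side of the $[v_3,v_4]$ identity is a multiple of $v_2$, so its $v_1$-component already gives $0=\frac{\lambda}{8}$ for every $c$.)
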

\begin{proof}
The proof is similar to that of the previous theorem; the condition
$$[D(v_1), v_3] + [v_1, D(v_3)] = D[v_1, v_3],$$
shows that $c = 0$. Next, $D = \widetilde{\operatorname{Ric}}^0$ satisfies the condition
$$[D(v_3), v_4] + [v_3, D(v_4)] = D[v_3, v_4] \Leftrightarrow 0 = -\sqrt{\lambda}D(v_2) \Leftrightarrow 0 = \frac{\lambda}{8}v_1.$$
This is a contradiction.
\end{proof}
The nonzero terms in the Kobayashi-Nomizu connection $\nabla^1$ of $(\mathfrak{g}_4, S_3^{\lambda}, J)$ are
\begin{multicols}{3}
	$\nabla^1_{v_1} v_4 = \frac{1}{2}v_4$\par
	$\nabla^1_{v_3} v_1 = \frac{1}{2}v_3$\par
	$\nabla^1_{v_3} v_3 = \frac{-1}{2}v_1$\par
	$\nabla^1_{v_4} v_1 = \frac{1}{2}v_3$\par
	$\nabla^1_{v_4} v_3 = \sqrt{\lambda}v_2$\par
\end{multicols}

In order to find to Ricci operator associated with $\nabla^1$, we need the following curvatures
\begin{multicols}{3}
	$R^1_{v_2v_1}v_2 = 0$\par
	$R^1_{v_3v_1}v_3 = \frac{-1}{4}v_1 - \frac{\sqrt{\lambda}}{2}v_2$\par
	$R^1_{v_4v_1}v_4 = 0$\par
	$R^1_{v_1v_2}v_1 = 0$\par
	$R^1_{v_3v_2}v_3 = 0$\par
	$R^1_{v_4v_2}v_4 = 0$\par
	$R^1_{v_1v_3}v_1 = 0$\par
	$R^1_{v_2v_3}v_2 = 0$\par
	$R^1_{v_4v_3}v_4 = 0$\par
	$R^1_{v_1v_4}v_1 = 0$\par
	$R^1_{v_2v_4}v_2 = 0$\par
	$R^1_{v_3v_4}v_3 = \frac{-1}{4}v_3$
\end{multicols}
We see that the Ricci operator $\operatorname{Ric}^1$ of $\nabla^1$ is represented in the basis $\mathscr{B}$ by the following matrix
$$\operatorname{Ric}^1 = \begin{bmatrix}
\frac{-1}{4} & 0 & 0 & 0\\
\frac{-\sqrt{\lambda}}{2} & 0 & 0 & 0\\
0 & 0 & 0 & \frac{-1}{4}\\
0 & 0 & 0 & 0
\end{bmatrix}.$$
\begin{theorem}
$\left(\mathfrak{g}_4, S_3^{\lambda}, J\right)$ is not a first kind algebraic Ricci soliton associated to the connection $\nabla^1$.
\end{theorem}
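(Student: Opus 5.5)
The plan is to argue by contradiction, as in the preceding theorems of this section. Suppose $\left(\mathfrak{g}_4, S_3^{\lambda}, J\right)$ is a first kind algebraic Ricci soliton associated to $\nabla^1$. Then there exist $c \in \mathbb{R}$ and a derivation $D$ of $\mathfrak{g}_4$ with $\operatorname{Ric}^1 = cI_4 + D$, so
$$D = \operatorname{Ric}^1 - cI_4 = \begin{bmatrix}
-\frac{1}{4} - c & 0 & 0 & 0\\
\frac{-\sqrt{\lambda}}{2} & -c & 0 & 0\\
0 & 0 & -c & \frac{-1}{4}\\
0 & 0 & 0 & -c
\end{bmatrix}.$$
Reading off the columns gives
$$D(v_1) = \left(-\tfrac{1}{4} - c\right)v_1 - \tfrac{\sqrt{\lambda}}{2}v_2, \qquad D(v_2) = -cv_2, \qquad D(v_3) = -cv_3, \qquad D(v_4) = -\tfrac{1}{4}v_3 - cv_4.$$

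The key step is to test the derivation identity on the pair $(v_1, v_3)$, using $[v_1, v_3] = \frac{1}{2}(v_4 - v_3)$.

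On the left-hand side, the $v_2$-part of $D(v_1)$ contributes nothing, because $[v_2, v_3] = 0$. Therefore
$$[D(v_1), v_3] + [v_1, D(v_3)] = \left(-\tfrac{1}{4} - 2c\right)\tfrac{1}{2}(v_4 - v_3).$$

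On the right-hand side,
$$D[v_1, v_3] = \tfrac{1}{2}\left(D(v_4) - D(v_3)\right) = \tfrac{1}{2}\left(\left(c - \tfrac{1}{4}\right)v_3 - cv_4\right).$$

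The off-diagonal entry $-\frac{1}{4}$ in $D(v_4)$ breaks the symmetry between the $v_3$ and $v_4$ components. Identifying coefficients therefore gives two independent linear equations in $c$:
\begin{itemize}
\item the $v_4$-component gives $-\frac{1}{4} - 2c = -c$, hence $c = -\frac{1}{4}$;
\item the $v_3$-component gives $\frac{1}{4} + 2c = c - \frac{1}{4}$, hence $c = -\frac{1}{2}$.
\end{itemize}
These two values are incompatible, which yields the contradiction.

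I expect no real obstacle here. The only care needed is to track signs correctly when expanding $[v_1, D(v_3)]$ and $D(v_4) - D(v_3)$. It is also worth noting that the contradiction does not depend on $\lambda$, so no restriction on $\lambda > 0$ is needed. If a sign slip made this pair consistent, the fallback is the pair $(v_1, v_4)$, or the pair $(v_3, v_4)$ with $D(v_2) = -cv_2$ on the right-hand side. These should similarly force incompatible values of $c$.
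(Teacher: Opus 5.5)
Your proposal is correct and is essentially identical to the paper's proof. The paper also tests the derivation identity on the pair $(v_1, v_3)$, and comparing the $v_4$- and $v_3$-components yields the same incompatible values $c = -\frac{1}{4}$ and $c = -\frac{1}{2}$.
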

\begin{proof}
Let us assume that $\left(\mathfrak{g}_4, S_3^{\lambda}, J\right)$ is a first kind algebraic Ricci soliton associated to $\nabla^1$, then $\operatorname{Ric}^1 = cI_4 + D$, where $c \in \mathbb{R}$ and $D$ is a derivation of $\mathfrak{g}_4$. Hence we have 
$$D = \operatorname{Ric}^1 - cI_4 = \begin{bmatrix}
\frac{-1}{4} - c & 0 & 0 & 0\\
\frac{-\sqrt{\lambda}}{2} & -c & 0 & 0\\
0 & 0 & -c & \frac{-1}{4}\\
0 & 0 & 0 & -c
\end{bmatrix}.$$
Since $D$ is a derivation of  $\mathfrak{g}_4$, it satisfies the following condition
\begin{eqnarray*}
& [D(v_1), v_3] + [v_1, D(v_3)] = D[v_1, v_3] \\
\Leftrightarrow& \left[ \left(\frac{-1}{4} - c\right)v_1 - \frac{\sqrt{\lambda}}{2}v_2, v_3\right] + \left[ v_1, -cv_3\right] = \frac{1}{2}\left( D(v_4) - D(v_3)\right)\\
\Leftrightarrow& \left(\frac{-1}{4} - 2c\right)[v_1, v_3] = \frac{1}{2}\left( D(v_4) - D(v_3)\right)\\
\Leftrightarrow& \left(\frac{-1}{4} - 2c\right)\frac{1}{2}\left(v_4 - v_3\right) = \frac{1}{2}\left( \left(\frac{-1}{4} + c\right)v_3 - cv_4\right)
\end{eqnarray*}
By identifying the components of $v_4$ and $v_3$, we obtain 
$$\left\lbrace\begin{array}{lll}
\frac{-1}{4} - 2c = -c \Longrightarrow c = \frac{-1}{4} \\
\frac{1}{4} + 2c = \frac{-1}{4} + c \Longrightarrow c = \frac{-1}{2}
\end{array}\right.$$
This is a contradiction.
\end{proof}
We see that $\widetilde{\operatorname{Ric}}^1$ is represented in the basis $\mathscr{B}$ by
$$\widetilde{\operatorname{Ric}}^1 = \begin{bmatrix}
\frac{-1}{4} & \frac{-\sqrt{\lambda}}{4} & 0 & 0\\
\frac{-\sqrt{\lambda}}{4} & 0 & 0 & 0\\
0 & 0 & 0 & \frac{-1}{8}\\
0 & 0 & \frac{1}{8} & 0
\end{bmatrix}.$$
\begin{theorem}
$\left(\mathfrak{g}_4, S_3^{\lambda}, J\right)$ is not a second kind algebraic Ricci soliton associated to the connection $\nabla^1$.
\end{theorem}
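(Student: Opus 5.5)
We argue as in the preceding theorems: assume that $\left(\mathfrak{g}_4, S_3^{\lambda}, J\right)$ is a second kind algebraic Ricci soliton associated to $\nabla^1$, so that $\widetilde{\operatorname{Ric}}^1 = cI_4 + D$ for some $c \in \mathbb{R}$ and some derivation $D$ of $\mathfrak{g}_4$. We then derive a contradiction from a single derivation identity.

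First we write $D = \widetilde{\operatorname{Ric}}^1 - cI_4$ using the matrix computed above. The $j$-th column gives the coordinates of $D(v_j)$, so
$$D(v_1) = \left(\tfrac{-1}{4} - c\right)v_1 - \tfrac{\sqrt{\lambda}}{4}v_2, \qquad D(v_2) = -\tfrac{\sqrt{\lambda}}{4}v_1 - cv_2,$$
$$D(v_3) = -cv_3 + \tfrac{1}{8}v_4, \qquad D(v_4) = -\tfrac{1}{8}v_3 - cv_4.$$

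Next we impose $[D(v_1), v_3] + [v_1, D(v_3)] = D[v_1, v_3]$, using the brackets $[v_1, v_3] = [v_1, v_4] = \frac{1}{2}(v_4 - v_3)$ and $[v_2, v_3] = 0$. The $v_2$-component of $D(v_1)$ drops out because $v_2$ is central in this basis. The left-hand side becomes
$$\left(\tfrac{-1}{4} - c - c + \tfrac{1}{8}\right)\tfrac{1}{2}(v_4 - v_3) = \left(\tfrac{-1}{8} - 2c\right)\tfrac{1}{2}(v_4 - v_3).$$
The right-hand side is
$$\tfrac{1}{2}\left(D(v_4) - D(v_3)\right) = \tfrac{1}{2}\left(\left(c - \tfrac{1}{8}\right)v_3 - \left(c + \tfrac{1}{8}\right)v_4\right).$$

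Finally we identify components. The $v_4$-components give $-\frac{1}{8} - 2c = -c - \frac{1}{8}$, hence $c = 0$. The $v_3$-components give $\frac{1}{8} + 2c = c - \frac{1}{8}$, hence $c = -\frac{1}{4}$. These values are incompatible, which is the desired contradiction.

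The only real obstacle is bookkeeping: we must read $D(v_j)$ from the columns and not the rows, since $\widetilde{\operatorname{Ric}}^1$ is not symmetric in the $\{v_3, v_4\}$ block. If this identity were somehow consistent, the fallback would be the condition for the pair $(v_3, v_4)$, namely $[D(v_3), v_4] + [v_3, D(v_4)] = -\sqrt{\lambda}\,D(v_2)$. Its left-hand side lies in $\operatorname{Span}(v_2)$, while the right-hand side has $v_1$-component $\frac{\lambda}{4} \neq 0$, so this condition also yields a contradiction.
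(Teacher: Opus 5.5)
Your proof is correct and uses the same strategy as the paper: write $D=\widetilde{\operatorname{Ric}}^1-cI_4$ and refute a single Leibniz identity. The only difference is the test pair. The paper uses $(v_2,v_3)$, where centrality of $v_2$ and the entry $-\frac{\sqrt{\lambda}}{4}$ give $-\frac{\sqrt{\lambda}}{8}(v_4-v_3)=0$ at once, without ever determining $c$. Your pair $(v_1,v_3)$ instead yields the incompatible values $c=0$ and $c=-\frac14$, which does not even need $\lambda>0$; your column reading of $D(v_j)$, both sides of the identity, and your fallback with $(v_3,v_4)$ all check out.
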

\begin{proof}
We are searching for some $c \in \mathbb{R}$ and a derivation $D$ of $\mathfrak{g}_4$ such that $\widetilde{\operatorname{Ric}}^1 = cI_4 + D$. Then
$$D = \widetilde{\operatorname{Ric}}^1 - cI_4 = \begin{bmatrix}
\frac{-1}{4} - c & \frac{-\sqrt{\lambda}}{4} & 0 & 0\\
\frac{-\sqrt{\lambda}}{4} & -c & 0 & 0\\
0 & 0 & -c & \frac{-1}{8}\\
0 & 0 & \frac{1}{8} & -c
\end{bmatrix}.$$
Given that $D$ is a derivation of  $\mathfrak{g}_4$, the condition below holds
\begin{eqnarray*}
& [D(v_2), v_3] + [v_2, D(v_3)] = D[v_2, v_3] \\
\Leftrightarrow& \left[ \frac{-\sqrt{\lambda}}{4}v_1 - cv_2, v_3\right] + \left[ v_2, -cv_3 + \frac{1}{8}v_4\right] = 0\\
\Leftrightarrow& \frac{-\sqrt{\lambda}}{4}[v_1, v_3] = 0 \Leftrightarrow \frac{-\sqrt{\lambda}}{4}\frac{1}{2}(v_4 - v_3) = 0
\end{eqnarray*}
This is a contradiction because $\lambda > 0$.
\end{proof}
\subsection{The Lorentzian Lie algebra $\left(\mathfrak{g}_4, S_4^{\lambda}\right)$}
Choose the following elements 
$$v_1 = e_1, \qquad v_2 = \frac{1}{\sqrt{\lambda}}e_4, \qquad v_3 = \frac{1}{\sqrt{2}}(e_2 + e_3), \qquad v_4 = \frac{1}{\sqrt{2}}(e_2 - e_3).$$
Then $\mathscr{B} = \left\lbrace v_1, v_2, v_3, v_4\right\rbrace$ is an orthonormal basis of $\left(\mathfrak{g}_4, S_4^{\lambda}\right)$. The bracket in the basis $\mathscr{B}$ is given by
$$[v_1, v_3] = \frac{1}{2}v_3 - \frac{1}{2}v_4 + \sqrt{\frac{\lambda}{2}}v_2, \qquad [v_1, v_4] = \frac{1}{2}v_3 - \frac{1}{2}v_4 - \sqrt{\frac{\lambda}{2}}v_2.$$
By \cite{aitbenhaddou2026lorentzian}, the Levi-Civita connection of $\left(\mathfrak{g}_4, S_4^{\lambda}\right)$ is characterized by the following formulas
\begin{multicols}{2}
$\nabla_{v_1} v_1 = \nabla_{v_2} v_2 = 0 $\par
$\nabla_{v_3} v_3 = \nabla_{v_4} v_4 = \frac{1}{2}v_1 $\par
$\nabla_{v_1} v_2 = \nabla_{v_2} v_1 = \frac{-1}{2}\sqrt{\frac{\lambda}{2}}v_3 - \frac{1}{2}\sqrt{\frac{\lambda}{2}}v_4$\par
$\nabla_{v_1} v_3 = -\nabla_{v_1} v_4 = \frac{1}{2}\sqrt{\frac{\lambda}{2}}v_2$\par
$\nabla_{v_3} v_1 = \frac{-1}{2}\sqrt{\frac{\lambda}{2}}v_2 - \frac{1}{2}v_3 + \frac{1}{2}v_4$\par
$\nabla_{v_4} v_1 = \frac{1}{2}\sqrt{\frac{\lambda}{2}}v_2 - \frac{1}{2}v_3 + \frac{1}{2}v_4$\par
$\nabla_{v_2} v_3 = \nabla_{v_3} v_2 = \frac{1}{2}\sqrt{\frac{\lambda}{2}}v_1$\par
$\nabla_{v_2} v_4 = \nabla_{v_4} v_2 = \frac{-1}{2}\sqrt{\frac{\lambda}{2}}v_1$\par
$\nabla_{v_3} v_4 = \nabla_{v_4} v_3 = \frac{1}{2}v_1$
\end{multicols}
The terms $\nabla_{v_i} (J)v_j$ are given by the following equalities
\begin{multicols}{3}
	$\nabla_{v_1} (J)v_1 = 0$\par
	$\nabla_{v_1} (J)v_2 = -\sqrt{\frac{\lambda}{2}}v_4$\par
	$\nabla_{v_1} (J)v_3 = 0$\par
	$\nabla_{v_1} (J)v_4 = \sqrt{\frac{\lambda}{2}}v_2$\par
	$\nabla_{v_2} (J)v_1 = -\sqrt{\frac{\lambda}{2}}v_4$\par
	$\nabla_{v_2} (J)v_2 = 0$\par
	$\nabla_{v_2} (J)v_3 = 0$\par
	$\nabla_{v_2} (J)v_4 = \sqrt{\frac{\lambda}{2}}v_1$\par
	$\nabla_{v_3} (J)v_1 = v_4$\par
	$\nabla_{v_3} (J)v_2 = 0$\par
	$\nabla_{v_3} (J)v_3 = 0$\par
	$\nabla_{v_3} (J)v_4 = -v_1$\par
	$\nabla_{v_4} (J)v_1 = v_4$\par
	$\nabla_{v_4} (J)v_2 = 0$\par
	$\nabla_{v_4} (J)v_3 = 0$\par
	$\nabla_{v_4} (J)v_4 = -v_1$\par
\end{multicols}

The canonical connection $\nabla^0$ of $\left(\mathfrak{g}_4, S_4^{\lambda}\right)$ is given by
\begin{multicols}{3}
	$\nabla^0_{v_1} v_1 = 0$\par
	$\nabla^0_{v_1} v_2 = \frac{-1}{2}\sqrt{\frac{\lambda}{2}}v_3$\par
	$\nabla^0_{v_1} v_3 = \frac{1}{2}\sqrt{\frac{\lambda}{2}}v_2$\par
	$\nabla^0_{v_1} v_4 = 0$\par
	$\nabla^0_{v_2} v_1 = \frac{-1}{2}\sqrt{\frac{\lambda}{2}}v_3$\par
	$\nabla^0_{v_2} v_2 = 0$\par
	$\nabla^0_{v_2} v_3 = \frac{1}{2}\sqrt{\frac{\lambda}{2}}v_1$\par
	$\nabla^0_{v_2} v_4 = 0$\par
	$\nabla^0_{v_3} v_1 = \frac{-1}{2}\sqrt{\frac{\lambda}{2}}v_2 - \frac{1}{2}v_3$\par
	$\nabla^0_{v_3} v_2 = \frac{1}{2}\sqrt{\frac{\lambda}{2}}v_1$\par
	$\nabla^0_{v_3} v_3 = \frac{1}{2}v_1$\par
	$\nabla^0_{v_3} v_4 = 0$\par
	$\nabla^0_{v_4} v_1 = \frac{1}{2}\sqrt{\frac{\lambda}{2}}v_2 - \frac{1}{2}v_3$\par
	$\nabla^0_{v_4} v_2 = \frac{-1}{2}\sqrt{\frac{\lambda}{2}}v_1$\par
	$\nabla^0_{v_4} v_3 = \frac{1}{2}v_1$\par
	$\nabla^0_{v_4} v_4 = 0$
\end{multicols}
In order to find to Ricci operator associated with $\nabla^0$, we need the following curvatures
\begin{multicols}{3}
	$R^0_{v_2v_1}v_2 = \frac{\lambda}{8}v_1$\par
	$R^0_{v_3v_1}v_3 = \frac{-3\lambda}{8}v_1$\par
	$R^0_{v_4v_1}v_4 = 0$\par
	$R^0_{v_1v_2}v_1 = \frac{\lambda}{8}v_2$\par
	$R^0_{v_3v_2}v_3 = \frac{\lambda}{8}v_2$\par
	$R^0_{v_4v_2}v_4 = 0$\par
	$R^0_{v_1v_3}v_1 = \frac{-1}{4}\sqrt{\frac{\lambda}{2}}v_2 - \frac{3\lambda}{8}v_3$\par
	$R^0_{v_2v_3}v_2 = \frac{\lambda}{8}v_3$\par
	$R^0_{v_4v_3}v_4 = 0$\par
	$R^0_{v_1v_4}v_1 = \frac{-1}{4}\sqrt{\frac{\lambda}{2}}v_2 + \frac{3\lambda}{8}v_3$\par
	$R^0_{v_2v_4}v_2 = \frac{-\lambda}{8}v_3$\par
	$R^0_{v_3v_4}v_3 = \frac{1}{2}\sqrt{\frac{\lambda}{2}}v_2$
\end{multicols}
The Ricci operator $\operatorname{Ric}^0$ of $\nabla^0$ is represented in the basis $\mathscr{B}$ by the following matrix
$$\operatorname{Ric}^0 = \begin{bmatrix}
\frac{-\lambda}{4} & 0 & 0 & 0\\
\\
0 & \frac{\lambda}{4} & \frac{-1}{4}\sqrt{\frac{\lambda}{2}} & \frac{1}{4}\sqrt{\frac{\lambda}{2}}\\
\\
0 & 0 & \frac{-\lambda}{4} & \frac{\lambda}{4}\\
\\
0 & 0 & 0 & 0
\end{bmatrix}.$$
\begin{theorem}
$\left(\mathfrak{g}_4, S_4^{\lambda}, J\right)$ does not qualify as a first kind algebraic Ricci soliton associated to the connection $\nabla^0$.
\end{theorem}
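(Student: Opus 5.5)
The plan is to argue by contradiction, exactly as in the preceding non-existence results. Suppose there exist $c\in\mathbb{R}$ and a derivation $D$ of $\mathfrak{g}_4$ with $\operatorname{Ric}^0 = cI_4 + D$. Then $D = \operatorname{Ric}^0 - cI_4$, and reading off its columns from the matrix of $\operatorname{Ric}^0$ above gives
$$D(v_1) = \left(\tfrac{-\lambda}{4} - c\right)v_1,\qquad D(v_2) = \left(\tfrac{\lambda}{4} - c\right)v_2,$$
$$D(v_3) = -\tfrac14\sqrt{\tfrac{\lambda}{2}}\,v_2 + \left(\tfrac{-\lambda}{4} - c\right)v_3,\qquad D(v_4) = \tfrac14\sqrt{\tfrac{\lambda}{2}}\,v_2 + \tfrac{\lambda}{4}v_3 - c\,v_4 .$$

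I will test the derivation identity on the pair $(v_1, v_3)$. Since $v_2$ is a multiple of $e_4$, which is central, we have $[v_1, v_2] = 0$. Hence
$$[D(v_1), v_3] + [v_1, D(v_3)] = \left(\tfrac{-\lambda}{2} - 2c\right)[v_1, v_3] = \left(\tfrac{-\lambda}{2} - 2c\right)\left(\tfrac12 v_3 - \tfrac12 v_4 + \sqrt{\tfrac{\lambda}{2}}\,v_2\right).$$
On the other side,
$$D[v_1, v_3] = \tfrac12 D(v_3) - \tfrac12 D(v_4) + \sqrt{\tfrac{\lambda}{2}}\,D(v_2).$$
I then compare the $v_3$ and $v_4$ components of the two sides.

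The $v_4$ component gives $\frac{\lambda}{4} + c = \frac{c}{2}$, so $c = -\frac{\lambda}{2}$. The $v_3$ component gives
$$\tfrac12\left(\tfrac{-\lambda}{2} - 2c\right) = \tfrac12\left(\tfrac{-\lambda}{4} - c\right) - \tfrac{\lambda}{8},$$
that is, $-\frac{\lambda}{4} - c = -\frac{\lambda}{4} - \frac{c}{2}$, which forces $c = 0$. Together these give $\lambda = 0$, contradicting $\lambda > 0$.

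The only delicate point is the careful bookkeeping of components, in particular the off-diagonal entry $\frac{\lambda}{4}$ in $D(v_4)$. If some coefficient fails to match when checked, the fallback is to also use the pair $(v_1, v_4)$ and the $v_2$ components, which give further linear constraints on $c$ that should be incompatible.
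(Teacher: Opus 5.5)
Your proposal is correct and is essentially the paper's own proof. It tests the derivation identity on $(v_1,v_3)$, uses $[v_1,v_2]=0$ to reduce the left side to $\left(-\frac{\lambda}{2}-2c\right)[v_1,v_3]$, and compares the $v_4$ and $v_3$ components to get $c=-\frac{\lambda}{2}$ and $c=0$, which contradicts $\lambda>0$; the fallback via $(v_1,v_4)$ is never needed.
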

\begin{proof}
Assume that $\left(\mathfrak{g}_4, S_4^{\lambda}, J\right)$ is a first kind algebraic Ricci soliton associated to $\nabla^0$, then there exists a real number $c$ and a derivation $D$ of $\mathfrak{g}_4$ such that $\operatorname{Ric}^0 = cI_4 + D$. This implies that 
$$D = \operatorname{Ric}^0 - cI_4 = \begin{bmatrix}
\frac{-\lambda}{4} - c & 0 & 0 & 0\\
\\
0 & \frac{\lambda}{4} - c & \frac{-1}{4}\sqrt{\frac{\lambda}{2}} & \frac{1}{4}\sqrt{\frac{\lambda}{2}}\\
\\
0 & 0 & \frac{-\lambda}{4} - c & \frac{\lambda}{4}\\
\\
0 & 0 & 0 & - c
\end{bmatrix}.$$
Since $D$ is a derivation of  $\mathfrak{g}_4$, it satisfies the following condition
\begin{eqnarray*}
& [D(v_1), v_3] + [v_1, D(v_3)] = D[v_1, v_3] \\
\Leftrightarrow& \left[ \left(\frac{-\lambda}{4} - c\right)v_1, v_3\right] + \left[ v_1, \frac{-1}{4}\sqrt{\frac{\lambda}{2}}v_2 + \left(\frac{-\lambda}{4} - c\right)v_3\right] = \frac{1}{2}D(v_3) - \frac{1}{2}D(v_4) + \sqrt{\frac{\lambda}{2}}D(v_2)\\
\Leftrightarrow& \left(\frac{-\lambda}{2} - 2c\right)[v_1, v_3] = \frac{1}{2}D(v_3) - \frac{1}{2}D(v_4) + \sqrt{\frac{\lambda}{2}}D(v_2)
\end{eqnarray*}
We develop the following 
\begin{eqnarray*}
& \frac{1}{2}D(v_3) - \frac{1}{2}D(v_4) + \sqrt{\frac{\lambda}{2}}D(v_2) \\
=& \frac{-1}{8}\sqrt{\frac{\lambda}{2}}v_2 + \frac{1}{2}\left( \frac{-\lambda}{4} - c\right)v_3 - \frac{1}{8}\sqrt{\frac{\lambda}{2}}v_2 - \frac{\lambda}{8}v_3 + \frac{c}{2}v_4 + \sqrt{\frac{\lambda}{2}}\left( \frac{\lambda}{4} - c\right)v_2
\end{eqnarray*}
By identifying the component of $v_4$ in the following equality 
$$(\frac{-\lambda}{2} - 2c)[v_1, v_3] = \frac{-1}{8}\sqrt{\frac{\lambda}{2}}v_2 + \frac{1}{2}(\frac{-\lambda}{4} - c)v_3 - \frac{1}{8}\sqrt{\frac{\lambda}{2}}v_2 - \frac{\lambda}{8}v_3 + \frac{c}{2}v_4 + \sqrt{\frac{\lambda}{2}}(\frac{\lambda}{4} - c)v_2$$
we obtain 
$$\frac{\lambda}{4} + c = \frac{c}{2} \Longrightarrow c = \frac{-\lambda}{2}.$$
Similarly, identifying the components of $v_3$ yields
$$\frac{-\lambda}{4} - c = \frac{-\lambda}{8} - \frac{c}{2} - \frac{\lambda}{8} \Longrightarrow c = 0.$$
Hence $c = 0 = \frac{-\lambda}{2}$, which is a contradiction.
\end{proof}
We obtain that $\widetilde{\operatorname{Ric}}^0$ is represented in the basis $\mathscr{B}$ by the following matrix
$$\widetilde{\operatorname{Ric}}^0 = \begin{bmatrix}
\frac{-\lambda}{4} & 0 & 0 & 0\\
\\
0 & \frac{\lambda}{4} & \frac{-1}{8}\sqrt{\frac{\lambda}{2}} & \frac{1}{8}\sqrt{\frac{\lambda}{2}}\\
\\
0 & \frac{-1}{8}\sqrt{\frac{\lambda}{2}} & \frac{-\lambda}{4} & \frac{\lambda}{8}\\
\\
0 & \frac{-1}{8}\sqrt{\frac{\lambda}{2}} & \frac{-\lambda}{8} & 0
\end{bmatrix}.$$
\begin{theorem}
$\left(\mathfrak{g}_4, S_4^{\lambda}, J\right)$ does not qualify as a second kind algebraic Ricci soliton associated to the connection $\nabla^0$.
\end{theorem}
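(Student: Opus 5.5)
The plan is to follow the same scheme as in the preceding theorems. Suppose $\left(\mathfrak{g}_4, S_4^{\lambda}, J\right)$ is a second kind algebraic Ricci soliton associated to $\nabla^0$. Then $\widetilde{\operatorname{Ric}}^0 = cI_4 + D$ for some $c \in \mathbb{R}$ and some derivation $D$ of $\mathfrak{g}_4$, so that
$$D = \widetilde{\operatorname{Ric}}^0 - cI_4 = \begin{bmatrix}
\frac{-\lambda}{4} - c & 0 & 0 & 0\\
0 & \frac{\lambda}{4} - c & \frac{-1}{8}\sqrt{\frac{\lambda}{2}} & \frac{1}{8}\sqrt{\frac{\lambda}{2}}\\
0 & \frac{-1}{8}\sqrt{\frac{\lambda}{2}} & \frac{-\lambda}{4} - c & \frac{\lambda}{8}\\
0 & \frac{-1}{8}\sqrt{\frac{\lambda}{2}} & \frac{-\lambda}{8} & -c
\end{bmatrix}.$$
Reading off columns, $D(v_1) = \left(\frac{-\lambda}{4} - c\right)v_1$ and
$$D(v_2) = \left(\frac{\lambda}{4} - c\right)v_2 - \frac{1}{8}\sqrt{\frac{\lambda}{2}}\,(v_3 + v_4).$$

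Rather than working with the two nonzero brackets, which mix all components and would require solving for $c$, I will test the derivation identity on a \emph{vanishing} bracket. The aim is to obtain a contradiction that does not involve $c$ at all. In the basis $\mathscr{B}$ the only nonzero brackets are $[v_1, v_3]$ and $[v_1, v_4]$, so $v_2$ is central. I take the pair $(v_1, v_2)$, for which the identity reads $[D(v_1), v_2] + [v_1, D(v_2)] = D[v_1, v_2] = 0$.

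The first term vanishes because $D(v_1)$ is a multiple of $v_1$ and $[v_1, v_2] = 0$. For the second term, only the $v_3$ and $v_4$ components of $D(v_2)$ contribute, and
$$[v_1, v_3] + [v_1, v_4] = v_3 - v_4,$$
since the $v_2$ contributions $\pm\sqrt{\frac{\lambda}{2}}\,v_2$ cancel. Hence the identity becomes
$$-\frac{1}{8}\sqrt{\frac{\lambda}{2}}\,(v_3 - v_4) = 0,$$
which is impossible since $\lambda > 0$. This gives the contradiction independently of $c$ and proves the theorem.

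There is no real obstacle here. The only care needed is reading the columns of $D$ correctly with the convention $D(v_j) = \sum_i D_{ij} v_i$, and checking that the $v_2$ parts of $[v_1, v_3]$ and $[v_1, v_4]$ cancel while the $v_3 - v_4$ parts add. If a column were misread, I would fall back on the pair $(v_1, v_3)$, as in the previous theorem, and compare coefficients of $v_4$ and $v_3$ to get two incompatible values of $c$.
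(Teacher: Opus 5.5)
Your proposal is correct and is essentially the paper's own proof. The paper also applies the derivation identity to the vanishing bracket $[v_1,v_2]$, uses that $D(v_1)$ is a multiple of $v_1$, and uses that the $v_2$-components of $[v_1,v_3]$ and $[v_1,v_4]$ cancel; it arrives at the same contradiction $\frac{-1}{8}\sqrt{\frac{\lambda}{2}}\,(v_3-v_4)=0$, with no need to solve for $c$.
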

\begin{proof}
Suppose that $\left(\mathfrak{g}_4, S_4^{\lambda}, J\right)$ is a second kind algebraic Ricci soliton, then there exists a real number $c$ and a derivation $D$ of $\mathfrak{g}_4$ such that $\widetilde{\operatorname{Ric}}^0 = cI_4 + D$. Then we obtain that
$$D = \widetilde{\operatorname{Ric}}^0 - cI_4 = \begin{bmatrix}
\frac{-\lambda}{4} - c & 0 & 0 & 0\\
\\
0 & \frac{\lambda}{4} - c & \frac{-1}{8}\sqrt{\frac{\lambda}{2}} & \frac{1}{8}\sqrt{\frac{\lambda}{2}}\\
\\
0 & \frac{-1}{8}\sqrt{\frac{\lambda}{2}} & \frac{-\lambda}{4} - c & \frac{\lambda}{8}\\
\\
0 & \frac{-1}{8}\sqrt{\frac{\lambda}{2}} & \frac{-\lambda}{8} & - c
\end{bmatrix}.$$
As $D$ is a derivation of  $\mathfrak{g}_4$, the rule below holds
\begin{eqnarray*}
& [D(v_1), v_2] + [v_1, D(v_2)] = D[v_1, v_2] \\ \Leftrightarrow & \left[ \left(\frac{-\lambda}{4} - c\right)v_1, v_2\right] + \left[ v_1, \left(\frac{\lambda}{4} - c\right)v_2 - \frac{1}{8}\sqrt{\frac{\lambda}{2}}v_3 - \frac{1}{8}\sqrt{\frac{\lambda}{2}}v_4\right] = 0\\
\Leftrightarrow& \frac{-1}{8}\sqrt{\frac{\lambda}{2}}\left([v_1, v_3] + [v_1, v_4]\right) = 0 \\
\Leftrightarrow& \frac{-1}{8}\sqrt{\frac{\lambda}{2}}\left(v_3 - v_4\right) = 0 
\end{eqnarray*}
This is a contradiction.
\end{proof}
The Kobayashi-Nomizu connection $\nabla^1$ of $(\mathfrak{g}_4, S_4^{\lambda}, J)$ is given by
\begin{multicols}{3}
	$\nabla^1_{v_1} v_1 = 0$\par
	$\nabla^1_{v_1} v_2 = \frac{-1}{2}\sqrt{\frac{\lambda}{2}}v_3$\par
	$\nabla^1_{v_1} v_3 = \frac{1}{2}\sqrt{\frac{\lambda}{2}}v_2$\par
	$\nabla^1_{v_1} v_4 = \frac{-1}{2}v_4$\par
	$\nabla^1_{v_2} v_1 = \frac{-1}{2}\sqrt{\frac{\lambda}{2}}v_3$\par
	$\nabla^1_{v_2} v_2 = 0$\par
	$\nabla^1_{v_2} v_3 = \frac{1}{2}\sqrt{\frac{\lambda}{2}}v_1$\par
	$\nabla^1_{v_2} v_4 = 0$\par
	$\nabla^1_{v_3} v_1 = \frac{-1}{2}\sqrt{\frac{\lambda}{2}}v_2 - \frac{1}{2}v_3$\par
	$\nabla^1_{v_3} v_2 = \frac{1}{2}\sqrt{\frac{\lambda}{2}}v_1$\par
	$\nabla^1_{v_3} v_3 = \frac{1}{2}v_1$\par
	$\nabla^1_{v_3} v_4 = 0$\par
	$\nabla^1_{v_4} v_1 = \sqrt{\frac{\lambda}{2}}v_2 - \frac{1}{2}v_3$\par
	$\nabla^1_{v_4} v_2 = 0$\par
	$\nabla^1_{v_4} v_3 = 0$\par
	$\nabla^1_{v_4} v_4 = 0$
\end{multicols}
To find the Ricci operator associated with $\nabla^1$, we need the following curvatures
\begin{multicols}{3}
	$R^1_{v_2v_1}v_2 = \frac{\lambda}{8}v_1$\par
	$R^1_{v_3v_1}v_3 = \frac{-1}{4}v_1 - \frac{3\lambda}{8}v_1$\par
	$R^1_{v_4v_1}v_4 = 0$\par
	$R^1_{v_1v_2}v_1 = \frac{\lambda}{8}v_2$\par
	$R^1_{v_3v_2}v_3 = \frac{\lambda}{8}v_2$\par
	$R^1_{v_4v_2}v_4 = 0$\par
	$R^1_{v_1v_3}v_1 = \frac{-1}{2}\sqrt{\frac{\lambda}{2}}v_2 - \frac{3\lambda}{8}v_3$\par
	$R^1_{v_2v_3}v_2 = \frac{\lambda}{8}v_3$\par
	$R^1_{v_4v_3}v_4 = 0$\par
	$R^1_{v_1v_4}v_1 = \frac{-1}{2}\sqrt{\frac{\lambda}{2}}v_2 + \frac{\lambda}{2}v_3$\par
	$R^1_{v_2v_4}v_2 = 0$\par
	$R^1_{v_3v_4}v_3 = \frac{1}{2}\sqrt{\frac{\lambda}{2}}v_2 - \frac{1}{4}v_3$
\end{multicols}
The Ricci operator $\operatorname{Ric}^1$ of $\nabla^1$ is represented in the basis $\mathscr{B}$ by the following matrix
$$\operatorname{Ric}^1 = \begin{bmatrix}
\frac{-1 - \lambda}{4} & 0 & 0 & 0\\
\\
0 & \frac{\lambda}{4} & \frac{-1}{2}\sqrt{\frac{\lambda}{2}} & 0\\
\\
0 & 0 & \frac{-\lambda}{4} & \frac{2\lambda - 1}{4}\\
\\
0 & 0 & 0 & 0
\end{bmatrix}.$$
\begin{theorem}
$\left(\mathfrak{g}_4, S_4^{\lambda}, J\right)$ does not exhibit the properties of a first kind algebraic Ricci soliton associated to the connection $\nabla^1$.
\end{theorem}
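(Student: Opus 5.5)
We argue by contradiction, exactly as in the previous theorems. Suppose $\operatorname{Ric}^1 = cI_4 + D$ for some $c\in\mathbb{R}$ and some derivation $D$ of $\mathfrak{g}_4$. Then $D = \operatorname{Ric}^1 - cI_4$, and reading off the columns of the matrix of $\operatorname{Ric}^1$ gives
$$D(v_1) = \Big(\tfrac{-1-\lambda}{4}-c\Big)v_1,\qquad D(v_2) = \Big(\tfrac{\lambda}{4}-c\Big)v_2,$$
$$D(v_3) = -\tfrac12\sqrt{\tfrac{\lambda}{2}}\,v_2 + \Big(\tfrac{-\lambda}{4}-c\Big)v_3,\qquad D(v_4) = \tfrac{2\lambda-1}{4}v_3 - c\,v_4 .$$
The vanishing brackets give no information. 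Indeed, $v_2$ is proportional to $e_4$ and hence central, and the condition for $[v_3,v_4]=0$ reduces to multiples of $[v_2,\cdot]=0$. So the whole argument rests on the condition attached to the nonzero bracket
$$[v_1,v_3] = \tfrac12 v_3 - \tfrac12 v_4 + \sqrt{\tfrac{\lambda}{2}}\,v_2 .$$

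Set $s = \big(\tfrac{-1-\lambda}{4}-c\big) + \big(\tfrac{-\lambda}{4}-c\big) = -\tfrac{1+2\lambda}{4} - 2c$. Using $[v_1,v_2]=0$, the left-hand side $[D(v_1),v_3]+[v_1,D(v_3)]$ equals $s\,[v_1,v_3]$. The right-hand side is $\tfrac12 D(v_3) - \tfrac12 D(v_4) + \sqrt{\tfrac{\lambda}{2}}\,D(v_2)$, which we expand with the formulas above. We then compare the three components.
\begin{itemize}
\item The $v_4$-component gives $-s/2 = c/2$, hence $c = -\tfrac{1+2\lambda}{4}$ and therefore $s = \tfrac{1+2\lambda}{4}$.
\item The $v_3$-component gives $\tfrac{s}{2} = \tfrac12\big(\tfrac{-\lambda}{4}-c\big) - \tfrac{2\lambda-1}{8}$. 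Substituting $c$ and $s$, this becomes $1+2\lambda = 2-\lambda$, i.e.\ $\lambda = \tfrac13$.
\item The $v_2$-component gives $s = -\tfrac14 + \tfrac{\lambda}{4} - c$. After substitution this forces $\lambda = 1$.
\end{itemize}
Since $\tfrac13 \neq 1$, no admissible $\lambda>0$ satisfies all three equations, and we reach the desired contradiction. If useful, the analogous condition for $[v_1,v_4]$ can be added as a cross-check, but it is not needed.

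The main obstacle is purely computational. We must track the off-diagonal entries $-\tfrac12\sqrt{\lambda/2}$ and $\tfrac{2\lambda-1}{4}$ correctly through the right-hand side, and use the column convention (column $j$ is $D(v_j)$) consistently with the earlier proofs. A sign slip in either entry could turn the contradiction into a spurious solution. For that reason I would double-check the $v_3$- and $v_2$-component equations before concluding.
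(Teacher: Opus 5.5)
Your proof is correct and is essentially the paper's argument: you assume $\operatorname{Ric}^1 = cI_4 + D$ and compare components in the derivation identity for $[v_1,v_3]$, and the three equations you get, $c=-\frac{1+2\lambda}{4}$, $c=\frac{\lambda-2}{4}$ and $c=-\frac{3\lambda}{4}$, are indeed incompatible. The only difference is that the paper finishes by pairing the $v_4$-component equation with the $v_2$-component of the $[v_1,v_4]$ condition, which gives $c=-\frac{1+4\lambda}{4}$ and hence $\lambda=0$; you instead stay within the single $[v_1,v_3]$ condition and use its $v_2$-component.
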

\begin{proof}
Assume that $\left(\mathfrak{g}_4, S_4^{\lambda}, J\right)$ is a first kind algebraic Ricci soliton associated to $\nabla^1$, then there exists a real number $c$ and a derivation $D$ of $\mathfrak{g}_4$ such that $\operatorname{Ric}^1 = cI_4 + D$. From this, it follows that 
$$D = \operatorname{Ric}^1 - cI_4 = \begin{bmatrix}
\frac{-1 - \lambda}{4} - c & 0 & 0 & 0\\
\\
0 & \frac{\lambda}{4} - c & \frac{-1}{2}\sqrt{\frac{\lambda}{2}} & 0\\
\\
0 & 0 & \frac{-\lambda}{4} - c & \frac{2\lambda - 1}{4}\\
\\
0 & 0 & 0 & - c
\end{bmatrix}.$$
Since $D$ is a derivation of  $\mathfrak{g}_4$, it satisfies the following condition
\begin{eqnarray*}
& [D(v_1), v_3] + [v_1, D(v_3)] = D[v_1, v_3] \\
\Leftrightarrow& \left[ \left(\frac{-1 - \lambda}{4} - c\right)v_1, v_3\right] + \left[ v_1, \frac{-1}{2}\sqrt{\frac{\lambda}{2}}v_2 + \left(\frac{-\lambda}{4} - c\right)v_3\right] = \frac{1}{2}D(v_3) - \frac{1}{2}D(v_4) + \sqrt{\frac{\lambda}{2}}D(v_2)\\
\Leftrightarrow& \left(\frac{-1 - 2\lambda}{4} - 2c\right)[v_1, v_3] = \frac{1}{2}D(v_3) - \frac{1}{2}D(v_4) + \sqrt{\frac{\lambda}{2}}D(v_2)
\end{eqnarray*}
We develop the following 
\begin{eqnarray*}
& \frac{1}{2}D(v_3) - \frac{1}{2}D(v_4) + \sqrt{\frac{\lambda}{2}}D(v_2) \\
=& \frac{-1}{4}\sqrt{\frac{\lambda}{2}}v_2 + \frac{1}{2}\left( \frac{-\lambda}{4} - c\right)v_3 - \frac{2\lambda - 1}{8}v_3 + \frac{c}{2}v_4 + \sqrt{\frac{\lambda}{2}}\left( \frac{\lambda}{4} - c\right)v_2
\end{eqnarray*}
By identifying the component of $v_4$ in the following equality 
$$(\frac{-1 - 2\lambda}{4} - 2c)[v_1, v_3] = \frac{-1}{4}\sqrt{\frac{\lambda}{2}}v_2 + \frac{1}{2}(\frac{-\lambda}{4} - c)v_3 - \frac{2\lambda - 1}{8}v_3 + \frac{c}{2}v_4 + \sqrt{\frac{\lambda}{2}}(\frac{\lambda}{4} - c)v_2$$
we obtain 
$$\frac{1 + 2\lambda}{8} + c = \frac{c}{2} \Longrightarrow c = \frac{-1 - 2\lambda}{4}.$$
Similarly, identifying the components of $v_3$ yields
$$\frac{-1 - 2\lambda}{8} - c = \frac{-\lambda}{8} - \frac{c}{2} - \frac{2\lambda - 1}{8} \Longrightarrow c = \frac{\lambda - 2}{4}.$$
Next, $D$ satisfies the following condition
\begin{eqnarray*}
& [D(v_1), v_4] + [v_1, D(v_4)] = D[v_1, v_4] \\
\Leftrightarrow& \left[ \left(\frac{-1 - \lambda}{4} - c\right)v_1, v_4\right] + \left[ v_1, \frac{2\lambda - 1}{4}v_3 - cv_4\right] = \frac{1}{2}D(v_3) - \frac{1}{2}D(v_4) - \sqrt{\frac{\lambda}{2}}D(v_2)\\
\Leftrightarrow& \left(\frac{-1 - \lambda}{4} - 2c\right)[v_1, v_4] + \frac{2\lambda - 1}{4}[v_1, v_3]  = \frac{1}{2}D(v_3) - \frac{1}{2}D(v_4) - \sqrt{\frac{\lambda}{2}}D(v_2)
\end{eqnarray*}
We develop the last equality as above and obtain
\begin{eqnarray*}
& \left(\frac{-1 - \lambda}{8} - c\right)v_3 + \left(\frac{1 + \lambda}{8} + c\right)v_4 - \sqrt{\frac{\lambda}{2}}\left(\frac{-1 - \lambda}{4} - 2c\right)v_2 + \frac{2\lambda - 1}{8}v_3 - \frac{2\lambda - 1}{8}v_4 + \sqrt{\frac{\lambda}{2}}\frac{2\lambda - 1}{4}v_2 \\
=& \frac{-1}{4}\sqrt{\frac{\lambda}{2}}v_2 + \frac{1}{2}\left( \frac{-\lambda}{4} - c\right)v_3 - \frac{2\lambda - 1}{8}v_3 + \frac{c}{2}v_4 - \sqrt{\frac{\lambda}{2}}\left( \frac{\lambda}{4} - c\right)v_2
\end{eqnarray*}
By identifying the component of $v_2$, we obtain 
$$\frac{1 + \lambda}{4} + 2c + \frac{2\lambda - 1}{4} = \frac{-1}{4} - \frac{\lambda}{4} + c \Longrightarrow c = \frac{-1 - 4\lambda}{4}.$$
Hence
$$c = \frac{-1 - 2\lambda}{4} = \frac{-1 - 4\lambda}{4} \Longrightarrow 2\lambda = 0.$$
This is a contradiction because $\lambda > 0$.
\end{proof}
We obtain that $\widetilde{\operatorname{Ric}}^1$ is represented in the basis $\mathscr{B}$ by
$$\widetilde{\operatorname{Ric}}^1 = \begin{bmatrix}
\frac{-1 - \lambda}{4} & 0 & 0 & 0\\
\\
0 & \frac{\lambda}{4} & \frac{-1}{4}\sqrt{\frac{\lambda}{2}} & 0\\
\\
0 & \frac{-1}{4}\sqrt{\frac{\lambda}{2}} & \frac{-\lambda}{4} & \frac{2\lambda - 1}{8}\\
\\
0 & 0 & \frac{1 - 2\lambda}{8} & 0
\end{bmatrix}.$$
\begin{theorem}
$\left(\mathfrak{g}_4, S_4^{\lambda}, J\right)$ does not exhibit the properties of a second kind algebraic Ricci soliton associated to the connection $\nabla^1$.
\end{theorem}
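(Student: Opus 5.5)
The plan is to mirror the argument used for the second kind soliton with respect to $\nabla^0$ on $\left(\mathfrak{g}_4, S_4^{\lambda}, J\right)$: assume $\widetilde{\operatorname{Ric}}^1 = cI_4 + D$ with $c \in \mathbb{R}$ and $D$ a derivation of $\mathfrak{g}_4$. Then test $D$ on a vanishing bracket, where the constant $c$ drops out entirely.

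First I will write down $D = \widetilde{\operatorname{Ric}}^1 - cI_4$ using the matrix of $\widetilde{\operatorname{Ric}}^1$ displayed just before the statement. Reading its columns gives
$$D(v_1) = \left(\tfrac{-1-\lambda}{4} - c\right)v_1, \qquad D(v_2) = \left(\tfrac{\lambda}{4} - c\right)v_2 - \tfrac{1}{4}\sqrt{\tfrac{\lambda}{2}}\,v_3 .$$

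Next I will impose the derivation rule on the pair $(v_1, v_2)$. In the basis $\mathscr{B}$ we have $[v_1, v_2] = 0$, so the right-hand side $D[v_1, v_2]$ vanishes. On the left-hand side, $[D(v_1), v_2] = 0$ because $D(v_1)$ is a multiple of $v_1$. Also $[v_1, v_2] = 0$, so only the $v_3$-component of $D(v_2)$ contributes. This leaves
$$-\tfrac{1}{4}\sqrt{\tfrac{\lambda}{2}}\,[v_1, v_3] = -\tfrac{1}{4}\sqrt{\tfrac{\lambda}{2}}\left(\tfrac{1}{2}v_3 - \tfrac{1}{2}v_4 + \sqrt{\tfrac{\lambda}{2}}v_2\right) = 0 .$$

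Finally I will compare the $v_3$-components. This forces $\tfrac{1}{8}\sqrt{\tfrac{\lambda}{2}} = 0$, which contradicts $\lambda > 0$. This holds for every $c$, so no choice of $c$ and $D$ works, and the theorem follows.

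The only real obstacle is choosing the right bracket to test. Pairs involving nonzero brackets, such as $(v_1, v_3)$ and $(v_1, v_4)$, tie $c$ to $\lambda$ and would require checking consistency of several equations. The pair $(v_1, v_2)$ avoids this, because the off-diagonal entry $-\tfrac{1}{4}\sqrt{\tfrac{\lambda}{2}}$ in the $(3,2)$ position of $D$ immediately produces a nonzero bracket. As a sanity check, if that entry were misread I would fall back to $[v_2, v_4] = 0$: the $v_3$-component of $D(v_4)$ does not contribute there, since $[v_2, v_3] = 0$ as well.
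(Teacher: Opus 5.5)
Your argument is correct and is exactly the paper's proof. The paper also tests the derivation rule on the vanishing bracket $[v_1,v_2]$, where the $v_3$-component $-\frac{1}{4}\sqrt{\frac{\lambda}{2}}$ of $D(v_2)$ leaves $-\frac{1}{4}\sqrt{\frac{\lambda}{2}}\,[v_1,v_3]=0$, impossible for $\lambda>0$. (Your proposed fallback pair $(v_2,v_4)$ would not actually help: $v_2$ is central and $[v_3,v_4]=0$, so that test reduces to $0=0$.)
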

\begin{proof}
Suppose that $\widetilde{\operatorname{Ric}}^1 = cI_4 + D$, where $c \in \mathbb{R}$ and $D$ is a derivation of $\mathfrak{g}_4$. Thus 
$$D = \widetilde{\operatorname{Ric}}^1 - cI_4 = \begin{bmatrix}
\frac{-1 - \lambda}{4} - c & 0 & 0 & 0\\
\\
0 & \frac{\lambda}{4} - c & \frac{-1}{4}\sqrt{\frac{\lambda}{2}} & 0\\
\\
0 & \frac{-1}{4}\sqrt{\frac{\lambda}{2}} & \frac{-\lambda}{4} - c & \frac{2\lambda - 1}{8}\\
\\
0 & 0 & \frac{1 - 2\lambda}{8} & - c
\end{bmatrix}.$$
Given that $D$ is a derivation of  $\mathfrak{g}_4$, the condition below holds
\begin{eqnarray*}
& [D(v_1), v_2] + [v_1, D(v_2)] = D[v_1, v_2] \\
\Leftrightarrow& \left[ \left(\frac{-1 - \lambda}{4} - c\right)v_1, v_2\right] + \left[ v_1, \left(\frac{\lambda}{4} - c\right)v_2 - \frac{1}{4}\sqrt{\frac{\lambda}{2}}v_3\right] = 0\\
\Leftrightarrow& \frac{-1}{4}\sqrt{\frac{\lambda}{2}}[v_1, v_3] = 0 \Leftrightarrow \frac{-1}{4}\sqrt{\frac{\lambda}{2}}\left(\frac{1}{2}v_3 - \frac{1}{2}v_4 + \sqrt{\frac{\lambda}{2}}v_2\right) = 0
\end{eqnarray*}
This is a contradiction because $\lambda > 0$.
\end{proof}
\subsection{The Lorentzian Lie algebra $\left(\mathfrak{g}_4, S_A^{\pm}\right)$}
\subsubsection{Case of the Lorentzian inner product $S_A^{+}$}
We consider $S_A^{+} = \begin{bmatrix}
1 & 0 & 0 & 0\\
0 & -1 & 0 & 0\\
0 & 0 & a & b\\
0 & 0 & b & c
\end{bmatrix}$ where $A = \begin{bmatrix}
a & b\\
b & c
\end{bmatrix}$ is symmetric positive definite. We set $d = ac - b^2$; an orthonormal basis $\mathscr{B} = \left\lbrace v_1, v_2, v_3, v_4\right\rbrace$ of the Lorentzian Lie algebra $\left(\mathfrak{g}_4, S_A^{+}\right)$ is given by the following
$$v_1 = e_1, \qquad v_2 = \frac{1}{\sqrt{a}}e_3, \qquad v_3 = \frac{-b}{\sqrt{ad}}e_3 + \sqrt{\frac{a}{d}}e_4, \qquad v_4 = e_2.$$
The commutators of the elements of $\mathscr{B}$ are described by
$$[v_1, v_2] = \frac{\sqrt{d}}{a}v_3 + \frac{b}{a}v_2, \qquad [v_1, v_3] = \frac{-b}{a}v_3 - \frac{b^2}{a\sqrt{d}}v_2, \qquad [v_1, v_4] = \sqrt{a}v_2.$$
By \cite{aitbenhaddou2026lorentzian}, the Levi-Civita connection of $\left(\mathfrak{g}_4, S_A^{+}\right)$ is given by
\begin{multicols}{2}
$\nabla_{v_1} v_1 = \nabla_{v_4} v_4 = 0$\par
$\nabla_{v_2} v_2 = -\nabla_{v_3} v_3 = \frac{b}{a}v_1$\par
$\nabla_{v_1} v_2 = \frac{b^2 + d}{2a\sqrt{d}}v_3 + \frac{\sqrt{a}}{2}v_4$\par
$\nabla_{v_2} v_1 = \frac{-b}{a}v_2 + \frac{b^2 - d}{2a\sqrt{d}}v_3 + \frac{\sqrt{a}}{2}v_4$\par
$\nabla_{v_1} v_3 = \frac{-b^2 - d}{2a\sqrt{d}}v_2$\par
$\nabla_{v_3} v_1 = \frac{b^2 - d}{2a\sqrt{d}}v_2 + \frac{b}{a}v_3$\par
$\nabla_{v_1} v_4 = -\nabla_{v_4} v_1 = \frac{\sqrt{a}}{2}v_2 $\par
$\nabla_{v_2} v_3 = \nabla_{v_3} v_2 = \frac{-b^2 + d}{2a\sqrt{d}}v_1 $\par
$\nabla_{v_2} v_4 = \nabla_{v_4} v_2 = \frac{\sqrt{a}}{2}v_1 $\par
$\nabla_{v_3} v_4 = \nabla_{v_4} v_3 = 0$
\end{multicols}
The nonzero terms in $\nabla_{v_i} (J)v_j$ are
$$\nabla_{v_1} (J)v_2 = \nabla_{v_2} (J)v_1 = \sqrt{a}v_4, \qquad \nabla_{v_1} (J)v_4 = -\sqrt{a}v_2, \qquad \nabla_{v_2} (J)v_4 = -\sqrt{a}v_1.$$
The canonical connection $\nabla^0$ of $\left(\mathfrak{g}_4, S_A^{+}\right)$ is given by
\begin{multicols}{3}
	$\nabla^0_{v_1} v_1 = 0$\par
	$\nabla^0_{v_1} v_2 = \frac{b^2 + d}{2a\sqrt{d}}v_3$\par
	$\nabla^0_{v_1} v_3 = \frac{-b^2 - d}{2a\sqrt{d}}v_2$\par
	$\nabla^0_{v_1} v_4 = 0$\par
	$\nabla^0_{v_2} v_1 = \frac{-b}{a}v_2 + \frac{b^2 - d}{2a\sqrt{d}}v_3$\par
	$\nabla^0_{v_2} v_2 = \frac{b}{a}v_1$\par
	$\nabla^0_{v_2} v_3 = \frac{-b^2 + d}{2a\sqrt{d}}v_1$\par
	$\nabla^0_{v_2} v_4 = 0$\par
	$\nabla^0_{v_3} v_1 = \frac{b^2 - d}{2a\sqrt{d}}v_2 + \frac{b}{a}v_3$\par
	$\nabla^0_{v_3} v_2 = \frac{-b^2 + d}{2a\sqrt{d}}v_1$\par
	$\nabla^0_{v_3} v_3 = \frac{-b}{a}v_1$\par
	$\nabla^0_{v_3} v_4 = 0$\par
	$\nabla^0_{v_4} v_1 = \frac{-\sqrt{a}}{2}v_2$\par
	$\nabla^0_{v_4} v_2 = \frac{\sqrt{a}}{2}v_1$\par
	$\nabla^0_{v_4} v_3 = 0$\par
	$\nabla^0_{v_4} v_4 = 0$
\end{multicols}
In order to find to Ricci operator associated with $\nabla^0$, we need the following curvatures
\begin{multicols}{2}
	$R^0_{v_2v_1}v_2 = \frac{b^4 - 2b^2d - 3d^2}{4a^2d}v_1$\par
	$R^0_{v_3v_1}v_3 = \frac{-3b^4 - 2b^2d + d^2}{4a^2d}v_1$\par
	$R^0_{v_4v_1}v_4 = 0$\par
	$R^0_{v_1v_2}v_1 = \frac{-2b^2d - 3d^2 + b^4}{4a^2d}v_2 + \frac{b^3 + bd}{a^2\sqrt{d}}v_3$\par
	$R^0_{v_3v_2}v_3 = \frac{b^4 + 2b^2d + d^2}{4a^2d}v_2$\par
	$R^0_{v_4v_2}v_4 = 0$\par
	$R^0_{v_1v_3}v_1 = \frac{b^3 + bd}{a^2\sqrt{d}}v_2 + \left(\frac{-3b^4 + 2b^2d + d^2}{4a^2d} - \frac{b^2}{a^2}\right)v_3$\par
	$R^0_{v_2v_3}v_2 = \left(\frac{b^4 - 2b^2d + d^2}{4a^2d} + \frac{b^2}{a^2}\right)v_3$\par
	$R^0_{v_4v_3}v_4 = 0$\par
	$R^0_{v_1v_4}v_1 = \frac{-b}{\sqrt{a}}v_2 + \frac{3b^2 - d}{4\sqrt{ad}}v_3$\par
	$R^0_{v_2v_4}v_2 = \frac{d - b^2}{4\sqrt{ad}}v_3$\par
	$R^0_{v_3v_4}v_3 = \frac{b}{2\sqrt{a}}v_2$
\end{multicols}
We recall that $d = ac - b^2$. The Ricci operator $\operatorname{Ric}^0$ of $\nabla^0$ is represented in the basis $\mathscr{B}$ by the following matrix
$$\operatorname{Ric}^0 = \begin{bmatrix}
\frac{-c^2}{2d} & 0 & 0 & 0\\
\\
0 & \frac{c(b^2 - d)}{2ad} & \frac{bc}{a\sqrt{d}} & \frac{-b}{2\sqrt{a}}\\
\\
0 & \frac{bc}{a\sqrt{d}} & \frac{c(d - b^2)}{2ad} & \frac{b^2}{2\sqrt{ad}}\\
\\
0 & 0 & 0 & 0
\end{bmatrix}.$$
\begin{theorem}\label{th1}
$\left(\mathfrak{g}_4, S_A^{+}, J\right)$ does not exhibit the properties of a first kind algebraic Ricci soliton associated to the connection $\nabla^0$.
\end{theorem}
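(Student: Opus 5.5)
The plan is to argue by contradiction, as in the previous theorems. Suppose $\operatorname{Ric}^0 = \kappa I_4 + D$ for some real number $\kappa$ and some derivation $D$ of $\mathfrak{g}_4$; I write $\kappa$ for the constant because $c$ already denotes an entry of $A$. Then $D = \operatorname{Ric}^0 - \kappa I_4$ is read off from the matrix above, with $D(v_j)=\sum_i D_{ij}v_i$ given by the $j$-th column. Concretely,
$$D(v_1) = \Big(\tfrac{-c^2}{2d}-\kappa\Big)v_1, \qquad D(v_4) = \tfrac{-b}{2\sqrt{a}}v_2 + \tfrac{b^2}{2\sqrt{ad}}v_3 - \kappa v_4,$$
and $D(v_2)$, $D(v_3)$ are taken from the middle columns. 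I will test the derivation identity only on the three nonvanishing brackets $[v_1,v_2]$, $[v_1,v_3]$, $[v_1,v_4]$. Throughout, the key facts are $d = ac-b^2>0$ and $c>0$, which hold because $A$ is positive definite.

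\emph{Step 1: the identity on $(v_1,v_4)$ forces $b=0$.} Expand $[D(v_1),v_4]+[v_1,D(v_4)] = \sqrt{a}\,D(v_2)$ using the brackets in $\mathscr{B}$. The $v_3$-component on the left comes only from $[v_1,D(v_4)]$:
$$-\tfrac{b}{2\sqrt{a}}\cdot\tfrac{\sqrt{d}}{a} - \tfrac{b^2}{2\sqrt{ad}}\cdot\tfrac{b}{a} = -\tfrac{b(d+b^2)}{2a\sqrt{ad}} = -\tfrac{bc}{2\sqrt{ad}},$$
where the last equality uses $d+b^2=ac$. The $v_3$-component on the right is $\sqrt{a}\cdot\tfrac{bc}{a\sqrt{d}} = \tfrac{bc}{\sqrt{ad}}$. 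Equating the two gives $\tfrac{3bc}{2\sqrt{ad}}=0$. Since $c>0$, this yields $b=0$.

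\emph{Step 2: with $b=0$, the same identity forces $\kappa=0$.} Now $d=ac$, so $\operatorname{Ric}^0$ becomes diagonal:
$$\operatorname{Ric}^0 = \operatorname{diag}\Big(-\tfrac{c}{2a},\, -\tfrac{c}{2a},\, \tfrac{c}{2a},\, 0\Big).$$
The $v_2$-component of the $(v_1,v_4)$ identity reads $\sqrt{a}\big(-\tfrac{c}{2a}-2\kappa\big) = \sqrt{a}\big(-\tfrac{c}{2a}-\kappa\big)$, which gives $\kappa=0$.

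\emph{Step 3: the identity on $(v_1,v_2)$ gives the contradiction.} With $b=0$ and $\kappa=0$ we have $[v_1,v_2] = \tfrac{\sqrt{d}}{a}v_3$ and $D=\operatorname{Ric}^0$. The identity $[D(v_1),v_2]+[v_1,D(v_2)] = \tfrac{\sqrt{d}}{a}D(v_3)$ then becomes
$$-\tfrac{c}{a}\cdot\tfrac{\sqrt{d}}{a}\,v_3 = \tfrac{c}{2a}\cdot\tfrac{\sqrt{d}}{a}\,v_3.$$
This forces $c=0$, contradicting $c>0$.

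The main obstacle is Step 1: the $v_3$-coefficient must be computed carefully, in particular the simplification $d+b^2=ac$. That simplification is what makes the coefficient factor as a nonzero multiple of $bc$; a sign slip there would lose the conclusion $b=0$. Once $b=0$, every matrix involved is diagonal and the remaining checks are immediate.
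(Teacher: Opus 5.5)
Your proposal is correct and follows the paper's proof step for step. The $v_3$-component of the $(v_1,v_4)$ derivation identity gives $bc=0$, hence $b=0$. The $v_2$-component of the same identity then gives $\kappa=0$ (the paper's $\eta=0$). Finally, the $(v_1,v_2)$ identity with $D=\operatorname{Ric}^0$ diagonal forces $c=0$, contradicting the positive definiteness of $A$.
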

\begin{proof}
Assume that $\left(\mathfrak{g}_4, S_A^{+}, J\right)$ is a first kind algebraic Ricci soliton associated to $\nabla^0$, then there exists a real number $\eta$ and a derivation $D$ of $\mathfrak{g}_4$ such that $\operatorname{Ric}^0 = \eta I_4 + D$. Then we obtain that 
$$D = \operatorname{Ric}^0 - \eta I_4 = \begin{bmatrix}
\frac{-c^2}{2d} - \eta & 0 & 0 & 0\\
\\
0 & \frac{c(b^2 - d)}{2ad} - \eta & \frac{bc}{a\sqrt{d}} & \frac{-b}{2\sqrt{a}}\\
\\
0 & \frac{bc}{a\sqrt{d}} & \frac{c(d - b^2)}{2ad} - \eta & \frac{b^2}{2\sqrt{ad}}\\
\\
0 & 0 & 0 & - \eta
\end{bmatrix}.$$
Given that $D$ is a derivation of $\mathfrak{g}_4$, it satisfies the following condition
\begin{eqnarray*}
& [D(v_1), v_4] + [v_1, D(v_4)] = D[v_1, v_4] \\ \Leftrightarrow & \left( \frac{- c^2}{2d} - \eta\right)\left[v_1, v_4\right] + \left[ v_1, \frac{-b}{2\sqrt{a}}v_2 + \frac{b^2}{2\sqrt{ad}}v_3 - \eta v_4\right] = \sqrt{a}D(v_2)\\
\Leftrightarrow & \left( \frac{-c^2}{2d} - 2\eta\right)\left[v_1, v_4\right] - \frac{b}{2\sqrt{a}} \left[ v_1, v_2\right] + \frac{b^2}{2\sqrt{ad}}\left[v_1, v_3\right] = \sqrt{a}D(v_2)
\end{eqnarray*}
By carefully developing the last equation above, we obtain the following equality
\begin{eqnarray*}
& \left(\frac{-c^2}{2d} - 2\eta\right)\sqrt{a}v_2 - \frac{b\sqrt{d}}{2a\sqrt{a}}v_3 - \frac{b^2}{2a\sqrt{a}}v_2 - \frac{b^3}{2a\sqrt{ad}}v_3 - \frac{b^4}{2a\sqrt{a}d}v_2 \\
& = \left(\frac{c(b^2 - d)}{2ad} - \eta\right)\sqrt{a}v_2 + \frac{bc}{\sqrt{ad}}v_3
\end{eqnarray*}
By identifying the components $v_3$, we get
\begin{eqnarray*}
\frac{-b\sqrt{d}}{2a\sqrt{a}} - \frac{b^3}{2a\sqrt{ad}} = \frac{bc}{\sqrt{ad}} &\Leftrightarrow& \frac{-bd - b^3}{2a\sqrt{ad}} = \frac{2abc}{2a\sqrt{ad}}\\
&\Leftrightarrow& -bd - b^3 = 2abc\\
&\Leftrightarrow& -b(ac - b^2) - b^3 = 2abc\\
&\Leftrightarrow& -abc = 2abc \Leftrightarrow abc = 0
\end{eqnarray*}
Since $A = \begin{bmatrix}
a & b\\
b & c
\end{bmatrix}$ is symmetric positive definite, we have $a > 0$ and $c \neq 0$. Thus $b = 0$. By using this fact and by identifying the components of $v_2$, we get
$$\frac{-c}{2a} - 2\eta = \frac{-c}{2a} - \eta \Longrightarrow \eta = 0.$$
Hence the derivation $D$ becomes
$$D = \operatorname{Ric}^0 = \operatorname{diag}\left\lbrace \frac{-c}{2a}, \frac{-c}{2a}, \frac{c}{2a}, 0\right\rbrace.$$
Next, $D$ satisfies the following condition
\begin{eqnarray*}
[D(v_1), v_2] + [v_1, D(v_2)] = D[v_1, v_2] &\Leftrightarrow & \frac{-c}{2a}[v_1, v_2] - \frac{c}{2a}[v_1, v_2] = D\left(\sqrt{\frac{c}{a}}v_3\right)\\
&\Leftrightarrow & \frac{-c}{a}[v_1, v_2] = \sqrt{\frac{c}{a}}D(v_3)\\ 
&\Leftrightarrow& \frac{-c}{a}\sqrt{\frac{c}{a}}v_3 = \sqrt{\frac{c}{a}}\frac{c}{2a}v_3\\
&\Leftrightarrow& \frac{-c}{a} = \frac{c}{2a} \Leftrightarrow c = 0
\end{eqnarray*}
This is a contradiction.
\end{proof}
We can see that $\widetilde{\operatorname{Ric}}^0$ is represented in the basis $\mathscr{B}$ by the following matrix
$$\widetilde{\operatorname{Ric}}^0 = \begin{bmatrix}
\frac{-c^2}{2d} & 0 & 0 & 0\\
\\
0 & \frac{c(b^2 - d)}{2ad} & \frac{bc}{a\sqrt{d}} & \frac{-b}{4\sqrt{a}}\\
\\
0 & \frac{bc}{a\sqrt{d}} & \frac{c(d - b^2)}{2ad} & \frac{b^2}{4\sqrt{ad}}\\
\\
0 & \frac{b}{4\sqrt{a}} & \frac{-b^2}{4\sqrt{ad}} & 0
\end{bmatrix}.$$
\begin{theorem}
$\left(\mathfrak{g}_4, S_A^{+}, J\right)$ does not qualify as a second kind algebraic Ricci soliton associated to the connection $\nabla^0$.
\end{theorem}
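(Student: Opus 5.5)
The plan is to mirror the proof of Theorem~\ref{th1}. Assume there exist $\eta \in \mathbb{R}$ and a derivation $D$ of $\mathfrak{g}_4$ with $\widetilde{\operatorname{Ric}}^0 = \eta I_4 + D$, so that $D = \widetilde{\operatorname{Ric}}^0 - \eta I_4$. Reading off the columns of the matrix above, I expect
\[
D(v_1) = \Bigl(\tfrac{-c^2}{2d} - \eta\Bigr)v_1, \qquad D(v_2) = \Bigl(\tfrac{c(b^2 - d)}{2ad} - \eta\Bigr)v_2 + \tfrac{bc}{a\sqrt{d}}v_3 + \tfrac{b}{4\sqrt{a}}v_4,
\]
\[
D(v_4) = \tfrac{-b}{4\sqrt{a}}v_2 + \tfrac{b^2}{4\sqrt{ad}}v_3 - \eta v_4 .
\]
The aim is to use only the few derivation identities involving $v_1$. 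Brackets among $v_2, v_3, v_4$ vanish, and the images of $v_2, v_3, v_4$ under $D$ have no $v_1$-component, so those identities are automatically satisfied and give nothing.

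\textbf{Step 1: force $b = 0$ from $(v_1, v_4)$.} Since $[v_1, v_4] = \sqrt{a}\,v_2$, the identity $[D(v_1), v_4] + [v_1, D(v_4)] = \sqrt{a}\,D(v_2)$ has a left-hand side lying in $\operatorname{Span}(v_2, v_3)$. This is because $\operatorname{ad}_{v_1}$ maps into $\operatorname{Span}(v_2, v_3)$ by the bracket table. The right-hand side, however, contains the term $\sqrt{a}\cdot\frac{b}{4\sqrt{a}}v_4 = \frac{b}{4}v_4$. Comparing $v_4$-components therefore gives $b = 0$ immediately.

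This is the key difference from Theorem~\ref{th1}. There the entry $(4,2)$ of $\operatorname{Ric}^0$ vanished and one had to use the $v_3$-component instead. Here the antisymmetrized entry $\frac{b}{4\sqrt{a}}$ in row $4$ does the job directly.

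\textbf{Step 2: force $\eta = 0$.} With $b = 0$ we have $d = ac$, $\frac{c^2}{2d} = \frac{c}{2a}$, and $\widetilde{\operatorname{Ric}}^0 = \operatorname{diag}\{\frac{-c}{2a}, \frac{-c}{2a}, \frac{c}{2a}, 0\}$. This is the same diagonal matrix as in the first-kind case. The $v_2$-components of the same $(v_1, v_4)$ identity give $\frac{-c}{2a} - 2\eta = \frac{-c}{2a} - \eta$, hence $\eta = 0$.

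\textbf{Step 3: contradiction.} Apply the identity for the pair $(v_1, v_2)$, where $[v_1, v_2] = \sqrt{\frac{c}{a}}\,v_3$ when $b = 0$, exactly as in the proof of Theorem~\ref{th1}. It yields $\frac{-c}{a} = \frac{c}{2a}$, so $c = 0$. This contradicts the positive definiteness of $A$, which forces $c > 0$.

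The only delicate point is the bookkeeping in Step 1: one must check that $[v_1, D(v_4)]$ and $[D(v_1), v_4]$ really produce no $v_4$-component. This follows from the brackets $[v_1, v_2]$, $[v_1, v_3]$, $[v_1, v_4]$ all lying in $\operatorname{Span}(v_2, v_3)$.
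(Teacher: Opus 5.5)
Your proposal is correct and follows the paper's proof essentially verbatim. The paper also compares $v_4$-components in the $(v_1,v_4)$ derivation identity to get $b=0$, then defers to the argument of Theorem~\ref{th1} to obtain $\eta=0$ and $c=0$. Your coefficient $\frac{b}{4}$ for that $v_4$-term is the accurate one (the paper writes $\frac{b}{4\sqrt{a}}$), though either way it forces $b=0$.
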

\begin{proof}
Assume that $\widetilde{\operatorname{Ric}}^0 = \eta I_4 + D$, where $\eta \in \mathbb{R}$ and $D$ is a derivation of $\mathfrak{g}_4$. This implies that
$$D = \widetilde{\operatorname{Ric}}^0 - \eta I_4 = \begin{bmatrix}
\frac{-c^2}{2d} - \eta & 0 & 0 & 0\\
\\
0 & \frac{c(b^2 - d)}{2ad} - \eta & \frac{bc}{a\sqrt{d}} & \frac{-b}{4\sqrt{a}}\\
\\
0 & \frac{bc}{a\sqrt{d}} & \frac{c(d - b^2)}{2ad} - \eta & \frac{b^2}{4\sqrt{ad}}\\
\\
0 & \frac{b}{4\sqrt{a}} & \frac{-b^2}{4\sqrt{ad}} & - \eta
\end{bmatrix}.$$
Since $D$ is a derivation of $\mathfrak{g}_4$, the rule below holds
\begin{eqnarray*}
& [D(v_1), v_4] + [v_1, D(v_4)] = D[v_1, v_4] \\ \Leftrightarrow & \left( \frac{- c^2}{2d} - \eta\right)\left[v_1, v_4\right] + \left[ v_1, \frac{-b}{4\sqrt{a}}v_2 + \frac{b^2}{4\sqrt{ad}}v_3 - \eta v_4\right] = \sqrt{a}D(v_2)\\
\Leftrightarrow & \left( \frac{-c^2}{2d} - 2\eta\right)\left[v_1, v_4\right] - \frac{b}{4\sqrt{a}} \left[ v_1, v_2\right] + \frac{b^2}{4\sqrt{ad}}\left[v_1, v_3\right] = \sqrt{a}D(v_2)
\end{eqnarray*}
By carefully developing the last equation above, we obtain the following equality
\begin{eqnarray*}
& \left(\frac{-c^2}{2d} - 2\eta\right)\sqrt{a}v_2 - \frac{b\sqrt{d}}{4a\sqrt{a}}v_3 - \frac{b^2}{4a\sqrt{a}}v_2 - \frac{b^3}{4a\sqrt{ad}}v_3 - \frac{b^4}{4a\sqrt{a}d}v_2 \\
& = \left(\frac{c(b^2 - d)}{2ad} - \eta\right)\sqrt{a}v_2 + \frac{bc}{\sqrt{ad}}v_3 + \frac{b}{4\sqrt{a}}v_4
\end{eqnarray*}
By identifying the components $v_4$, we get $b = 0$. Now, the same reasoning as in the previous theorem shows that $\eta = 0$ and $c = 0$, which is a contradiction.
\end{proof}
The Kobayashi-Nomizu connection $\nabla^1$ of $(\mathfrak{g}_4, S_A^{+}, J)$ is given by
\begin{multicols}{3}
	$\nabla^1_{v_1} v_1 = 0$\par
	$\nabla^1_{v_1} v_2 = \frac{b^2 + d}{2a\sqrt{d}}v_3$\par
	$\nabla^1_{v_1} v_3 = \frac{-b^2 - d}{2a\sqrt{d}}v_2$\par
	$\nabla^1_{v_1} v_4 = 0$\par
	$\nabla^1_{v_2} v_1 = \frac{-b}{a}v_2 + \frac{b^2 - d}{2a\sqrt{d}}v_3$\par
	$\nabla^1_{v_2} v_2 = \frac{b}{a}v_1$\par
	$\nabla^1_{v_2} v_3 = \frac{-b^2 + d}{2a\sqrt{d}}v_1$\par
	$\nabla^1_{v_2} v_4 = 0$\par
	$\nabla^1_{v_3} v_1 = \frac{b^2 - d}{2a\sqrt{d}}v_2 + \frac{b}{a}v_3$\par
	$\nabla^1_{v_3} v_2 = \frac{-b^2 + d}{2a\sqrt{d}}v_1$\par
	$\nabla^1_{v_3} v_3 = \frac{-b}{a}v_1$\par
	$\nabla^1_{v_3} v_4 = 0$\par
	$\nabla^1_{v_4} v_1 = -\sqrt{a}v_2$\par
	$\nabla^1_{v_4} v_2 = 0$\par
	$\nabla^1_{v_4} v_3 = 0$\par
	$\nabla^1_{v_4} v_4 = 0$
\end{multicols}
In order to find to Ricci operator associated with $\nabla^1$, we need the following curvatures (note that the calculations are close to those related to $\nabla^0$, the only terms that changed are $\nabla^1_{v_4}v_1$ and $\nabla^1_{v_4}v_2$, so the only curvatures that are changed are the last three curvatures)
\begin{multicols}{2}
	$R^1_{v_2v_1}v_2 = \frac{b^4 - 2b^2d - 3d^2}{4a^2d}v_1$\par
	$R^1_{v_3v_1}v_3 = \frac{-3b^4 - 2b^2d + d^2}{4a^2d}v_1$\par
	$R^1_{v_4v_1}v_4 = 0$\par
	$R^1_{v_1v_2}v_1 = \frac{-2b^2d - 3d^2 + b^4}{4a^2d}v_2 + \frac{b^3 + bd}{a^2\sqrt{d}}v_3$\par
	$R^1_{v_3v_2}v_3 = \frac{b^4 + 2b^2d + d^2}{4a^2d}v_2$\par
	$R^1_{v_4v_2}v_4 = 0$\par
	$R^1_{v_1v_3}v_1 = \frac{b^3 + bd}{a^2\sqrt{d}}v_2 + \left(\frac{-3b^4 + 2b^2d + d^2}{4a^2d} - \frac{b^2}{a^2}\right)v_3$\par
	$R^1_{v_2v_3}v_2 = \left(\frac{b^4 - 2b^2d + d^2}{4a^2d} + \frac{b^2}{a^2}\right)v_3$\par
	$R^1_{v_4v_3}v_4 = 0$\par
	$R^1_{v_1v_4}v_1 = \frac{-b}{\sqrt{a}}v_2 + \frac{b^2}{\sqrt{ad}}v_3$\par
	$R^1_{v_2v_4}v_2 = \frac{-b}{\sqrt{a}}v_2$\par
	$R^1_{v_3v_4}v_3 = \frac{b}{\sqrt{a}}v_2$
\end{multicols}
The Ricci operator $\operatorname{Ric}^1$ of $\nabla^1$ is represented in the basis $\mathscr{B}$ by the following matrix
$$\operatorname{Ric}^1 = \begin{bmatrix}
\frac{-c^2}{2d} & 0 & 0 & 0\\
\\
0 & \frac{c(b^2 - d)}{2ad} & \frac{bc}{a\sqrt{d}} & \frac{-b}{\sqrt{a}}\\
\\
0 & \frac{bc}{a\sqrt{d}} & \frac{c(d - b^2)}{2ad} & \frac{b^2}{\sqrt{ad}}\\
\\
0 & 0 & 0 & 0
\end{bmatrix}.$$
\begin{theorem}
$\left(\mathfrak{g}_4, S_A^{+}, J\right)$ is not a first kind algebraic Ricci soliton associated to the connection $\nabla^1$.
\end{theorem}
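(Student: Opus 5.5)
We argue by contradiction, following the proof of Theorem \ref{th1}. Suppose $\operatorname{Ric}^1 = \eta I_4 + D$ with $\eta \in \mathbb{R}$ and $D$ a derivation of $\mathfrak{g}_4$. Then $D = \operatorname{Ric}^1 - \eta I_4$ differs from the matrix used in Theorem \ref{th1} only in the fourth column: its entries are now $\frac{-b}{\sqrt{a}}$ and $\frac{b^2}{\sqrt{ad}}$ instead of $\frac{-b}{2\sqrt{a}}$ and $\frac{b^2}{2\sqrt{ad}}$. Since that column enters only through $D(v_4)$, the argument of Theorem \ref{th1} should go through with modified constants.

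\textbf{Step 1.} Test the derivation identity on the pair $(v_1, v_4)$, using $[v_1,v_4]=\sqrt{a}\,v_2$, $D(v_4) = \frac{-b}{\sqrt{a}}v_2 + \frac{b^2}{\sqrt{ad}}v_3 - \eta v_4$ and the brackets $[v_1,v_2]$, $[v_1,v_3]$. The left side is
$$\left(\tfrac{-c^2}{2d}-2\eta\right)\sqrt{a}\,v_2 - \tfrac{b}{\sqrt a}[v_1,v_2] + \tfrac{b^2}{\sqrt{ad}}[v_1,v_3],$$
and it must equal $\sqrt{a}\,D(v_2)$.

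\textbf{Step 2.} Compare the $v_3$-components. The left side gives $-\frac{b\sqrt d}{a\sqrt a} - \frac{b^3}{a\sqrt{ad}} = -\frac{b(d+b^2)}{a\sqrt{ad}} = -\frac{bc}{\sqrt{ad}}$, using $d+b^2 = ac$. The right side gives $\frac{bc}{\sqrt{ad}}$. Hence $2bc = 0$. Positive definiteness of $A$ gives $a>0$ and $c>0$, so $b = 0$.

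\textbf{Step 3.} With $b=0$ we have $d = ac$ and $\operatorname{Ric}^1 = \operatorname{diag}\{\frac{-c}{2a}, \frac{-c}{2a}, \frac{c}{2a}, 0\}$, which is exactly the matrix $\operatorname{Ric}^0$ of Theorem \ref{th1} at $b=0$.
\begin{itemize}
\item The $v_2$-components in Step 1 give $-\frac{c}{2a} - 2\eta = -\frac{c}{2a} - \eta$, so $\eta = 0$.
\item Then apply the derivation condition to $(v_1,v_2)$, where $[v_1,v_2] = \sqrt{c/a}\,v_3$. This yields $-\frac{c}{a} = \frac{c}{2a}$, so $c=0$, contradicting $c>0$.
\end{itemize}

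The only real obstacle is the bookkeeping in Step 2: the factor $2$ change in $D(v_4)$ must be tracked exactly, and the identity $d+b^2 = ac$ must be used to collapse the $v_3$-coefficient. A sign error there would lose the conclusion $b=0$. If the $v_3$ comparison somehow degenerated, the fallback is to use the $(v_1,v_3)$ identity as well and eliminate $\eta$.
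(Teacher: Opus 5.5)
Your proposal is correct and follows essentially the same route as the paper's proof. You test the derivation identity on $(v_1,v_4)$ and compare $v_3$-components, simplifying with $d+b^2=ac$, which forces $bc=0$ (the paper writes $abc=0$) and hence $b=0$; the argument of Theorem~\ref{th1} then gives $\eta=0$ and $c=0$, a contradiction, and you correctly spell out this last step, which the paper only cites.
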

\begin{proof}
Suppose that $\operatorname{Ric}^1 = \eta I_4 + D$, where $\eta \in \mathbb{R}$ and $D$ is a derivation of $\mathfrak{g}_4$. Then we obtain that 
$$D = \operatorname{Ric}^1 - \eta I_4 = \begin{bmatrix}
\frac{-c^2}{2d} - \eta & 0 & 0 & 0\\
\\
0 & \frac{c(b^2 - d)}{2ad} - \eta & \frac{bc}{a\sqrt{d}} & \frac{-b}{\sqrt{a}}\\
\\
0 & \frac{bc}{a\sqrt{d}} & \frac{c(d - b^2)}{2ad} - \eta & \frac{b^2}{\sqrt{ad}}\\
\\
0 & 0 & 0 & - \eta
\end{bmatrix}.$$
Given that $D$ is a derivation of $\mathfrak{g}_4$, it satisfies the following condition
\begin{eqnarray*}
& [D(v_1), v_4] + [v_1, D(v_4)] = D[v_1, v_4] \\ \Leftrightarrow & \left( \frac{- c^2}{2d} - \eta\right)\left[v_1, v_4\right] + \left[ v_1, \frac{-b}{\sqrt{a}}v_2 + \frac{b^2}{\sqrt{ad}}v_3 - \eta v_4\right] = \sqrt{a}D(v_2)\\
\Leftrightarrow & \left( \frac{-c^2}{2d} - 2\eta\right)\left[v_1, v_4\right] - \frac{b}{\sqrt{a}} \left[ v_1, v_2\right] + \frac{b^2}{\sqrt{ad}}\left[v_1, v_3\right] = \sqrt{a}D(v_2)
\end{eqnarray*}
By carefully developing the last equation above, we obtain the following equality
\begin{eqnarray*}
& \left(\frac{-c^2}{2d} - 2\eta\right)\sqrt{a}v_2 - \frac{b\sqrt{d}}{a\sqrt{a}}v_3 - \frac{b^2}{a\sqrt{a}}v_2 - \frac{b^3}{a\sqrt{ad}}v_3 - \frac{b^4}{a\sqrt{a}d}v_2 \\
& = \left(\frac{c(b^2 - d)}{2ad} - \eta\right)\sqrt{a}v_2 + \frac{bc}{\sqrt{ad}}v_3
\end{eqnarray*}
By identifying the components $v_3$, we get
\begin{eqnarray*}
\frac{-b\sqrt{d}}{a\sqrt{a}} - \frac{b^3}{a\sqrt{ad}} = \frac{bc}{\sqrt{ad}} &\Leftrightarrow& \frac{-bd - b^3}{a\sqrt{ad}} = \frac{abc}{a\sqrt{ad}}\\
&\Leftrightarrow& -bd - b^3 = abc\\
&\Leftrightarrow& -b(ac - b^2) - b^3 = abc\\
&\Leftrightarrow& -abc = abc \Leftrightarrow abc = 0
\end{eqnarray*}
Since $A = \begin{bmatrix}
a & b\\
b & c
\end{bmatrix}$ is symmetric positive definite, we have $a > 0$ and $c \neq 0$. Thus $b = 0$. Next, the same reasoning as in theorem \ref{th1} shows that $\eta = 0$ and $c = 0$, which is a contradiction.
\end{proof}
We obtain that $\widetilde{\operatorname{Ric}}^1$ is represented in the basis $\mathscr{B}$ by
$$\widetilde{\operatorname{Ric}}^1 = \begin{bmatrix}
\frac{-c^2}{2d} & 0 & 0 & 0\\
\\
0 & \frac{c(b^2 - d)}{2ad} & \frac{bc}{a\sqrt{d}} & \frac{-b}{2\sqrt{a}}\\
\\
0 & \frac{bc}{a\sqrt{d}} & \frac{c(d - b^2)}{2ad} & \frac{b^2}{2\sqrt{ad}}\\
\\
0 & \frac{b}{2\sqrt{a}} & \frac{-b^2}{2\sqrt{ad}} & 0
\end{bmatrix}.$$
\begin{theorem}
$\left(\mathfrak{g}_4, S_A^{+}, J\right)$ is not a second kind algebraic Ricci soliton associated to the connection $\nabla^1$.
\end{theorem}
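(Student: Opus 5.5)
We argue by contradiction, following the pattern of the second kind result for $\nabla^0$ in the case $S_A^{+}$. Suppose $\widetilde{\operatorname{Ric}}^1 = \eta I_4 + D$ for some $\eta \in \mathbb{R}$ and some derivation $D$ of $\mathfrak{g}_4$. Then $D = \widetilde{\operatorname{Ric}}^1 - \eta I_4$, i.e. the matrix displayed just before the statement with $\eta$ subtracted on the diagonal. The only nonzero entries of $D$ outside the central $2\times2$ block are in the fourth column, $\frac{-b}{2\sqrt{a}}$ and $\frac{b^2}{2\sqrt{ad}}$, and in the fourth row, $\frac{b}{2\sqrt{a}}$ and $\frac{-b^2}{2\sqrt{ad}}$.

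\textbf{Step 1: $b = 0$.} We test the derivation identity on the pair $(v_1, v_4)$:
$$[D(v_1), v_4] + [v_1, D(v_4)] = D[v_1, v_4] = \sqrt{a}\,D(v_2).$$
Since $D(v_1)$ is a multiple of $v_1$, and brackets with $v_1$ land in $\operatorname{Span}(v_2, v_3)$, the left-hand side has no $v_4$-component. On the right-hand side, $\sqrt{a}\,D(v_2)$ has $v_4$-component $\sqrt{a}\cdot\frac{b}{2\sqrt{a}} = \frac{b}{2}$. Comparing $v_4$-components therefore forces $b = 0$. This is exactly the mechanism used in the analogous $\nabla^0$ theorem, with the coefficient $\frac14$ replaced by $\frac12$.

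\textbf{Step 2: $\eta = 0$.} With $b = 0$ we have $d = ac$, and $D$ becomes diagonal:
$$D = \operatorname{diag}\left\{ \frac{-c}{2a} - \eta,\ \frac{-c}{2a} - \eta,\ \frac{c}{2a} - \eta,\ -\eta \right\},$$
using $\frac{c^2}{2d} = \frac{c}{2a}$. The brackets reduce to $[v_1, v_2] = \sqrt{c/a}\,v_3$, $[v_1, v_3] = 0$ and $[v_1, v_4] = \sqrt{a}\,v_2$. The $(v_1, v_4)$ identity now reads
$$\left(\frac{-c}{2a} - 2\eta\right)\sqrt{a}\,v_2 = \left(\frac{-c}{2a} - \eta\right)\sqrt{a}\,v_2,$$
which gives $\eta = 0$.

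\textbf{Step 3: contradiction.} The $(v_1, v_2)$ identity gives
$$\frac{-c}{a}\sqrt{c/a}\,v_3 = \frac{c}{2a}\sqrt{c/a}\,v_3,$$
so $c = 0$. This contradicts the positive definiteness of $A$, exactly as in Theorem \ref{th1}.

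The only point requiring care is Step 1: one must check that no term on the left-hand side contributes a $v_4$-component. This holds because the first row of $D$ vanishes off the diagonal, and $v_4$ never appears in any bracket. Everything after that is a repetition of the reasoning in Theorem \ref{th1}.
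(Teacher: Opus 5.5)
Your proof is correct and is essentially the paper's argument: the $v_4$-component of the $(v_1,v_4)$ derivation identity forces $b=0$. Then, exactly as in Theorem \ref{th1}, the $v_2$-component of the same identity gives $\eta=0$, and the $(v_1,v_2)$ identity gives $c=0$, contradicting the positive definiteness of $A$. (Minor remark: the absence of a $v_4$-component on the left-hand side needs no condition on $D$ at all, since every bracket of $\mathfrak{g}_4$ lies in $\operatorname{Span}(v_2,v_3)$.)
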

\begin{proof}
Assume that $\widetilde{\operatorname{Ric}}^1 = \eta I_4 + D$, where $\eta \in \mathbb{R}$ and $D$ is a derivation of $\mathfrak{g}_4$. Thus 
$$D = \widetilde{\operatorname{Ric}}^1 - \eta I_4 = \begin{bmatrix}
\frac{-c^2}{2d} - \eta & 0 & 0 & 0\\
\\
0 & \frac{c(b^2 - d)}{2ad} - \eta & \frac{bc}{a\sqrt{d}} & \frac{-b}{2\sqrt{a}}\\
\\
0 & \frac{bc}{a\sqrt{d}} & \frac{c(d - b^2)}{2ad} - \eta & \frac{b^2}{2\sqrt{ad}}\\
\\
0 & \frac{b}{2\sqrt{a}} & \frac{-b^2}{2\sqrt{ad}} & - \eta
\end{bmatrix}.$$
As $D$ is a derivation of $\mathfrak{g}_4$, the rule below is satisfied
\begin{eqnarray*}
& [D(v_1), v_4] + [v_1, D(v_4)] = D[v_1, v_4] \\ \Leftrightarrow & \left( \frac{- c^2}{2d} - \eta\right)\left[v_1, v_4\right] + \left[ v_1, \frac{-b}{2\sqrt{a}}v_2 + \frac{b^2}{2\sqrt{ad}}v_3 - \eta v_4\right] = \sqrt{a}D(v_2)\\
\Leftrightarrow & \left( \frac{-c^2}{2d} - 2\eta\right)\left[v_1, v_4\right] - \frac{b}{2\sqrt{a}} \left[ v_1, v_2\right] + \frac{b^2}{2\sqrt{ad}}\left[v_1, v_3\right] = \sqrt{a}D(v_2)
\end{eqnarray*}
By carefully developing the last equation above, we obtain the following equality
\begin{eqnarray*}
& \left(\frac{-c^2}{2d} - 2\eta\right)\sqrt{a}v_2 - \frac{b\sqrt{d}}{2a\sqrt{a}}v_3 - \frac{b^2}{2a\sqrt{a}}v_2 - \frac{b^3}{2a\sqrt{ad}}v_3 - \frac{b^4}{2a\sqrt{a}d}v_2 \\
& = \left(\frac{c(b^2 - d)}{2ad} - \eta\right)\sqrt{a}v_2 + \frac{bc}{\sqrt{ad}}v_3 + \frac{b}{2}v_4
\end{eqnarray*}
By identifying the components $v_4$, we get $b = 0$. Now, the same reasoning as in theorem \ref{th1} shows that $\eta = 0$ and $c = 0$, which is a contradiction.
\end{proof}
\subsubsection{Case of the Lorentzian inner product $S_A^{-}$}
We consider $S_A^{-} = \begin{bmatrix}
-1 & 0 & 0 & 0\\
0 & 1 & 0 & 0\\
0 & 0 & a & b\\
0 & 0 & b & c
\end{bmatrix}$ where $A = \begin{bmatrix}
a & b\\
b & c
\end{bmatrix}$ is symmetric positive definite. We set $d = ac - b^2$; an orthonormal basis $\mathscr{B} = \left\lbrace v_1, v_2, v_3, v_4\right\rbrace$ of the Lorentzian Lie algebra $\left(\mathfrak{g}_4, S_A^{-}\right)$ is given by the following
$$v_1 = e_2, \qquad v_2 = \frac{1}{\sqrt{a}}e_3, \qquad v_3 = \frac{-b}{\sqrt{ad}}e_3 + \sqrt{\frac{a}{d}}e_4, \qquad v_4 = e_1.$$
The bracket is given in the basis $\mathscr{B}$ by
$$[v_1, v_4] = -\sqrt{a}v_2, \qquad [v_2, v_4] = \frac{-\sqrt{d}}{a}v_3 - \frac{b}{a}v_2, \qquad [v_3, v_4] = \frac{b}{a}v_3 + \frac{b^2}{a\sqrt{d}}v_2.$$
By \cite{aitbenhaddou2026lorentzian}, the Levi-Civita connection of $\left(\mathfrak{g}_4, S_A^{-}\right)$ is described by
\begin{multicols}{2}
$\nabla_{v_1} v_1 = \nabla_{v_4} v_4 = 0$\par
$\nabla_{v_2} v_2 = -\nabla_{v_3} v_3 = \frac{-b}{a}v_4$\par
$\nabla_{v_1} v_2 = \nabla_{v_2} v_1 = \frac{-\sqrt{a}}{2}v_4$\par
$\nabla_{v_1} v_3 = \nabla_{v_3} v_1 = 0$\par
$\nabla_{v_1} v_4= -\nabla_{v_4} v_1 = \frac{-\sqrt{a}}{2}v_2$\par
$\nabla_{v_2} v_3 = \nabla_{v_3} v_2 = \frac{b^2 - d}{2a\sqrt{d}}v_4$\par
$\nabla_{v_2} v_4 = \frac{-\sqrt{a}}{2}v_1 - \frac{b}{a}v_2 +  \frac{b^2 - d}{2a\sqrt{d}}v_3$\par
$\nabla_{v_4} v_2 = \frac{-\sqrt{a}}{2}v_1 + \frac{b^2 + d}{2a\sqrt{d}}v_3$\par
$\nabla_{v_3} v_4 = \frac{b^2 - d}{2a\sqrt{d}}v_2 + \frac{b}{a}v_3$\par
$\nabla_{v_4} v_3 = \frac{-b^2 - d}{2a\sqrt{d}}v_2$
\end{multicols}
The terms $\nabla_{v_i} (J)v_j$ are given by the following equalities
\begin{multicols}{2}
	$\nabla_{v_1} (J)v_1 = 0$\par
	$\nabla_{v_1} (J)v_2 = -\sqrt{a}v_4$\par
	$\nabla_{v_1} (J)v_3 = 0$\par
	$\nabla_{v_1} (J)v_4 = \sqrt{a}v_2$\par
	$\nabla_{v_2} (J)v_1 = -\sqrt{a}v_4$\par
	$\nabla_{v_2} (J)v_2 = \frac{-2b}{a}v_4$\par
	$\nabla_{v_2} (J)v_3 = \frac{b^2 - d}{a\sqrt{d}}v_4$\par
	$\nabla_{v_2} (J)v_4 = \sqrt{a}v_1 + \frac{2b}{a}v_2 -  \frac{b^2 - d}{a\sqrt{d}}v_3$\par
	$\nabla_{v_3} (J)v_1 = 0$\par
	$\nabla_{v_3} (J)v_2 = \frac{b^2 - d}{a\sqrt{d}}v_4$\par
	$\nabla_{v_3} (J)v_3 =  \frac{2b}{a}v_4$\par
	$\nabla_{v_3} (J)v_4 = \frac{d - b^2}{a\sqrt{d}}v_2 - \frac{2b}{a}v_3$\par
	$\nabla_{v_4} (J)v_1 = 0$\par
	$\nabla_{v_4} (J)v_2 = 0$\par
	$\nabla_{v_4} (J)v_3 = 0$\par
	$\nabla_{v_4} (J)v_4 = 0$\par
\end{multicols}
The nonvanishing terms in the canonical connection $\nabla^0$ of $\left(\mathfrak{g}_4, S_A^{-}\right)$ are
$$\nabla^0_{v_4} v_1 = \frac{\sqrt{a}}{2}v_2, \qquad \nabla^0_{v_4} v_2 = \frac{-\sqrt{a}}{2}v_1 + \frac{b^2 + d}{2a\sqrt{d}}v_3, \qquad \nabla^0_{v_4} v_3 = \frac{-b^2 - d}{2a\sqrt{d}}v_2.$$
\begin{theorem}\label{th2}
The Lorentzian Lie algebra $\left(\mathfrak{g}_4, S_A^{-}, J\right)$ is flat with respect to the canonical connection $\nabla^0$.
\end{theorem}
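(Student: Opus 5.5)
The plan is to exploit the very special shape of $\nabla^0$ recorded just before the statement: its only nonvanishing terms are of the form $\nabla^0_{v_4}v_j$. In other words, $\nabla^0_{v_1}=\nabla^0_{v_2}=\nabla^0_{v_3}=0$ as endomorphisms of $\mathfrak{g}_4$, and $\nabla^0_{v_4}=:L$ is the single endomorphism
$$Lv_1=\frac{\sqrt{a}}{2}v_2,\quad Lv_2=\frac{-\sqrt{a}}{2}v_1+\frac{b^2+d}{2a\sqrt{d}}v_3,\quad Lv_3=\frac{-b^2-d}{2a\sqrt{d}}v_2,\quad Lv_4=0.$$
Writing (\ref{canonicalcurvature}) in operator form, $R^0_{uv}=\nabla^0_{[u,v]}-[\nabla^0_u,\nabla^0_v]$, so it suffices to check $R^0_{v_iv_j}=0$ for $i<j$.

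First I would confirm the vanishing claims from (\ref{canonical}) together with the table of $\nabla_{v_i}(J)v_j$. These claims are what carry the whole argument. For instance,
$$\nabla^0_{v_2}v_4=\nabla_{v_2}v_4+\tfrac12(\nabla_{v_2}J)v_4,$$
since $Jv_4=-v_4$. Substituting $\nabla_{v_2}v_4=\frac{-\sqrt a}{2}v_1-\frac ba v_2+\frac{b^2-d}{2a\sqrt d}v_3$ and $(\nabla_{v_2}J)v_4=\sqrt a v_1+\frac{2b}{a}v_2-\frac{b^2-d}{a\sqrt d}v_3$ gives exactly $0$. The other entries with $i\le 3$ cancel in the same way: for $j\le3$ one has $Jv_j=v_j$, and the $v_4$-component of $\nabla_{v_i}v_j$ is removed by $\frac12(\nabla_{v_i}J)v_j$. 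I expect this bookkeeping to be the main (though routine) obstacle, since any sign slip would destroy flatness.

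Next I would split into two cases. If $i,j\in\{1,2,3\}$, the bracket table gives $[v_i,v_j]=0$, so $\nabla^0_{[v_i,v_j]}=0$; moreover both $\nabla^0_{v_i}$ and $\nabla^0_{v_j}$ vanish, so their commutator is zero and $R^0_{v_iv_j}=0$. If $j=4$ and $i\le 3$, then $[\nabla^0_{v_i},L]=[0,L]=0$. Also $[v_1,v_4]=-\sqrt a\,v_2$, $[v_2,v_4]=\frac{-\sqrt d}{a}v_3-\frac ba v_2$ and $[v_3,v_4]=\frac ba v_3+\frac{b^2}{a\sqrt d}v_2$ all lie in $\operatorname{Span}(v_1,v_2,v_3)$. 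Since $\nabla^0$ is linear in its lower slot and vanishes along this span, $\nabla^0_{[v_i,v_4]}=0$. Hence $R^0_{v_iv_4}=0$.

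By skew-symmetry in $u,v$ and bilinearity, $R^0\equiv0$, which is exactly flatness in the sense of the Definition. As a byproduct, $\operatorname{Ric}^0=\widetilde{\operatorname{Ric}}^0=0$.
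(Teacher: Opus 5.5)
Your proof is correct and is essentially the paper's argument: $\nabla^0_{v_1}=\nabla^0_{v_2}=\nabla^0_{v_3}=0$, every bracket lies in a span on which $\nabla^0$ vanishes (the paper uses $\operatorname{Span}(v_2,v_3)$), and the commutator term $[\nabla^0_{v_i},\nabla^0_{v_j}]$ could only survive if both indices were $4$, where it is trivially zero. Your explicit check that these vanishing entries of $\nabla^0$ actually follow from (\ref{canonical}) and the table of $(\nabla_{v_i}J)v_j$ is a useful addition; the paper takes it for granted.
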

\begin{proof}
The curvature tensor $R^0$ of $\nabla^0$ is given by
$$R^0_{v_iv_j}v_k = \nabla^0_{[v_i, v_j]}v_k - \nabla^0_{v_i}\nabla^0_{v_j}v_k + \nabla^0_{v_j}\nabla^0_{v_i}v_k.$$
The bracket $[v_i, v_j]$ is expressed in terms of $v_2$ and $v_3$ and we have
$$\nabla^0_{v_2}v_k = \nabla^0_{v_3}v_k = 0 \quad \forall k = 1, \ldots, 4.$$
So the first term $\nabla^0_{[v_i, v_j]}v_k$ is always zero. Next, for the term $\nabla^0_{v_j}v_k$ to be nonzero, $j$ must equal $4$. Again, for the term $\nabla^0_{v_i}\nabla^0_{v_j}v_k$ to be nonzero, $i$ must equal $4$. So, $i = j = 4$. The same reasoning applies to the other term $\nabla^0_{v_j}\nabla^0_{v_i}v_k$. But when $i = j$, we have $R^0_{v_iv_j}v_k = 0$.
\end{proof}
The nonvanishing terms in the Kobayashi-Nomizu connection $\nabla^1$ of $(\mathfrak{g}_4, S_A^{-}, J)$ are
$$\nabla^1_{v_4} v_1 = \sqrt{a}v_2, \qquad \nabla^1_{v_4} v_2 = \frac{b}{a}v_2 + \frac{\sqrt{d}}{a}v_3, \qquad \nabla^1_{v_4} v_3 = \frac{-b^2}{a\sqrt{d}}v_2 - \frac{b}{a}v_3.$$
\begin{theorem}
The Lorentzian Lie algebra $\left(\mathfrak{g}_4, S_A^{-}, J\right)$ is flat with respect to the Kobayashi-Nomizu connection $\nabla^1$.
\end{theorem}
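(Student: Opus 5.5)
The plan is to reuse verbatim the argument of Theorem \ref{th2}, which used only two structural facts: every bracket $[v_i,v_j]$ lies in $\operatorname{Span}(v_2,v_3)$, and the connection is nonzero only when differentiating in the direction $v_4$. So the first step is to confirm that the Kobayashi-Nomizu connection $\nabla^1$ of $(\mathfrak{g}_4,S_A^-,J)$ has the same support as $\nabla^0$. The listed nonvanishing terms are $\nabla^1_{v_4}v_1$, $\nabla^1_{v_4}v_2$ and $\nabla^1_{v_4}v_3$, hence
$$\nabla^1_{v_1}v_k=\nabla^1_{v_2}v_k=\nabla^1_{v_3}v_k=0,\qquad \nabla^1_{v_4}v_4=0,\qquad k=1,\ldots,4.$$
If one wants to verify this from (\ref{kobayashinomizu}), note that $\nabla^1_uv-\nabla^0_uv=-\frac14[(\nabla_vJ)Ju-(\nabla_{Jv}J)u]$. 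When $v\in\{v_1,v_2,v_3\}$ we have $Jv=v$, so the correction becomes $-\frac14(\nabla_vJ)(Ju-u)$, which vanishes unless $u=v_4$. When $v=v_4$ we have $\nabla_{v_4}J=0$ and $Jv_4=-v_4$, so that term is killed as well. Combined with the list of $\nabla^0$, this shows that only $\nabla^1_{v_4}$ is nonzero. I would only sanity-check this against the paper's list rather than recompute every entry.

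Next, expand $R^1_{v_iv_j}v_k$ via (\ref{kobayashicurvature}) and treat its three terms separately. The first term is $\nabla^1_{[v_i,v_j]}v_k$. The brackets of $S_A^-$ are $[v_1,v_4]=-\sqrt a\,v_2$, $[v_2,v_4]\in\operatorname{Span}(v_2,v_3)$ and $[v_3,v_4]\in\operatorname{Span}(v_2,v_3)$, with all other brackets zero. Since $\nabla^1_{v_2}=\nabla^1_{v_3}=0$, this term vanishes.

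The term $\nabla^1_{v_i}\nabla^1_{v_j}v_k$ can be nonzero only if $j=4$, since otherwise $\nabla^1_{v_j}v_k=0$, and only if $i=4$, since otherwise $\nabla^1_{v_i}$ annihilates everything. So it requires $i=j=4$, and symmetrically the third term also requires $i=j=4$. For $i=j$, however, $R^1_{v_iv_i}=0$ by antisymmetry. Therefore $R^1=0$ on all basis triples, and by trilinearity $R^1=0$.

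The only potential obstacle is the first step: making sure that $\nabla^1_{v_i}=0$ for $i\le 3$, in particular that the extra Kobayashi-Nomizu term does not create components such as $\nabla^1_{v_2}v_4$. The symmetry computation above handles this, so I expect the proof to be essentially a one-paragraph repetition of the proof of Theorem \ref{th2}.
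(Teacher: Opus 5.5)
Your proposal is correct and follows the paper's argument: the paper's proof just says it is similar to that of Theorem \ref{th2}. That means all brackets lie in $\operatorname{Span}(v_2,v_3)$, only $\nabla^1_{v_4}$ is nonzero, and so every term of $R^1_{v_iv_j}v_k$ forces $i=j=4$. Your derivation from (\ref{kobayashinomizu}) that the Kobayashi--Nomizu correction changes only $\nabla^1_{v_4}$ on $\operatorname{Span}(v_1,v_2,v_3)$ (using $Jv=v$ there and $\nabla_{v_4}J=0$) is valid, and it justifies the support claim that the paper reads off its computed list of nonvanishing terms.
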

\begin{proof}
Similar to the proof of theorem \ref{th2}.
\end{proof}
\subsection{The Lorentzian Lie algebra $\left(\mathfrak{g}_4, S_A\right)$}
The Lorentzian inner product $S_A$ is given by $S_A = \begin{bmatrix}
1 & 0 & 0 & 0\\
0 & 1 & 0 & 0\\
0 & 0 & a & b\\
0 & 0 & b & c
\end{bmatrix}$, where the matrix $A = \begin{bmatrix}
a & b\\
b & c
\end{bmatrix}$ is Lorentzian. Consider the following orthonormal basis of $\left(\mathfrak{g}_4, S_A\right)$
$$v_1 = e_1, \qquad v_2 = e_2, \qquad v_3 = \alpha e_3 + \beta e_4, \qquad v_4 = \mu e_3 + \nu e_4$$
where the vector $v_4$ is timelike. This means that
$$S_A(v_4, v_4) = a\mu^2 + 2b\mu\nu + c\nu^2 = -1.$$
Since $\mathscr{B} = \left\lbrace v_1, v_2, v_3, v_4\right\rbrace$ is a basis of $\mathfrak{g_4}$, then $d := \operatorname{det}\left\lbrace v_1, v_2, v_3, v_4\right\rbrace = \alpha\nu - \beta\mu \neq 0$. We have
\begin{equation} \label{system1}
\left\lbrace\begin{array}{lll}
\nu v_3 = \alpha\nu e_3 + \beta\nu e_4 \\
\beta v_4 = \beta\mu e_3 + \beta\nu e_4 
\end{array}\right.
\end{equation}
By subtracting the two equations of the system (\ref{system1}), we obtain that $\nu v_3 - \beta v_4 = de_3$.
Similarly, we have the following system
\begin{equation} \label{system2}
\left\lbrace\begin{array}{lll}
\mu v_3 = \alpha\mu e_3 + \beta\mu e_4 \\
\alpha v_4 = \alpha\mu e_3 + \alpha\nu e_4 
\end{array}\right.
\end{equation}
By subtracting the two equations of the system (\ref{system2}), we obtain that $\mu v_3 - \alpha v_4 = -de_4$. Hence
$$\left\lbrace\begin{array}{lll}
e_3 = \frac{\nu}{d}v_3 - \frac{\beta}{d}v_4 \\
e_4 = \frac{-\mu}{d}v_3 + \frac{\alpha}{d}v_4
\end{array}\right.$$
Now one can obtain that the bracket in the basis $\mathscr{B}$ is described by
$$\left[v_1, v_2\right] = \frac{\nu}{d}v_3 - \frac{\beta}{d}v_4, \qquad
\left[v_1, v_3\right] = \frac{-\alpha\mu}{d}v_3 + \frac{\alpha^2}{d}v_4, \qquad
\left[v_1, v_4\right] = \frac{-\mu^2}{d}v_3 + \frac{\alpha\mu}{d}v_4.$$
By \cite{aitbenhaddou2026lorentzian}, the Levi-Civita connection of $\left(\mathfrak{g}_4, S_A\right)$ is given by
\begin{multicols}{2}
$\nabla_{v_1} v_1 = \nabla_{v_2} v_2 = 0$\par
$\nabla_{v_3} v_3 = \nabla_{v_4} v_4 = \frac{-\alpha\mu}{d}v_1$\par
$\nabla_{v_1} v_2 = \frac{\nu}{2d}v_3 - \frac{\beta}{2d}v_4$\par
$\nabla_{v_2} v_1 = \frac{-\nu}{2d}v_3 + \frac{\beta}{2d}v_4$\par
$\nabla_{v_1} v_3 = \frac{-\nu}{2d}v_2 - \frac{\mu^2 - \alpha^2}{2d}v_4$\par
$\nabla_{v_3} v_1 = \frac{-\nu}{2d}v_2 + \frac{\alpha\mu}{d}v_3 - \frac{\alpha^2 + \mu^2}{2d}v_4$\par
$\nabla_{v_1} v_4 = \frac{-\beta}{2d}v_2 + \frac{\alpha^2 - \mu^2}{2d}v_3 $\par
$\nabla_{v_4} v_1 = \frac{-\beta}{2d}v_2 + \frac{\alpha^2 + \mu^2}{2d}v_3 - \frac{\alpha\mu}{d}v_4 $\par
$\nabla_{v_2} v_3 = \nabla_{v_3} v_2 = \frac{\nu}{2d}v_1 $\par
$\nabla_{v_2} v_4 = \nabla_{v_4} v_2 = \frac{\beta}{2d}v_1 $\par
$\nabla_{v_3} v_4 = \nabla_{v_4} v_3 = -\frac{\alpha^2 + \mu^2}{2d}v_1$
\end{multicols}
The terms $\nabla_{v_i} (J)v_j$ are described by the following equalities
\begin{multicols}{2}
	$\nabla_{v_1} (J)v_1 = 0$\par
	$\nabla_{v_1} (J)v_2 = \frac{-\beta}{d}v_4$\par
	$\nabla_{v_1} (J)v_3 = \frac{\alpha^2 - \mu^2}{d}v_4$\par
	$\nabla_{v_1} (J)v_4 = \frac{\beta}{d}v_2 - \frac{\alpha^2 - \mu^2}{d}v_3$\par
	$\nabla_{v_2} (J)v_1 = \frac{\beta}{d}v_4$\par
	$\nabla_{v_2} (J)v_2 = 0$\par
	$\nabla_{v_2} (J)v_3 = 0$\par
	$\nabla_{v_2} (J)v_4 = \frac{-\beta}{d}v_1$\par
	$\nabla_{v_3} (J)v_1 = -\frac{\alpha^2 + \mu^2}{d}v_4$\par
	$\nabla_{v_3} (J)v_2 = 0$\par
	$\nabla_{v_3} (J)v_3 =  0$\par
	$\nabla_{v_3} (J)v_4 = \frac{\alpha^2 + \mu^2}{d}v_1$\par
	$\nabla_{v_4} (J)v_1 = \frac{-2\alpha\mu}{d}v_4$\par
	$\nabla_{v_4} (J)v_2 = 0$\par
	$\nabla_{v_4} (J)v_3 = 0$\par
	$\nabla_{v_4} (J)v_4 = \frac{2\alpha\mu}{d}v_1$\par
\end{multicols}
The canonical connection $\nabla^0$ of $\left(\mathfrak{g}_4, S_A\right)$ is given by
\begin{multicols}{3}
	$\nabla^0_{v_1} v_1 = 0$\par
	$\nabla^0_{v_1} v_2 = \frac{\nu}{2d}v_3$\par
	$\nabla^0_{v_1} v_3 = \frac{-\nu}{2d}v_2$\par
	$\nabla^0_{v_1} v_4 = 0$\par
	$\nabla^0_{v_2} v_1 = \frac{-\nu}{2d}v_3$\par
	$\nabla^0_{v_2} v_2 = 0$\par
	$\nabla^0_{v_2} v_3 = \frac{\nu}{2d}v_1$\par
	$\nabla^0_{v_2} v_4 = 0$\par
	$\nabla^0_{v_3} v_1 = \frac{-\nu}{2d}v_2 + \frac{\alpha\mu}{d}v_3$\par
	$\nabla^0_{v_3} v_2 = \frac{\nu}{2d}v_1$\par
	$\nabla^0_{v_3} v_3 = \frac{-\alpha\mu}{d}v_1$\par
	$\nabla^0_{v_3} v_4 = 0$\par
	$\nabla^0_{v_4} v_1 = \frac{-\beta}{2d}v_2 + \frac{\alpha^2 + \mu^2}{2d}v_3$\par
	$\nabla^0_{v_4} v_2 = \frac{\beta}{2d}v_1$\par
	$\nabla^0_{v_4} v_3 = -\frac{\alpha^2 + \mu^2}{2d}v_1$\par
	$\nabla^0_{v_4} v_4 = 0$
\end{multicols}
In order to find to Ricci operator associated with $\nabla^0$, we need the following curvatures
\begin{multicols}{2}
	$R^0_{v_2v_1}v_2 = \left(\frac{-3\nu^2}{4d^2} + \frac{\beta^2}{2d^2}\right)v_1$\par
	$R^0_{v_3v_1}v_3 = \left(\frac{\alpha^4 - \alpha^2\mu^2}{2d^2} + \frac{\nu^2}{4d^2}\right)v_1$\par
	$R^0_{v_4v_1}v_4 = 0$\par
	$R^0_{v_1v_2}v_1 = \frac{-3\nu^2 + 2\beta^2}{4d^2}v_2 - \frac{\alpha^2\beta + \mu^2\beta - 2\alpha\mu\nu}{2d^2}v_3$\par
	$R^0_{v_3v_2}v_3 = \frac{\nu^2}{4d^2}v_2$\par
	$R^0_{v_4v_2}v_4 = 0$\par
	$R^0_{v_1v_3}v_1 = \frac{2\alpha\mu\nu - \alpha^2\beta}{2d^2}v_2 + \left(\frac{\alpha^4 - \alpha^2\mu^2}{2d^2} + \frac{\nu^2}{4d^2}\right)v_3$\par
	$R^0_{v_2v_3}v_2 = \frac{\nu^2}{4d^2}v_3$\par
	$R^0_{v_4v_3}v_4 = 0$\par
	$R^0_{v_1v_4}v_1 = \left(\frac{\mu^2\nu - \alpha\beta\mu}{2d^2} + \frac{\alpha^2\nu + \mu^2\nu}{4d^2}\right)v_2 + \left(\frac{\alpha^3\mu - \alpha\mu^3}{2d^2} + \frac{\beta\nu}{4d^2}\right)v_3$\par
	$R^0_{v_2v_4}v_2 = \frac{\beta\nu}{4d^2}v_3$\par
	$R^0_{v_3v_4}v_3 = \left(-\frac{\alpha^2\nu + \mu^2\nu}{4d^2} + \frac{\alpha\beta\mu}{2d^2}\right)v_2$
\end{multicols}
The Ricci operator $\operatorname{Ric}^0$ of $\nabla^0$ is represented in the basis $\mathscr{B}$ by the matrix below
$$\operatorname{Ric}^0 = \begin{bmatrix}
\frac{\alpha^4 + \beta^2 - \nu^2 - \alpha^2\mu^2}{2d^2} & 0 & 0 & 0\\
\\
0 & \frac{\beta^2 - \nu^2}{2d^2} & \frac{2\alpha\mu\nu - \alpha^2\beta}{2d^2} & \frac{\mu^2\nu}{2d^2}\\
\\
0 & \frac{2\alpha\mu\nu - \alpha^2\beta - \mu^2\beta}{2d^2} & \frac{\alpha^4 + \nu^2 - \alpha^2\mu^2}{2d^2} & \frac{\alpha^3\mu - \alpha\mu^3 + \beta\nu}{2d^2}\\
\\
0 & 0 & 0 & 0
\end{bmatrix}.$$
To simplify the expression of $\operatorname{Ric}^0$, we adopt the following notation:
\begin{multicols}{3}
$x = \frac{\alpha^4 + \beta^2 - \nu^2 - \alpha^2\mu^2}{2d^2}$\par
$y = \frac{\beta^2 - \nu^2}{2d^2}$\par
$z = \frac{2\alpha\mu\nu - \alpha^2\beta - \mu^2\beta}{2d^2}$\par
$u = \frac{2\alpha\mu\nu - \alpha^2\beta}{2d^2}$\par
$v = \frac{\alpha^4 + \nu^2 - \alpha^2\mu^2}{2d^2}$\par
$w = \frac{\mu^2\nu}{2d^2}$\par
$t = \frac{\alpha^3\mu - \alpha\mu^3 + \beta\nu}{2d^2}$
\end{multicols}
Hence the simple form of $\operatorname{Ric}^0$ is given by:
$$\operatorname{Ric}^0 = \begin{bmatrix}
x & 0 & 0 & 0\\
0 & y & u & w\\
0 & z & v & t\\
0 & 0 & 0 & 0
\end{bmatrix}.$$
\begin{theorem}\label{th3.25}
$\left(\mathfrak{g}_4, S_A, J\right)$ does not exhibit the properties of a first kind algebraic Ricci soliton associated to the connection $\nabla^0$.
\end{theorem}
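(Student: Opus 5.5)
We argue by contradiction, following the pattern of the proof of Theorem \ref{th1}. Suppose $\operatorname{Ric}^0 = \eta I_4 + D$ with $\eta\in\mathbb{R}$ and $D$ a derivation of $\mathfrak{g}_4$. Then $D = \operatorname{Ric}^0 - \eta I_4$, with $D(v_1) = (x-\eta)v_1$, $D(v_2) = (y-\eta)v_2 + z v_3$, $D(v_3) = u v_2 + (v-\eta)v_3$ and $D(v_4) = w v_2 + t v_3 - \eta v_4$.

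All brackets not involving $v_1$ vanish, and the first row of $D$ has zero entries off the diagonal. Hence the derivation identities for pairs $(v_i,v_j)$ with $i,j\geq 2$ hold trivially. The only information comes from the three identities $[D(v_1),v_j]+[v_1,D(v_j)] = D[v_1,v_j]$ for $j=2,3,4$.

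To organise these identities we pass back to the basis $\{e_1,e_2,e_3,e_4\}$, using $e_3 = \frac{\nu}{d}v_3 - \frac{\beta}{d}v_4$ and $e_4 = \frac{-\mu}{d}v_3 + \frac{\alpha}{d}v_4$. Recall the standard description of derivations of $\mathfrak{g}_4$: $D$ preserves the center $\operatorname{Span}(e_4)$ and the commutator ideal $\operatorname{Span}(e_3,e_4)$. If $D e_1 \equiv d_{11}e_1$ and $D e_2 \equiv d_{22}e_2$ modulo $\mathscr{D}(\mathfrak{g}_4)$, then
$$D e_3 = (d_{11}+d_{22})e_3 + \kappa e_4, \qquad D e_4 = (2d_{11}+d_{22})e_4 .$$
Here $v_1=e_1$, $v_2=e_2$, and $D(v_2)$ has no $e_1$ component, so $d_{11} = x-\eta$ and $d_{22}=y-\eta$. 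Therefore
$$\operatorname{Ric}^0(e_4) = (2x+y-2\eta)\,e_4, \qquad \operatorname{Ric}^0(e_3) = (x+y-\eta)\,e_3 + \kappa e_4 .$$

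The key observation is that the last row of $\operatorname{Ric}^0$ is zero. So $\operatorname{Ric}^0(e_3)$ and $\operatorname{Ric}^0(e_4)$ lie in $\operatorname{Span}(v_2,v_3)$, that is, they have zero $v_4$-component.

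\textbf{First equation.} Compare $v_4$-components in the $e_4$ equation, where $e_4$ has $v_4$-coefficient $\alpha/d$. This gives $\alpha(2x+y-2\eta)=0$.

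\textbf{Second equation.} Compare $v_4$-components in the $e_3$ equation. This gives $-\beta(x+y-\eta) + \alpha\kappa = 0$.

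\textbf{Remaining equations.} Next compare the $v_3$- and $v_2$-components. Since $\operatorname{Ric}^0(e_4) = \frac{1}{d}\left(-\mu\operatorname{Ric}^0(v_3)+\alpha\operatorname{Ric}^0(v_4)\right)$, this yields explicit polynomial relations among $x,y,z,u,v,w,t,\eta,\kappa$. In particular, the $v_2$-component of $\operatorname{Ric}^0(e_4)$ must vanish, which gives $-\mu u + \alpha w = 0$. Equivalently, these relations are the componentwise content of the identities $j=3,4$ written in the paper's style, as in Theorem \ref{th1}.

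The case analysis then splits on whether $\alpha = 0$.

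If $\alpha=0$, then $d=-\beta\mu$, so $\beta\mu\neq0$. The formulas simplify drastically: $u = 0$, $t = \frac{\beta\nu}{2d^2}$, $w=\frac{\mu^2\nu}{2d^2}$, and $[v_1,v_3]=0$. The identity for $j=3$ then forces $D(v_3)$ to commute appropriately with $v_1$. I expect that comparing the $e_4$ equation with $w\neq 0$ (when $\nu\neq0$) and then treating $\nu=0$ separately yields a contradiction such as $\mu=0$.

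If $\alpha\neq0$, then $2x+y=2\eta$, and we also have $\mu u=\alpha w$. Substituting the definitions gives $\mu(2\alpha\mu\nu-\alpha^2\beta) = \alpha\mu^2\nu$, i.e. $\alpha\mu(\mu\nu-\alpha\beta)=0$. Feeding this back into the $v_3$-component relations should produce an identity that contradicts $d = \alpha\nu-\beta\mu\neq0$.

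The main obstacle is this final algebraic elimination. The quantities $x,\dots,t$ are rational in $\alpha,\beta,\mu,\nu$, and these variables are not independent: they satisfy the orthonormality constraints for $A$, namely $a\alpha^2+2b\alpha\beta+c\beta^2=1$, $a\alpha\mu+b(\alpha\nu+\beta\mu)+c\beta\nu=0$, and $a\mu^2+2b\mu\nu+c\nu^2=-1$, with $\det A<0$. I would first try to reach a contradiction using only $d\neq0$, splitting the subcases $\mu=0$ and $\mu\nu=\alpha\beta$ (the latter contradicting $d\neq 0$ when combined with the others). I would invoke the constraints on $A$ only if a nontrivial parameter family survives.
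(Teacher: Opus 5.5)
Your reduction is correct as far as it goes. Since $D(v_1)=(x-\eta)e_1$ and $D(v_2)=(y-\eta)e_2+\alpha z\,e_3+\beta z\,e_4$, the derivation property is equivalent to $\operatorname{Ric}^0(e_3)=(x+y-\eta)e_3+\alpha z\,e_4$ and $\operatorname{Ric}^0(e_4)=(2x+y-2\eta)e_4$. So your $\kappa$ is $\alpha z$, and this pair of identities is equivalent to the paper's nine equations.

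The gap is that you stop exactly where the proof has to be done, and the elimination you sketch does not close.
\begin{itemize}
\item For $\alpha=0$ one has $e_4=-\frac{\mu}{d}v_3$, so the $e_4$-identity involves only $\operatorname{Ric}^0(v_3)$, i.e.\ $u$ and $v$. It carries no information on $w$, and your relation $\alpha w-\mu u=0$ is vacuous there, since $u=0$.
\item For the subcase $\nu=0$ you give no mechanism at all.
\item For $\alpha\neq0$, the alternative $\mu\nu=\alpha\beta$ is compatible with $d\neq0$ (e.g.\ $\alpha=1$, $\beta=2$, $\mu=2$, $\nu=1$), so it yields nothing on its own.
\end{itemize}
The orthonormality constraints on $A$ are not what is missing either.

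The missing step is the $v_2$-component of the $e_3$-identity, $\nu u-\beta w=0$; this is the paper's first equation. Together with your $\alpha w-\mu u=0$ it forms a linear system in $(u,w)$ with determinant $\alpha\nu-\beta\mu=d\neq0$. Hence $u=w=0$ in every case. Then $w=\mu^2\nu/(2d^2)=0$ gives $\mu\nu=0$, and $u=\alpha(2\mu\nu-\alpha\beta)/(2d^2)=0$ gives $\alpha\beta=0$. With $d\neq0$, only $\beta=\mu=0$ or $\alpha=\nu=0$ remain.
\begin{itemize}
\item If $\beta=\mu=0$: your first equation gives $2\eta=2x+y$, and the $v_3$-component of the $e_3$-identity gives $\eta=x+y-v$. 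Hence $y=2v$, i.e.\ $2\alpha^4+3\nu^2=0$, which is impossible since $\nu\neq0$.
\item If $\alpha=\nu=0$: then $v=t=0$. Your second equation gives $\eta=x+y$, and the $v_3$-component of the $e_4$-identity gives $2\eta=2x+y$. Hence $y=\beta^2/(2d^2)=0$, contradicting $d=-\beta\mu\neq0$.
\end{itemize}
This is the content of the paper's Cases 2--4, which likewise turn on its first equation (with the second) forcing $u=w=0$. Only $d\neq0$ is used.
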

\begin{proof}
Assume that $\left(\mathfrak{g}_4, S_A, J\right)$ is a first kind algebraic Ricci soliton associated to $\nabla^0$, then there exists a real number $\eta$ and a derivation $D$ of $\mathfrak{g}_4$ such that $\operatorname{Ric}^0 = \eta I_4 + D$. Then we obtain that 
$$D = \operatorname{Ric}^0 - \eta I_4 = \begin{bmatrix}
x - \eta & 0 & 0 & 0\\
0 & y - \eta & u & w\\
0 & z & v - \eta & t\\
0 & 0 & 0 & -\eta
\end{bmatrix}.$$
Let us analyze the following first condition
\begin{eqnarray*}
& [D(v_1), v_2] + [v_1, D(v_2)] = D[v_1, v_2] \\ \Leftrightarrow & \left[\left(x - \eta\right)v_1, v_2\right] + \left[v_1, (y - \eta)v_2 + zv_3\right] = D[v_1, v_2]\\
\Leftrightarrow & \left(x + y - 2\eta\right)\left[v_1, v_2\right] + z[v_1, v_3] = \frac{\nu}{d}D(v_3) - \frac{\beta}{d}D(v_4)
\end{eqnarray*}
By carefully developing the last equation above, we obtain the following equality
\begin{eqnarray*}
& \left(x + y - 2\eta\right)\frac{\nu}{d}v_3 - \left(x + y - 2\eta\right)\frac{\beta}{d}v_4 - \frac{\alpha\mu z}{d}v_3 + \frac{\alpha^2z}{d}v_4 \\
& = \frac{\nu u}{d}v_2 + \left(v - \eta\right)\frac{\nu}{d}v_3 - \frac{\beta w}{d}v_2 - \frac{\beta t}{d}v_3 + \frac{\beta}{d}\eta v_4
\end{eqnarray*}
By identifying the components of $v_i$ we obtain the following system
$$\left\lbrace\begin{array}{lll}
\nu u - \beta w = 0 \\
\\
(x + y - 2\eta)\nu - \alpha\mu z = (v - \eta)\nu - \beta t\\
\\
-(x + y - 2\eta)\beta + \alpha^2z = \beta\eta
\end{array}\right.$$
Similarly, the condition $[D(v_1), v_3] + [v_1, D(v_3)] = D[v_1, v_3]$ gives the following system
$$\left\lbrace\begin{array}{lll}
\alpha^2w - \alpha\mu u = 0\\
\\
-\alpha\mu(x + v - 2\eta) + \nu u = \alpha^2 t - \alpha\mu(v - \eta)\\
\\
\alpha^2(x + v - 2\eta) - \beta u = -\alpha^2\eta
\end{array}\right.$$
The same for the condition $[D(v_1), v_4] + [v_1, D(v_4)] = D[v_1, v_4]$, it gives the following system
$$\left\lbrace\begin{array}{lll}
-\mu^2 u + \alpha\mu w = 0\\
\\
-\mu^2(x - 2\eta) + \nu w - \alpha\mu t = -\mu^2(v - \eta) + \alpha\mu t\\
\\
\alpha\mu(x - 2\eta) - \beta w + \alpha^2 t = -\alpha\mu\eta
\end{array}\right.$$
Then the fact that $D$ is a derivation of $\mathfrak{g}_4$ with respect to the orthonormal basis $\left\lbrace v_1, v_2, v_3, v_4\right\rbrace$ gives the following equations
\begin{align}
\nu u - \beta w &= 0 \label{eq1} \\
\alpha^2w - \alpha\mu u &= 0 \label{eq2} \\
-\mu^2 u + \alpha\mu w &= 0 \label{eq3} \\
(x + y - 2\eta)\nu - \alpha\mu z &= (v - \eta)\nu - \beta t \label{eq4} \\
-(x + y - 2\eta)\beta + \alpha^2z &= \beta\eta \label{eq5} \\
-\alpha\mu(x + v - 2\eta) + \nu u &= \alpha^2 t - \alpha\mu(v - \eta) \label{eq6} \\
\alpha^2(x + v - 2\eta) - \beta u &= -\alpha^2\eta \label{eq7} \\
-\mu^2(x - 2\eta) + \nu w - \alpha\mu t &= -\mu^2(v - \eta) + \alpha\mu t \label{eq8} \\
\alpha\mu(x - 2\eta) - \beta w + \alpha^2 t &= -\alpha\mu\eta \label{eq9}
\end{align}
We distinguish four cases depending on whether $\alpha$ and $\mu$ are zero or nonzero:\\
\textbf{Case 1}. If $\alpha = \mu = 0$, this case is clearly impossible bacause $d = \alpha\nu - \beta\mu \neq 0$.\\
\textbf{Case 2}. If $\alpha \neq 0$ and $\mu = 0$, then $\nu \neq 0$ because $d = \alpha\nu \neq 0$. By equations (\ref{eq1}) and (\ref{eq2}), we have $w = u = 0$. Since $\mu = 0$, then $u = \frac{-\alpha^2\beta}{2d^2} = 0$, hence $\beta = 0$. We replace these elements in the nine equations above, we obtain that
$$x + y - 2\eta = v - \eta, \qquad z = 0, \qquad t = 0, \qquad x + v - 2\eta = - \eta.$$
Hence $ \eta = x + y - v = x + v$. Note that with the conditions as above, the elements $x$, $y$ and $v$ becomes
$$x = \frac{\alpha^4 - \nu^2}{2\alpha^2\nu^2}, \qquad y = \frac{-\nu^2}{2\alpha^2\nu^2}, \qquad v = \frac{\alpha^4 + \nu^2}{2\alpha^2\nu^2}.$$
Thus $\eta = \frac{-3\nu^2}{2\alpha^2\nu^2} = \frac{2\alpha^4}{2\alpha^2\nu^2}$. This is a contradiction.\\
\textbf{Case 3}. If $\alpha = 0$ and $\mu \neq 0$, then $\beta \neq 0$ because $d = -\beta\mu \neq 0$. Since $\alpha = 0$, then $u = 0$. By equation (\ref{eq1}), we have $w = 0$ which implies that $\nu = 0$. Since $\alpha = \nu = 0$, then $v = t = 0$. Note that $\nu = 0 \Longrightarrow y = \frac{\beta^2}{2d^2}$. By equations (\ref{eq5}) and (\ref{eq8}) we obtain that: 
$$\eta = x + y = x \Longrightarrow y = 0 \Longrightarrow \beta = 0.$$
This is a contradiction.\\
\textbf{Case 4}. If $\alpha, \mu \neq 0$, we have $\nu \neq 0$ or $\beta \neq 0$ because $d = \alpha\nu - \beta\mu \neq 0$. The equations (\ref{eq1}) and (\ref{eq2}) can be rewritten as
$$\begin{bmatrix}
\nu & -\beta\\
-\alpha\mu & \alpha^2
\end{bmatrix}\begin{bmatrix}
u\\
w
\end{bmatrix} = \begin{bmatrix}
0\\
0
\end{bmatrix}$$
We have $\operatorname{det}\begin{bmatrix}
\nu & -\beta\\
-\alpha\mu & \alpha^2
\end{bmatrix} = \alpha d \neq 0$, then $u = w = 0$. We see that $w = 0$ implies that $\nu = 0$, and then $\beta \neq 0$. Next, $\nu = 0 \Longrightarrow u = \frac{-\alpha^2\beta}{2d^2} = 0$, this implies that $\alpha = 0$ or $\beta = 0$. This is a contradiction.
\end{proof}
We can see that $\widetilde{\operatorname{Ric}}^0$ is represented in the basis $\mathscr{B}$ by the following matrix
$$\widetilde{\operatorname{Ric}}^0 = \begin{bmatrix}
\frac{\alpha^4 + \beta^2 - \nu^2 - \alpha^2\mu^2}{2d^2} & 0 & 0 & 0\\
\\
0 & \frac{\beta^2 - \nu^2}{2d^2} & \frac{4\alpha\mu\nu - 2\alpha^2\beta - \mu^2\beta}{4d^2} & \frac{\mu^2\nu}{4d^2}\\
\\
0 & \frac{4\alpha\mu\nu - 2\alpha^2\beta - \mu^2\beta}{4d^2} & \frac{\alpha^4 + \nu^2 - \alpha^2\mu^2}{2d^2} & \frac{\alpha^3\mu - \alpha\mu^3 + \beta\nu}{4d^2}\\
\\
0 & \frac{-\mu^2\nu}{4d^2} & -\frac{\alpha^3\mu - \alpha\mu^3 + \beta\nu}{4d^2} & 0
\end{bmatrix}.$$
To simplify the expression of $\widetilde{\operatorname{Ric}}^0$, we adopt the following notation:
\begin{multicols}{3}
$x = \frac{\alpha^4 + \beta^2 - \nu^2 - \alpha^2\mu^2}{2d^2}$\par
$y = \frac{\beta^2 - \nu^2}{2d^2}$\par
$z = \frac{4\alpha\mu\nu - 2\alpha^2\beta - \mu^2\beta}{4d^2}$\par
$u = \frac{-\mu^2\nu}{4d^2}$\par
$v = \frac{\alpha^4 + \nu^2 - \alpha^2\mu^2}{2d^2}$\par
$w = -\frac{\alpha^3\mu - \alpha\mu^3 + \beta\nu}{4d^2}$
\end{multicols}
Hence the simple form of $\widetilde{\operatorname{Ric}}^0$ is given by:
$$\widetilde{\operatorname{Ric}}^0 = \begin{bmatrix}
x & 0 & 0 & 0\\
0 & y & z & -u\\
0 & z & v & -w\\
0 & u & w & 0
\end{bmatrix}.$$
\begin{theorem}\label{th3.26}
$\left(\mathfrak{g}_4, S_A, J\right)$ does not exhibit the properties of a second kind algebraic Ricci soliton associated to the connection $\nabla^0$.
\end{theorem}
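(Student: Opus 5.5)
We argue by contradiction, as in Theorem \ref{th3.25}. Suppose $\widetilde{\operatorname{Ric}}^0 = \eta I_4 + D$ with $\eta\in\mathbb{R}$ and $D$ a derivation of $\mathfrak{g}_4$. In the simplified notation this gives
$$D = \begin{bmatrix}
x-\eta & 0 & 0 & 0\\
0 & y-\eta & z & -u\\
0 & z & v-\eta & -w\\
0 & u & w & -\eta
\end{bmatrix},$$
so $D(v_1)=(x-\eta)v_1$, $D(v_2)=(y-\eta)v_2+zv_3+uv_4$, $D(v_3)=zv_2+(v-\eta)v_3+wv_4$ and $D(v_4)=-uv_2-wv_3-\eta v_4$.

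The cheapest constraints come from the vanishing brackets. The plan is to start with $[D(v_2),v_3]+[v_2,D(v_3)]=D[v_2,v_3]=0$. The only nonzero brackets are $[v_1,\cdot]$, and no $v_1$ occurs in $D(v_2)$ or $D(v_3)$, so this identity is automatic. The vanishing brackets therefore give nothing, and (unlike the $S_2^\lambda,S_3^\lambda,S_4^\lambda$ cases) we must use the brackets involving $v_1$.

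We impose $[D(v_1),v_j]+[v_1,D(v_j)]=D[v_1,v_j]$ for $j=2,3,4$, using
$$[v_1,v_2]=\tfrac{\nu}{d}v_3-\tfrac{\beta}{d}v_4,\qquad [v_1,v_3]=\tfrac{-\alpha\mu}{d}v_3+\tfrac{\alpha^2}{d}v_4,\qquad [v_1,v_4]=\tfrac{-\mu^2}{d}v_3+\tfrac{\alpha\mu}{d}v_4.$$
The right-hand sides are combinations of $D(v_3)$ and $D(v_4)$, while the left-hand sides lie in $\operatorname{Span}(v_3,v_4)$. Comparing $v_2$-components therefore yields three linear equations with no $\eta$:
$$\nu z+\beta u=0,\qquad -\alpha\mu z-\alpha^2 u=0,\qquad -\mu^2 z-\alpha\mu u=0.$$
These are analogues of (\ref{eq1})--(\ref{eq3}). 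The $v_3$- and $v_4$-components give six further equations involving $\eta$, analogous to (\ref{eq4})--(\ref{eq9}).

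We then split into cases on $\alpha,\mu$ as in Theorem \ref{th3.25}. The case $\alpha=\mu=0$ is excluded by $d\neq0$.

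\textbf{Case $\alpha\neq0,\mu=0$.} Then $\nu\neq0$ and $u=-\mu^2\nu/(4d^2)=0$, so $\nu z=0$ gives $z=0$. This means $-2\alpha^2\beta=0$, hence $\beta=0$. Then $w=0$, and the $\eta$-equations reduce to $x+y-2\eta=v-\eta$ and $x+v-2\eta=-\eta$. The first gives $\eta=x+y-v$ and the second gives $\eta=x+v$. With $x,y,v$ explicit this yields $-3\nu^2=2\alpha^4$, a contradiction. This mirrors Case 2 of Theorem \ref{th3.25}.

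\textbf{Case $\alpha\neq0,\mu\neq0$.} The system in $(z,u)$ has $\det\begin{bmatrix}\nu&\beta\\ -\alpha\mu&-\alpha^2\end{bmatrix}=-\alpha d\neq0$, so $z=u=0$. Since $u=-\mu^2\nu/(4d^2)$, we get $\nu=0$, hence $\beta\neq0$. Then $z=-\beta(2\alpha^2+\mu^2)/(4d^2)\neq0$, a contradiction.

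\textbf{Case $\alpha=0,\mu\neq0$.} Then $\beta\neq0$ and $\nu z+\beta u=0$. The natural route is to use the $v_3$/$v_4$ equations to force $\nu=0$, and then get $y=\beta^2/(2d^2)=0$ from $\eta=x+y=x$, as in Case 3 of Theorem \ref{th3.25}. This relation is less immediate here because $u$ is not forced to vanish directly.

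I expect this last case to be the main obstacle: the bookkeeping in the $v_3,v_4$ components of the three derivation identities when $\alpha=0$. The expressions simplify drastically there: $d=-\beta\mu$, $z=-\mu^2\beta/(4d^2)$, $u=-\mu^2\nu/(4d^2)$ and $w=-\beta\nu/(4d^2)$. Then $\nu z+\beta u=-\mu^2\beta\nu/(2d^2)=0$ already forces $\nu=0$. With $\nu=0$ we get $u=w=0$ and $v=0$, so the remaining identities should reduce to $\eta=x+y=x$, giving $\beta=0$, a contradiction.
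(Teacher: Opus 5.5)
Your proposal is correct and follows the paper's proof. It uses the same matrix for $D$, the same three $\eta$-free equations from the $v_2$-components, and the same case split on $\alpha,\mu$. Your cases $\alpha\neq0,\mu=0$ and $\alpha\mu\neq0$ match the paper's Cases 2 and 4 essentially verbatim.

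The only divergence is the case $\alpha=0,\mu\neq0$. There the paper finishes in one line: the third equation $\mu^2z+\alpha\mu u=0$ becomes $\mu^2 z=0$, forcing $z=-\mu^2\beta/(4d^2)=0$, which is impossible since $\beta\mu\neq0$. Your detour also works: $\nu z+\beta u=0$ gives $\nu=0$, and then $\eta=x+y=x$ does follow from the $v_4$-component of the $[v_1,v_2]$ identity and the $v_3$-component of the $[v_1,v_4]$ identity. It is just longer, and the equation you wrote down but did not use already settles this case.
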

\begin{proof}
Assume that $\left(\mathfrak{g}_4, S_A, J\right)$ is a second kind algebraic Ricci soliton associated to $\nabla^0$, then there exists a real number $\eta$ and a derivation $D$ of $\mathfrak{g}_4$ such that $\widetilde{\operatorname{Ric}}^0 = \eta I_4 + D$. Then we obtain that 
$$D = \widetilde{\operatorname{Ric}}^0 - \eta I_4 = \begin{bmatrix}
x - \eta & 0 & 0 & 0\\
0 & y - \eta & z & -u\\
0 & z & v - \eta & -w\\
0 & u & w & -\eta
\end{bmatrix}.$$
Let us analyze the following first condition
\begin{eqnarray*}
	& [D(v_1), v_2] + [v_1, D(v_2)] = D[v_1, v_2] \\ \Leftrightarrow & \left[\left(x - \eta\right)v_1, v_2\right] + \left[v_1, \left(y - \eta\right)v_2 + zv_3 + uv_4\right] = D[v_1, v_2]\\
	\Leftrightarrow & \left(x - \eta\right)\left[v_1, v_2\right] + \left(y - \eta\right)\left[v_1, v_2\right] + z[v_1, v_3] + u[v_1, v_4] = \frac{\nu}{d}D(v_3) - \frac{\beta}{d}D(v_4)\\
	\Leftrightarrow & \left(x + y - 2\eta\right)\left[v_1, v_2\right] + z[v_1, v_3] + u[v_1, v_4] = \frac{\nu}{d}D(v_3) - \frac{\beta}{d}D(v_4)
\end{eqnarray*}
By carefully developing the last equation above, we obtain the following equality
\begin{eqnarray*}
& \left(x + y - 2\eta\right)\frac{\nu}{d}v_3 - \left(x + y - 2\eta\right)\frac{\beta}{d}v_4 - \frac{\alpha\mu z}{d}v_3 + \frac{\alpha^2z}{d}v_4 - \frac{\mu^2u}{d}v_3 + \frac{\alpha\mu u}{d}v_4 \\
& = \frac{\nu z}{d}v_2 + \left(v - \eta\right)\frac{\nu}{d}v_3 + \frac{\nu w}{d}v_4 + \frac{\beta u}{d}v_2 + \frac{\beta w}{d}v_3 + \frac{\beta}{d}\eta v_4
\end{eqnarray*}
By identifying the components of $v_i$ we obtain the following system
$$\left\lbrace\begin{array}{lll}
\nu z + \beta u = 0 \\
\\
(x + y - 2\eta)\nu - \alpha\mu z - \mu^2u = (v - \eta)\nu + \beta w\\
\\
-(x + y - 2\eta)\beta + \alpha^2 z + \alpha\mu u = \nu w + \beta\eta
\end{array}\right.$$
Similarly, the condition $[D(v_1), v_3] + [v_1, D(v_3)] = D[v_1, v_3]$ gives the following system
$$\left\lbrace\begin{array}{lll}
\alpha\mu z + \alpha^2u = 0\\
\\
-\alpha\mu(x + v - 2\eta) + \nu z - \mu^2w = -\alpha\mu(v - \eta) - \alpha^2 w\\
\\
\alpha^2(x + v - 2\eta) - \beta z + \alpha\mu w = -\alpha\mu w - \alpha^2\eta
\end{array}\right.$$
The same for the condition $[D(v_1), v_4] + [v_1, D(v_4)] = D[v_1, v_4]$, it gives the following system
$$\left\lbrace\begin{array}{lll}
\mu^2 z + \alpha\mu u = 0\\
\\
-\mu^2(x - 2\eta) - u\nu + \alpha\mu w = -\mu^2(v - \eta) -\alpha\mu w\\
\\
(x - 2\eta)\alpha\mu +\beta u - \alpha^2 w = -\mu^2 w - \alpha\mu\eta
\end{array}\right.$$
Then the fact that $D$ is a derivation of $\mathfrak{g}_4$ with respect to the orthonormal basis $\left\lbrace v_1, v_2, v_3, v_4\right\rbrace$ gives the following equations
\begin{align}
\nu z + \beta u &= 0 \label{eq11} \\
\alpha\mu z + \alpha^2u &= 0 \label{eq21} \\
\mu^2 z + \alpha\mu u &= 0 \label{eq31} \\
(x + y - 2\eta)\nu - \alpha\mu z - \mu^2u &= (v - \eta)\nu + \beta w \label{eq41} \\
-(x + y - 2\eta)\beta + \alpha^2 z + \alpha\mu u &= \nu w + \beta\eta \label{eq51} \\
-\alpha\mu(x + v - 2\eta) + \nu z - \mu^2 w &= -\alpha\mu(v - \eta) - \alpha^2 w \label{eq61} \\
\alpha^2(x + v - 2\eta) - \beta z + \alpha\mu w &= -\alpha\mu w - \alpha^2\eta \label{eq71} \\
-\mu^2(x - 2\eta) - u\nu + \alpha\mu w &= -\mu^2(v - \eta) -\alpha\mu w \label{eq81} \\
(x - 2\eta)\alpha\mu +\beta u - \alpha^2 w &= -\mu^2 w - \alpha\mu\eta \label{eq91}
\end{align}
We distinguish four cases depending on whether $\alpha$ and $\mu$ are zero or nonzero:\\
\textbf{Case 1}. If $\alpha = \mu = 0$, this case is clearly impossible bacause $d = \alpha\nu - \beta\mu \neq 0$.\\
\textbf{Case 2}. If $\alpha \neq 0$ and $\mu = 0$, then $\nu \neq 0$ because $d = \alpha\nu \neq 0$. By equations (\ref{eq11}) and (\ref{eq21}), we have $z = u = 0$. Since $\mu = 0$, then $z = \frac{-\alpha^2\beta}{2d^2} = 0$, hence $\beta = 0$. We replace these elements in the nine equations above, we obtain that
$$x + y - 2\eta = v - \eta, \qquad w = 0, \qquad x + v - 2\eta = -\eta.$$
Hence $ \eta = x + y - v = x + v$. Note that with the conditions as above, the elements $x, y$ and $v$ becomes
$$x = \frac{\alpha^4 - \nu^2}{2\alpha^2\nu^2}, \qquad y = \frac{-\nu^2}{2\alpha^2\nu^2}, \qquad v = \frac{\alpha^4 + \nu^2}{2\alpha^2\nu^2}.$$
Thus $\eta = \frac{-3\nu^2}{2\alpha^2\nu^2} = \frac{2\alpha^4}{2\alpha^2\nu^2}$. This is a contradiction.\\
\textbf{Case 3}. If $\alpha = 0$ and $\mu \neq 0$, then $\beta \neq 0$ because $d = -\beta\mu \neq 0$. Since $\alpha = 0$, then $z = \frac{-\mu^2\beta}{4d^2}$. By equation (\ref{eq31}), we have $z = 0$, which implies that $\mu = 0$ or $\beta = 0$. This is a contradiction.\\
\textbf{Case 4}. If $\alpha, \mu \neq 0$, we have $\nu \neq 0$ or $\beta \neq 0$ because $d = \alpha\nu - \beta\mu \neq 0$. The equations (\ref{eq11}) and (\ref{eq21}) can be rewritten as
$$\begin{bmatrix}
\nu & \beta\\
\alpha\mu & \alpha^2
\end{bmatrix}\begin{bmatrix}
z\\
u
\end{bmatrix} = \begin{bmatrix}
0\\
0
\end{bmatrix}$$
We have $\operatorname{det}\begin{bmatrix}
\nu & \beta\\
\alpha\mu & \alpha^2
\end{bmatrix} = \alpha d \neq 0$, then $z = u = 0$. Note that $u = 0 \Longrightarrow \nu = 0$. Next, we can see that
$$z = \frac{-\beta(\mu^2 + 2\alpha^2)}{4d^2} = 0.$$
Then $\beta = 0$ or $\mu^2 + 2\alpha^2 = 0$. This is a contradiction.
\end{proof}
The Kobayashi-Nomizu connection $\nabla^1$ of $(\mathfrak{g}_4, S_A, J)$ is given by
\begin{multicols}{3}
	$\nabla^1_{v_1} v_1 = 0$\par
	$\nabla^1_{v_1} v_2 = \frac{\nu}{2d}v_3$\par
	$\nabla^1_{v_1} v_3 = \frac{-\nu}{2d}v_2$\par
	$\nabla^1_{v_1} v_4 = \frac{\alpha\mu}{d}v_4$\par
	$\nabla^1_{v_2} v_1 = \frac{-\nu}{2d}v_3$\par
	$\nabla^1_{v_2} v_2 = 0$\par
	$\nabla^1_{v_2} v_3 = \frac{\nu}{2d}v_1$\par
	$\nabla^1_{v_2} v_4 = 0$\par
	$\nabla^1_{v_3} v_1 = \frac{-\nu}{2d}v_2 + \frac{\alpha\mu}{d}v_3$\par
	$\nabla^1_{v_3} v_2 = \frac{\nu}{2d}v_1$\par
	$\nabla^1_{v_3} v_3 = \frac{-\alpha\mu}{d}v_1$\par
	$\nabla^1_{v_3} v_4 = 0$\par
	$\nabla^1_{v_4} v_1 = \frac{\mu^2}{d}v_3$\par
	$\nabla^1_{v_4} v_2 = 0$\par
	$\nabla^1_{v_4} v_3 = 0$\par
	$\nabla^1_{v_4} v_4 = 0$
\end{multicols}
In order to find to Ricci operator associated with $\nabla^1$, we need the following curvatures
\begin{multicols}{2}
	$R^1_{v_2v_1}v_2 = \frac{-3\nu^2}{4d^2}v_1$\par
	$R^1_{v_3v_1}v_3 = \left(\frac{-\alpha^2\mu^2}{d^2} + \frac{\nu^2}{4d^2}\right)v_1$\par
	$R^1_{v_4v_1}v_4 = 0$\par
	$R^1_{v_1v_2}v_1 = \frac{-3\nu^2}{4d^2}v_2 + \frac{\alpha\mu\nu - \mu^2\beta}{d^2}v_3$\par
	$R^1_{v_3v_2}v_3 = \frac{\nu^2}{4d^2}v_2$\par
	$R^1_{v_4v_2}v_4 = 0$\par
	$R^1_{v_1v_3}v_1 = \frac{\alpha\mu\nu}{d^2}v_2 + \frac{\nu^2}{4d^2}v_3$\par
	$R^1_{v_2v_3}v_2 = \frac{\nu^2}{4d^2}v_3$\par
	$R^1_{v_4v_3}v_4 = 0$\par
	$R^1_{v_1v_4}v_1 = \frac{\mu^2\nu}{d^2}v_2$\par
	$R^1_{v_2v_4}v_2 = 0$\par
	$R^1_{v_3v_4}v_3 = \frac{-\alpha\mu^3}{d^2}v_3$
\end{multicols}
The Ricci operator $\operatorname{Ric}^1$ of $\nabla^1$ is represented in the basis $\mathscr{B}$ by the following matrix
$$\operatorname{Ric}^1 = \begin{bmatrix}
\frac{-2\alpha^2\mu^2 - \nu^2}{2d^2} & 0 & 0 & 0\\
\\
0 & \frac{-\nu^2}{2d^2} & \frac{\alpha\mu\nu}{d^2} & \frac{\mu^2\nu}{d^2}\\
\\
0 & \frac{\alpha\mu\nu - \mu^2\beta}{d^2} & \frac{\nu^2}{2d^2} & \frac{-\alpha\mu^3}{d^2}\\
\\
0 & 0 & 0 & 0
\end{bmatrix}.$$
\begin{theorem}
$\left(\mathfrak{g}_4, S_A, J\right)$ is a first kind algebraic Ricci soliton associated to the connection $\nabla^1$ if and only if $\alpha = \nu = 0$. In this case we have 
$$\operatorname{Ric}^1 = \begin{bmatrix}
0 & 0 & 0 & 0\\
0 & 0 & 0 & 0\\
0 & \frac{-1}{\beta} & 0 & 0\\
0 & 0 & 0 & 0
\end{bmatrix}$$
is a derivation of $\mathfrak{g}_4$ with respect to the corresponding orthonormal basis $\left\lbrace v_1, v_2, v_3, v_4\right\rbrace$.
\end{theorem}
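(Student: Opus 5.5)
We follow the scheme used in Theorems \ref{th3.25} and \ref{th3.26}. Write $D = \operatorname{Ric}^1 - \eta I_4$ and abbreviate the entries of $\operatorname{Ric}^1$ as
$$x' = \frac{-2\alpha^2\mu^2-\nu^2}{2d^2},\quad y' = \frac{-\nu^2}{2d^2},\quad u' = \frac{\alpha\mu\nu}{d^2},\quad w' = \frac{\mu^2\nu}{d^2},\quad z' = \frac{\alpha\mu\nu-\mu^2\beta}{d^2},\quad v' = \frac{\nu^2}{2d^2},\quad t' = \frac{-\alpha\mu^3}{d^2}.$$
The matrix then has exactly the shape treated in Theorem \ref{th3.25}. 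The brackets $[v_1,v_j]$ are the only nonzero ones, and $D$ preserves $\operatorname{Span}(v_2,v_3,v_4)$ and sends $v_1$ to a multiple of $v_1$. Hence the conditions $[Dv_i,v_j]+[v_i,Dv_j]=D[v_i,v_j]$ with $i,j\geq 2$ hold automatically: both sides vanish, since $D[v_i,v_j]=0$ and $[Dv_i,v_j]$ only involves $v_1$-components, which are zero. So being a derivation reduces to the nine equations (\ref{eq1})--(\ref{eq9}) with primed entries, obtained exactly as before from the three conditions for $(v_1,v_2)$, $(v_1,v_3)$, $(v_1,v_4)$.

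Next we run the same four-case split on $\alpha$ and $\mu$. Case $\alpha=\mu=0$ is impossible since $d\neq 0$. If $\alpha\neq0$ and $\mu=0$, then $\nu\neq 0$. Here $u'=w'=z'=t'=0$, $x'=y'=-v'$, and the equations reduce as in Case 2 of Theorem \ref{th3.25} to $x'+y'-2\eta=v'-\eta$ and $x'+v'-2\eta=-\eta$. These give $\eta=x'+y'-v'=-3v'$ and $\eta=x'+v'=0$, so $\nu=0$, a contradiction. If $\alpha,\mu\neq0$, equations (\ref{eq1}) and (\ref{eq2}) form a linear system with determinant $\alpha d\neq0$, which forces $u'=w'=0$, hence $\nu=0$. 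The remaining equations, for example (\ref{eq8}) and (\ref{eq9}), should then give contradictory values of $\eta$, since with $\nu=0$ we have $x'=-\alpha^2\mu^2/d^2\neq 0$ and $t'\neq0$. This step is the main computational obstacle. We expect to compare (\ref{eq7}), which now reads $\alpha^2(x'-\eta)=0$ so that $\eta=x'$, with (\ref{eq9}), and to derive $\alpha\mu=0$.

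Finally, for $\alpha=0$ and $\mu\neq0$ we have $\beta\neq0$ and $d=-\beta\mu$. All entries then vanish except $y'=-\nu^2/2d^2$, $w'=\mu^2\nu/d^2$, $v'=\nu^2/2d^2$ and $z'=-\mu^2\beta/d^2=-1/\beta$. Equation (\ref{eq1}) gives $\beta w'=0$, so $\nu=0$. The remaining equations (\ref{eq5}) and (\ref{eq8}) give $\eta=0$, since (\ref{eq8}) reduces to $\mu^2\eta=0$. For the converse, substituting $\alpha=\nu=0$ yields the displayed matrix, and we check directly that $D=\operatorname{Ric}^1$, i.e. $v_2\mapsto -\frac1\beta v_3$, is a derivation. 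It kills $[v_1,v_j]$ terms appropriately: $[v_1,Dv_2]=-\frac1\beta[v_1,v_3]=0$ because $\alpha=0$, and $D[v_1,v_2]=D(-\frac{\beta}{d}v_4)=0$. The other brackets are checked in the same way. This confirms that $\alpha=\nu=0$ is exactly the soliton case, with $\eta=0$.
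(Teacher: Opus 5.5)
Your proposal is correct and is essentially the paper's argument with the case split reordered. The paper first isolates $\nu=0$, where the $(v_1,v_2)$ condition gives $\eta=0$ and the $(v_1,v_3)$ condition gives $\alpha=0$, and runs the $(\alpha,\mu)$ split only for $\nu\neq0$; you run the $(\alpha,\mu)$ split directly and recover $\nu=0$ from (\ref{eq1})--(\ref{eq2}).

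The step you flagged for $\alpha,\mu\neq0$ does close in one line. With $\nu=0$, (\ref{eq7}) gives $\eta=x'$, and then (\ref{eq9}) with $w'=0$ reduces to $\alpha^2t'=0$, i.e.\ $\alpha\mu^3=0$, a contradiction; alternatively (\ref{eq5}) gives $\eta=0$, exactly as in the paper. Also, in the case $\alpha=0$ you omitted $x'=-\nu^2/(2d^2)$ from the nonzero entries, which is harmless because $\nu=0$ already follows from (\ref{eq1}) alone.
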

\begin{proof}
Assume that $\left(\mathfrak{g}_4, S_A, J\right)$ is a first kind algebraic Ricci soliton associated to $\nabla^1$, then there exists a real number $\eta$ and a derivation $D$ of $\mathfrak{g}_4$ such that $\operatorname{Ric}^1 = \eta I_4 + D$. Then we obtain that 
$$D = \operatorname{Ric}^1 - \eta I_4 = \begin{bmatrix}
\frac{-2\alpha^2\mu^2 - \nu^2}{2d^2} - \eta & 0 & 0 & 0\\
\\
0 & \frac{-\nu^2}{2d^2} - \eta & \frac{\alpha\mu\nu}{d^2} & \frac{\mu^2\nu}{d^2}\\
\\
0 & \frac{\alpha\mu\nu - \mu^2\beta}{d^2} & \frac{\nu^2}{2d^2} - \eta & \frac{-\alpha\mu^3}{d^2}\\
\\
0 & 0 & 0 & -\eta
\end{bmatrix}.$$
We distinguish two cases
\begin{enumerate}
\item If $\nu = 0$, then $\beta, \mu \neq 0$ because $d = -\beta\mu \neq 0$. In this case the Ricci operator is given by
$$\operatorname{Ric}^1 = \begin{bmatrix}
\frac{-\alpha^2}{\beta^2} & 0 & 0 & 0\\
0 & 0 & 0 & 0\\
0 & \frac{-1}{\beta} & 0 & \frac{-\alpha\mu}{\beta^2}\\
0 & 0 & 0 & 0
\end{bmatrix}.$$
The derivation $D$ is given by
$$D = \operatorname{Ric}^1 - \eta I_4 = \begin{bmatrix}
\frac{-\alpha^2}{\beta^2} - \eta & 0 & 0 & 0\\
0 & -\eta & 0 & 0\\
0 & \frac{-1}{\beta} & -\eta & \frac{-\alpha\mu}{\beta^2}\\
0 & 0 & 0 & -\eta
\end{bmatrix}.$$
The bracket in the basis $\mathscr{B}$ is as follows
$$[v_1, v_2] = \frac{1}{\mu}v_4, \qquad [v_1, v_3] = \frac{\alpha}{\beta}v_3 - \frac{\alpha^2}{\beta\mu}v_4, \qquad [v_1, v_4] = \frac{\mu}{\beta}v_3 - \frac{\alpha}{\beta}v_4.$$
Given that $D$ is a derivation of $\mathfrak{g}_4$, is satisfies the condition
\begin{eqnarray*}
& [D(v_1), v_2] + [v_1, D(v_2)] = D[v_1, v_2]\\ \Leftrightarrow& [(\frac{-\alpha^2}{\beta^2} - \eta)v_1, v_2] + [v_1, -\eta v_2 - \frac{1}{\beta}v_3] = D[v_1, v_2]\\
\Leftrightarrow& (\frac{-\alpha^2}{\beta^2} - 2\eta)[v_1, v_2] - \frac{1}{\beta}[v_1, v_3] = \frac{1}{\mu}D(v_4)\\
\Leftrightarrow& (\frac{-\alpha^2}{\beta^2} - 2\eta)\frac{1}{\mu}v_4 - \frac{\alpha}{\beta^2}v_3 + \frac{\alpha^2}{\beta^2\mu}v_4 = \frac{-\alpha}{\beta^2}v_3 - \frac{\eta}{\mu}v_4
\end{eqnarray*}
By identifying the components of $v_4$ in the last equality as above, we obtain that $2\eta = \eta$ and then $\eta = 0$. Next, $D = \operatorname{Ric}^1$ satisfies the condition
\begin{eqnarray*}
& [D(v_1), v_3] + [v_1, D(v_3)] = D[v_1, v_3]\\ \Leftrightarrow& \frac{-\alpha^2}{\beta^2}[v_1, v_3] = \frac{-\alpha^2}{\beta\mu}D(v_4)\\
\Leftrightarrow& \frac{-\alpha^3}{\beta^3}v_3 + \frac{\alpha^4}{\beta^3\mu}v_4 = \frac{\alpha^3}{\beta^3}v_3 \Leftrightarrow \alpha = 0
\end{eqnarray*}
Since $\alpha = 0$, the Ricci operator $\operatorname{Ric}^1$ becomes
$$\operatorname{Ric}^1 = \begin{bmatrix}
0 & 0 & 0 & 0\\
0 & 0 & 0 & 0\\
0 & \frac{-1}{\beta} & 0 & 0\\
0 & 0 & 0 & 0
\end{bmatrix}.$$
Now, it is easy to check that $\operatorname{Ric}^1$ satisfies the remaining derivation conditions. An example of this case occurs when the Lorentzian inner product $S_A$ satisfies $a = -1$, $c = 1$ and $b = 0$; moreover $S_A$ is represented in the basis $\left\lbrace e_1, e_2, e_3, e_4\right\rbrace$ by
$$S_A = \begin{bmatrix}
1 & 0 & 0 & 0\\
0 & 1 & 0 & 0\\
0 & 0 & -1 & 0\\
0 & 0 & 0 & 1
\end{bmatrix}.$$
Then $\alpha = \nu = 0$ and $\beta = \mu = 1$. It follows that $S_A$ is a first kind algebraic Ricci soliton with respect to the Kobayashi-Nomizu connection $\nabla^1$, where $\eta = 0$ and 
$$D = \operatorname{Ric}^1 = \begin{bmatrix}
0 & 0 & 0 & 0\\
0 & 0 & 0 & 0\\
0 & -1 & 0 & 0\\
0 & 0 & 0 & 0
\end{bmatrix}$$
is a derivation of $\mathfrak{g}_4$ with respect to the corresponding orthonormal basis $\left\lbrace v_1, v_2, v_3, v_4\right\rbrace$.
\item If $\nu \neq 0$, let us first simplify the expression for $\operatorname{Ric}^1$ by introducing the following notation:
\begin{multicols}{3}
$x = \frac{-2\alpha^2\mu^2 - \nu^2}{2d^2}$\par
$y = \frac{-\nu^2}{2d^2}$\par
$z = \frac{\alpha\mu\nu - \mu^2\beta}{d^2}$\par
$u = \frac{\alpha\mu\nu}{d^2}$\par
$v = \frac{\nu^2}{2d^2}$\par
$w = \frac{\mu^2\nu}{d^2}$\par
$t = \frac{-\alpha\mu^3}{d^2}$
\end{multicols}
Hence the simple form of $\operatorname{Ric}^1$ is given by:
$$\operatorname{Ric}^1 = \begin{bmatrix}
x & 0 & 0 & 0\\
0 & y & u & w\\
0 & z & v & t\\
0 & 0 & 0 & 0
\end{bmatrix}.$$
The derivation $D$ is given by
$$D = \operatorname{Ric}^1 - \eta I_4 = \begin{bmatrix}
x - \eta & 0 & 0 & 0\\
0 & y - \eta & u & w\\
0 & z & v - \eta & t\\
0 & 0 & 0 & -\eta
\end{bmatrix}.$$
By the same reasoning as in Theorem \ref{th3.25}, the fact that $D$ is a derivation of $\mathfrak{g}_4$ with respect to the orthonormal basis $\left\lbrace v_1, v_2, v_3, v_4\right\rbrace$ yields the following equations:
\begin{align}
\nu u - \beta w &= 0 \label{eq12} \\
\alpha^2w - \alpha\mu u &= 0 \label{eq22} \\
-\mu^2 u + \alpha\mu w &= 0 \label{eq32} \\
(x + y - 2\eta)\nu - \alpha\mu z &= (v - \eta)\nu - \beta t \label{eq42} \\
-(x + y - 2\eta)\beta + \alpha^2z &= \beta\eta \label{eq52} \\
-\alpha\mu(x + v - 2\eta) + \nu u &= \alpha^2 t - \alpha\mu(v - \eta) \label{eq62} \\
\alpha^2(x + v - 2\eta) - \beta u &= -\alpha^2\eta \label{eq72} \\
-\mu^2(x - 2\eta) + \nu w - \alpha\mu t &= -\mu^2(v - \eta) + \alpha\mu t \label{eq82} \\
\alpha\mu(x - 2\eta) - \beta w + \alpha^2 t &= -\alpha\mu\eta \label{eq92}
\end{align}
We distinguish four cases depending on whether $\alpha$ and $\mu$ are zero or nonzero:\\
\textbf{Case 1}. If $\alpha = \mu = 0$, this case is clearly impossible bacause $d = \alpha\nu - \beta\mu \neq 0$.\\
\textbf{Case 2}. If $\alpha \neq 0$ and $\mu = 0$, then $w = u = z = t = 0$. By equations (\ref{eq42}) and (\ref{eq72}), we have
$$x + y - 2\eta = v - \eta, \qquad x + v - 2\eta = - \eta.$$
Hence $ \eta = x + y - v = x + v$. Since $\mu = 0$, the elements $x$, $y$ and $v$ becomes
$$x = \frac{-1}{2\alpha^2}, \qquad y = \frac{-1}{2\alpha^2}, \qquad v = \frac{1}{2\alpha^2}.$$
Thus $\eta = \frac{-3}{2\alpha^2} = 0$. This is a contradiction.\\
\textbf{Case 3}. If $\alpha = 0$ and $\mu \neq 0$, then $\beta \neq 0$ because $d = -\beta\mu \neq 0$. Since $\alpha = 0$, then $u = 0$. By equation (\ref{eq12}), we have $w = 0$ which implies that $\mu = 0$ or $\nu = 0$. This is a contradiction, since we are considering the case where $\mu$ and $\nu$ are both nonzero.\\
\textbf{Case 4}. If $\alpha, \mu \neq 0$. The equations (\ref{eq12}) and (\ref{eq22}) can be rewritten as
$$\begin{bmatrix}
\nu & -\beta\\
-\alpha\mu & \alpha^2
\end{bmatrix}\begin{bmatrix}
u\\
w
\end{bmatrix} = \begin{bmatrix}
0\\
0
\end{bmatrix}$$
We have $\operatorname{det}\begin{bmatrix}
\nu & -\beta\\
-\alpha\mu & \alpha^2
\end{bmatrix} = \alpha d \neq 0$, then $u = w = 0$. Since $\alpha, \mu, \nu \neq 0$, then $u$ and $w$ are nonzero. This is a contradiction.
\end{enumerate}
\end{proof}
We can see that $\widetilde{\operatorname{Ric}}^1$ is represented in the basis $\mathscr{B}$ by
$$\widetilde{\operatorname{Ric}}^1 = \begin{bmatrix}
\frac{-2\alpha^2\mu^2 - \nu^2}{2d^2} & 0 & 0 & 0\\
\\
0 & \frac{-\nu^2}{2d^2} & \frac{2\alpha\mu\nu - \mu^2\beta}{2d^2} & \frac{\mu^2\nu}{2d^2}\\
\\
0 & \frac{2\alpha\mu\nu - \mu^2\beta}{2d^2} & \frac{\nu^2}{2d^2} & \frac{-\alpha\mu^3}{2d^2}\\
\\
0 & \frac{-\mu^2\nu}{2d^2} & \frac{\alpha\mu^3}{2d^2} & 0
\end{bmatrix}.$$
To simplify the expression of $\widetilde{\operatorname{Ric}}^1$, we consider the following notation:
\begin{multicols}{3}
$x = \frac{-2\alpha^2\mu^2 - \nu^2}{2d^2}$\par
$y = \frac{-\nu^2}{2d^2}$\par
$z = \frac{2\alpha\mu\nu - \mu^2\beta}{2d^2}$\par
$u = \frac{-\mu^2\nu}{2d^2}$\par
$v = \frac{\nu^2}{2d^2}$\par
$w = \frac{\alpha\mu^3}{2d^2}$
\end{multicols}
Hence the simple form of $\widetilde{\operatorname{Ric}}^1$ is given by:
$$\widetilde{\operatorname{Ric}}^1 = \begin{bmatrix}
x & 0 & 0 & 0\\
0 & y & z & -u\\
0 & z & v & -w\\
0 & u & w & 0
\end{bmatrix}.$$
\begin{theorem}
$\left(\mathfrak{g}_4, S_A, J\right)$ does not exhibit the properties of a second kind algebraic Ricci soliton associated to the connection $\nabla^1$.
\end{theorem}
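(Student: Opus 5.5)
The plan is to argue by contradiction, in the same way as for Theorem \ref{th3.26}. The matrix $\widetilde{\operatorname{Ric}}^1$, written in the simplified form with $x,y,z,u,v,w$, has exactly the same pattern as $\widetilde{\operatorname{Ric}}^0$ in Theorem \ref{th3.26}; only the meaning of the letters changes.

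Suppose $\widetilde{\operatorname{Ric}}^1 = \eta I_4 + D$ with $\eta \in \mathbb{R}$ and $D$ a derivation of $\mathfrak{g}_4$. Then
$$D = \begin{bmatrix} x-\eta & 0 & 0 & 0\\ 0 & y-\eta & z & -u\\ 0 & z & v-\eta & -w\\ 0 & u & w & -\eta\end{bmatrix}.$$
The brackets in $\mathscr{B}$ are the same as in Theorem \ref{th3.26}. Hence the three derivation conditions on the pairs $(v_1,v_2)$, $(v_1,v_3)$, $(v_1,v_4)$ give word for word the nine equations (\ref{eq11})--(\ref{eq91}). I will only use a few of them, with
$$x = \frac{-2\alpha^2\mu^2-\nu^2}{2d^2},\quad y=\frac{-\nu^2}{2d^2},\quad z=\frac{2\alpha\mu\nu-\mu^2\beta}{2d^2},\quad u=\frac{-\mu^2\nu}{2d^2},\quad v=\frac{\nu^2}{2d^2},\quad w=\frac{\alpha\mu^3}{2d^2}.$$
I then split into four cases according to whether $\alpha$ and $\mu$ vanish. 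The case $\alpha=\mu=0$ is excluded because $d=\alpha\nu-\beta\mu\neq 0$.

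\textbf{Case $\alpha\neq 0$, $\mu=0$.} Here $d=\alpha\nu$, so $\nu\neq 0$, and $z=u=w=0$ automatically.
Equation (\ref{eq41}) reduces to $x+y-2\eta=v-\eta$, and (\ref{eq71}) to $x+v-2\eta=-\eta$. Thus $\eta=x+y-v=x+v$.
With $\mu=0$ one has $x=y=-\frac{1}{2\alpha^2}$ and $v=\frac{1}{2\alpha^2}$. This gives $\eta=-\frac{3}{2\alpha^2}$ and $\eta=0$ at the same time, a contradiction.

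\textbf{Case $\alpha=0$, $\mu\neq 0$.} Here $d=-\beta\mu$, so $\beta\neq0$, and $z=-\frac{\mu^2\beta}{2d^2}\neq 0$.
Equation (\ref{eq31}) reads $\mu^2 z+\alpha\mu u=\mu^2 z=0$. This forces $z=0$, a contradiction.

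\textbf{Case $\alpha,\mu\neq0$.} Equations (\ref{eq11}) and (\ref{eq21}) form the linear system
$$\begin{bmatrix}\nu & \beta\\ \alpha\mu & \alpha^2\end{bmatrix}\begin{bmatrix}z\\u\end{bmatrix}=\begin{bmatrix}0\\0\end{bmatrix},$$
whose determinant is $\alpha d\neq0$. Hence $z=u=0$.
From $u=0$ and $\mu\neq0$ we get $\nu=0$. Then $z=-\frac{\mu^2\beta}{2d^2}=0$ gives $\beta=0$. This contradicts $d=-\beta\mu\neq0$.

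The only delicate step is checking that the derivation conditions really reproduce (\ref{eq11})--(\ref{eq91}) with these new entries. The bracket data and the sign pattern of the matrix (the entries $-u$, $-w$ in the fourth column) are identical to Theorem \ref{th3.26}, so this is a bookkeeping check. I expect no genuine obstacle beyond verifying the values of $x,y,v$ in the case $\mu=0$.
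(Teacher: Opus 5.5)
Your proposal is correct and follows the paper's proof essentially step for step: the same matrix for $D$, the same nine derivation equations (\ref{eq11})--(\ref{eq91}) carried over from Theorem \ref{th3.26}, and the same four-case split on $\alpha,\mu$, with identical arguments when $\mu=0$ and when $\alpha=0$. The only deviation is in the case $\alpha,\mu\neq 0$. After obtaining $\nu=0$, you use $z=-\mu^2\beta/(2d^2)=0$ to force $\beta=0$ and contradict $d\neq 0$, whereas the paper invokes (\ref{eq43}) to get $w=0$; both closings are valid.
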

\begin{proof}
Assume that $\left(\mathfrak{g}_4, S_A, J\right)$ is a second kind algebraic Ricci soliton associated to $\nabla^1$, then there exists a real number $\eta$ and a derivation $D$ of $\mathfrak{g}_4$ such that $\widetilde{\operatorname{Ric}}^1 = \eta I_4 + D$. Then we obtain that 
$$D = \widetilde{\operatorname{Ric}}^1 - \eta I_4 = \begin{bmatrix}
x - \eta & 0 & 0 & 0\\
0 & y - \eta & z & -u\\
0 & z & v - \eta & -w\\
0 & u & w & -\eta
\end{bmatrix}.$$
By the same reasoning as in Theorem \ref{th3.26}, the fact that $D$ is a derivation of $\mathfrak{g}_4$ with respect to the orthonormal basis $\left\lbrace v_1, v_2, v_3, v_4\right\rbrace$ yields the following equations:
\begin{align}
\nu z + \beta u &= 0 \label{eq13} \\
\alpha\mu z + \alpha^2u &= 0 \label{eq23} \\
\mu^2 z + \alpha\mu u &= 0 \label{eq33} \\
(x + y - 2\eta)\nu - \alpha\mu z - \mu^2u &= (v - \eta)\nu + \beta w \label{eq43} \\
-(x + y - 2\eta)\beta + \alpha^2 z + \alpha\mu u &= \nu w + \beta\eta \label{eq53} \\
-\alpha\mu(x + v - 2\eta) + \nu z - \mu^2 w &= -\alpha\mu(v - \eta) - \alpha^2 w \label{eq63} \\
\alpha^2(x + v - 2\eta) - \beta z + \alpha\mu w &= -\alpha\mu w - \alpha^2\eta \label{eq73} \\
-\mu^2(x - 2\eta) - u\nu + \alpha\mu w &= -\mu^2(v - \eta) -\alpha\mu w \label{eq83} \\
(x - 2\eta)\alpha\mu +\beta u - \alpha^2 w &= -\mu^2 w - \alpha\mu\eta \label{eq93}
\end{align}
We distinguish four cases depending on whether $\alpha$ and $\mu$ are zero or nonzero:\\
\textbf{Case 1}. If $\alpha = \mu = 0$, this case is clearly impossible bacause $d = \alpha\nu - \beta\mu \neq 0$.\\
\textbf{Case 2}. If $\alpha \neq 0$ and $\mu = 0$, then $\nu \neq 0$ because $d = \alpha\nu \neq 0$. Since $\mu = 0$, then $z = u = w = 0$. By equations (\ref{eq43}) and (\ref{eq73}), we have
$$x + y - 2\eta = v - \eta, \qquad x + v - 2\eta = - \eta.$$
Hence $ \eta = x + y - v = x + v$. Since $\mu = 0$, the elements $x$, $y$ and $v$ becomes
$$x = \frac{-1}{2\alpha^2}, \qquad y = \frac{-1}{2\alpha^2}, \qquad v = \frac{1}{2\alpha^2}.$$
Thus $\eta = \frac{-3}{2\alpha^2} = 0$. This is a contradiction.\\
\textbf{Case 3}. If $\alpha = 0$ and $\mu \neq 0$, then $\beta \neq 0$ because $d = -\beta\mu \neq 0$. Since $\alpha = 0$, then $z = \frac{-\mu^2\beta}{2d^2}$. By equation (\ref{eq33}), we have $z = 0$, which implies that $\mu = 0$ or $\beta = 0$. This is a contradiction.\\
\textbf{Case 4}. If $\alpha, \mu \neq 0$, we have $\nu \neq 0$ or $\beta \neq 0$ because $d = \alpha\nu - \beta\mu \neq 0$. The equations (\ref{eq13}) and (\ref{eq23}) can be rewritten as
$$\begin{bmatrix}
\nu & \beta\\
\alpha\mu & \alpha^2
\end{bmatrix}\begin{bmatrix}
z\\
u
\end{bmatrix} = \begin{bmatrix}
0\\
0
\end{bmatrix}$$
We have $\operatorname{det}\begin{bmatrix}
\nu & \beta\\
\alpha\mu & \alpha^2
\end{bmatrix} = \alpha d \neq 0$, then $z = u = 0$. Note that $u = 0 \Longrightarrow \nu = 0$, and then $\beta \neq 0$. Since $\nu = 0$, then $y = v = 0$. By equation (\ref{eq43}), we obtain $w = 0$, which implies that $\alpha = 0$ or $\mu = 0$. This is a contradiction.
\end{proof}
It is well known that the Lie group $G_4$ admits a flat Lorentzian metric, although this metric was overlooked in the classification obtained in \cite{bokan2015lorentz}. This was proved in \cite{aitbenhaddou2012left,bajo2024classification}. In fact, the Lorentzian inner product inducing this flat Lorentzian metric is represented, with respect to the basis $\left\lbrace e_1, e_2, e_3, e_4\right\rbrace$ of $\mathfrak{g}_4$, by the following matrix:
$$S = \begin{bmatrix}
1 & 0 & 0 & 0\\
0 & 0 & 0 & -1\\
0 & 0 & 1 & 0\\
0 & -1 & 0 & 0
\end{bmatrix}.$$
\subsection{The Lorentzian Lie algebra $\left(\mathfrak{g}_4, S\right)$}
An orthonormal basis of $\left(\mathfrak{g}_4, S\right)$ is given by
$$v_1 = e_1, \qquad v_2 = e_3, \qquad v_3 = \frac{1}{\sqrt{2}}(e_2 - e_4), \qquad v_4 = \frac{1}{\sqrt{2}}(e_2 + e_4).$$
The bracket in the orthonormal basis $\mathscr{B} = \left\lbrace v_1, v_2, v_3, v_4\right\rbrace$ satisfies
$$[v_1, v_2] = \frac{1}{\sqrt{2}}(v_4 - v_3), \qquad [v_1, v_3] = [v_1, v_4] = \frac{1}{\sqrt{2}}v_2.$$
Hence, the structure constants of $\left(\mathfrak{g}_4, S\right)$ are:
\begin{multicols}{4}
$\xi_{123} = \frac{-1}{\sqrt{2}}$\par
$\xi_{213} = \frac{1}{\sqrt{2}}$\par
$\xi_{124} = \frac{-1}{\sqrt{2}}$\par
$\xi_{214} = \frac{1}{\sqrt{2}}$\par
$\xi_{132} = \frac{1}{\sqrt{2}}$\par
$\xi_{312} = \frac{-1}{\sqrt{2}}$\par
$\xi_{142} = \frac{1}{\sqrt{2}}$\par
$\xi_{412} = \frac{-1}{\sqrt{2}}$
\end{multicols}
Using (\ref{Levi}), we find that the Levi-Civita connection of $\left(\mathfrak{g}_4, S\right)$ is as follows
\begin{multicols}{3}
$\nabla_{v_i}v_i = 0,\ \forall i=1,\ldots,4$\par
$\nabla_{v_1}v_2 = \frac{1}{\sqrt{2}}(v_4 - v_3)$\par
$\nabla_{v_2}v_1 = 0$\par
$\nabla_{v_1}v_3 = \nabla_{v_1}v_4 = \frac{1}{\sqrt{2}}v_2$\par
$\nabla_{v_3}v_1 = \nabla_{v_4}v_1 = 0$\par
$\nabla_{v_2}v_3 = \nabla_{v_3}v_2 = 0$\par
$\nabla_{v_2}v_4 = \nabla_{v_4}v_2 = 0$\par
$\nabla_{v_3}v_4 = \nabla_{v_4}v_3=0$
\end{multicols}
It is straightforward to check that the curvature tensor $R$ is zero and then $\left(\mathfrak{g}_4, S\right)$ is flat. For example, we have
\begin{eqnarray*}
R_{v_1v_2}v_1 &=& \nabla_{[v_1, v_2]}v_1 - \nabla_{v_1}\nabla_{v_2}v_1 + \nabla_{v_2}\nabla_{v_1}v_1\\
&=& \frac{1}{\sqrt{2}}\nabla_{v_4}v_1 - \frac{1}{\sqrt{2}}\nabla_{v_3}v_1 = 0
\end{eqnarray*}
and the same reasoning applies to the other terms.

The nonzero terms in $\nabla_{v_i}(J)v_j$ are
$$\nabla_{v_1}(J)v_2 = \sqrt{2}v_4, \qquad \nabla_{v_1}(J)v_4 = -\sqrt{2}v_2.$$
The nonvanishing terms in $\nabla^0$ of $\left(\mathfrak{g}_4, S, J\right)$ are
$$\nabla^0_{v_1}v_2 = \frac{-1}{\sqrt{2}}v_3, \qquad \nabla^0_{v_1}v_3 = \frac{1}{\sqrt{2}}v_2.$$
\begin{theorem}
The Lorentzian Lie algebra $\left(\mathfrak{g}_4, S, J\right)$ is still flat with respect to $\nabla^0$.
\end{theorem}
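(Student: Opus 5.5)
We will argue exactly as in the proof of Theorem \ref{th2}, working term by term in the formula
$$R^0_{v_iv_j}v_k = \nabla^0_{[v_i, v_j]}v_k - \nabla^0_{v_i}\nabla^0_{v_j}v_k + \nabla^0_{v_j}\nabla^0_{v_i}v_k.$$
The key structural fact is read off from the list of nonvanishing terms of $\nabla^0$. The only nonzero terms are $\nabla^0_{v_1}v_2 = \frac{-1}{\sqrt{2}}v_3$ and $\nabla^0_{v_1}v_3 = \frac{1}{\sqrt{2}}v_2$. Hence
$$\nabla^0_{v_2}v_k = \nabla^0_{v_3}v_k = \nabla^0_{v_4}v_k = 0 \quad \forall k = 1, \ldots, 4.$$
In other words, $\nabla^0_X$ can be nonzero only when $X$ has a nonzero $v_1$-component.

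First, we handle the bracket term. The brackets of $(\mathfrak{g}_4, S)$ in $\mathscr{B}$ are $[v_1, v_2] = \frac{1}{\sqrt{2}}(v_4 - v_3)$ and $[v_1, v_3] = [v_1, v_4] = \frac{1}{\sqrt{2}}v_2$. Every bracket $[v_i, v_j]$ therefore lies in $\operatorname{Span}(v_2, v_3, v_4)$. By linearity of $\nabla^0$ in the lower slot, $\nabla^0_{[v_i, v_j]}v_k = 0$ for all $i, j, k$.

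Second, we handle the quadratic terms. For $\nabla^0_{v_i}\nabla^0_{v_j}v_k$ to be nonzero we need $\nabla^0_{v_j}v_k \neq 0$, which forces $j = 1$. Its image lies in $\operatorname{Span}(v_2, v_3)$, and applying $\nabla^0_{v_i}$ to it is nonzero only if $i = 1$. Symmetrically, $\nabla^0_{v_j}\nabla^0_{v_i}v_k \neq 0$ forces $i = j = 1$. But $R^0_{v_iv_i} = 0$ by skew-symmetry in $i, j$, since the two quadratic terms then cancel and the bracket vanishes.

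Therefore $R^0_{v_iv_j}v_k = 0$ for all $i, j, k$, and $R^0 = 0$. There is no real obstacle here. The only point requiring care is confirming, from the formula (\ref{canonical}) together with the two nonzero terms of $\nabla_{v_i}(J)v_j$, that $\nabla^0_{v_2}$, $\nabla^0_{v_3}$ and $\nabla^0_{v_4}$ indeed vanish identically. This holds because $\nabla_{v_i} = 0$ for $i \ge 2$ in the Levi-Civita table, and $\nabla_{v_i}J = 0$ for $i \geq 2$.
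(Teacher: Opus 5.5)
Your proof is correct and follows essentially the same argument as the paper. The bracket term vanishes because every $[v_i,v_j]$ lies in $\operatorname{Span}(v_2,v_3,v_4)$, where $\nabla^0$ is identically zero, and the quadratic terms can survive only when $i=j=1$, where $R^0_{v_iv_j}$ vanishes by skew-symmetry; your extra check that $\nabla^0_{v_2}=\nabla^0_{v_3}=\nabla^0_{v_4}=0$ (via the Levi-Civita table and $\nabla_{v_i}J=0$ for $i\geq 2$) is a welcome detail that the paper only asserts.
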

\begin{proof}
The curvature $R^0$ is given by
$$R^0_{v_iv_j}v_k = \nabla^0_{[v_i, v_j]}v_k - \nabla^0_{v_i}\nabla^0_{v_j}v_k + \nabla^0_{v_j}\nabla^0_{v_i}v_k.$$
The bracket $[v_i, v_j]$ is expressed in terms of $v_2, v_3$ and $v_4$ and we have
$$\nabla^0_{v_2} = \nabla^0_{v_3} = \nabla^0_{v_4} = 0.$$
So the first term $\nabla^0_{[v_i, v_j]}v_k$ is always zero. Next, for the term $\nabla^0_{v_j}v_k$ to be nonzero, $j$ must equal $1$. Again, for the term $\nabla^0_{v_i}\nabla^0_{v_j}v_k$ to be nonzero, $i$ must equal $1$. So, $i = j = 1$. The same reasoning applies to the other term $\nabla^0_{v_j}\nabla^0_{v_i}v_k$. But when $i = j$, we have $R^0_{v_iv_j}v_k = 0$.
\end{proof}
The nonvanishing terms in $\nabla^1$ of $\left(\mathfrak{g}_4, S, J\right)$ are
$$\nabla^1_{v_1}v_2 = \frac{-1}{\sqrt{2}}v_3, \qquad \nabla^1_{v_1}v_3 = \frac{1}{\sqrt{2}}v_2, \qquad \nabla^1_{v_4}v_1 = \frac{-1}{\sqrt{2}}v_2.$$
In order to find to Ricci operator associated with $\nabla^1$, we need the following curvatures
\begin{multicols}{4}
	$R^1_{v_2v_1}v_2 = 0$\par
	$R^1_{v_3v_1}v_3 = 0$\par
	$R^1_{v_4v_1}v_4 = 0$\par
	$R^1_{v_1v_2}v_1 = \frac{-1}{2}v_2$\par
	$R^1_{v_3v_2}v_3 = 0$\par
	$R^1_{v_4v_2}v_4 = 0$\par
	$R^1_{v_1v_3}v_1 = 0$\par
	$R^1_{v_2v_3}v_2 = 0$\par
	$R^1_{v_4v_3}v_4 = 0$\par
	$R^1_{v_1v_4}v_1 = \frac{-1}{2}v_3$\par
	$R^1_{v_2v_4}v_2 = 0$\par
	$R^1_{v_3v_4}v_3 = 0$
\end{multicols}
The Ricci operator $\operatorname{Ric}^1$ of $\nabla^1$ is represented in the basis $\mathscr{B}$ by the following matrix
$$\operatorname{Ric}^1 = \begin{bmatrix}
0 & 0 & 0 & 0\\
0 & \frac{-1}{2} & 0 & 0\\
0 & 0 & 0 & \frac{-1}{2}\\
0 & 0 & 0 & 0
\end{bmatrix}.$$
Therefore, $\left(\mathfrak{g}_4, S, J\right)$ is not Ricci-flat and then not flat.
\begin{theorem}
$\left(\mathfrak{g}_4, S, J\right)$ does not exhibit the properties of a first kind algebraic Ricci soliton associated to the connection $\nabla^1$.
\end{theorem}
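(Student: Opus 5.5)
We argue by contradiction, in the same way as the earlier non-existence results: assume $\operatorname{Ric}^1 = cI_4 + D$ with $c\in\mathbb{R}$ and $D$ a derivation of $\mathfrak{g}_4$. Then $D = \operatorname{Ric}^1 - cI_4$, which we read off column by column from the matrix of $\operatorname{Ric}^1$ computed above:
$$D(v_1) = -c\,v_1, \qquad D(v_2) = \left(-\tfrac{1}{2} - c\right)v_2, \qquad D(v_3) = -c\,v_3, \qquad D(v_4) = -\tfrac{1}{2}v_3 - c\,v_4.$$
We then impose the derivation rule $[D(v_i), v_j] + [v_i, D(v_j)] = D[v_i, v_j]$ only on the pairs where the bracket is nonzero, namely $(v_1,v_3)$ and $(v_1,v_4)$. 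Both brackets equal $\frac{1}{\sqrt{2}}v_2$, and each pair should give a linear equation in $c$.

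For the pair $(v_1, v_3)$ the left-hand side is $-2c[v_1,v_3] = -\frac{2c}{\sqrt{2}}v_2$. The right-hand side is $\frac{1}{\sqrt{2}}D(v_2) = \frac{1}{\sqrt{2}}\left(-\frac12 - c\right)v_2$. Comparing the $v_2$-components gives $-2c = -\frac12 - c$, that is $c = \frac12$.

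For the pair $(v_1, v_4)$ the term $-\frac12 v_3$ in $D(v_4)$ contributes $-\frac12[v_1,v_3]$. The left-hand side is therefore
$$-c[v_1,v_4] + \left[v_1, -\tfrac12 v_3 - c v_4\right] = \tfrac{1}{\sqrt{2}}\left(-2c - \tfrac12\right)v_2,$$
while the right-hand side is again $\frac{1}{\sqrt{2}}\left(-\frac12 - c\right)v_2$. This forces $-2c - \frac12 = -\frac12 - c$, that is $c = 0$.

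The two conditions give $c = \frac12$ and $c = 0$, which is a contradiction, so no such $c$ and $D$ exist. The only point requiring care is reading the columns of $\operatorname{Ric}^1$ correctly, in particular that the entry $-\frac12$ in position $(3,4)$ means $D(v_4)$ has a $v_3$-component. That term is exactly what separates the two conditions. No case analysis is needed, and the remaining derivation conditions do not have to be checked.
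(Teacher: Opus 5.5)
Your proof is correct and is essentially the paper's: both assume $\operatorname{Ric}^1 = cI_4 + D$ and extract incompatible values of $c$ from the Leibniz rule. The paper tests the pair $(v_1,v_2)$, where comparing the $v_4$- and $v_3$-components gives $c=-\frac12$ and $c=-1$. Your pairs $(v_1,v_3)$ and $(v_1,v_4)$ give $c=\frac12$ and $c=0$, and both computations are right.

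One factual slip: $[v_1,v_2]=\frac{1}{\sqrt{2}}(v_4-v_3)$ is also nonzero, so $(v_1,v_3)$ and $(v_1,v_4)$ are not the only pairs with nonvanishing bracket. This does not affect your argument, since any two incompatible necessary conditions already yield the contradiction.
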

\begin{proof}
Assume that $\operatorname{Ric}^1 = cI_4 + D$, where $c \in \mathbb{R}$ and $D$ is a derivation of $\mathfrak{g}_4$. Thus 
$$D = \operatorname{Ric}^1 - cI_4 = \begin{bmatrix}
-c & 0 & 0 & 0\\
0 & \frac{-1}{2} - c & 0 & 0\\
0 & 0 & -c & \frac{-1}{2}\\
0 & 0 & 0 & -c
\end{bmatrix}.$$
Since $D$ is a derivation of $\mathfrak{g}_4$, it satisfies the rule
\begin{eqnarray*}
& [D(v_1), v_2] + [v_1, D(v_2)] = D[v_1, v_2]\\ \Leftrightarrow& -c[v_1, v_2] + (\frac{-1}{2} - c)[v_1, v_2] = D[v_1, v_2]\\
\Leftrightarrow& (\frac{-1}{2} - 2c)[v_1, v_2] = \frac{1}{\sqrt{2}}(D(v_4) - D(v_3))\\
\Leftrightarrow& (\frac{-1}{2} - 2c)\frac{1}{\sqrt{2}}(v_4 - v_3) = \frac{1}{\sqrt{2}}(-cv_4 + (\frac{-1}{2} + c)v_3)
\end{eqnarray*}
By identifying the components of $v_4$ and $v_3$ respectively, we obtain
$$\left\lbrace\begin{array}{lll}
\frac{-1}{2} - 2c = -c \Longrightarrow c = \frac{-1}{2} \\
\frac{1}{2} + 2c = \frac{-1}{2} + c \Longrightarrow c = -1
\end{array}\right.$$
This is a contradiction.
\end{proof}
We can see that $\widetilde{\operatorname{Ric}}^1$ is represented in the basis $\mathscr{B}$ by
$$\widetilde{\operatorname{Ric}}^1 = \begin{bmatrix}
0 & 0 & 0 & 0\\
0 & \frac{-1}{2} & 0 & 0\\
0 & 0 & 0 & \frac{-1}{4}\\
0 & 0 & \frac{1}{4} & 0
\end{bmatrix}.$$
\begin{theorem}
$\left(\mathfrak{g}_4, S_A, J\right)$ is not a second kind algebraic Ricci soliton associated to the connection $\nabla^1$.
\end{theorem}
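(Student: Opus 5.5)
The statement is worded for $\left(\mathfrak{g}_4, S_A, J\right)$, the same metric Lie algebra treated in the subsection on $S_A$, not for the flat structure $S$. So I will prove it with the data already computed there: the orthonormal basis $v_1 = e_1$, $v_2 = e_2$, $v_3 = \alpha e_3 + \beta e_4$, $v_4 = \mu e_3 + \nu e_4$ with $d = \alpha\nu - \beta\mu \neq 0$, and the brackets
$$[v_1, v_2] = \tfrac{\nu}{d}v_3 - \tfrac{\beta}{d}v_4,\qquad [v_1, v_3] = \tfrac{-\alpha\mu}{d}v_3 + \tfrac{\alpha^2}{d}v_4,\qquad [v_1, v_4] = \tfrac{-\mu^2}{d}v_3 + \tfrac{\alpha\mu}{d}v_4.$$
I will also use the symmetrized Ricci operator $\widetilde{\operatorname{Ric}}^1$ of $\nabla^1$ in its simplified form with entries $x, y, z, u, v, w$. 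The argument is by contradiction: assume $\widetilde{\operatorname{Ric}}^1 = \eta I_4 + D$ for some $\eta \in \mathbb{R}$ and some derivation $D$, and set $D = \widetilde{\operatorname{Ric}}^1 - \eta I_4$.

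First I impose the derivation identity on the only nonvanishing brackets, namely the pairs $(v_1, v_2)$, $(v_1, v_3)$ and $(v_1, v_4)$. For each pair I expand both sides in the basis $\mathscr{B}$ and compare the components along $v_2$, $v_3$ and $v_4$. The matrix of $D$ has exactly the shape of the one in Theorem \ref{th3.26}, so this yields formally the same nine equations (\ref{eq13})--(\ref{eq93}), now with the $\nabla^1$ values
$$x = \tfrac{-2\alpha^2\mu^2 - \nu^2}{2d^2},\quad y = \tfrac{-\nu^2}{2d^2},\quad z = \tfrac{2\alpha\mu\nu - \mu^2\beta}{2d^2},\quad u = \tfrac{-\mu^2\nu}{2d^2},\quad v = \tfrac{\nu^2}{2d^2},\quad w = \tfrac{\alpha\mu^3}{2d^2}.$$

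Next I split into cases according to whether $\alpha$ and $\mu$ vanish.

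\textbf{Case $\alpha = \mu = 0$.} This forces $d = 0$, which is excluded.

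\textbf{Case $\alpha \neq 0$, $\mu = 0$.} Then $z = u = w = 0$. Equations (\ref{eq43}) and (\ref{eq73}) give $\eta = x + y - v$ and $\eta = x + v$. Here $x = y = -\tfrac{1}{2\alpha^2}$ and $v = \tfrac{1}{2\alpha^2}$, so the first gives $\eta = -\tfrac{3}{2\alpha^2}$ and the second gives $\eta = 0$, a contradiction.

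\textbf{Case $\alpha = 0$, $\mu \neq 0$.} Then $\beta \neq 0$ and $z = -\tfrac{\mu^2\beta}{2d^2}$. Equation (\ref{eq33}) reduces to $\mu^2 z = 0$, hence $z = 0$, which is impossible.

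\textbf{Case $\alpha, \mu \neq 0$.} Equations (\ref{eq13}) and (\ref{eq23}) form a homogeneous $2\times 2$ system in $(z, u)$ with determinant $\alpha d \neq 0$, so $z = u = 0$. From $u = 0$ we get $\nu = 0$, hence $\beta \neq 0$ and $y = v = 0$. Then (\ref{eq43}) reduces to $\beta w = 0$, so $w = 0$. This forces $\alpha\mu = 0$, a contradiction.

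The step needing the most care is re-deriving the nine equations with the correct signs. The lower-left block of $D$ is antisymmetric rather than symmetric, so a single sign slip in the $v_3$ or $v_4$ component could make a degenerate subcase appear consistent. I would therefore first recheck the coefficient bookkeeping in the $(v_1, v_4)$ condition before relying on the case analysis, which otherwise only uses the bracket relations and the explicit formulas for $x, y, z, u, v, w$.
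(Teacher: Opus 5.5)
Your argument proves a different statement from the one the paper proves at this point. In the paper this theorem is the last one in the subsection on the flat inner product $S$. It comes right after $\widetilde{\operatorname{Ric}}^1$ is computed for $(\mathfrak{g}_4, S, J)$, and its proof, ``similar to the proof of the precedent theorem'', points to the first-kind result for $(\mathfrak{g}_4, S, J)$. The ``$S_A$'' in the statement is evidently a misprint for $S$. The $S_A$ version is already stated and proved at the end of the subsection on $(\mathfrak{g}_4, S_A)$, and your proposal reproduces that earlier proof almost verbatim: same nine equations, same four cases.

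Read literally, your proof is fine. The nine equations are correct, including the signs in the $(v_1, v_4)$ condition that come from the antisymmetric off-diagonal entries in the fourth row and column of $D$. Each case closes as you say; in the last case you could stop earlier, since $\nu = 0$ makes $z = -\mu^2\beta/(2d^2) \neq 0$. But the proof says nothing about $S$, which you explicitly set aside.

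The missing piece is the short computation for $S$. There $[v_1, v_2] = \tfrac{1}{\sqrt{2}}(v_4 - v_3)$ and $[v_1, v_3] = [v_1, v_4] = \tfrac{1}{\sqrt{2}}v_2$. The map $D = \widetilde{\operatorname{Ric}}^1 - cI_4$ acts by $Dv_1 = -cv_1$, $Dv_2 = (-\tfrac12 - c)v_2$, $Dv_3 = -cv_3 + \tfrac14 v_4$, $Dv_4 = -\tfrac14 v_3 - cv_4$. The derivation identity on $(v_1, v_2)$ becomes
\[
\left(-\tfrac12 - 2c\right)\tfrac{1}{\sqrt{2}}(v_4 - v_3) = \tfrac{1}{\sqrt{2}}\left(\left(c - \tfrac14\right)v_3 - \left(c + \tfrac14\right)v_4\right).
\]
The $v_4$-component forces $c = -\tfrac14$, while the $v_3$-component forces $c = -\tfrac34$, a contradiction. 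This single bracket condition is the whole proof in the paper. It runs exactly parallel to the first-kind case for $S$, where one gets $c = -\tfrac12$ and $c = -1$. No case analysis on $\alpha, \mu$ is involved, since $S$ has no parameters.
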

\begin{proof}
Similar to the proof of the precedent theorem.
\end{proof}
\section{Lorentzian algebraic Ricci solitons on $\mathfrak{h}_3 \oplus \mathbb{R}$}
\begin{theorem} \cite{bokan2015lorentz}
The set $\mathcal{S}^L\left(\mathfrak{h}_3 \oplus \mathbb{R}\right)$ of nonequivalent Lorentz inner product on algebra $\mathfrak{h}_3 \oplus \mathbb{R}$ in basis $\left\lbrace e_1, e_2, e_3, e_4\right\rbrace$ is represented by the following matrices
\begin{multicols}{3}
$S_{\mu} = \begin{bmatrix}
1 & 0 & 0 & 0\\
0 & -1 & 0 & 0\\
0 & 0 & \mu & 0\\
0 & 0 & 0 & 1
\end{bmatrix}$\par
$S_{\lambda}^{+} = \begin{bmatrix}
1 & 0 & 0 & 0\\
0 & 1 & 0 & 0\\
0 & 0 & \lambda & 0\\
0 & 0 & 0 & -1
\end{bmatrix}$\par
$S_{\lambda}^{-} = \begin{bmatrix}
1 & 0 & 0 & 0\\
0 & 1 & 0 & 0\\
0 & 0 & -\lambda & 0\\
0 & 0 & 0 & 1
\end{bmatrix}$\par
$S_0^{1} = \begin{bmatrix}
1 & 0 & 0 & 0\\
0 & 1 & 0 & 0\\
0 & 0 & 0 & 1\\
0 & 0 & 1 & 0
\end{bmatrix}$\par
$S_0^{2} = \begin{bmatrix}
1 & 0 & 0 & 0\\
0 & 0 & 1 & 0\\
0 & 1 & 0 & 0\\
0 & 0 & 0 & \lambda
\end{bmatrix}$\par
$S_0^{3} = \begin{bmatrix}
1 & 0 & 0 & 0\\
0 & 0 & 0 & 1\\
0 & 0 & 1 & 0\\
0 & 1 & 0 & 0
\end{bmatrix}$
\end{multicols}
with $\lambda, \mu > 0$.
\end{theorem}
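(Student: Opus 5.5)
The statement is a classification theorem quoted from \cite{bokan2015lorentz}, so the paper itself would just cite it. If I had to prove it, I would follow the standard orbit method. Because $H_3 \times \mathbb{R}$ is simply connected, $\operatorname{Aut}(H_3\times\mathbb{R}) \simeq \operatorname{Aut}(\mathfrak{h}_3\oplus\mathbb{R})$, as recalled in the preliminaries. Two inner products on $\mathfrak{h}_3\oplus\mathbb{R}$ give isometric left-invariant metrics through an automorphism exactly when their Gram matrices $S, S'$ satisfy $S' = P^{T} S P$ for some $P \in \operatorname{Aut}(\mathfrak{h}_3\oplus\mathbb{R})$, up to the scaling conventions of \cite{bokan2015lorentz}. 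The task is therefore to describe the orbits of this congruence action on symmetric matrices of signature $(3,1)$.

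\textbf{Step 1: the automorphism group.} In the basis $\{e_1,e_2,e_3,e_4\}$ with $[e_1,e_2]=e_3$, an automorphism must preserve the center $\operatorname{Span}(e_3,e_4)$ and the commutator $\operatorname{Span}(e_3)$. Writing out the bracket condition, its matrix has the block form
$$\begin{bmatrix} A & 0 \\ * & \begin{matrix} \det A & q \\ 0 & r\end{matrix}\end{bmatrix}^{T}\text{-type},$$
with $A \in GL_2(\mathbb{R})$ acting on $\operatorname{Span}(e_1,e_2)$ modulo the center, $e_3 \mapsto (\det A)\,e_3$, $e_4 \mapsto q e_3 + r e_4$ with $r\neq 0$, and arbitrary center components in the images of $e_1, e_2$.

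\textbf{Step 2: case split on the restriction to the center.} The invariant I would split on is the restriction of $S$ to the center $\mathcal{Z}=\operatorname{Span}(e_3,e_4)$, since $\mathcal{Z}$ and the line $\mathscr{D}=\operatorname{Span}(e_3)$ are preserved. The first dichotomy is whether $S|_{\mathcal{Z}}$ is nondegenerate or degenerate.

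If $S|_{\mathcal{Z}}$ is nondegenerate, I decompose $\mathfrak{g} = \mathcal{Z}^{\perp}\oplus\mathcal{Z}$ using the free center components of $P$. I then use $q, r$ to make $e_4$ orthogonal to $e_3$ and normalize it. Finally I use $A$ to diagonalize the form on $\mathcal{Z}^\perp$, which is possible because $GL_2$ acts on binary forms. The norm of $e_3$ then rescales by $(\det A)^2$ relative to the $\mathcal{Z}^\perp$ form, so it survives as the parameter $\mu$ or $\lambda$. Placing the single negative direction in $\mathcal{Z}^\perp$, on $e_4$, or on $e_3$ yields $S_\mu$, $S^{+}_\lambda$ and $S^{-}_\lambda$ respectively.

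If $S|_{\mathcal{Z}}$ is degenerate, there are two sub-cases. When $\mathcal{Z}$ is Lorentzian-degenerate of full null type, meaning $S|_\mathcal{Z}$ is indefinite, it is in fact nondegenerate and was already handled. The genuinely degenerate cases come from the radical line of $S|_{\mathcal{Z}}$, which is either $\mathscr{D}$ or a line different from $\mathscr{D}$. I then sub-split according to whether $S|_{\mathcal{Z}}$ is zero or has rank one. Each of these produces a null vector paired with $e_4$, $e_2$, or $e_3$, giving $S^1_0$, $S^2_0$ and $S^3_0$.

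\textbf{Step 3: distinctness.} To show the listed forms are pairwise inequivalent, I would compare invariants: the signature of $S|_\mathcal{Z}$ and $S|_{\mathscr{D}}$, the causal character of $\mathscr{D}$ and of $\mathcal{Z}$, and the scale-invariant ratio defining $\lambda$ or $\mu$.

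\textbf{Main obstacle.} I expect the degenerate-center cases to be the hardest part. The bookkeeping is delicate there: the usable freedom is reduced because $P$ must normalize a null line, and the statement's claim of a surviving parameter $\lambda$ in $S^2_0$ must be matched correctly against rescalings by $\det A$ and $r$. I would handle these cases by working with an explicit null basis and tracking exactly which entries of $P$ remain free. I would also take care that no further metric is overlooked, since a metric was overlooked in the $\mathfrak{g}_4$ list from the same classification.
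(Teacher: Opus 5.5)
The paper gives no proof here; it only quotes \cite{bokan2015lorentz}, so your outline has to stand on its own. The strategy is the right one: compute $\operatorname{Aut}(\mathfrak{h}_3\oplus\mathbb{R})$, then split on how $S$ restricts to the invariant subspaces $\mathcal{Z}$ and $\mathscr{D}$. Your Step 1 is also correct. The case tree in Step 2, however, is wrong, and executed as written it loses $S^1_0$.

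\textbf{The nondegenerate branch.} You use $q,r$ to make $e_4$ orthogonal to $e_3$, which presupposes that $\mathscr{D}=\operatorname{Span}(e_3)$ is non-null. When $S|_{\mathcal{Z}}$ is Lorentzian and $e_3$ is null, no such $e_4$ exists, since it would put $e_3$ in the radical of $S|_{\mathcal{Z}}$. That subcase is exactly $S^1_0$, whose center has Gram matrix $\left[\begin{smallmatrix}0&1\\1&0\end{smallmatrix}\right]$. There you should instead:
\begin{itemize}
\item use $q$ to make $e_4$ null (possible because $\langle e_3,e_4\rangle\neq 0$);
\item use $A$ to make $e_1,e_2$ orthonormal in the positive definite $\mathcal{Z}^\perp$;
\item use $r$ to reach $\langle e_3,e_4\rangle=1$.
\end{itemize}

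\textbf{The degenerate branch.} It cannot produce $S^1_0$. The subcase $S|_{\mathcal{Z}}=0$ is empty, because signature $(3,1)$ admits no $2$-dimensional totally null subspace. A negative semidefinite rank-one restriction is also impossible, since a null vector orthogonal to a timelike one is zero. So a degenerate $S|_{\mathcal{Z}}$ is positive semidefinite of rank one. The only split is whether its null radical is $\mathscr{D}$, giving $S^2_0$, or another line, in which case $e_3$ is spacelike and you get $S^3_0$.

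\textbf{The correct tree.} If $S|_{\mathcal{Z}}$ is positive definite you get $S_\mu$. If it is Lorentzian with $\mathscr{D}$ spacelike, timelike or null you get $S^+_\lambda$, $S^-_\lambda$, $S^1_0$ respectively. If it is degenerate you get $S^2_0$ or $S^3_0$. Your Step 3 invariants (type of $S|_{\mathcal{Z}}$, causal character of $\mathscr{D}$, position of the radical) then separate these six types.

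\textbf{The parameter in $S^2_0$.} Your worry about $\lambda$ in $S^2_0$ is justified, but it resolves against the statement. The automorphism fixing $e_1,e_2,e_3$ and sending $e_4\mapsto re_4$ changes only the $(4,4)$ entry of $S^2_0$, multiplying it by $r^2$. So that $\lambda$ is not an invariant, and every value is equivalent to $\lambda=1$. This is what the paper tacitly uses when it takes $v_2=e_4$ as a unit vector for $S^2_0$. Your Step 3 therefore cannot separate that family.

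By contrast, $\lambda$ and $\mu$ in $S^\pm_\lambda$ and $S_\mu$ are genuine invariants: they equal $\pm\langle[x,y],[x,y]\rangle$ for an orthonormal pair $x,y$ in the canonical complement $\mathcal{Z}^\perp$. This invariant changes under homotheties. So the equivalence must be by automorphisms alone, and your hedge ``up to the scaling conventions'' should be dropped.
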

\begin{remark}
Note that as a vector space, $\mathfrak{h}_3 \oplus \mathbb{R}$ is identified with $\mathbb{R}^4$ and then the elements $e_1$, $e_2$, $e_3$ and $e_4$ are the canonical basis elements of $\mathbb{R}^4$.
\end{remark}
\subsection{The Lorentzian Lie algebra $\left(\mathfrak{h}_3 \oplus \mathbb{R}, S_{\mu}\right)$}
An orthonormal basis of $\left(\mathfrak{h}_3 \oplus \mathbb{R}, S_{\mu}\right)$ is given by
$$v_1 = e_1, \qquad v_2 = \frac{1}{\sqrt{\mu}}e_3, \qquad v_3 = e_4, \qquad v_4 = e_2.$$
The nonzero bracket in this basis is $[v_1, v_4] = \sqrt{\mu}v_2$, thus the nonzero structure constants are $\xi_{142} = -\xi_{412} = \sqrt{\mu}$. Using (\ref{Levi}), we find that the nonvanishing terms in the Levi-Civita connection of $\left(\mathfrak{h}_3 \oplus \mathbb{R}, S_{\mu}\right)$ are the following
$$\nabla_{v_1}v_2 = \nabla_{v_2}v_1 = \frac{\sqrt{\mu}}{2}v_4, \qquad \nabla_{v_1}v_4 = -\nabla_{v_4}v_1 = \frac{\sqrt{\mu}}{2}v_2, \qquad \nabla_{v_2}v_4 = \nabla_{v_4}v_2 = \frac{\sqrt{\mu}}{2}v_1.$$
The nonzero terms in $\nabla_{v_i}(J)v_j$ are
$$\nabla_{v_1}(J)v_2 = \nabla_{v_2}(J)v_1 = \sqrt{\mu}v_4, \qquad \nabla_{v_1}(J)v_4 = -\sqrt{\mu}v_2, \qquad \nabla_{v_2}(J)v_4 = -\sqrt{\mu}v_1.$$
The nonvanishing terms in $\nabla^0$ of $\left(\mathfrak{h}_3 \oplus \mathbb{R}, S_{\mu}\right)$ are
$$\nabla^0_{v_4}v_1 = \frac{-\sqrt{\mu}}{2}v_2, \qquad \nabla^0_{v_4}v_2 = \frac{\sqrt{\mu}}{2}v_1.$$
\begin{theorem}
The Lorentzian Lie algebra $\left(\mathfrak{h}_3 \oplus \mathbb{R}, S_{\mu}, J\right)$ is flat with respect to $\nabla^0$.
\end{theorem}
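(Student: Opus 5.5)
The plan is to reuse the argument of Theorem \ref{th2}, which is purely combinatorial: it only needs to know which directions $\nabla^0$ differentiates along and where the brackets land. We work with
$$R^0_{v_iv_j}v_k = \nabla^0_{[v_i, v_j]}v_k - \nabla^0_{v_i}\nabla^0_{v_j}v_k + \nabla^0_{v_j}\nabla^0_{v_i}v_k$$
and show that each of the three terms vanishes, or that the last two cancel, for all $i,j,k$.

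\emph{First term.} The only nonzero bracket is $[v_1, v_4] = \sqrt{\mu}v_2$, so every bracket $[v_i, v_j]$ is a multiple of $v_2$. From the list of nonvanishing terms of $\nabla^0$, the operator $\nabla^0_{v_2}$ is identically zero. Hence $\nabla^0_{[v_i, v_j]}v_k = 0$ for all $i,j,k$.

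\emph{Second and third terms.} The only nonzero covariant derivatives are $\nabla^0_{v_4}v_1$ and $\nabla^0_{v_4}v_2$. So $\nabla^0_{v_p} = 0$ for $p = 1, 2, 3$, and $\nabla^0_{v_i}\nabla^0_{v_j}v_k$ can be nonzero only when $i = j = 4$; the same holds for $\nabla^0_{v_j}\nabla^0_{v_i}v_k$. If $i = j$, then $R^0_{v_iv_i} = 0$ trivially, because the two composite terms cancel and the bracket vanishes. If $i \neq j$, at least one of $i,j$ differs from $4$, so the corresponding operator is zero and both composite terms vanish.

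Together these give $R^0 = 0$. There is no real obstacle here; the only thing to double-check is the stated list of $\nabla^0$, namely that $\nabla^0_{v_1}$, $\nabla^0_{v_2}$ and $\nabla^0_{v_3}$ are indeed zero. Using (\ref{canonical}) with the listed $\nabla_{v_i}(J)v_j$, one computes for example
$$\nabla^0_{v_1}v_2 = \nabla_{v_1}v_2 - \tfrac12(\nabla_{v_1}J)v_2 = \tfrac{\sqrt{\mu}}{2}v_4 - \tfrac{\sqrt{\mu}}{2}v_4 = 0,$$
and the other entries follow in the same way.
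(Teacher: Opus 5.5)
Your proof is correct and follows the paper's argument essentially verbatim. Every bracket is a multiple of $v_2$ with $\nabla^0_{v_2}=0$, and the composite terms can be nonzero only when $i=j=4$, where $R^0_{v_iv_i}=0$; your explicit check that $\nabla^0_{v_1}$, $\nabla^0_{v_2}$, $\nabla^0_{v_3}$ vanish (e.g.\ $\nabla^0_{v_1}v_2=\nabla^0_{v_1}v_4=0$) is a useful verification that the paper leaves implicit.
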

\begin{proof}
The curvature $R^0$ is given by
$$R^0_{v_iv_j}v_k = \nabla^0_{[v_i, v_j]}v_k - \nabla^0_{v_i}\nabla^0_{v_j}v_k + \nabla^0_{v_j}\nabla^0_{v_i}v_k.$$
The bracket $[v_i, v_j]$ is expressed in term of $v_2$ and we have $\nabla^0_{v_2} = 0$. So the first term $\nabla^0_{[v_i, v_j]}v_k$ is always zero. Next, for the term $\nabla^0_{v_j}v_k$ to be nonzero, $j$ must equal $4$. Again, for the term $\nabla^0_{v_i}\nabla^0_{v_j}v_k$ to be nonzero, $i$ must equal $4$. So, $i = j = 4$. The same reasoning applies to the other term $\nabla^0_{v_j}\nabla^0_{v_i}v_k$. But when $i = j$, we have $R^0_{v_iv_j}v_k = 0$.
\end{proof}
The nonzero term in $\nabla^1$ of $\left(\mathfrak{h}_3 \oplus \mathbb{R}, S_{\mu}\right)$ is $\nabla^1_{v_4}v_1 = -\sqrt{\mu}v_2.$
\begin{theorem}
The Lorentzian Lie algebra $\left(\mathfrak{h}_3 \oplus \mathbb{R}, S_{\mu}, J\right)$ is flat with respect to $\nabla^1$.
\end{theorem}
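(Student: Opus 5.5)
The plan is to follow the same pattern as the proof of Theorem \ref{th2} and of the flatness of $\left(\mathfrak{h}_3 \oplus \mathbb{R}, S_{\mu}, J\right)$ with respect to $\nabla^0$. The only input is the description of $\nabla^1$ just obtained: its unique nonvanishing term is $\nabla^1_{v_4}v_1 = -\sqrt{\mu}v_2$, together with the single nonzero bracket $[v_1, v_4] = \sqrt{\mu}v_2$. We expand
$$R^1_{v_iv_j}v_k = \nabla^1_{[v_i, v_j]}v_k - \nabla^1_{v_i}\nabla^1_{v_j}v_k + \nabla^1_{v_j}\nabla^1_{v_i}v_k$$
and show that each of the three terms vanishes for all $i, j, k$.

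\textbf{First term.} Every bracket $[v_i, v_j]$ is a multiple of $v_2$. Since $\nabla^1_{v_2} = 0$ (only $\nabla^1_{v_4}$ is nonzero), the term $\nabla^1_{[v_i, v_j]}v_k$ vanishes identically.

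\textbf{Second term.} For $\nabla^1_{v_i}\nabla^1_{v_j}v_k$ to be nonzero, the inner factor must be nonzero, which forces $j = 4$ and $k = 1$. Then $\nabla^1_{v_j}v_k = -\sqrt{\mu}v_2$. Applying $\nabla^1_{v_i}$ to $v_2$ gives zero for every $i$, because the only nonzero term $\nabla^1_{v_4}$ is nonzero only when applied to $v_1$.

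\textbf{Third term.} The same argument with $i$ and $j$ interchanged shows that $\nabla^1_{v_j}\nabla^1_{v_i}v_k = 0$.

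Hence $R^1 = 0$. This argument is slightly more direct than the one in Theorem \ref{th2}: here any composition $\nabla^1\nabla^1$ vanishes outright, because the image $v_2$ of the only nonzero term is annihilated by every $\nabla^1_{v_i}$. So there is no need to reduce to the case $i = j$.

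The only real obstacle is making sure the stated formula for $\nabla^1$ is correct, since the whole argument rests on it. I would verify it directly from (\ref{kobayashinomizu}). The nonzero values of $(\nabla_{v_i}J)v_j$ are listed above. The correction term $-\frac{1}{4}\left[(\nabla_{v}J)Ju - (\nabla_{Jv}J)u\right]$ vanishes unless $v \in \{v_1, v_2\}$. Checking the pair $u = v_4$, $v = v_1$ gives the correction $-\frac{1}{4}\left[(\nabla_{v_1}J)(-v_4) - (\nabla_{v_1}J)v_4\right] = \frac{1}{2}(\nabla_{v_1}J)v_4 = -\frac{\sqrt{\mu}}{2}v_2$. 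Adding this to $\nabla^0_{v_4}v_1$ yields $-\sqrt{\mu}v_2$. I would then check that the remaining corrections exactly cancel the other $\nabla^0$ terms, for example $\nabla^0_{v_4}v_2$.
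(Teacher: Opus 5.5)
Your proof is correct and follows the paper's route. The paper says only ``similar to the precedent theorem'': the bracket term vanishes because all brackets lie in $\operatorname{Span}(v_2)$ and $\nabla^1_{v_2}=0$, and the composite terms are controlled by tracking where the single nonzero term $\nabla^1_{v_4}v_1=-\sqrt{\mu}v_2$ can occur. Your observation that $\nabla^1_{v_i}\nabla^1_{v_j}v_k=0$ outright, since every $\nabla^1_{v_i}$ kills $v_2$, is a harmless streamlining of the paper's reduction to $i=j=4$. Your check of the formula for $\nabla^1$ via (\ref{kobayashinomizu}) is also accurate.
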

\begin{proof}
Similar to the proof of the precedent theorem.
\end{proof}
\subsection{The Lorentzian Lie algebra $\left(\mathfrak{h}_3 \oplus \mathbb{R}, S_{\lambda}^{+}\right)$}
An orthonormal basis of $\left(\mathfrak{h}_3 \oplus \mathbb{R}, S_{\lambda}^{+}\right)$ is given by
$$v_1 = e_1, \qquad v_2 = e_2, \qquad v_3 = \frac{1}{\sqrt{\lambda}}e_3, \qquad v_4 = e_4.$$
The nonzero bracket in this basis is $[v_1, v_2] = \sqrt{\lambda}v_3$. The nonvanishing terms in the Levi-Civita connection of $\left(\mathfrak{h}_3 \oplus \mathbb{R}, S_{\lambda}^{+}\right)$ are
$$\nabla_{v_1}v_2 = -\nabla_{v_2}v_1 = \frac{\sqrt{\lambda}}{2}v_3, \qquad \nabla_{v_1}v_3 = \nabla_{v_3}v_1 = \frac{-\sqrt{\lambda}}{2}v_2, \qquad \nabla_{v_2}v_3 = \nabla_{v_3}v_2 = \frac{\sqrt{\lambda}}{2}v_1.$$
We can see that
$$\nabla_{v_i}(J)v_j = 0 \quad \forall i, j \in \left\lbrace 1, \ldots, 4\right\rbrace.$$
Thus $\nabla^1 = \nabla^0 = \nabla$. By \cite{aitbenhaddou2026lorentzian}, the Ricci operator is represented in the orthonormal basis $\left\lbrace v_1, v_2, v_3, v_4\right\rbrace$ by
$$\operatorname{Ric}^1 = \widetilde{\operatorname{Ric}}^1 = \operatorname{Ric}^0 = \widetilde{\operatorname{Ric}}^0 = \operatorname{Ric} = \operatorname{diag}\left\lbrace \frac{-\lambda}{2}, \frac{-\lambda}{2}, \frac{\lambda}{2}, 0\right\rbrace,$$
and the algebraic Ricci soliton equation is expressed as
$$\operatorname{Ric} = \frac{-3\lambda}{2}I_4 + D, \quad \text{where}\; D = \operatorname{diag}\left\lbrace \lambda, \lambda, 2\lambda, \frac{3\lambda}{2}\right\rbrace $$
is the derivation of $\mathfrak{h}_3 \oplus \mathbb{R}$ with respect to the orthonormal basis $\left\lbrace v_1, v_2, v_3, v_4\right\rbrace$.
\subsection{The Lorentzian Lie algebra $\left(\mathfrak{h}_3 \oplus \mathbb{R}, S_{\lambda}^{-}\right)$}
An orthonormal basis of $\left(\mathfrak{h}_3 \oplus \mathbb{R}, S_{\lambda}^{-}\right)$ is given by
$$v_1 = e_1, \qquad v_2 = e_2, \qquad v_3 = e_4, \qquad v_4 = \frac{1}{\sqrt{\lambda}}e_3$$
The nonzero bracket in this basis is $[v_1, v_2] = \sqrt{\lambda}v_4$. The nonvanishing terms in the Levi-Civita connection of $\left(\mathfrak{h}_3 \oplus \mathbb{R}, S_{\lambda}^{-}\right)$ are
$$\nabla_{v_1}v_2 = -\nabla_{v_2}v_1 = \frac{\sqrt{\lambda}}{2}v_4, \qquad \nabla_{v_1}v_4 = \nabla_{v_4}v_1 = \frac{\sqrt{\lambda}}{2}v_2, \qquad \nabla_{v_2}v_4 = \nabla_{v_4}v_2 = \frac{-\sqrt{\lambda}}{2}v_1.$$
The nonzero terms in $\nabla_{v_i}(J)v_j$ are
$$\nabla_{v_1}(J)v_2 = -\nabla_{v_2}(J)v_1 = \sqrt{\lambda}v_4, \qquad \nabla_{v_1}(J)v_4 = -\sqrt{\lambda}v_2, \qquad \nabla_{v_2}(J)v_4 = \sqrt{\lambda}v_1.$$
The nonvanishing terms in $\nabla^0$ of $\left(\mathfrak{h}_3 \oplus \mathbb{R}, S_{\lambda}^{-}\right)$ are
$$\nabla^0_{v_4}v_1 = \frac{\sqrt{\lambda}}{2}v_2, \qquad \nabla^0_{v_4}v_2 = \frac{-\sqrt{\lambda}}{2}v_1.$$
Computing the necessary curvatures yields the following matrix representation of the Ricci operator $\operatorname{Ric}^0$ associated with $\nabla^0$ with respect to the basis $\left\lbrace v_1, v_2, v_3, v_4\right\rbrace$
$$\operatorname{Ric}^0 = \operatorname{diag}\left\lbrace \frac{\lambda}{2}, \frac{\lambda}{2}, 0, 0\right\rbrace.$$
Since $\operatorname{Ric}^0$ is diagonal, then $\operatorname{Ric}^0 = \widetilde{\operatorname{Ric}}^0$.
\begin{theorem}
$\left(\mathfrak{h}_3 \oplus \mathbb{R}, S_{\lambda}^{-}, J\right)$ is a first and second kind algebraic Ricci soliton with respect to $\nabla^0$ where $c = \lambda$ and $D = \operatorname{diag}\left\lbrace \frac{-\lambda}{2}, \frac{-\lambda}{2}, -\lambda, -\lambda\right\rbrace$ is the derivation of $\mathfrak{h}_3 \oplus \mathbb{R}$ with respect to the orthonormal basis $\left\lbrace v_1, v_2, v_3, v_4\right\rbrace$.
\end{theorem}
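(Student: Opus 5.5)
The plan is to verify the soliton equation directly, since the Ricci operator $\operatorname{Ric}^0$ of $\nabla^0$ has already been computed above as $\operatorname{diag}\left\lbrace \frac{\lambda}{2}, \frac{\lambda}{2}, 0, 0\right\rbrace$. I would set
$$D := \operatorname{Ric}^0 - \lambda I_4 = \operatorname{diag}\left\lbrace \frac{-\lambda}{2}, \frac{-\lambda}{2}, -\lambda, -\lambda\right\rbrace,$$
so that $\operatorname{Ric}^0 = \lambda I_4 + D$ holds by construction. The only thing left to prove is that this $D$ is a derivation of $\mathfrak{h}_3 \oplus \mathbb{R}$, written in the orthonormal basis $\left\lbrace v_1, v_2, v_3, v_4\right\rbrace$.

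For the derivation property I would use that $D$ is diagonal, say $D(v_i) = d_iv_i$. Then for every pair $i, j$,
$$[D(v_i), v_j] + [v_i, D(v_j)] = (d_i + d_j)[v_i, v_j].$$
In this basis the only nonzero bracket is $[v_1, v_2] = \sqrt{\lambda}v_4$.

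\textbf{Vanishing brackets.} For every pair other than $\{1, 2\}$ we have $[v_i, v_j] = 0$. So both sides of the derivation identity vanish, the right side being $D(0) = 0$.

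\textbf{The pair $(v_1, v_2)$.} The left-hand side is $(d_1 + d_2)\sqrt{\lambda}v_4 = -\lambda\sqrt{\lambda}v_4$. The right-hand side is $D(\sqrt{\lambda}v_4) = \sqrt{\lambda}\,d_4v_4 = -\lambda\sqrt{\lambda}v_4$. These agree, so the condition reduces to the single identity $d_1 + d_2 = d_4$, which holds. By bilinearity and antisymmetry of the bracket, this settles $D$ being a derivation. Hence the first kind soliton equation holds with $c = \lambda$.

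For the second kind statement, I would use the relations recorded in the Preliminaries between the entries of $\widetilde{\operatorname{Ric}}^0$ and $\operatorname{Ric}^0$. Every off-diagonal entry of $\widetilde{\operatorname{Ric}}^0$ is a combination of off-diagonal entries of $\operatorname{Ric}^0$, all of which are zero. For $i \le 3$ the diagonal entries satisfy $\widetilde{\operatorname{Ric}}^0_{ii} = \operatorname{Ric}^0_{ii}$. For the timelike entry, $\widetilde{\rho}^0(v_4, v_4) = \rho^0(v_4, v_4)$ gives $\widetilde{\operatorname{Ric}}^0_{44} = \operatorname{Ric}^0_{44} = 0$. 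Thus $\widetilde{\operatorname{Ric}}^0 = \operatorname{Ric}^0$, and the same $c$ and $D$ work for the second kind.

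I expect no step to be a real obstacle. The only place needing care is the derivation identity on the single bracket $[v_1, v_2]$, namely matching the eigenvalue $d_4$ of $v_4$ with $d_1 + d_2$. The rest is bookkeeping with a diagonal operator.
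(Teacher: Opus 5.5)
Your proposal is correct and follows essentially the same route as the paper: both reduce everything to the derivation identity on the single nonzero bracket $[v_1,v_2]=\sqrt{\lambda}v_4$, treat the vanishing brackets as trivial because $D$ is diagonal, and get the second kind from $\widetilde{\operatorname{Ric}}^0=\operatorname{Ric}^0$. The only difference is that the paper keeps $c$ unknown and solves $\lambda-2c=-c$ to get $c=\lambda$, which also shows $c=\lambda$ is the only admissible constant; you instead substitute $c=\lambda$ and check $d_1+d_2=d_4$ directly, which is all the existence statement requires.
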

\begin{proof}
Suppose that $\operatorname{Ric}^0 = \widetilde{\operatorname{Ric}}^0 = cI_4 + D$, where $c \in \mathbb{R}$ and $D$ is a derivation of $\mathfrak{h}_3 \oplus \mathbb{R}$. From this, we get
$$D = \operatorname{Ric}^0 - cI_4 = \operatorname{diag}\left\lbrace \frac{\lambda}{2} - c, \frac{\lambda}{2} - c, -c, -c\right\rbrace.$$
Since $D$ is a derivation of $\mathfrak{h}_3 \oplus \mathbb{R}$, the condition below holds
\begin{eqnarray*}
[D(v_1), v_2] + [v_1, D(v_2)] = D[v_1, v_2] &\Leftrightarrow& [(\frac{\lambda}{2} - c)v_1, v_2] + [v_1, (\frac{\lambda}{2} - c)v_2] = \sqrt{\lambda}D(v_4)\\
&\Leftrightarrow & (\lambda - 2c)[v_1, v_2] = \sqrt{\lambda}D(v_4)\\
&\Leftrightarrow & (\lambda - 2c)\sqrt{\lambda}v_4 = -\sqrt{\lambda}cv_4\\
&\Leftrightarrow & \lambda - 2c = -c \Leftrightarrow c = \lambda.
\end{eqnarray*}
Hence $D = \operatorname{diag}\left\lbrace \frac{-\lambda}{2}, \frac{-\lambda}{2}, -\lambda, -\lambda\right\rbrace$. Since $D$ is diagonal, it obviously satisfies
$$[D(v_i), v_j] + [v_i, D(v_j)] = D[v_i, v_j]$$
for all vanishing brackets. 
\end{proof}
The Kobayashi-Nomizu connection $\nabla^1$ of $\left(\mathfrak{h}_3 \oplus \mathbb{R}, S_{\lambda}^{-}, J\right)$ is null, i.e. $\nabla^1 = 0$.
\subsection{The Lorentzian Lie algebra $\left(\mathfrak{h}_3 \oplus \mathbb{R}, S_{0}^{1}\right)$}
An orthonormal basis of $\left(\mathfrak{h}_3 \oplus \mathbb{R}, S_{0}^{1}\right)$ is given by
$$v_1 = e_1, \qquad v_2 = e_2, \qquad v_3 = \frac{1}{\sqrt{2}}(e_3 + e_4), \qquad v_4 = \frac{1}{\sqrt{2}}(e_3 - e_4)$$
The nonzero bracket in this basis is $[v_1, v_2] = \frac{1}{\sqrt{2}}v_3 + \frac{1}{\sqrt{2}}v_4$. In view of (\ref{Levi}), the Levi-Civita connection of $\left(\mathfrak{h}_3 \oplus \mathbb{R}, S_{0}^{1}\right)$ is described by
\begin{multicols}{3}
	$\nabla_{v_i}v_i = 0,\ \forall i=1,\ldots,4$\par
	$\nabla_{v_1}v_2 = \frac{1}{2\sqrt{2}}v_3 + \frac{1}{2\sqrt{2}}v_4$\par
	$\nabla_{v_2}v_1 = \frac{-1}{2\sqrt{2}}v_3 - \frac{1}{2\sqrt{2}}v_4$\par
	$\nabla_{v_1}v_3 = \nabla_{v_3}v_1 = \frac{-1}{2\sqrt{2}}v_2$\par
	$\nabla_{v_1}v_4 = \nabla_{v_4}v_1 = \frac{1}{2\sqrt{2}}v_2$\par
	$\nabla_{v_2}v_3 = \nabla_{v_3}v_2 = \frac{1}{2\sqrt{2}}v_1$\par
	$\nabla_{v_2}v_4 = \nabla_{v_4}v_2 = \frac{-1}{2\sqrt{2}}v_1$\par
	$\nabla_{v_3}v_4 = \nabla_{v_4}v_3=0$
\end{multicols}
The nonvanishing terms in $\nabla_{v_i}(J)v_j$ are
$$\nabla_{v_1}(J)v_2 = -\nabla_{v_2}(J)v_1 = \frac{1}{\sqrt{2}}v_4, \qquad \nabla_{v_1}(J)v_4 = \frac{-1}{\sqrt{2}}v_2, \qquad \nabla_{v_2}(J)v_4 = \frac{1}{\sqrt{2}}v_1.$$
The canonical connection $\nabla^0$ of $\left(\mathfrak{h}_3 \oplus \mathbb{R}, S_{0}^{1}\right)$ is expressed as
\begin{multicols}{3}
	$\nabla^0_{v_i}v_i = 0,\ \forall i=1,\ldots,4$\par
	$\nabla^0_{v_1}v_2 = \frac{1}{2\sqrt{2}}v_3$\par
	$\nabla^0_{v_2}v_1 = \frac{-1}{2\sqrt{2}}v_3$\par
	$\nabla^0_{v_1}v_3 = \nabla^0_{v_3}v_1 = \frac{-1}{2\sqrt{2}}v_2$\par
	$\nabla^0_{v_1}v_4 = \nabla^0_{v_2}v_4 = 0$\par
	$\nabla^0_{v_4}v_1 = \frac{1}{2\sqrt{2}}v_2$\par
	$\nabla^0_{v_2}v_3 = \nabla^0_{v_3}v_2 = \frac{1}{2\sqrt{2}}v_1$\par
	$\nabla^0_{v_4}v_2 = \frac{-1}{2\sqrt{2}}v_1$\par
	$\nabla^0_{v_3}v_4 = \nabla^0_{v_4}v_3 = 0$
\end{multicols}
In order to find to Ricci operator associated with $\nabla^0$, the following curvatures are needed
\begin{multicols}{4}
	$R^0_{v_2v_1}v_2 = \frac{-1}{8}v_1$\par
	$R^0_{v_3v_1}v_3 = \frac{1}{8}v_1$\par
	$R^0_{v_4v_1}v_4 = 0$\par
	$R^0_{v_1v_2}v_1 = \frac{-1}{8}v_2$\par
	$R^0_{v_3v_2}v_3 = \frac{1}{8}v_2$\par
	$R^0_{v_4v_2}v_4 = 0$\par
	$R^0_{v_1v_3}v_1 = \frac{1}{8}v_3$\par
	$R^0_{v_2v_3}v_2 = \frac{1}{8}v_3$\par
	$R^0_{v_4v_3}v_4 = 0$\par
	$R^0_{v_1v_4}v_1 = \frac{-1}{8}v_3$\par
	$R^0_{v_2v_4}v_2 = \frac{-1}{8}v_3$\par
	$R^0_{v_3v_4}v_3 = 0$
\end{multicols}
The Ricci operator $\operatorname{Ric}^0$ of $\nabla^0$ is represented in the basis $\mathscr{B}$ by the following matrix
$$\operatorname{Ric}^0 = \begin{bmatrix}
0 & 0 & 0 & 0\\
0 & 0 & 0 & 0\\
0 & 0 & \frac{1}{4} & \frac{-1}{4}\\
0 & 0 & 0 & 0
\end{bmatrix}.$$
\begin{theorem}
$\left(\mathfrak{h}_3 \oplus \mathbb{R}, S_{0}^{1}, J\right)$ is a first kind algebraic Ricci soliton with respect to $\nabla^0$ where $c = 0$ and $D = \operatorname{Ric}^0$ is the derivation of $\mathfrak{h}_3 \oplus \mathbb{R}$ with respect to the orthonormal basis $\left\lbrace v_1, v_2, v_3, v_4\right\rbrace$.
\end{theorem}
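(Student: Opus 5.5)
The plan is to follow the same template as the previous theorems. I would assume $\operatorname{Ric}^0 = cI_4 + D$ with $D$ a derivation, so that
$$D = \operatorname{Ric}^0 - cI_4 = \begin{bmatrix} -c & 0 & 0 & 0\\ 0 & -c & 0 & 0\\ 0 & 0 & \frac14 - c & -\frac14\\ 0 & 0 & 0 & -c \end{bmatrix}.$$
The columns give $D(v_1) = -cv_1$, $D(v_2) = -cv_2$, $D(v_3) = (\frac14 - c)v_3$ and $D(v_4) = -\frac14 v_3 - cv_4$.

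The only nonzero bracket in the basis $\mathscr{B}$ is $[v_1, v_2] = \frac{1}{\sqrt2}(v_3 + v_4)$. The single nontrivial derivation condition is therefore $[D(v_1), v_2] + [v_1, D(v_2)] = D[v_1, v_2]$. Its left side is $-2c[v_1,v_2] = -\frac{2c}{\sqrt2}(v_3+v_4)$. Its right side is $\frac{1}{\sqrt2}(D(v_3)+D(v_4))$, which equals $\frac{1}{\sqrt2}\big((\frac14 - c - \frac14)v_3 - cv_4\big) = \frac{-c}{\sqrt2}(v_3+v_4)$. Comparing coefficients gives $-2c = -c$, hence $c = 0$ and $D = \operatorname{Ric}^0$.

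It then remains to check that $D = \operatorname{Ric}^0$ satisfies the derivation identity for all pairs with vanishing bracket. Since $v_3$ and $v_4$ span the center, $D$ maps $\mathfrak{g}$ into $\operatorname{Span}(v_3) \subset \mathcal{Z}$, and $D(v_1) = D(v_2) = 0$. Hence for any pair $v_i, v_j$ both $[D(v_i), v_j]$ and $[v_i, D(v_j)]$ vanish. The identity then reduces to $D[v_i,v_j] = 0$, which holds for vanishing brackets trivially, and for $[v_1,v_2]$ because $D(v_3 + v_4) = \frac14 v_3 - \frac14 v_3 = 0$. This confirms $D$ is a derivation, and the soliton equation holds with $c = 0$.

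The only step needing care is reading the matrix columns correctly, in particular $D(v_4) = -\frac14 v_3 - cv_4$. It is exactly this entry that cancels the $\frac14$ from $D(v_3)$ and makes $c = 0$ consistent.
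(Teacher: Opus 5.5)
Your proposal is correct and follows the paper's route: the $[v_1,v_2]$ condition gives $-2c=-c$, hence $c=0$, and $D=\operatorname{Ric}^0$ is then checked on the remaining pairs. Your check of those pairs makes the paper's ``straightforward verification'' explicit. It uses three facts: $D$ kills $v_1,v_2$, its image lies in $\operatorname{Span}(v_3)\subset\mathcal{Z}$, and $D(v_3+v_4)=0$.
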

\begin{proof}
Assume that $\operatorname{Ric}^0 = cI_4 + D$, where $c \in \mathbb{R}$ and $D$ is a derivation of $\mathfrak{h}_3 \oplus \mathbb{R}$. Hence
$$D = \operatorname{Ric}^0 - cI_4 = \begin{bmatrix}
- c & 0 & 0 & 0\\
0 & - c & 0 & 0\\
0 & 0 & \frac{1}{4} - c & \frac{-1}{4}\\
0 & 0 & 0 & - c
\end{bmatrix}.$$
Since $D$ is a derivation of $\mathfrak{h}_3 \oplus \mathbb{R}$, it satisfies the following condition
\begin{eqnarray*}
[D(v_1), v_2] + [v_1, D(v_2)] = D[v_1, v_2] &\Leftrightarrow & - 2c\left[v_1, v_2\right] = \frac{1}{\sqrt{2}}\left(D(v_3) + D(v_4)\right)\\
&\Leftrightarrow& - 2c\frac{1}{\sqrt{2}}(v_3 + v_4) = \frac{1}{\sqrt{2}}\left(D(v_3) + D(v_4)\right)\\
&\Leftrightarrow& - 2c(v_3 + v_4) = -c(v_3 + v_4) \Leftrightarrow 2c = c \Leftrightarrow c = 0.
\end{eqnarray*}
Hence $c = 0$ and $ D = \operatorname{Ric}^0$. A straightforward verification shows that $D = \operatorname{Ric}^0$ satisfies the following condition for all vanishing brackets
$$[D(v_i), v_j] + [v_i, D(v_j)] = D[v_i, v_j].$$ 
Therefore, $\left(\mathfrak{h}_3 \oplus \mathbb{R}, S_{0}^{1}, J\right)$ is a first kind algebraic Ricci soliton where $c = 0$ and $D = \operatorname{Ric}^0$ is a derivation of $\mathfrak{h}_3 \oplus \mathbb{R}$ with respect to the orthonormal basis $\left\lbrace v_1, v_2, v_3, v_4\right\rbrace$.
\end{proof}
We obtain that $\widetilde{\operatorname{Ric}}^0$ is represented in the basis $\mathscr{B}$ by
$$\widetilde{\operatorname{Ric}}^0 = \begin{bmatrix}
0 & 0 & 0 & 0\\
0 & 0 & 0 & 0\\
0 & 0 & \frac{1}{4} & \frac{-1}{8}\\
0 & 0 & \frac{1}{8} & 0
\end{bmatrix}.$$
\begin{theorem}
$\left(\mathfrak{h}_3 \oplus \mathbb{R}, S_{0}^{1}, J\right)$ is a second kind algebraic Ricci soliton with respect to $\nabla^0$ where $c = \frac{-1}{8}$ and 
$$D = \begin{bmatrix}
\frac{1}{8} & 0 & 0 & 0\\
0 & \frac{1}{8} & 0 & 0\\
0 & 0 & \frac{3}{8} & \frac{-1}{8}\\
0 & 0 & \frac{1}{8} & \frac{1}{8}
\end{bmatrix}$$
is the derivation of $\mathfrak{h}_3 \oplus \mathbb{R}$ with respect to the orthonormal basis $\left\lbrace v_1, v_2, v_3, v_4\right\rbrace$.
\end{theorem}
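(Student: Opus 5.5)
The plan is to argue exactly as in the proof of the preceding theorem for the first kind soliton: write $D=\widetilde{\operatorname{Ric}}^0-cI_4$, impose the derivation identity on the single nonzero bracket $[v_1,v_2]=\frac{1}{\sqrt2}(v_3+v_4)$ to determine $c$, and then check the identity on the vanishing brackets. The statement asserts existence with $c=\frac{-1}{8}$, but I will also show that this value of $c$ is forced.

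Assume $\widetilde{\operatorname{Ric}}^0=cI_4+D$. The columns of
$$D=\begin{bmatrix} -c&0&0&0\\ 0&-c&0&0\\ 0&0&\frac14-c&\frac{-1}{8}\\ 0&0&\frac18&-c\end{bmatrix}$$
give
$$Dv_1=-cv_1,\quad Dv_2=-cv_2,\quad Dv_3=\left(\tfrac14-c\right)v_3+\tfrac18v_4,\quad Dv_4=-\tfrac18v_3-cv_4.$$

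The condition $[D(v_1),v_2]+[v_1,D(v_2)]=D[v_1,v_2]$ has left side $-2c[v_1,v_2]=\frac{-2c}{\sqrt2}(v_3+v_4)$. Its right side is $\frac{1}{\sqrt2}\left(D(v_3)+D(v_4)\right)=\frac{1}{\sqrt2}\left(\frac18-c\right)(v_3+v_4)$. Comparing coefficients gives $-2c=\frac18-c$, hence $c=\frac{-1}{8}$. Substituting back yields precisely the matrix $D$ in the statement.

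It remains to check sufficiency. With $c=\frac{-1}{8}$ we have $Dv_1=\frac18v_1$ and $Dv_2=\frac18v_2$, so the left side of the $[v_1,v_2]$ condition is $\frac14[v_1,v_2]$. On the right, $D(v_3+v_4)=\frac14(v_3+v_4)$, so the condition holds.

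For every other pair $\{i,j\}\neq\{1,2\}$ the bracket $[v_i,v_j]$ vanishes, so I must show $[Dv_i,v_j]+[v_i,Dv_j]=0$.
\begin{itemize}
\item $D$ preserves $\operatorname{Span}(v_1)$ and $\operatorname{Span}(v_2)$.
\item $D$ maps $\operatorname{Span}(v_3,v_4)=\mathcal Z(\mathfrak h_3\oplus\mathbb R)$ into itself.
\item Hence if either index lies in $\{3,4\}$, both terms are brackets involving a central element, so both vanish.
\item The pairs $(1,1)$ and $(2,2)$ are trivial.
\end{itemize}

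I expect no real obstacle. The only point requiring care is reading the columns of $\widetilde{\operatorname{Ric}}^0$ correctly, since the matrix is not symmetric in the $v_3,v_4$ block. Getting $D(v_3)+D(v_4)$ wrong there would spoil the value of $c$.
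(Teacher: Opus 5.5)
Your proposal is correct and follows the paper's proof essentially verbatim. You write $D=\widetilde{\operatorname{Ric}}^0-cI_4$, impose the derivation rule on $[v_1,v_2]=\frac{1}{\sqrt2}(v_3+v_4)$ to get $-2c=\frac18-c$, i.e.\ $c=-\frac18$, and then check the vanishing brackets; your argument for that last step (that $D$ preserves the center $\operatorname{Span}(v_3,v_4)$) simply spells out what the paper calls a straightforward verification.
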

\begin{proof}
Suppose that $\widetilde{\operatorname{Ric}}^0 = cI_4 + D$, where $c \in \mathbb{R}$ and $D$ is a derivation of $\mathfrak{h}_3 \oplus \mathbb{R}$. Then
$$D = \widetilde{\operatorname{Ric}}^0 - cI_4 = \begin{bmatrix}
-c & 0 & 0 & 0\\
0 & -c & 0 & 0\\
0 & 0 & \frac{1}{4} - c & \frac{-1}{8}\\
0 & 0 & \frac{1}{8} & -c
\end{bmatrix}.$$
Given that $D$ is a derivation of $\mathfrak{h}_3 \oplus \mathbb{R}$, the rule below holds
\begin{eqnarray*}
[D(v_1), v_2] + [v_1, D(v_2)] = D[v_1, v_2] &\Leftrightarrow & - 2c\left[v_1, v_2\right] = \frac{1}{\sqrt{2}}\left(D(v_3) + D(v_4)\right)\\
&\Leftrightarrow& - 2c\frac{1}{\sqrt{2}}(v_3 + v_4) = \frac{1}{\sqrt{2}}\left(D(v_3) + D(v_4)\right)\\
&\Leftrightarrow& - 2c(v_3 + v_4) = (\frac{1}{8} - c)(v_3 + v_4)\\
&\Leftrightarrow& -2c = \frac{1}{8} - c \Leftrightarrow c = \frac{-1}{8}.
\end{eqnarray*}
Hence $c = \frac{-1}{8}$ and $D$ is given by
$$D = \begin{bmatrix}
\frac{1}{8} & 0 & 0 & 0\\
0 & \frac{1}{8} & 0 & 0\\
0 & 0 & \frac{3}{8} & \frac{-1}{8}\\
0 & 0 & \frac{1}{8} & \frac{1}{8}
\end{bmatrix}.$$
It is straightforward to check that $D$ as above satisfies the following Leibniz rule for all vanishing brackets
$$[D(v_i), v_j] + [v_i, D(v_j)] = D[v_i, v_j].$$ 
\end{proof}
The Kobayashi-Nomizu connection $\nabla^1$ of $\left(\mathfrak{h}_3 \oplus \mathbb{R}, S_{0}^{1}\right)$ is expressed as
\begin{multicols}{3}
	$\nabla^1_{v_i}v_i = 0,\ \forall i=1,\ldots,4$\par
	$\nabla^1_{v_1}v_2 = \frac{1}{2\sqrt{2}}v_3$\par
	$\nabla^1_{v_2}v_1 = \frac{-1}{2\sqrt{2}}v_3$\par
	$\nabla^1_{v_1}v_3 = \nabla^1_{v_3}v_1 = \frac{-1}{2\sqrt{2}}v_2$\par
	$\nabla^1_{v_1}v_4 = \nabla^1_{v_2}v_4 = 0$\par
	$\nabla^1_{v_4}v_1 = 0$\par
	$\nabla^1_{v_2}v_3 = \nabla^1_{v_3}v_2 = \frac{1}{2\sqrt{2}}v_1$\par
	$\nabla^1_{v_4}v_2 = 0$\par
	$\nabla^1_{v_3}v_4 = \nabla^1_{v_4}v_3 = 0$
\end{multicols}
To find the Ricci operator associated with $\nabla^1$, the curvatures below are needed
\begin{multicols}{4}
	$R^1_{v_2v_1}v_2 = \frac{-3}{8}v_1$\par
	$R^1_{v_3v_1}v_3 = \frac{1}{8}v_1$\par
	$R^1_{v_4v_1}v_4 = 0$\par
	$R^1_{v_1v_2}v_1 = \frac{-3}{8}v_2$\par
	$R^1_{v_3v_2}v_3 = \frac{1}{8}v_2$\par
	$R^1_{v_4v_2}v_4 = 0$\par
	$R^1_{v_1v_3}v_1 = \frac{1}{8}v_3$\par
	$R^1_{v_2v_3}v_2 = \frac{1}{8}v_3$\par
	$R^1_{v_4v_3}v_4 = 0$\par
	$R^1_{v_1v_4}v_1 = 0$\par
	$R^1_{v_2v_4}v_2 = 0$\par
	$R^1_{v_3v_4}v_3 = 0$
\end{multicols}
The Ricci operator $\operatorname{Ric}^1$ is represented in the basis $\left\lbrace v_1, v_2, v_3, v_4\right\rbrace$ by the matrix below
$$\operatorname{Ric}^1 = \operatorname{diag}\left\lbrace \frac{-1}{4}, \frac{-1}{4}, \frac{1}{4}, 0\right\rbrace.$$
Since $\operatorname{Ric}^1$ is diagonal, then $\operatorname{Ric}^1 = \widetilde{\operatorname{Ric}}^1$.
\begin{theorem}
$\left(\mathfrak{h}_3 \oplus \mathbb{R}, S_{0}^{1}, J\right)$ does not exhibit the properties of a first nor second kind algebraic Ricci soliton with respect to $\nabla^1$.
\end{theorem}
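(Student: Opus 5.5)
Since $\operatorname{Ric}^1$ is diagonal, we have $\operatorname{Ric}^1 = \widetilde{\operatorname{Ric}}^1$, so the first and second kind conditions coincide and one argument handles both. Assume $\operatorname{Ric}^1 = cI_4 + D$ with $c \in \mathbb{R}$ and $D$ a derivation of $\mathfrak{h}_3 \oplus \mathbb{R}$. Then
$$D = \operatorname{diag}\left\lbrace \frac{-1}{4} - c, \frac{-1}{4} - c, \frac{1}{4} - c, -c\right\rbrace,$$
and the aim is to derive a contradiction from the Leibniz rule applied to the unique nonzero bracket $[v_1, v_2] = \frac{1}{\sqrt{2}}(v_3 + v_4)$.

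First, compute the left-hand side. Because $D$ is diagonal, $[D(v_1), v_2] + [v_1, D(v_2)] = \left(\frac{-1}{2} - 2c\right)[v_1, v_2] = \left(\frac{-1}{2} - 2c\right)\frac{1}{\sqrt{2}}(v_3 + v_4)$. On the right-hand side,
$$D[v_1, v_2] = \frac{1}{\sqrt{2}}\left(\left(\frac{1}{4} - c\right)v_3 - c v_4\right).$$
Comparing the $v_4$ components gives $\frac{-1}{2} - 2c = -c$, hence $c = \frac{-1}{2}$. Comparing the $v_3$ components gives $\frac{-1}{2} - 2c = \frac{1}{4} - c$, hence $c = \frac{-3}{4}$. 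These two values are incompatible, which is the desired contradiction.

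The conclusion is reached without checking the vanishing brackets, since this single condition already fails. The only real obstacle is confirming the stated $\operatorname{Ric}^1$. It follows from the listed curvatures: the $(1,1)$ entry is $\frac{-3}{8} + \frac{1}{8} = \frac{-1}{4}$, the $(2,2)$ entry is $\frac{-3}{8} + \frac{1}{8} = \frac{-1}{4}$, the $(3,3)$ entry is $\frac{1}{8} + \frac{1}{8} = \frac{1}{4}$, and the $(4,4)$ entry is $0$. The contradiction arises precisely because the $v_3$ and $v_4$ diagonal entries differ by $\frac{1}{4}$, while $[v_1, v_2]$ has equal components along $v_3$ and $v_4$.
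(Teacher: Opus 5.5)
Your proposal is correct and is essentially the paper's own proof. You use that $\operatorname{Ric}^1$ is diagonal (so $\operatorname{Ric}^1 = \widetilde{\operatorname{Ric}}^1$), set $D = \operatorname{Ric}^1 - cI_4$, and apply the Leibniz rule to $[v_1, v_2] = \frac{1}{\sqrt{2}}(v_3 + v_4)$, obtaining the incompatible values $c = -\frac{3}{4}$ and $c = -\frac{1}{2}$ from the $v_3$ and $v_4$ components; your extra check of the diagonal entries of $\operatorname{Ric}^1$ from the listed curvatures agrees with the paper's matrix.
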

\begin{proof}
Suppose that $\operatorname{Ric}^1 = \widetilde{\operatorname{Ric}}^1 = cI_4 + D$, where $c \in \mathbb{R}$ and $D$ is a derivation of $\mathfrak{h}_3 \oplus \mathbb{R}$. From this, we get
$$D = \operatorname{Ric}^1 - cI_4 = \operatorname{diag}\left\lbrace \frac{-1}{4} - c, \frac{-1}{4} - c, \frac{1}{4} - c, -c\right\rbrace.$$
Since $D$ is a derivation of $\mathfrak{h}_3 \oplus \mathbb{R}$, the condition below holds
\begin{eqnarray*}
[D(v_1), v_2] + [v_1, D(v_2)] = D[v_1, v_2] &\Leftrightarrow& [(\frac{-1}{4} - c)v_1, v_2] + [v_1, (\frac{-1}{4} - c)v_2] = D[v_1, v_2]\\
&\Leftrightarrow & (\frac{-1}{2} - 2c)[v_1, v_2] = \frac{1}{\sqrt{2}}(D(v_3) + D(v_4))\\
&\Leftrightarrow & (\frac{-1}{2} - 2c)\frac{1}{\sqrt{2}}(v_3 + v_4) = \frac{1}{\sqrt{2}}((\frac{1}{4} - c)v_3 - cv_4)
\end{eqnarray*}
By identifying the components of $v_3$ and $v_4$, we obtain
$$\left\lbrace\begin{array}{lll}
\frac{-1}{2} - 2c = \frac{1}{4} - c \Longrightarrow c = \frac{-3}{4} \\
\frac{-1}{2} - 2c = -c \Longrightarrow c = \frac{-1}{2}
\end{array}\right.$$
This is a contradiction.
\end{proof}
\subsection{The Lorentzian Lie algebra $\left(\mathfrak{h}_3 \oplus \mathbb{R}, S_{0}^{2}\right)$}
The following basis $\mathscr{B} = \left\lbrace v_1, v_2, v_3, v_4\right\rbrace $ where
$$v_1 = e_1, \quad v_2 = e_4, \quad v_3 = \frac{1}{\sqrt{2}}(e_2 + e_3), \quad v_4 = \frac{1}{\sqrt{2}}(e_2 - e_3),$$
is an orthonormal basis of $\mathfrak{h}_3 \oplus \mathbb{R}$ with respect to our metric $S_{0}^{2}$. The bracket in the basis $\mathscr{B}$ is given by
$$[v_1, v_3] = [v_1, v_4] = \frac{1}{2}(v_3 - v_4).$$
The Levi-Civita connection of $\left(\mathfrak{h}_3 \oplus \mathbb{R}, S_{0}^{2}\right)$ is described by
\begin{multicols}{3}
	$\nabla_{v_1}v_1 = \nabla_{v_2}v_2 = 0$\par
	$\nabla_{v_3}v_3 = \nabla_{v_4}v_4 = \frac{1}{2}v_1$\par
	$\nabla_{v_1}v_2 = \nabla_{v_2}v_1 = 0$\par
	$\nabla_{v_1}v_3 = \nabla_{v_1}v_4 = 0$\par
	$\nabla_{v_3}v_1 = \frac{-1}{2}v_3 + \frac{1}{2}v_4$\par
	$\nabla_{v_4}v_1 = \frac{-1}{2}v_3 + \frac{1}{2}v_4$\par
	$\nabla_{v_2}v_3 = \nabla_{v_3}v_2 = 0$\par
	$\nabla_{v_2}v_4 = \nabla_{v_4}v_2 = 0$\par
	$\nabla_{v_3}v_4 = \nabla_{v_4}v_3 = \frac{1}{2}v_1$
\end{multicols}
The nonvanishing terms in $\nabla_{v_i}(J)v_j$ are
$$\nabla_{v_3}(J)v_1 = \nabla_{v_4}(J)v_1 = v_4, \qquad \nabla_{v_3}(J)v_4 = \nabla_{v_4}(J)v_4 = -v_1.$$
The canonical connection $\nabla^0$ of $\left(\mathfrak{h}_3 \oplus \mathbb{R}, S_{0}^{2}\right)$ is expressed as
\begin{multicols}{3}
	$\nabla^0_{v_1}v_1 = \nabla^0_{v_2}v_2 = 0$\par
	$\nabla^0_{v_3}v_3 = \frac{1}{2}v_1$\par
	$\nabla^0_{v_1}v_2 = \nabla^0_{v_2}v_1 = 0$\par
	$\nabla^0_{v_1}v_3 = \nabla^0_{v_1}v_4 = 0$\par
	$\nabla^0_{v_3}v_1 = \nabla^0_{v_4}v_1 = \frac{-1}{2}v_3$\par
	$\nabla^0_{v_2}v_3 = \nabla^0_{v_3}v_2 = 0$\par
	$\nabla^0_{v_2}v_4 = \nabla^0_{v_4}v_2 = 0$\par
	$\nabla^0_{v_3}v_4 = \nabla^0_{v_4}v_4 = 0$\par
	$\nabla^0_{v_4}v_3 = \frac{1}{2}v_1$
\end{multicols}
It follows from \cite{aitbenhaddou2026lorentzian} that $\left(\mathfrak{h}_3 \oplus \mathbb{R}, S_{0}^{2}\right)$ is flat with respect to the Levi-Civita connection $\nabla$. The following theorem establishes that it is also flat with respect to the canonical connection $\nabla^0$.
\begin{theorem}
$\left(\mathfrak{h}_3 \oplus \mathbb{R}, S_{0}^{2}\right)$ is flat with respect to $\nabla^0$.
\end{theorem}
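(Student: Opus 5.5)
The plan is to exploit two structural features of the canonical connection $\nabla^0$ listed above, instead of computing the curvature term by term. First, I will read off from the table that $\nabla^0_{v_1}=\nabla^0_{v_2}=0$ as endomorphisms of $\mathfrak{h}_3\oplus\mathbb{R}$: every entry $\nabla^0_{v_1}v_k$ and $\nabla^0_{v_2}v_k$ vanishes. Second, I will check that the operators $\nabla^0_{v_3}$ and $\nabla^0_{v_4}$ coincide. Indeed both send $v_1\mapsto -\frac{1}{2}v_3$, $v_2\mapsto 0$, $v_3\mapsto \frac{1}{2}v_1$ and $v_4\mapsto 0$, using $\nabla^0_{v_3}v_4=\nabla^0_{v_4}v_4=0$. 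Consequently
$$\nabla^0_{v_3-v_4}=0.$$

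Next I will treat the first term of the curvature formula (\ref{canonicalcurvature}). The only nonzero brackets are $[v_1,v_3]=[v_1,v_4]=\frac{1}{2}(v_3-v_4)$, so every $[v_i,v_j]$ is a multiple of $v_3-v_4$. Since $\nabla^0_u$ is linear in $u$, this gives $\nabla^0_{[v_i,v_j]}=0$ for all $i,j$. Hence
$$R^0_{v_iv_j}=-\left[\nabla^0_{v_i},\nabla^0_{v_j}\right].$$

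Finally I will show that all these commutators vanish. If $i$ or $j$ lies in $\{1,2\}$, the commutator is zero because that operator is zero. The only remaining pair is $\{i,j\}=\{3,4\}$, and there $\nabla^0_{v_3}=\nabla^0_{v_4}$, so the two operators commute. The case $i=j$ is trivial by skew-symmetry. Therefore $R^0=0$.

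The only step needing care is the identification $\nabla^0_{v_3}=\nabla^0_{v_4}$. It requires using the table entries exactly as listed, namely $\nabla^0_{v_3}v_4=\nabla^0_{v_4}v_4=0$ and $\nabla^0_{v_3}v_3=\nabla^0_{v_4}v_3=\frac{1}{2}v_1$. If in doubt, I would re-derive these entries from (\ref{canonical}) using the listed values of $\nabla_{v_i}(J)v_j$. Everything else is immediate, in the same spirit as the proof of Theorem \ref{th2}.
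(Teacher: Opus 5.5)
Your argument is correct, and it is organized differently from the paper's. The paper simply asserts that a routine calculation gives $R^0_{v_iv_j}=0$ for all six pairs, i.e.\ it evaluates the vectors $R^0_{v_iv_j}v_k$ entry by entry from the table of $\nabla^0$.

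You instead reduce everything to two operator identities, $\nabla^0_{v_1}=\nabla^0_{v_2}=0$ and $\nabla^0_{v_3}=\nabla^0_{v_4}$. Together with $[\mathfrak{h}_3\oplus\mathbb{R},\mathfrak{h}_3\oplus\mathbb{R}]=\operatorname{Span}(v_3-v_4)$, these kill both terms of $R^0_{uv}=\nabla^0_{[u,v]}-[\nabla^0_u,\nabla^0_v]$ at once. This is the mechanism of the proof of Theorem~\ref{th2}, extended to the case where two nonzero operators coincide. Your version explains why the curvature vanishes and is insensitive to arithmetic slips; the paper's brute-force check needs no insight but gives no structure.

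The step you flag as delicate can be made independent of the table. Since $J^2=\operatorname{Id}$, formula (\ref{canonical}) reads $\nabla^0_u=\frac12\left(\nabla_u+J\nabla_uJ\right)$, which depends linearly on $\nabla_u$ alone. The Levi-Civita table already gives $\nabla_{v_1}=\nabla_{v_2}=0$ and $\nabla_{v_3}=\nabla_{v_4}$, so both identities pass to $\nabla^0$ automatically. The same reasoning also recovers the flatness of $\nabla$ itself.

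This also shows why the argument does not carry over to $\nabla^1$. Its correction term involves $\nabla_vJ$ rather than $\nabla_u$ alone, and indeed $\nabla^1_{v_1}v_4=-\frac12v_4\neq0$ there.
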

\begin{proof}
A routine calculation shows that the following curvature transformations vanish:
$$R^0_{v_1v_2} = R^0_{v_1v_3} = R^0_{v_1v_4} = R^0_{v_2v_3} = R^0_{v_2v_4} = R^0_{v_3v_4} = 0,$$
where 
$$R^0_{v_iv_j}v_k = \nabla^0_{[v_i, v_j]}v_k - \nabla^0_{v_i}\nabla^0_{v_j}v_k + \nabla^0_{v_j}\nabla^0_{v_i}v_k.$$
\end{proof}
The Kobayashi-Nomizu connection $\nabla^1$ of $\left(\mathfrak{h}_3 \oplus \mathbb{R}, S_{0}^{2}\right)$ is expressed as
\begin{multicols}{3}
	$\nabla^1_{v_1}v_1 = \nabla^1_{v_2}v_2 = 0$\par
	$\nabla^1_{v_3}v_3 = \frac{1}{2}v_1$\par
	$\nabla^1_{v_1}v_2 = \nabla^1_{v_2}v_1 = 0$\par
	$\nabla^1_{v_1}v_3 = 0$\par
	$\nabla^1_{v_1}v_4 = \frac{-1}{2}v_4$\par
	$\nabla^1_{v_3}v_1 = \nabla^1_{v_4}v_1 = \frac{-1}{2}v_3$\par
	$\nabla^1_{v_2}v_3 = \nabla^1_{v_3}v_2 = 0$\par
	$\nabla^1_{v_2}v_4 = \nabla^1_{v_4}v_2 = 0$\par
	$\nabla^1_{v_3}v_4 = \nabla^1_{v_4}v_4 = 0$\par
	$\nabla^1_{v_4}v_3 = 0$
\end{multicols}
The Ricci operator $\operatorname{Ric}^1$ of $\nabla^1$ is represented in the basis $\mathscr{B}$ by the matrix
$$\operatorname{Ric}^1 = \begin{bmatrix}
\frac{-1}{4} & 0 & 0 & 0\\
0 & 0 & 0 & 0\\
0 & 0 & 0 & \frac{-1}{4}\\
0 & 0 & 0 & 0
\end{bmatrix}.$$
\begin{theorem}
$\left(\mathfrak{h}_3 \oplus \mathbb{R}, S_{0}^{2}, J\right)$ does not exhibit the properties of a first kind algebraic Ricci soliton with respect to $\nabla^1$.
\end{theorem}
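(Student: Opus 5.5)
The plan is to argue by contradiction, exactly as in the preceding non-existence theorems: assume $\operatorname{Ric}^1 = cI_4 + D$ with $c\in\mathbb{R}$ and $D$ a derivation of $\mathfrak{h}_3\oplus\mathbb{R}$. I will use the matrix of $\operatorname{Ric}^1$ computed above and the bracket in $\mathscr{B}$, namely $[v_1,v_3]=[v_1,v_4]=\frac{1}{2}(v_3-v_4)$.

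First, I write
$$D=\operatorname{Ric}^1-cI_4=\begin{bmatrix} -\frac14-c&0&0&0\\ 0&-c&0&0\\ 0&0&-c&-\frac14\\ 0&0&0&-c\end{bmatrix}.$$
Reading off the columns gives $D(v_1)=(-\frac14-c)v_1$, $D(v_2)=-cv_2$, $D(v_3)=-cv_3$ and $D(v_4)=-\frac14 v_3-cv_4$. It should suffice to test the single Leibniz condition on the pair $(v_1,v_3)$, since this is the one nonzero bracket that involves the $v_1$-eigenvalue.

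Next, I expand both sides of that condition. On the left,
$$[D(v_1),v_3]+[v_1,D(v_3)]=\left(-\tfrac14-2c\right)[v_1,v_3]=\tfrac12\left(-\tfrac14-2c\right)(v_3-v_4).$$
On the right, $D[v_1,v_3]=\frac12\bigl(D(v_3)-D(v_4)\bigr)=\frac12\bigl((\frac14-c)v_3+cv_4\bigr)$.

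Finally, I compare coefficients. The $v_3$-component gives $-\frac14-2c=\frac14-c$, so $c=-\frac12$. The $v_4$-component gives $\frac14+2c=c$, so $c=-\frac14$. These two values are incompatible, which yields the contradiction. The only delicate point is reading $D$ column-wise, so that the off-diagonal entry $-\frac14$ enters $D(v_4)$ rather than $D(v_3)$, and keeping the signs straight.

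If this pair did not by itself produce incompatible conditions, the fallback is to add the condition on $(v_1,v_4)$ as a second check. As computed, however, the $(v_1,v_3)$ condition alone settles the question.
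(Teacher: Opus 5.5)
Your proposal is correct and follows the paper's proof exactly: you form $D=\operatorname{Ric}^1-cI_4$, test the Leibniz rule on the pair $(v_1,v_3)$, and compare the $v_3$- and $v_4$-components to obtain the incompatible values $c=-\frac12$ and $c=-\frac14$. One small remark: your stated reason for choosing $(v_1,v_3)$, that it is the only nonzero bracket involving $v_1$, is inaccurate because $[v_1,v_4]\neq 0$ as well, but this does not affect the argument, since a single violated derivation identity already gives the contradiction.
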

\begin{proof}
Assume that $\operatorname{Ric}^1 = cI_4 + D$, where $c \in \mathbb{R}$ and $D$ is a derivation of $\mathfrak{h}_3 \oplus \mathbb{R}$. Then
$$D = \operatorname{Ric}^1 - cI_4 = \begin{bmatrix}
\frac{-1}{4} - c & 0 & 0 & 0\\
0 & -c & 0 & 0\\
0 & 0 & -c & \frac{-1}{4}\\
0 & 0 & 0 & -c
\end{bmatrix}.$$
Since $D$ is a derivation of $\mathfrak{h}_3 \oplus \mathbb{R}$, the condition below holds
\begin{eqnarray*}
[D(v_1), v_3] + [v_1, D(v_3)] = D[v_1, v_3] &\Leftrightarrow& [(\frac{-1}{4} - c)v_1, v_3] + [v_1, -cv_3] = D[v_1, v_3]\\
&\Leftrightarrow & (\frac{-1}{4} - 2c)[v_1, v_3] = \frac{1}{2}(D(v_3) - D(v_4))\\
&\Leftrightarrow & (\frac{-1}{4} - 2c)\frac{1}{2}(v_3 - v_4) = \frac{1}{2}((\frac{1}{4} - c)v_3 + cv_4)
\end{eqnarray*}
By identifying the components of $v_3$ and $v_4$, we obtain
$$\left\lbrace\begin{array}{lll}
\frac{-1}{4} - 2c = \frac{1}{4} - c \Longrightarrow c = \frac{-1}{2} \\
\frac{1}{4} + 2c = c \Longrightarrow c = \frac{-1}{4}
\end{array}\right.$$
This is a contradiction.
\end{proof}
We obtain that $\widetilde{\operatorname{Ric}}^1$ is represented in the basis $\mathscr{B}$ by the matrix
$$\widetilde{\operatorname{Ric}}^1 = \begin{bmatrix}
\frac{-1}{4} & 0 & 0 & 0\\
0 & 0 & 0 & 0\\
0 & 0 & 0 & \frac{-1}{8}\\
0 & 0 & \frac{1}{8} & 0
\end{bmatrix}.$$
\begin{theorem}
$\left(\mathfrak{h}_3 \oplus \mathbb{R}, S_{0}^{2}, J\right)$ does not exhibit the properties of a second kind algebraic Ricci soliton with respect to $\nabla^1$.
\end{theorem}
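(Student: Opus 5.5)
Suppose, for contradiction, that $\widetilde{\operatorname{Ric}}^1 = cI_4 + D$ for some $c \in \mathbb{R}$ and some derivation $D$ of $\mathfrak{h}_3 \oplus \mathbb{R}$. Then
$$D = \widetilde{\operatorname{Ric}}^1 - cI_4 = \begin{bmatrix}
\frac{-1}{4} - c & 0 & 0 & 0\\
0 & -c & 0 & 0\\
0 & 0 & -c & \frac{-1}{8}\\
0 & 0 & \frac{1}{8} & -c
\end{bmatrix},$$
so that $D(v_1) = (\frac{-1}{4} - c)v_1$, $D(v_3) = -cv_3 + \frac{1}{8}v_4$ and $D(v_4) = \frac{-1}{8}v_3 - cv_4$. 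As in the proof of the previous theorem, I will test the Leibniz rule on the bracket $[v_1, v_3] = \frac{1}{2}(v_3 - v_4)$.

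The left-hand side is
$$[D(v_1), v_3] + [v_1, D(v_3)] = \left(\frac{-1}{4} - c\right)[v_1, v_3] - c[v_1, v_3] + \frac{1}{8}[v_1, v_4].$$
Since $[v_1, v_4] = [v_1, v_3]$, this equals $\left(\frac{-1}{8} - 2c\right)\frac{1}{2}(v_3 - v_4)$.

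The right-hand side is
$$D[v_1, v_3] = \frac{1}{2}\left(D(v_3) - D(v_4)\right) = \frac{1}{2}\left(\left(\frac{1}{8} - c\right)v_3 + \left(\frac{1}{8} + c\right)v_4\right).$$

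Comparing the $v_3$-components gives $\frac{-1}{8} - 2c = \frac{1}{8} - c$, hence $c = \frac{-1}{4}$. Comparing the $v_4$-components gives $\frac{1}{8} + 2c = \frac{1}{8} + c$, hence $c = 0$. These two values are incompatible, so no such $c$ and $D$ exist, and $\left(\mathfrak{h}_3 \oplus \mathbb{R}, S_{0}^{2}, J\right)$ is not a second kind algebraic Ricci soliton with respect to $\nabla^1$.

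The only step that needs care is the sign bookkeeping. The $(3,4)$ and $(4,3)$ entries of $\widetilde{\operatorname{Ric}}^1$ are antisymmetric, and this changes the coefficients from those of the first-kind case. If an arithmetic slip were to make the two conditions agree, I would fall back on the analogous condition for $[v_1, v_4]$. That condition gives a second linear relation in $c$ to compare against.
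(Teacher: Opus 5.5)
Your proof is correct and follows the paper's own argument. The paper likewise applies the derivation rule to $[v_1,v_3]=\frac{1}{2}(v_3-v_4)$, as in the first-kind case, and obtains $c=-\frac{1}{4}$ from the $v_3$-component and $c=0$ from the $v_4$-component; your closing fallback remark is unnecessary, since your computation already gives the contradiction.
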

\begin{proof}
The proof is similar to that of the precedent theorem, we obtain that $c = \frac{-1}{4} = 0$, which is a contradiction.
\end{proof}
\subsection{The Lorentzian Lie algebra $\left(\mathfrak{h}_3 \oplus \mathbb{R}, S_{0}^{3}\right)$}
An orthonormal basis of $\left(\mathfrak{h}_3 \oplus \mathbb{R}, S_{0}^{3}\right)$ is given by
$$v_1 = e_1, \qquad v_2 = e_3, \qquad v_3 = \frac{1}{\sqrt{2}}(e_2 + e_4), \qquad v_4 = \frac{1}{\sqrt{2}}(e_2 - e_4).$$ 
The nonzero brackets in this basis are: $[v_1, v_3] = [v_1, v_4] = \frac{1}{\sqrt{2}}v_2$. According to (\ref{Levi}), the Levi-Civita connection of $\left(\mathfrak{h}_3 \oplus \mathbb{R}, S_{0}^{3}\right)$ is described by
\begin{multicols}{3}
	$\nabla_{v_i}v_i = 0,\ \forall i=1,\ldots,4$\par
	$\nabla_{v_1}v_2 = \frac{-1}{2\sqrt{2}}v_3 + \frac{1}{2\sqrt{2}}v_4$\par
	$\nabla_{v_2}v_1 = \frac{-1}{2\sqrt{2}}v_3 + \frac{1}{2\sqrt{2}}v_4$\par
	$\nabla_{v_1}v_3 = \nabla_{v_1}v_4 = \frac{1}{2\sqrt{2}}v_2$\par
	$\nabla_{v_3}v_1 = \nabla_{v_4}v_1 = \frac{-1}{2\sqrt{2}}v_2$\par
	$\nabla_{v_2}v_3 = \nabla_{v_3}v_2 = \frac{1}{2\sqrt{2}}v_1$\par
	$\nabla_{v_2}v_4 = \nabla_{v_4}v_2 = \frac{1}{2\sqrt{2}}v_1$\par
	$\nabla_{v_3}v_4 = \nabla_{v_4}v_3 = 0$
\end{multicols}
The nonvanishing terms in $\nabla_{v_i}(J)v_j$ are
$$\nabla_{v_1}(J)v_2 = \nabla_{v_2}(J)v_1 = \frac{1}{\sqrt{2}}v_4, \qquad \nabla_{v_1}(J)v_4 = \frac{-1}{\sqrt{2}}v_2, \qquad \nabla_{v_2}(J)v_4 = \frac{-1}{\sqrt{2}}v_1.$$
The canonical connection $\nabla^0$ of $\left(\mathfrak{h}_3 \oplus \mathbb{R}, S_{0}^{3}\right)$ is expressed as
\begin{multicols}{3}
	$\nabla^0_{v_i}v_i = 0,\ \forall i=1,\ldots,4$\par
	$\nabla^0_{v_1}v_2 = \nabla^0_{v_2}v_1 = \frac{-1}{2\sqrt{2}}v_3$\par
	$\nabla^0_{v_1}v_3 = \frac{1}{2\sqrt{2}}v_2$\par
	$\nabla^0_{v_1}v_4 = 0$\par
	$\nabla^0_{v_3}v_1 = \nabla^0_{v_4}v_1 = \frac{-1}{2\sqrt{2}}v_2$\par
	$\nabla^0_{v_2}v_3 = \nabla^0_{v_3}v_2 = \frac{1}{2\sqrt{2}}v_1$\par
	$\nabla^0_{v_2}v_4 = 0$\par
	$\nabla^0_{v_4}v_2 = \frac{1}{2\sqrt{2}}v_1$\par
	$\nabla^0_{v_3}v_4 = \nabla^0_{v_4}v_3 = 0$
\end{multicols}
The Ricci operator $\operatorname{Ric}^0$ of $\nabla^0$ is represented in the basis $\mathscr{B} = \left\lbrace v_1, v_2, v_3, v_4\right\rbrace$ by the following matrix
$$\operatorname{Ric}^0 = \begin{bmatrix}
\frac{-1}{4} & 0 & 0 & 0\\
0 & \frac{1}{4} & 0 & 0\\
0 & 0 & \frac{-1}{4} & \frac{-1}{4}\\
0 & 0 & 0 & 0
\end{bmatrix}.$$
\begin{theorem}
$\left(\mathfrak{h}_3 \oplus \mathbb{R}, S_{0}^{3}, J\right)$ is a first kind algebraic Ricci soliton with respect to $\nabla^0$ where $c = \frac{-3}{4}$ and 
$$D = \begin{bmatrix}
\frac{1}{2} & 0 & 0 & 0\\
0 & 1 & 0 & 0\\
0 & 0 & \frac{1}{2} & \frac{-1}{4}\\
0 & 0 & 0 & \frac{3}{4}
\end{bmatrix}$$ 
is the derivation of $\mathfrak{h}_3 \oplus \mathbb{R}$ with respect to the orthonormal basis $\left\lbrace v_1, v_2, v_3, v_4\right\rbrace$.
\end{theorem}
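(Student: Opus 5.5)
The plan follows the same scheme as the earlier theorems of this section. I start from the matrix of $\operatorname{Ric}^0$ computed just above and posit $\operatorname{Ric}^0 = cI_4 + D$. This gives
$$D = \operatorname{Ric}^0 - cI_4 = \begin{bmatrix}
\frac{-1}{4} - c & 0 & 0 & 0\\
0 & \frac{1}{4} - c & 0 & 0\\
0 & 0 & \frac{-1}{4} - c & \frac{-1}{4}\\
0 & 0 & 0 & -c
\end{bmatrix},$$
so that $D(v_1) = (\frac{-1}{4}-c)v_1$, $D(v_2) = (\frac14 - c)v_2$, $D(v_3) = (\frac{-1}{4}-c)v_3$ and $D(v_4) = \frac{-1}{4}v_3 - cv_4$. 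The only nonzero brackets in $\mathscr{B}$ are $[v_1, v_3] = [v_1, v_4] = \frac{1}{\sqrt2}v_2$, so the derivation conditions reduce to a small number of scalar equations.

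First I determine $c$ from the condition $[D(v_1), v_3] + [v_1, D(v_3)] = D[v_1, v_3]$. The left side is $\left(\frac{-1}{2} - 2c\right)\frac{1}{\sqrt2}v_2$ and the right side is $\frac{1}{\sqrt2}\left(\frac14 - c\right)v_2$. Comparing the $v_2$ components forces $c = \frac{-3}{4}$, which proves the ``only if'' direction. Substituting $c = \frac{-3}{4}$ gives precisely the matrix $D$ displayed in the statement.

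Next I verify the remaining Leibniz conditions for this $D$. For the pair $(v_1, v_4)$ the left side is
$$\tfrac12[v_1, v_4] - \tfrac14[v_1, v_3] + \tfrac34[v_1, v_4] = \tfrac{1}{\sqrt2}v_2,$$
which equals $\frac{1}{\sqrt2}D(v_2)$ because $D(v_2) = v_2$. This is the only place where the off-diagonal entry $\frac{-1}{4}$ matters. I expect it to be the main point to check, since a nonzero entry there could a priori break the derivation property.

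For the pairs with vanishing bracket, namely $(v_1, v_2)$, $(v_2, v_3)$, $(v_2, v_4)$ and $(v_3, v_4)$, I argue as follows. $D$ preserves the lines $\mathbb{R}v_1$ and $\mathbb{R}v_2$ and the subspace $\operatorname{Span}(v_3, v_4)$. Moreover $v_2$ is central, and $\operatorname{Span}(v_3, v_4)$ is abelian. Hence both sides of each of these conditions vanish. This completes the proof that $\left(\mathfrak{h}_3 \oplus \mathbb{R}, S_0^3, J\right)$ is a first kind algebraic Ricci soliton with $c = \frac{-3}{4}$ and the stated $D$.
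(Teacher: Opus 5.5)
Your proposal is correct and follows essentially the same route as the paper: set $D = \operatorname{Ric}^0 - cI_4$, extract $c = -\frac{3}{4}$ from the Leibniz condition on $(v_1, v_3)$, and check $(v_1, v_4)$. The paper notes that $(v_1, v_4)$ also forces $c = -\frac{3}{4}$, which is exactly your explicit check. Your structural argument for the vanishing brackets ($D$ preserves $\mathbb{R}v_1$, $\mathbb{R}v_2$ and $\operatorname{Span}(v_3, v_4)$, $v_2$ is central, and $\operatorname{Span}(v_3, v_4)$ is abelian) simply makes explicit what the paper calls straightforward.
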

\begin{proof}
Assume that $\operatorname{Ric}^0 = cI_4 + D$, where $c \in \mathbb{R}$ and $D$ is a derivation of $\mathfrak{h}_3 \oplus \mathbb{R}$. Hence
$$D = \operatorname{Ric}^0 - cI_4 = \begin{bmatrix}
\frac{-1}{4} - c & 0 & 0 & 0\\
0 & \frac{1}{4} - c & 0 & 0\\
0 & 0 & \frac{-1}{4} - c & \frac{-1}{4}\\
0 & 0 & 0 & -c
\end{bmatrix}.$$
It easy to verify that
$$[D(v_1), v_3] + [v_1, D(v_3)] = D[v_1, v_3] \Leftrightarrow c = \frac{-3}{4},$$
$$[D(v_1), v_4] + [v_1, D(v_4)] = D[v_1, v_4] \Leftrightarrow c = \frac{-3}{4}.$$
Hence $D$ is given by 
$$D = \begin{bmatrix}
\frac{1}{2} & 0 & 0 & 0\\
0 & 1 & 0 & 0\\
0 & 0 & \frac{1}{2} & \frac{-1}{4}\\
0 & 0 & 0 & \frac{3}{4}
\end{bmatrix}.$$
Now, it is straightforward to check that $D$ satisfies the derivation condition for all vanishing brackets.
\end{proof}
We obtain that $\widetilde{\operatorname{Ric}}^0$ is represented in the basis $\mathscr{B}$ by
$$\widetilde{\operatorname{Ric}}^0 = \begin{bmatrix}
\frac{-1}{4} & 0 & 0 & 0\\
0 & \frac{1}{4} & 0 & 0\\
0 & 0 & \frac{-1}{4} & \frac{-1}{8}\\
0 & 0 & \frac{1}{8} & 0
\end{bmatrix}.$$
\begin{theorem}
$\left(\mathfrak{h}_3 \oplus \mathbb{R}, S_{0}^{3}, J\right)$ is a second kind algebraic Ricci soliton with respect to $\nabla^0$ where $c = \frac{-5}{8}$ and 
$$D = \begin{bmatrix}
\frac{3}{8} & 0 & 0 & 0\\
0 & \frac{7}{8} & 0 & 0\\
0 & 0 & \frac{3}{8} & \frac{-1}{8}\\
0 & 0 & \frac{1}{8} & \frac{5}{8}
\end{bmatrix}$$
is the derivation of $\mathfrak{h}_3 \oplus \mathbb{R}$ with respect to the orthonormal basis $\left\lbrace v_1, v_2, v_3, v_4\right\rbrace$.
\end{theorem}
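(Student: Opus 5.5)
The plan is the same as for the first kind result just proved. Suppose $\widetilde{\operatorname{Ric}}^0 = cI_4 + D$ with $c\in\mathbb{R}$ and $D$ a derivation of $\mathfrak{h}_3 \oplus \mathbb{R}$, so that
$$D = \widetilde{\operatorname{Ric}}^0 - cI_4 = \begin{bmatrix}
\frac{-1}{4} - c & 0 & 0 & 0\\
0 & \frac{1}{4} - c & 0 & 0\\
0 & 0 & \frac{-1}{4} - c & \frac{-1}{8}\\
0 & 0 & \frac{1}{8} & -c
\end{bmatrix}.$$
Reading off columns gives
$$D(v_1) = \left(\tfrac{-1}{4} - c\right)v_1, \qquad D(v_2) = \left(\tfrac{1}{4} - c\right)v_2,$$
$$D(v_3) = \left(\tfrac{-1}{4} - c\right)v_3 + \tfrac{1}{8}v_4, \qquad D(v_4) = \tfrac{-1}{8}v_3 - cv_4.$$
The only nonzero brackets are $[v_1, v_3] = [v_1, v_4] = \frac{1}{\sqrt{2}}v_2$, so only two derivation conditions involve $c$ nontrivially.

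First I impose $[D(v_1), v_3] + [v_1, D(v_3)] = D[v_1, v_3]$. Using $[v_1, v_3] = [v_1, v_4]$, the left side is
$$\left(\tfrac{-1}{4} - c + \tfrac{-1}{4} - c + \tfrac{1}{8}\right)\tfrac{1}{\sqrt{2}}v_2 = \left(\tfrac{-3}{8} - 2c\right)\tfrac{1}{\sqrt{2}}v_2 .$$
The right side is $\frac{1}{\sqrt{2}}\left(\frac{1}{4} - c\right)v_2$. Comparing coefficients forces $c = \frac{-5}{8}$.

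Next I check $[D(v_1), v_4] + [v_1, D(v_4)] = D[v_1, v_4]$. Its left side is
$$\left(\tfrac{-1}{4} - c - \tfrac{1}{8} - c\right)\tfrac{1}{\sqrt{2}}v_2 = \left(\tfrac{-3}{8} - 2c\right)\tfrac{1}{\sqrt{2}}v_2,$$
which is the same equation, so it is consistent and again gives $c = \frac{-5}{8}$. Substituting this value yields exactly the matrix $D$ in the statement.

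Finally, I verify the Leibniz rule on all vanishing brackets. Since $v_2$ is central and $D(v_2) \in \operatorname{Span}(v_2)$, every condition involving $v_2$ has both sides zero. Since $D$ preserves $\operatorname{Span}(v_3, v_4)$, which is abelian, the $[v_3, v_4]$ condition is also trivial. The pair $(v_1, v_2)$ gives $0 = 0$ for the same reason.

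The only step needing care is the bookkeeping of the off-diagonal entries $\pm\frac{1}{8}$ in $D(v_3)$ and $D(v_4)$ when computing the brackets with $v_1$. Everything else is immediate.
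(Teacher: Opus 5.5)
Your proposal is correct and follows the same approach as the paper. The derivation conditions on $(v_1,v_3)$ and $(v_1,v_4)$ both reduce to $-\frac{3}{8}-2c=\frac{1}{4}-c$, which forces $c=-\frac{5}{8}$, and the vanishing brackets then hold trivially; you also write out the bookkeeping that the paper leaves as ``easy to verify,'' namely the $\pm\frac{1}{8}$ off-diagonal contributions and the centrality and abelian-subspace arguments.
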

\begin{proof}
Let us assume that $\widetilde{\operatorname{Ric}}^0 = cI_4 + D$, where $c \in \mathbb{R}$ and $D$ is a derivation of $\mathfrak{h}_3 \oplus \mathbb{R}$. Thus
$$D = \widetilde{\operatorname{Ric}}^0 - cI_4 = \begin{bmatrix}
\frac{-1}{4} - c & 0 & 0 & 0\\
0 & \frac{1}{4} - c & 0 & 0\\
0 & 0 & \frac{-1}{4} - c & \frac{-1}{8}\\
0 & 0 & \frac{1}{8} & -c
\end{bmatrix}.$$
It easy to verify that
$$[D(v_1), v_3] + [v_1, D(v_3)] = D[v_1, v_3] \Leftrightarrow c = \frac{-5}{8},$$
$$[D(v_1), v_4] + [v_1, D(v_4)] = D[v_1, v_4] \Leftrightarrow c = \frac{-5}{8}.$$
Hence $D$ is given by 
$$D = \begin{bmatrix}
\frac{3}{8} & 0 & 0 & 0\\
0 & \frac{7}{8} & 0 & 0\\
0 & 0 & \frac{3}{8} & \frac{-1}{8}\\
0 & 0 & \frac{1}{8} & \frac{5}{8}
\end{bmatrix}.$$
Next, it is straightforward to check that $D$ satisfies the derivation condition for all vanishing brackets.
\end{proof}
The Kobayashi-Nomizu connection $\nabla^1$ of $\left(\mathfrak{h}_3 \oplus \mathbb{R}, S_{0}^{3}\right)$ is expressed as
\begin{multicols}{3}
	$\nabla^1_{v_i}v_i = 0,\ \forall i=1,\ldots,4$\par
	$\nabla^1_{v_1}v_2 = \nabla^1_{v_2}v_1 = \frac{-1}{2\sqrt{2}}v_3$\par
	$\nabla^1_{v_1}v_3 = \frac{1}{2\sqrt{2}}v_2$\par
	$\nabla^1_{v_1}v_4 = 0$\par
	$\nabla^1_{v_4}v_1 = \frac{-1}{\sqrt{2}}v_2$\par
	$\nabla^1_{v_3}v_1 = \frac{-1}{2\sqrt{2}}v_2$\par
	$\nabla^1_{v_2}v_3 = \nabla^1_{v_3}v_2 = \frac{1}{2\sqrt{2}}v_1$\par
	$\nabla^1_{v_2}v_4 = \nabla^1_{v_4}v_2 = 0$\par
	$\nabla^1_{v_3}v_4 = \nabla^1_{v_4}v_3 = 0$
\end{multicols}
The Ricci operator $\operatorname{Ric}^1$ of $\nabla^1$ is represented in the basis $\mathscr{B} = \left\lbrace v_1, v_2, v_3, v_4\right\rbrace$ by the following matrix
$$\operatorname{Ric}^1 = \begin{bmatrix}
\frac{-1}{4} & 0 & 0 & 0\\
0 & \frac{1}{4} & 0 & 0\\
0 & 0 & \frac{-1}{4} & \frac{-1}{2}\\
0 & 0 & 0 & 0
\end{bmatrix}.$$
\begin{theorem}
$\left(\mathfrak{h}_3 \oplus \mathbb{R}, S_{0}^{3}, J\right)$ is not a first kind algebraic Ricci soliton with respect to $\nabla^1$.
\end{theorem}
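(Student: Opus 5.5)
We argue by contradiction, as in the analogous results for the other Lorentzian inner products. Suppose $\operatorname{Ric}^1 = cI_4 + D$ for some $c \in \mathbb{R}$ and some derivation $D$ of $\mathfrak{h}_3 \oplus \mathbb{R}$. From the matrix of $\operatorname{Ric}^1$ in the basis $\mathscr{B}$ we get
$$D = \operatorname{Ric}^1 - cI_4 = \begin{bmatrix}
\frac{-1}{4} - c & 0 & 0 & 0\\
0 & \frac{1}{4} - c & 0 & 0\\
0 & 0 & \frac{-1}{4} - c & \frac{-1}{2}\\
0 & 0 & 0 & -c
\end{bmatrix},$$
so $D(v_1) = (\frac{-1}{4} - c)v_1$, $D(v_2) = (\frac{1}{4} - c)v_2$, $D(v_3) = (\frac{-1}{4} - c)v_3$, and $D(v_4) = \frac{-1}{2}v_3 - cv_4$. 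The only nonzero brackets are $[v_1, v_3] = [v_1, v_4] = \frac{1}{\sqrt{2}}v_2$, so we test the Leibniz rule on the pairs $(v_1, v_3)$ and $(v_1, v_4)$.

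For the pair $(v_1, v_3)$ we have $[D(v_1), v_3] + [v_1, D(v_3)] = (\frac{-1}{2} - 2c)\frac{1}{\sqrt{2}}v_2$, while $D[v_1, v_3] = \frac{1}{\sqrt{2}}(\frac{1}{4} - c)v_2$. Equating the $v_2$ components gives $\frac{-1}{2} - 2c = \frac{1}{4} - c$, hence $c = \frac{-3}{4}$. This is the same value found for $\nabla^0$.

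For the pair $(v_1, v_4)$, the extra entry $\frac{-1}{2}$ in $D(v_4)$ adds a term $\frac{-1}{2}[v_1, v_3]$. This gives
$$[D(v_1), v_4] + [v_1, D(v_4)] = \left(\frac{-1}{4} - c - c - \frac{1}{2}\right)\frac{1}{\sqrt{2}}v_2,$$
while $D[v_1, v_4] = \frac{1}{\sqrt{2}}(\frac{1}{4} - c)v_2$. Equating the $v_2$ components gives $\frac{-3}{4} - 2c = \frac{1}{4} - c$, hence $c = -1$.

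The two conditions force $c = \frac{-3}{4}$ and $c = -1$ simultaneously, which is a contradiction. Therefore no pair $(c, D)$ exists, and $\left(\mathfrak{h}_3 \oplus \mathbb{R}, S_{0}^{3}, J\right)$ is not a first kind algebraic Ricci soliton with respect to $\nabla^1$.
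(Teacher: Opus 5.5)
Your proof is correct and is exactly the paper's argument: you test the Leibniz rule on the two nonzero brackets $[v_1,v_3]$ and $[v_1,v_4]$. These give $c=-\frac{3}{4}$ and $c=-1$ respectively, which is the contradiction the paper derives. You also write out the bracket computations that the paper leaves as ``easy to verify'', including the extra term $-\frac{1}{2}[v_1,v_3]$ coming from the $(3,4)$ entry of $D$.
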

\begin{proof}
Assume that $\left(\mathfrak{h}_3 \oplus \mathbb{R}, S_{0}^{3}, J\right)$ is a first kind algebraic Ricci soliton associated to $\nabla^1$, then there exists a real number $c$ and a derivation $D$ of $\mathfrak{h}_3 \oplus \mathbb{R}$ such that $\operatorname{Ric}^1 = cI_4 + D$. Then we obtain that
$$D = \operatorname{Ric}^1 - cI_4 = \begin{bmatrix}
\frac{-1}{4} - c & 0 & 0 & 0\\
0 & \frac{1}{4} - c & 0 & 0\\
0 & 0 & \frac{-1}{4} - c & \frac{-1}{2}\\
0 & 0 & 0 & -c
\end{bmatrix}.$$
It easy to verify that
$$[D(v_1), v_3] + [v_1, D(v_3)] = D[v_1, v_3] \Leftrightarrow c = \frac{-3}{4},$$
$$[D(v_1), v_4] + [v_1, D(v_4)] = D[v_1, v_4] \Leftrightarrow c = -1.$$
Hence $c = \frac{-3}{4} = -1$, which is a contradiction.
\end{proof}
We obtain that $\widetilde{\operatorname{Ric}}^1$ is represented in the basis $\mathscr{B}$ by
$$\widetilde{\operatorname{Ric}}^1 = \begin{bmatrix}
\frac{-1}{4} & 0 & 0 & 0\\
0 & \frac{1}{4} & 0 & 0\\
0 & 0 & \frac{-1}{4} & \frac{-1}{4}\\
0 & 0 & \frac{1}{4} & 0
\end{bmatrix}.$$
\begin{theorem}
$\left(\mathfrak{h}_3 \oplus \mathbb{R}, S_{0}^{3}, J\right)$ does not qualify as a second kind algebraic Ricci soliton with respect to $\nabla^1$.
\end{theorem}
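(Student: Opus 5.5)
The plan is to argue by contradiction, exactly as in the preceding theorem for $\operatorname{Ric}^1$. Suppose $\widetilde{\operatorname{Ric}}^1 = cI_4 + D$ for some $c \in \mathbb{R}$ and some derivation $D$ of $\mathfrak{h}_3 \oplus \mathbb{R}$. Then
$$D = \widetilde{\operatorname{Ric}}^1 - cI_4 = \begin{bmatrix}
\frac{-1}{4} - c & 0 & 0 & 0\\
0 & \frac{1}{4} - c & 0 & 0\\
0 & 0 & \frac{-1}{4} - c & \frac{-1}{4}\\
0 & 0 & \frac{1}{4} & -c
\end{bmatrix}.$$
Reading off the columns gives
$$D(v_1) = \left(\tfrac{-1}{4} - c\right)v_1, \qquad D(v_2) = \left(\tfrac{1}{4} - c\right)v_2,$$
$$D(v_3) = \left(\tfrac{-1}{4} - c\right)v_3 + \tfrac{1}{4}v_4, \qquad D(v_4) = \tfrac{-1}{4}v_3 - cv_4.$$

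The only nonzero brackets are $[v_1, v_3] = [v_1, v_4] = \frac{1}{\sqrt{2}}v_2$. So I will test the Leibniz rule only on the pairs $(v_1, v_3)$ and $(v_1, v_4)$, in each case comparing the $v_2$-components.

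For $(v_1, v_3)$, the term $[D(v_1), v_3]$ contributes $\left(\frac{-1}{4} - c\right)\frac{1}{\sqrt{2}}v_2$. The term $[v_1, D(v_3)]$ contributes $\left(\frac{-1}{4} - c\right)\frac{1}{\sqrt{2}}v_2 + \frac{1}{4}\cdot\frac{1}{\sqrt{2}}v_2$. The right-hand side is $\frac{1}{\sqrt{2}}D(v_2) = \frac{1}{\sqrt{2}}\left(\frac{1}{4} - c\right)v_2$. Equating coefficients gives $\frac{-1}{4} - 2c = \frac{1}{4} - c$, that is, $c = \frac{-1}{2}$.

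For $(v_1, v_4)$, the left-hand side has coefficient $\left(\frac{-1}{4} - c\right) - \frac{1}{4} - c = \frac{-1}{2} - 2c$ in front of $\frac{1}{\sqrt{2}}v_2$. The right-hand side again has coefficient $\frac{1}{4} - c$. This gives $c = \frac{-3}{4}$.

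The two conditions force $c = \frac{-1}{2}$ and $c = \frac{-3}{4}$ simultaneously, which is a contradiction. Hence $\left(\mathfrak{h}_3 \oplus \mathbb{R}, S_0^3, J\right)$ is not a second kind algebraic Ricci soliton with respect to $\nabla^1$.

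There is no real obstacle; the only care needed is tracking the off-diagonal entries $\pm\frac{1}{4}$ of $D$. These are precisely what make the two conditions differ: they contribute $+\frac{1}{4}$ in the first and $-\frac{1}{4}$ in the second.
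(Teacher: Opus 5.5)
Your proof is correct and follows the paper's argument exactly. You impose the derivation condition on the two nonvanishing brackets $[v_1,v_3]$ and $[v_1,v_4]$, obtain $c=-\frac{1}{2}$ and $c=-\frac{3}{4}$ respectively, and reach the same contradiction as the paper, while writing out the coefficient computations that the paper leaves as ``easy to verify.''
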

\begin{proof}
Suppose that $\widetilde{\operatorname{Ric}}^1 = cI_4 + D$, where $c \in \mathbb{R}$ and $D$ is a derivation of $\mathfrak{h}_3 \oplus \mathbb{R}$. Hence
$$D = \widetilde{\operatorname{Ric}}^1 - cI_4 = \begin{bmatrix}
\frac{-1}{4} - c & 0 & 0 & 0\\
0 & \frac{1}{4} - c & 0 & 0\\
0 & 0 & \frac{-1}{4} - c & \frac{-1}{4}\\
0 & 0 & \frac{1}{4} & -c
\end{bmatrix}.$$
It easy to verify that
$$[D(v_1), v_3] + [v_1, D(v_3)] = D[v_1, v_3] \Leftrightarrow c = \frac{-1}{2},$$
$$[D(v_1), v_4] + [v_1, D(v_4)] = D[v_1, v_4] \Leftrightarrow c = \frac{-3}{4}.$$
Hence $c = \frac{-1}{2} = \frac{-3}{4}$, which is a contradiction.
\end{proof}


\begin{thebibliography}{99}
	
	\bibitem{aitbenhaddou2026lorentzian} M. Ait Ben Haddou and Y. Ayad, {\it {Lorentzian} Algebraic {Ricci} Solitons on Four-Dimensional Nilpotent {Lie} Groups}, Math. Notes, {\bf 119} (2026), no. 1, 133--158.\\
	\url{https://doi.org/10.1134/s0001434625604630}
	
	\bibitem{aitbenhaddou2012left} M. Ait Ben Haddou, M. Boucetta and H. Lebzioui, {\it Left-invariant {Lorentzian} flat metrics on {Lie} groups}, J. Lie Theory, {\bf 22} (2012), no. 1, 269--289.\\
	\url{https://doi.org/10.5802/jolt.669}
	
	\bibitem{bajo2024classification} I. Bajo, S. Benayadi and H. Lebzioui, {\it Classification of flat {Lorentzian} nilpotent {Lie} algebras}, Bull. Lond. Math. Soc., {\bf 56} (2024), no. 6, 2132--2149.\\
	\url{https://doi.org/10.1112/blms.13047}
	
	\bibitem{batat2017algebraic} W. Batat and K. Onda, {\it Algebraic {Ricci} solitons of three-dimensional {Lorentzian} {Lie} groups}, J. Geom. Phys., {\bf 114} (2017), 138--152.\\
	\url{https://doi.org/10.1016/j.geomphys.2016.11.018}
	
	\bibitem{bokan2015lorentz} N. Bokan, T. {\v{S}}ukilovi{\'c}, and S. Vukmirovi{\'c}, {\it Lorentz geometry of 4-dimensional nilpotent {Lie} groups}, Geom. Dedicata, {\bf 177} (2015), 83--102.\\
	\url{https://doi.org/10.1007/s10711-014-9980-4}
	
	\bibitem{etayo2016distinguished} F. Etayo and R. Santamar{\'{\i}}a, {\it Distinguished connections on {{\((J^2=\pm 1)\)}}-metric manifolds}, Arch. Math. (Brno), {\bf 52} (2016), no. 3, 159--203.\\
	\url{https://doi.org/10.5817/am2016-3-159}
	
	\bibitem{garcia} E. Garc{\'i}a‐R{\'i}o, R. Rodriguez-Gigirey and R. V{\'a}zquez-Lorenzo, {\it Four-dimensional {Lorentzian} algebraic {Ricci} solitons}, Proc. R. Soc. Edinb., Sect. A, Math. (2026), 1--56. (To appear)
	
	\bibitem{goze1996nilpotent} M. Goze and Y. Khakimdjanov, {\it Nilpotent {Lie} algebras}, Math. Appl., Dordr., {\bf 361} (1996).\\
	\url{https://doi.org/10.1007/978-94-017-2432-6}
	
	\bibitem{jablonski2011concerning} M. Jablonski, {\it Concerning the existence of {Einstein} and {Ricci} soliton metrics on solvable {Lie} groups}, Geom. Topol., {\bf 15} (2011), no. 2, 735--764.\\
	\url{https://doi.org/10.2140/gt.2011.15.735}
	
	\bibitem{jablonski2014homogeneous} M. Jablonski, {\it Homogeneous {Ricci} solitons are algebraic}, Geom. Topol., {\bf 18} (2014), no. 4, 2477--2486.\\ \url{https://doi.org/10.2140/gt.2014.18.2477}
	
	\bibitem{jablonski2015homogeneous} M. Jablonski, {\it Homogeneous {Ricci} solitons}, J. Reine Angew. Math., {\bf 699} (2015), 159--182.\\ \url{https://doi.org/10.1515/crelle-2013-0044}
	
	\bibitem{lauret2001ricci} J. Lauret, {\it {Ricci} soliton homogeneous nilmanifolds}, Math. Ann., {\bf 319} (2001), no. 4, 715--733.\\ \url{https://doi.org/10.1007/pl00004456}
	
	\bibitem{lauret2011ricci} J. Lauret, {\it {Ricci} soliton solvmanifolds}, J. Reine Angew. Math., {\bf 650} (2011), 1--21.\\
	\url{https://doi.org/10.1515/crelle.2011.001}
	
	\bibitem{magnin1986algebres} L. Magnin, {\it Sur les algebres de {Lie} nilpotentes de dimension $\leq$ 7}, J. Geom. Phys., {\bf 3} (1986), no. 1, 119--144.\\
	\url{https://doi.org/10.1016/0393-0440(86)90005-7}
	
	\bibitem{onda2014examples} K. Onda, {\it Examples of Algebraic {Ricci} Solitons in the Pseudo-Riemannian Case}, Acta Math. Hung., {\bf 144} (2014), no. 1, 247--265.\\ \url{https://doi.org/10.1007/s10474-014-0426-0}
	
	\bibitem{wang2022canonical} Y. Wang, {\it Canonical connections and algebraic {Ricci} solitons of three-dimensional {Lorentzian} {Lie} groups}, Chin. Ann. Math., Ser. B, {\bf 43} (2022), no. 3, 443--458.\\
	\url{https://doi.org/10.1007/s11401-022-0334-5}
	
	\bibitem{warner1983foundations} F. W. Warner, {\it Foundations of differentiable manifolds and {Lie} groups}, Glenview, {Illinois}-{London}: {Scott}, {Foresman} \& {Comp}. 270 p. {{\textsterling}} 4.24 (1971).
	
\end{thebibliography}
\end{document}